\def\red{\color{red}}
\def\rr{{\mathbb R}}
\def\rn{{\mathbb{R}^n}}
\def\zz{{\mathbb Z}}
\def\cc{{\mathbb C}}
\def\nn{{\mathbb N}}
\def\ca{{\mathcal A}}
\def\cm{{\mathcal M}}
\def\dfrac{\displaystyle\frac}
\def\cs{{\mathcal S}}
\def\fz{\infty }
\def\az{\alpha}
\def\bz{\beta}
\def\lz{\lambda}
\def\tz{\theta}
\def\lf{\left}
\def\r{\right}
\def\ls{\lesssim}
\def\noz{\nonumber}
\def\wz{\widetilde}
\def\com{\complement}
\def\dist{\mathop\mathrm{\,dist\,}}
\def\supp{\mathop\mathrm{\,supp\,}}
\def\XXint#1#2#3{{\setbox0=\hbox{$#1{#2#3}{\int}$ }
\vcenter{\hbox{$#2#3$ }}\kern-.6\wd0}}
\DeclareMathOperator{\esssup}{ess\,sup}
\DeclareMathOperator{\essinf}{ess\,inf}
\def\ga{\gamma}
\def\f{\frac}
\def\({\left(}
\def \){ \right)}
\def\lz{{\lambda}}
\def\BB{{\mathbb B}}
\newcommand{\wt}{\widetilde}
\newcommand{\p}{\partial}
\def\al{\alpha}
\def\supp{\operatorname{supp}}
\def\BB{\mathbb{B}}
\def\ld{\lambda}
\newcommand{\bp}{ \begin{proof} }
\newcommand{\ep}{ \end{proof} }
\def\XXint#1#2#3{{\setbox0=\hbox{$#1{#2#3}{\int}$ }
\vcenter{\hbox{$#2#3$ }}\kern-.6\wd0}}
\newtheorem{theorem}{Theorem}[section]
\newtheorem{lemma}[theorem]{Lemma}
\newtheorem{assumption}[theorem]{Assumption}
\newtheorem{proposition}[theorem]{Proposition}
\newtheorem{Ellipticity Condition}[theorem]{Ellipticity Condition}
\newtheorem{Strong Ellipticity Condition}[theorem]{Strong Ellipticity Condition}
\theoremstyle{definition}
\newtheorem{remark}[theorem]{Remark}
\newtheorem{definition}[theorem]{Definition}
\renewcommand{\appendix}{\par
\setcounter{section}{0}%
\setcounter{subsection}{0}%
\setcounter{subsubsection}{0}%
\gdef\thesection{\@Alph\c@section}%
\gdef\thesubsection{\@Alph\c@section.\@arabic\c@subsection}%
\gdef\theHsection{\@Alph\c@section.}%
\gdef\theHsubsection{\@Alph\c@section.\@arabic\c@subsection}%
\csname appendixmore\endcsname
}
\numberwithin{equation}{section}
\begin{document}
\title{\bf\Large Maximal Function and
Riesz Transform Characterizations of
Hardy Spaces Associated with
Homogeneous Higher Order  Elliptic
Operators and  Ball Quasi-Banach
Function Spaces
\footnotetext{\hspace{-0.35cm} 2020 {\it
Mathematics Subject Classification}.
Primary 42B30;
Secondary 42B25, 47B06, 35J30, 42B35.
\endgraf {\it Key words and phrases.}
ball quasi-Banach function space, high order elliptic operator, Hardy
space, maximal function, Riesz transform.
\endgraf
This project is partially supported by the National Key Research and
Development Program of China
(Grant No. 2020YFA0712900), the National
Natural Science Foundation of China (Grant Nos.
11971058, 12071197, 12122102,
11871100, 11871254 and 12071431)
and the Fundamental Research Funds for the
Central Universities (Grant No. lzujbky-2021-ey18).}}
\author{Xiaosheng Lin, Dachun
Yang\footnote{Corresponding
author, E-mail: \texttt{dcyang@bnu.edu.cn}/{\red July 7, 2022}/Final version.},
\ Sibei Yang and Wen Yuan}
\date{}
\maketitle

\vspace{-0.7cm}

\begin{center}
\begin{minipage}{13cm}
{\small {\bf Abstract}\quad
Let $L$ be a homogeneous divergence form higher
order elliptic operator with complex
bounded measurable coefficients on
$\mathbb{R}^n$ and $X$ a ball
quasi-Banach function space on
$\mathbb{R}^n$ satisfying
some mild assumptions. Denote by
$H_{X,\, L}(\mathbb{R}^n)$
the Hardy space, associated with
both $L$ and $X$,
which is defined via the Lusin area
function related to
the semigroup generated by $L$.
In this article,
the authors establish both
the maximal function and the
Riesz transform characterizations
of $H_{X,\, L}(\mathbb{R}^n)$.
The results obtained in this article
have a wide range of generality
and can be applied to the weighted
Hardy space, the variable Hardy space,
the mixed-norm Hardy space, the
Orlicz--Hardy space, the Orlicz-slice
Hardy space, and the Morrey--Hardy
space, associated with $L$.
In particular, even when $L$ is a
second order divergence
form elliptic operator, both the
maximal function and
the Riesz transform characterizations
of the mixed-norm Hardy space, the
Orlicz-slice Hardy space, and the
Morrey--Hardy space, associated
with $L$, obtained in this
article, are totally new.}
\end{minipage}
\end{center}

\vspace{0.3cm}

\tableofcontents

\vspace{0.4cm}

\section{Introduction}
The real-variable theory of the
classical Hardy space $H^p(\rn)$
with $p\in(0,1]$, creatively  initiated
by Stein and Weiss  \cite{SW60} and
then further seminally
developed by Fefferman and
Stein  \cite{FS72}, plays a key role
in both harmonic analysis and partial
differential equations (see, for
instance, \cite{CLMS93,FS72,S94}
and the references therein).
It is well known that, when $p\in(0,1]$,
the Hardy space $H^p(\rn)$ is a good substitute
of the Lebesgue space $L^p(\rn)$ in
the study of the boundedness of
some classical operators.
For instance, when $p\in(0,1]$,
the Riesz transform is bounded on
$H^p(\rn)$ but not on $L^p(\rn)$.
However, there exist many settings
in which the real-variable theory of
the Hardy space can not be applicable;
for instance, the Riesz transform
$\nabla L^{-1/2}$ may not be bounded
from $H^1(\mathbb R^n)$ to
$L^1(\mathbb R^n)$ when
$L:=-{\rm div}(A\nabla)$ is a second
order divergence elliptic operator with
complex bounded measurable
coefficients (see, for instance,
\cite{HM09,hmm11}).

In recent years, there has been a lot
of studies which pay attention to the
real-variable theory of Hardy
spaces associated with operators. Here,
we give a brief overview of this
research field. Auscher et al.
\cite{ADM05} first introduced  the
Hardy space $H^1_{L}(\rn)$ associated
with an operator $L$ whose heat kernel
satisfies a  pointwise Gaussian upper
bound estimates and obtained its
molecular characterization.
Moreover, Duong and Yan
\cite{DY051,DY05} established the
dual theory of the Hardy space
$H^1_L(\rn).$ Yan \cite{Y08} further
generalized these results to the
Hardy space $H^p_L(\rn)$ with certain
$p\in(0,1].$ Furthermore, Hardy spaces
associated operators satisfying the
Davies--Gaffney estimates which are
weaker than the Gaussian upper bound
estimates were studied in
\cite{A08JGE,CMY17,CMY16,DL13,HMMY11,HM09,hmm11}.
Meanwhile, the real-variable theory of
various variants of Hardy spaces
associated with operators
were developed; see, for instance,
\cite{B14,BCKYY131,SY10} for weighted
Hardy spaces
associated with operators,
\cite{BCKYY13,BL11,JY10,JY11,YY14} for
(Musielak--)Orlicz--Hardy spaces
associated with operators, and
\cite{YZ16,YZZ18} for variable Hardy
spaces associated with operators. For
more studies on Hardy spaces
and other function spaces associated
with operators, we refer the reader to
\cite{BDL18,BDL20,GKKP19,GKKP17,gn17,
LW17,A02,SY18,YY13,sy15} and the references therein.

In particular, let $L$ be a homogeneous
divergence form higher order elliptic
operator with complex
bounded measurable coefficients on
$\mathbb{R}^n$ (see, for
instance, \cite{CMY16} or Subsection
\ref{juhuatai} below for its detailed
definition). For the Hardy space
$H^p_L(\rn)$ with $p\in(0,1],$
Cao and the second author of this
article \cite{CY12} established several
characterizations of
$H^p_L(\rn)$ by means of the molecule,
the square function, and the Riesz
transform. Furthermore, Deng et al.
\cite{DDY12} also established the
corresponding characterizations of the
Hardy space $H^1_L(\rn).$ Moreover, Cao
et al. \cite{CMY16} obtained the
various maximal function characterizations of $H^p_L(\rn).$

Let $X$ be a ball quasi-Banach function
space on $\rn$ (see, for instance,
\cite{SHYY17} or Subsection \ref{feng1}
below for its detailed definition) and
$L$ a homogeneous divergence form
higher order elliptic operator with
complex bounded measurable coefficients
on $\mathbb{R}^n$. Sawano et al.
\cite{SHYY17} introduced the Hardy space
$H_{X,\,L}(\rn)$ associated with both
$X$ and $L$, via the Lusin area
function related to the semigroup
generated by $L$, and established its
molecular characterization. \emph{It is then quite natural
to ask whether or not there exist the maximal
function and the Riesz
transform characterizations of $H_{X,\,
L}(\rn)$}.

Recall that
ball quasi-Banach function spaces  were
introduced in \cite{SHYY17} to include
more important function spaces than
quasi-Banach function spaces originally
introduced in \cite[Definitions 1.1
and 1.3]{BS88}. Indeed, the former
includes (weighted) Lebesgue spaces,
variable Lebesgue spaces, mixed-norm
Lebesgue spaces, Orlicz spaces,
Orlicz-slice spaces,
and Morrey spaces which, except
Lebesgue spaces, are usually not
quasi-Banach function spaces
(see, for instance, \cite{SHYY17,WYY20,ZWYY20}).

The main \emph{purpose} of
this article is to answer the aforementioned question,
namely, to establish both the
maximal function and the Riesz transform
characterizations of the Hardy space
$H_{X,\,L}(\rn)$ associated with both
$X$ and $L$. More precisely, we obtain
the radial and the non-tangential
maximal function characterizations
of the space $H_{X,\,L}(\rn)$ under
some mild assumptions on both $X$ and
$L$. Moreover, we prove that the Riesz
transform $\nabla^m L^{-1/2}$
associated with $L$ is bounded from
$H_{X,\,L}(\rn)$ to $H_X(\rn)$ and
further establish the Riesz transform
characterization of $H_{X,\,L}(\rn)$.
Here, $H_X(\rn)$ denotes the Hardy
space associated with $X$ introduced in
\cite{SHYY17}. Furthermore, some
applications of the main results
obtained as above to some concrete
function spaces are given. It is worth
pointing out that, when $X$ is just the
classical Lebesgue space, both the
maximal function and the Riesz transform
characterizations of  $H_{X,\,L}(\rn)$
were established in \cite{CMY16,CY12};
however, when $L$ is a general
homogeneous divergence form higher
order elliptic operator with complex
bounded measurable coefficients
on $\rn$ (except a second order divergence
form elliptic operator) and $X$ is one
of the weighted Lebesgue space, the
variable Lebesgue space, the mixed-norm
Lebesgue space, the Orlicz space, the
Orlicz-slice space, or the Morrey space,
both the maximal function and the Riesz
transform characterizations of
$H_{X,\,L}(\rn)$ are totally new.

Compared with the second order
divergence form elliptic operator, the
argument arose in the  homogeneous
divergence  form higher order elliptic
operator $L$ is more complicated.
Because of this and the deficiency of
the explicit expression of the norm for
the ball quasi-Banach
function space $X$, the methods used in
\cite{CMY16} are no longer applicable.
To overcome these difficulties,
in this article, by the Sobolev
embedding theorem, we deal with the
higher derivative caused by the higher
order elliptic operator. Meanwhile,  by
taking full advantage of the divergence
form elliptic operator, the
Besicovitch covering lemma, and the
Caccioppoli inequality, we first
establish a  good-$\lambda$ inequality
concerning both the non-tangential maximal
function and the truncated Lusin area
function associated with the
heat semigroup of $L$ in Lemma
\ref{good} below. Using this
good-$\lambda$ inequality and
borrowing some ideas from the proof of
the  extrapolation theorem in
the scale of Banach function spaces
(see, for instance, \cite{cmp11}),
we then establish the maximal function
characterization of the Hardy space
$H_{X,\, L}(\rn)$ associated with both
$X$ and $L.$ Moreover, to obtain the
Riesz transform characterization of
$H_{X,\,L}(\rn)$, following an
approach similar to that used in \cite{hmm11,CY12},
we need to introduce the homogeneous
Hardy--Sobolev space associated with
the ball quasi-Banach function space $X$
and establish its atomic
characterization. When $X$ is just the
Lebesgue space, the atomic
characterization of the homogeneous
Hardy--Sobolev space associated
with $X$ can be obtained easily by the
connection between the classical homogeneous
Hardy--Sobolev space and the
homogeneous Triebel--Lizorkin space.
However, this is impossible when $X$ is
a general ball quasi-Banach function
space. Motivated by \cite{LY07,GL},
we directly establish the atomic
decomposition of the homogeneous
Hardy--Sobolev space associated with
the ball quasi-Banach function space
$X$ via both the atomic decomposition of the
$X$-tent space
$T_X(\mathbb{R}^{n+1}_+)$ and the
Calder\'on reproducing formula. Then,
using the atomic characterization
of the homogeneous Hardy--Sobolev space
associated with $X$, we finally obtain
the Riesz transform characterization
of $H_{X,\,L}(\rn)$

The remainder of this article is
organized as follows.

In Section \ref{pre}, we state some
known results on the homogeneous
divergence form higher order elliptic
operator $L$, the ball quasi-Banach
function space $X$, and the
Hardy space $H_{X,\,L}(\rn)$ associated
with both $X$ and $L$.

In Section \ref{max}, we establish the
radial and the non-tangential maximal
function characterizations
of the Hardy space $H_{X,\, L}(\rn)$
(see Theorem \ref{th2} below). By
making full use of the special
structure of the divergence form
elliptic operator, the Caccioppoli
inequality, the Besicovitch covering
lemma, and the Sobolev embedding
theorem, we first establish a
good-$\lambda$ inequality concerning
the non-tangential maximal function and
the truncated Lusin area function
associated with the heat semigroup
generated by $L$,
which plays an important role in the
proof of Theorem \ref{th2} (see Lemma
\ref{good} below).
Moreover, to overcome the essential
difficulty caused by the deficiency of
the explicit  expression of the norm of
the ball quasi-Banach function space
$X$, we cleverly use the extrapolation
theorem in the scale of ball Banach
function spaces (see, for instance,
\cite{cmp11} or Lemma \ref{waicha} below).

Section \ref{Riesz} is devoted to
establishing the Riesz characterization of
the Hardy space $H_{X,\,L}(\rn)$
(see Theorems \ref{r} and \ref{riesz}
below). In Subsection \ref{s1}, with
the help of the molecular
characterization of both the Hardy
spaces $H_X(\rn)$ and $H_{X,\,L}(\rn)$,
we prove that the Riesz transform
$\nabla^m L^{-1/2}$ is bounded
from $H_{X,\,L}(\rn)$ to $H_X(\rn)$
under some mild assumptions
on the ball quasi-Banach space $X$ (see
Theorem \ref{r} below). Furthermore, in
Subsection \ref{s2}, motivated
by \cite{LY07,GL}, we first introduce the
homogeneous Hardy--Sobolev space
$\dot{H}_{m,\,X}(\rn)$ associated
with the ball quasi-Banach function
space $X$ (see Definition
\ref{zhanshen} below for the details).
Then,  using both
the atomic decomposition of
the $X$-tent space
$T_X(\mathbb{R}^{n+1}_+)$ and the
Calder\'on reproducing formula, we
establish the atomic decomposition of
the homogeneous Hardy--Sobolev space
$\dot{H}_{m,\,X}(\rn)$, which is
essential in the proof of Theorem
\ref{riesz}. Applying the atomic
decomposition of the homogeneous
Hardy--Sobolev space $\dot{H}_{m,\,X}(\rn)$
and following the approach used in
\cite{hmm11,CY12}, we then obtain the
Riesz transform characterization of $H_{X,\,L}(\rn)$.

In Section \ref{appl}, we give some
applications of both the maximal function
and the Riesz transform
characterizations of the space
$H_{X,\,L}(\rn)$ obtained in Sections
\ref{max} and \ref{Riesz}, respectively, to the weighted Hardy
space, the variable Hardy space, the
mixed-norm Hardy space, the
Orlicz--Hardy space,  the Orlicz-slice
Hardy space, and the Morrey--Hardy
space, associated with the operator $L$.
It is worth pointing out that, when $L$
is a general homogeneous divergence
form higher order elliptic
operator with complex bounded measurable
coefficients on $\rn$ (except a second
order divergence form elliptic
operator), both the maximal function and the
Riesz transform characterizations of
the weighted Hardy space, the variable
Hardy space, the mixed-norm Hardy
space, the Orlicz--Hardy space,
the Orlicz-slice Hardy space, and the
Morrey--Hardy space associated with the
operator $L$, obtained in this article,
are new. In particular, even when $L$
is just a second order divergence
form elliptic operator, both the
maximal function and the Riesz
transform characterizations of the
mixed-norm Hardy space, the Orlicz-slice
Hardy space, and the Morrey--Hardy
space associated with $L$, established
in this article, are totally new.

Due to the generality and the
practicability, more applications of these 
main results of this
article are predictable.

Finally, we make some conventions on
notation.
Let $\nn:=\{1,2,\ldots\}$,
$\zz_+:=\nn\cup\{0\}$, and $\zz_+^n:=(\zz_+)^n$.
We always denote by $C$ a
\emph{positive constant} which is
independent of the main parameters,
but it may vary from line to line. We
also use $C_{(\alpha,\beta,\ldots)}$ to
denote a positive constant depending on
the indicated parameters $\alpha,\,\beta,\,\ldots.$
The \emph{symbol} $f\lesssim g$ means
that $f\le Cg$. If $f\lesssim g$
and $g\lesssim f$, we then write $f\sim
g$. If $f\le Cg$ and $g=h$ or $g\le h$,
we then write $f\ls g\sim h$ or $f\ls
g\ls h$, \emph{rather than} $f\ls g=h$
or $f\ls g\le h$. We use $\mathbf{0}$
to denote the \emph{origin} of $\rn$.
For any measurable subset $E$ of $\rn$,
we denote by $\mathbf{1}_E$ its
characteristic function. For any
$\alpha\in(0,\infty)$ and any ball
$B:=B(x_B,r_B)$ in $\rn$, with
$x_B\in\rn$ and $r_B\in(0,\infty)$, let
$\alpha B:=B(x_B,\alpha r_B)$.
Denote by ${\mathcal Q}$ the \emph{set}
of all the cubes having their edges
parallel to the coordinate axes. For
any $j\in\nn$ and any ball
$B\subset\rn$, let
$U_j(B):=(2^{j+1}B)\setminus(2^jB)$
and $U_0(B):=2B.$ Denote the
\emph{differential operator}
$\frac{\p^{|\al|}}{\p 	
x_1^{\al_1}\cdots\p x_n^{\al_n}}$
simply by $\p^\al,$ where
$\al:=(\al_1,\ldots,\al_n)$
and $|\al|:=\al_1+\cdots\al_n$.
Denote by $\cs(\rn)$ the \emph{space of
all Schwartz functions}, equipped with
the well-known topology determined by a
countable family of norms,  and
by $\cs'(\rn)$ its \emph{dual space},
equipped with the weak-$\ast$ topology
(namely, the \emph{space of all
tempered distributions}).
For any  $f\in\mathcal{S}'(\rn)$ and
$i\in\{1,\ldots,n\}$, denote
$\frac{\p f}{\p x_i}$ by $\p _i f$, and
let
$$\nabla f:=(\p_1 f,\ldots,\p_n f).$$
Let
$\mathbb{R}^{n+1}_+:=\rn\times(0,\infty).$
For any sets $E,F\subset\rn$, let
$$\dist(E,F):=\inf_{x\in E,\,y\in F}|x-y|.$$
Finally, for any given $q\in[1,\infty]$,
we denote by $q'$ its \emph{conjugate
exponent}, namely, $1/q+1/q'=1$.

\section{Preliminaries}\label{pre}

In this section, we state some known
concepts and facts on the homogeneous
higher order elliptic operator $L$,
the ball quasi-Banach function space
$X$, and the Hardy space $H_{X,\,L}(\rn)$
associated with both $X$ and $L$.

\subsection{Homogeneous Higher Order Elliptic Operators}\label{juhuatai}

In this subsection, we recall the
definition and some properties of the
homogeneous divergence form higher
order elliptic operator $L$.

Let $m\in \mathbb{N}$ and
$\dot{W}^{m,2}(\rn)$ be the
$m$-\emph{order homogeneous Sobolev
space} equipped with the \emph{norm}
\begin{align*}
\|f\|_{\dot{W}^{m,2}(\rn)}:=\left[\sum_{
|\al|=m}\|\partial^\al
f\|_{L^2(\rn)}^2\right]^{1/2}<\infty,
\end{align*}
where, for any given $\al\in\zz_+^n$
with $|\al|=m$, $\partial^\al f$
denotes the $m$-order derivative
in the sense of distributions. For any
multi-indices $\az,\bz\in\zz_+^n$
satisfying $|\az|=m=|\bz|$,
let $a_{\az,\,\bz}$ be a complex
bounded measurable function on $\rn$.
For any given $f, g\in\dot{W}^{m,2}(\rn),$
define the \emph{sesquilinear form}
$a_0,$ mapping $\dot{W}^{m,2}(\rn)\times
\dot{W}^{m,2}(\rn)$ into $\mathbb{C},$ by setting
\begin{align*}
a_0(f,g):=\sum_{|\al|=|\beta|=m}
\int_\rn a_{\al,\beta}(x)\partial^
\beta f(x) \overline{\partial ^\al g(x)}\,dx.
\end{align*}
The following both ellipticity
conditions on $\{a_{\al,\,\beta}\}
_{|\al|=|\beta|=m}$ are necessary.

\begin{Ellipticity Condition}\label{ec}
Let $m\in\nn.$ There  exist constants
$0<\lambda_0\leq \Lambda_0<\infty$
such that, for any $f,g\in \dot{W}^{m,2}(\rn)$,
\begin{align*}
a_0(f,g)\leq \Lambda_0 \|\nabla ^m
f\|_{L^2(\rn)}\|\nabla ^m g\|_{L^2(\rn)}
\end{align*}
and
\begin{align*}
\Re(a_0(f,g))\geq \lambda_0 \|\nabla ^m
f\|_{L^2(\rn)}^2.
\end{align*}
Here and thereafter, for any $z\in\cc$, $\Re z$
denotes the \emph{real part} of $z$ and
\begin{align*}
\|\nabla ^m f\|_{L^2(\rn)}:=\left[\sum_{|\al|=m}
\int_\rn\left|\partial ^\al f(x)
\right|^2\,dx\right]^{1/2}.
\end{align*}
\end{Ellipticity Condition}

\begin{Strong Ellipticity Condition}\label{sec}
Let $m\in\nn.$ There exists a positive
constant $\lambda_1$ such that,
for any $\xi:=\{\xi_\al\}_{|\al|=m}$
with $\xi_\al\in \mathbb{C},$ and
for almost every $x\in\rn,$
\begin{align*}
\Re\left\{\sum_{|\al|=|\beta|=m}
a_{\al,\,\beta}(x)\xi_\beta\overline{
\xi_\al}\right\}\geq \lambda_1
|\xi|^2=\lambda_1\left\{\sum_{|\al|=m}
|\xi_\al|^2\right\}.
\end{align*}
\end{Strong Ellipticity Condition}

It is easy to prove that Strong Ellipticity
Condition \ref{sec}  implies Ellipticity
Condition \ref{ec}. However, the equivalence
between  Ellipticity Condition \ref{ec} and
Strong Ellipticity Condition \ref{sec} is
only a specific feature of second order
divergence form elliptic operators (see, for
instance, \cite[Remark 2.1]{CMY16}), which is
no longer true for divergence form elliptic
operators of order greater than two.

Assume that, for any $\az,\bz\in\zz_+^n$
satisfying $|\az|=m=|\bz|$, $a_{\az,\,\bz}$
is a complex bounded measurable
function on $\rn$. Assume further that the
sesquilinear form $a_0$ satisfies Ellipticity
Condition \ref{ec}. Then it is well known that
there exists a \emph{densely defined
operator $L$} in $L^2(\rn)$
associated with $a_0$ (see, for instance,
\cite[p.\,830]{CMY16}), which is formally written as
\begin{align}\label{high}
L:=\sum_{|\al|=m=|\beta|}(-1)^m\partial
^{\al}\lf(a_{\al,\,\beta}\partial^\beta\r).
\end{align}
Usually, we call $L$ a \emph{homogeneous
divergence form $2m$-order elliptic
operator} on $\rn$.

Denote by $(p_{-}(L),p_{+}(L))$ the
\emph{interior of the maximal interval
of exponents} $p\in[1,\infty]$,
for which the family $\{e^{-tL}\}_{t\in
(0,\infty)}$ of operators is bounded
on $L^p(\rn).$ Meanwhile, denote by
$(q_{-}(L),q_{+}(L))$ the \emph{interior
of the maximal interval of exponents}
$q\in[1,\infty]$ such that
the family $\{\sqrt{t}\nabla^m e^{-tL}\}
_{t\in(0,\infty)}$ of operators is
bounded on $L^q(\rn).$
For the exponents $p_{-}(L),\,p_{+}(L),
\,q_{-}(L),$ and $q_{+}(L)$, we have
the following conclusions
(see, for instance, \cite[p.\,67]{A02}
and \cite[Proposition 2.5]{CMY16}) .

\begin{proposition}\label{basic}
Let $n,m\in\nn$ and $L$ be a homogeneous
divergence form $2m$-order elliptic
operator on $\rn$
in \eqref{high} satisfying Ellipticity
Condition \ref{ec}. Then the
following conclusions hold true:
\begin{itemize}
\item [$(\mathrm{i})$] It holds true that
\begin{align*}
\begin{cases}	
(p_-(L),p_+(L))=(1,\infty)&\text{if}\ n\leq2m,\\
\displaystyle
\left[
\frac{2n}{n+2m},\frac{2n}{n-2m}\right]
\subset(p_-(L),p_+(L))&\text{if}\ n>2m.
\end{cases}
\end{align*}
\item [$(\mathrm{ii})$] $q_-(L)=p_-(L)$
and $q_+(L)\in(2,\infty).$
\item [$(\mathrm{iii})$]For any $k\in\zz_+$
and $p_-(L)<p\leq q\leq p_+(L),$ the family
$\{(tL)^ke^{-tL}\}_{t\in(0,\infty)}$ of
operators satisfies the following
\emph{$m-L^p-L^q$ off-diagonal estimates}:
There exist positive constants $C_1$ and
$C_2$ such that, for any closed sets $E$
and $F$ of $\rn,$ any $t\in(0,\infty)$,
and any $f\in L^2(\rn)\cap L^p(\rn)$
supported in $E,$
\end{itemize}
\begin{align*}
\left\|(tL)^ke^{-tL}(f)\right\|_{L^q(F)}
\leq C_1 t^{\frac{n}{2m}(\frac{1}{q}-\f 1 p)}
\exp {\left\{-C_2\frac{[\dist(E,F)]^{\frac{2m}{2m-1}
}}{t^{\frac{1}{2m-1}}}\right\}}\|f\|_{L^p(E)}.
\end{align*}
\end{proposition}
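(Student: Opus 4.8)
The proof is essentially a reduction to the $L^2$ theory of $L$ augmented by two classical inputs: Sobolev-type embeddings for the homogeneous space $\dot{W}^{m,2}(\rn)$, and the reverse-Hölder (Meyers--Gehring) self-improvement for the gradients $\nabla^m$ of the semigroup. In the paper itself we simply invoke \cite[p.\,67]{A02} and \cite[Proposition 2.5]{CMY16}, but the mechanism is as follows. By Ellipticity Condition \ref{ec}, the sesquilinear form $a_0$ is bounded and sectorial, so $-L$ generates a bounded analytic semigroup $\{e^{-tL}\}_{t\in(0,\infty)}$ on $L^2(\rn)$, and the lower bound $\Re\,a_0(u,u)\geq\lambda_0\|\nabla^m u\|_{L^2(\rn)}^2$ gives the energy estimate $\|\sqrt{t}\,\nabla^m e^{-tL}\|_{L^2(\rn)\to L^2(\rn)}\lesssim1$. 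A weighted energy (Caccioppoli) argument then yields the case $p=q=2$ of (iii): testing the weak equation satisfied by $u(t):=e^{-tL}f$ (with $f\in L^2(\rn)$ supported in $E$) against $e^{2\phi}u(t)$ for a bounded Lipschitz weight $\phi$ with $\|\nabla\phi\|_{L^\infty(\rn)}$ small, estimating $\frac{d}{dt}\int_\rn|u(t)|^2e^{2\phi}\,dx$ via the ellipticity and the bounded coefficients, and then optimizing over $\phi$, produces the $L^2$--$L^2$ Davies--Gaffney estimate for $\{(tL)^ke^{-tL}\}_{t\in(0,\infty)}$ with the subexponential decay $\exp\{-C[\dist(E,F)]^{2m/(2m-1)}/t^{1/(2m-1)}\}$; the exponent $2m/(2m-1)$ is dictated by the homogeneity of an order-$2m$ operator (for $m=1$ it is the Gaussian exponent $2$). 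The powers $(tL)^k$ with $k\geq1$ are obtained from the case $k=0$ by Cauchy's integral formula, using analyticity on a sector.

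To prove (i), I would combine the $L^2$ off-diagonal bound for $\sqrt{t}\,\nabla^m e^{-tL}$ with Sobolev's inequality. When $n>2m$, the embedding $\dot{W}^{m,2}(\rn)\hookrightarrow L^{2n/(n-2m)}(\rn)$ together with $\|\nabla^m e^{-tL}f\|_{L^2(\rn)}\lesssim t^{-1/2}\|f\|_{L^2(\rn)}$ gives $\|e^{-tL}f\|_{L^{2n/(n-2m)}(\rn)}\lesssim t^{-1/2}\|f\|_{L^2(\rn)}$, and the same computation carried out with the cutoff $\phi$ upgrades this to $L^2$--$L^{2n/(n-2m)}$ off-diagonal estimates; since the adjoint $L^*$ satisfies the same hypotheses, duality yields $L^{2n/(n+2m)}$--$L^2$ off-diagonal estimates, and composing the two produces $L^{2n/(n+2m)}$--$L^{2n/(n-2m)}$ off-diagonal estimates with the same decay. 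Summing this decay over the annuli $U_j(B)$ of a ball $B$ of radius $\sim t^{1/(2m)}$ shows $e^{-tL}$ is bounded on $L^p(\rn)$ for every $p\in[\frac{2n}{n+2m},\frac{2n}{n-2m}]$; the openness of the interval $(p_-(L),p_+(L))$ (part of its definition) together with the standard self-improvement and interpolation properties of the boundedness set (see \cite{A02}) then gives $[\frac{2n}{n+2m},\frac{2n}{n-2m}]\subset(p_-(L),p_+(L))$. When $n\leq2m$, the corresponding Sobolev/Trudinger embedding puts $\dot{W}^{m,2}(\rn)$ into $L^q(\rn)$ for every $q\in[2,\infty)$ (and into $L^\infty(\rn)$ when $n<2m$); running the same argument yields $L^2$--$L^q$ off-diagonal estimates for every $q\in[2,\infty)$, hence $p_+(L)=\infty$, and duality gives $p_-(L)=1$, so $(p_-(L),p_+(L))=(1,\infty)$.

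For (ii), the lower bound $q_+(L)>2$ is the higher-order Meyers estimate: the higher-order Caccioppoli inequality for $\nabla^m u$ together with a Gehring-type reverse-Hölder lemma shows that the $L^2$ average of $\nabla^m e^{-tL}f$ over a ball is controlled by the $L^{q_0}$ average, for some $q_0<2$, of $\nabla^m e^{-tL}f$ over a concentric larger ball, which after the usual localization yields boundedness of $\sqrt{t}\,\nabla^m e^{-tL}$ on $L^q(\rn)$ for some $q>2$; the precise range of $q_+(L)$ and the identity $q_-(L)=p_-(L)$ are exactly the content of \cite[p.\,67]{A02} and \cite[Proposition 2.5]{CMY16} (the identity combining the $L^p$-boundedness of $\{e^{-tL}\}$, the Caccioppoli inequality, and the $L^2$ off-diagonal estimates). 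Finally, (iii) for general exponents $p\leq q$ with $p_-(L)<p\leq q\leq p_+(L)$ follows from its $p=q=2$ case by the composition identity $(tL)^ke^{-tL}=[(tL)^ke^{-tL/2}]\circ e^{-tL/2}$ together with the standard principle (Blunck--Kunstmann; see also \cite{A02}) that interpolating, by Riesz--Thorin with the exponential weight carried along, the $L^2$--$L^2$ off-diagonal decay against the uniform $L^p$- (respectively $L^q$-) boundedness of the semigroup on $(p_-(L),p_+(L))$ yields $L^p$--$L^2$ and $L^2$--$L^q$ off-diagonal estimates with the same decay; chaining these two and adjusting the constants gives the claim.

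I expect the only genuinely substantive steps to be the weighted energy argument producing the precise subexponential decay $[\dist(E,F)]^{2m/(2m-1)}/t^{1/(2m-1)}$ in the $L^2$ off-diagonal estimate, and the higher-order reverse-Hölder self-improvement behind $q_+(L)>2$; everything else --- Sobolev embedding, duality through $L^*$, Riesz--Thorin with a weight, Cauchy's formula, and summation over annuli --- is routine bookkeeping, which is why in the paper we simply cite \cite[p.\,67]{A02} and \cite[Proposition 2.5]{CMY16}.
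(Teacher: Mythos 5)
The paper contains no proof of Proposition \ref{basic} to compare against: it is imported wholesale from \cite[p.\,67]{A02} and \cite[Proposition 2.5]{CMY16}, and the present article only ever uses it as a black box. Your outline is a faithful reconstruction of the standard arguments behind those citations: sectorial form theory and analyticity on $L^2(\rn)$, Davies' exponential-perturbation (weighted energy) method producing the $L^2$--$L^2$ off-diagonal decay with the homogeneity-dictated exponent $\frac{2m}{2m-1}$, Cauchy's formula for the powers $(tL)^k$, Sobolev embedding plus duality for (i), Caccioppoli--Gehring self-improvement for $q_+(L)>2$, and Blunck--Kunstmann-type interpolation/composition for (iii). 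At the level of a sketch this is essentially the right route.

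Two places are glossed in a way that would need repair in a complete write-up. First, in (i) for $n>2m$ your argument only yields boundedness of $e^{-tL}$ \emph{at} the Sobolev exponents $\frac{2n}{n+2m}$ and $\frac{2n}{n-2m}$; since $(p_-(L),p_+(L))$ is by definition the interior of the boundedness interval, this gives $\bigl(\frac{2n}{n+2m},\frac{2n}{n-2m}\bigr)\subset(p_-(L),p_+(L))$, and the appeal to ``openness of the interval'' cannot promote the endpoints into the interior. The actual reason the closed interval lies inside is quantitative: $q_+(L)>2$ from part (ii), combined with the Sobolev embedding applied to $\nabla^m e^{-tL}$ on $L^q(\rn)$ for $2<q<q_+(L)$ and composition of semigroup steps, gives $p_+(L)\geq \frac{qn}{n-mq}>\frac{2n}{n-2m}$, and the same bound for $L^*$ plus duality gives $p_-(L)<\frac{2n}{n+2m}$. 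Second, for $n\leq 2m$ the homogeneous embedding of $\dot{W}^{m,2}(\rn)$ into $L^q(\rn)$ for all finite $q$ is not literally available; one should instead use the Gagliardo--Nirenberg inequality $\|u\|_{L^q(\rn)}\lesssim\|u\|_{L^2(\rn)}^{1-\sigma}\|\nabla^m u\|_{L^2(\rn)}^{\sigma}$ (and Morrey's embedding when $n<2m$), which still produces the correctly scaled $L^2$--$L^q$ bounds. Finally, note that your interpolation argument for (iii), like the underlying references, operates inside the open interval, so the stated endpoint $q=p_+(L)$ would need a separate justification (for instance via the strict inequality $p_+(L)>\frac{2n}{n-2m}$ obtained above) rather than following directly from the chaining you describe.
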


Furthermore, we also have the following
\emph{Caccioppoli inequality} which was
obtained in \cite[Proposition 3.2]{CMY16}.

\begin{proposition}\label{cacci}
Let $m\in\nn$ and $L$  be a homogeneous
divergence form $2m$-order elliptic operator
 in \eqref{high} satisfying Ellipticity
Condition \ref{ec}. For any $f\in L^2(\rn),$
$x\in\rn,$ and $t\in(0,\infty),$
let $u(x,\, t):=e^{-t^{2m}L}(f)(x).$ Then
there exists a positive constants $C$, independent
of $f$, such that, for any $x_0\in\rn,$
$r\in(0,\infty),$ and $t_0\in(3r,\infty),$
\begin{align*}
\int_{t_0-r}^{t_0+r}\int_{B(x_0,r)}|\nabla
^m u(x,t)|^2\, dx \, dt\leq \frac{C}{r^{2m}}
\int_{t_0-2r}^{t_0+2r}\int_{B(x_0,2r)}|
u(x,t)|^2\, dx \, dt.
\end{align*}
\end{proposition}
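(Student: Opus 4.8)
The plan is to derive this estimate from the standard energy (Caccioppoli) argument for the parabolic equation $\partial_t u = -L u$ on the cylinder, adapted to the $2m$-order setting. First I would fix $x_0\in\rn$, $r\in(0,\infty)$, and $t_0\in(3r,\infty)$, and introduce a smooth cutoff $\eta\in C_c^\infty(\rn\times\rr)$ with $\eta\equiv 1$ on $B(x_0,r)\times(t_0-r,t_0+r)$, $\supp\eta\subset B(x_0,2r)\times(t_0-2r,t_0+2r)$, and with derivatives controlled by $|\partial^\alpha_x\eta|\lesssim r^{-|\alpha|}$ for $|\alpha|\le m$ and $|\partial_t\eta|\lesssim r^{-2m}$ (the scaling $t\sim r^{2m}$ is dictated by the homogeneity of $L$). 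Since $u(\cdot,t)=e^{-t^{2m}L}f\in \dot W^{m,2}(\rn)$ for each $t>0$ and $t\mapsto u(\cdot,t)$ is smooth into $L^2(\rn)$ on $(0,\infty)$, the function $\eta^2\bar u$ is a legitimate test function in the weak formulation $\langle\partial_t u,\varphi\rangle = -a_0(u,\varphi)$.

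The key computation is to test the equation against $\varphi = \eta^2\bar u$ and integrate in $t$ over $(t_0-2r,t_0+2r)$. The time-derivative term yields, after integration by parts in $t$ and using that $\eta$ vanishes at the temporal endpoints, a term of the form $\iint |u|^2\,\eta\,\partial_t\eta$, which by the bound on $\partial_t\eta$ is controlled by $r^{-2m}\iint_{B(x_0,2r)\times(t_0-2r,t_0+2r)}|u|^2$ — exactly the right-hand side. The form term produces $\iint a_0$-type integrand with $\varphi=\eta^2\bar u$; here I would use the Leibniz rule to move $m$ derivatives off $\eta^2\bar u$, isolating the principal term $\sum_{|\alpha|=|\beta|=m}\iint a_{\alpha,\beta}\,\eta^2\,\partial^\beta u\,\overline{\partial^\alpha u}$, whose real part is bounded below by $\lambda_0\iint\eta^2|\nabla^m u|^2$ via Ellipticity Condition~\ref{ec} (applied to $\eta u(\cdot,t)$, plus a commutator correction). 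All the remaining cross terms contain at least one derivative falling on $\eta$ — hence carry a positive power of $r^{-1}$ against a factor with strictly fewer than $m$ derivatives on $u$ — and after iterating one can bound them, via Young's inequality with a small parameter $\varepsilon$, by $\varepsilon\iint\eta^2|\nabla^m u|^2 + C_\varepsilon r^{-2m}\iint_{B(x_0,2r)\times(t_0-2r,t_0+2r)}|u|^2$. Absorbing the $\varepsilon$-term into the left-hand side and restricting the integration domain on the left to $B(x_0,r)\times(t_0-r,t_0+r)$ (where $\eta\equiv1$) gives the claimed inequality.

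The main obstacle I anticipate is the bookkeeping of the interpolation/absorption step for the intermediate-order terms: after the Leibniz expansion one is left with a genuine hierarchy of mixed terms $\iint(\partial^{\gamma}\eta)(\partial^{\delta}\eta)(\partial^{\alpha'}u)\overline{(\partial^{\beta'}u)}$ with $|\alpha'|,|\beta'|<m$, and to control these by the top-order quantity one needs an interpolation inequality of Gagliardo--Nirenberg type on the cube, $\|\partial^{k} v\|_{L^2}\lesssim \|\nabla^m v\|_{L^2}^{k/m}\|v\|_{L^2}^{1-k/m}$ applied to $v=\eta\, u(\cdot,t)$ (with the correct $r$-scaling), together with a careful accounting so that every gradient put on a cutoff is compensated by exactly one lost derivative on $u$. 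A second, more structural subtlety is that Ellipticity Condition~\ref{ec} is a coercivity estimate on $\dot W^{m,2}(\rn)$, not a pointwise Gårding inequality, so applying it to $\eta u$ rather than to $u$ generates commutator terms $[\,\eta,\partial^\alpha\,]u$ of lower order; these, however, are again of the absorbable type and are handled by the same interpolation. Once these are organized, the constant $C$ depends only on $n$, $m$, $\lambda_0$, $\Lambda_0$, and the dimensional constants from the cutoff, and is manifestly independent of $f$, $x_0$, $r$, and $t_0$, as asserted.
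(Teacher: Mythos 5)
Your overall strategy (test the heat equation against $\eta^{2}\bar u$ on a cylinder, expand by Leibniz, use the coercivity of Ellipticity Condition \ref{ec} applied to $\eta u(\cdot,t)$ plus absorbable commutators, and control the intermediate-order terms by interpolation and Young's inequality) is indeed the standard route; the paper itself does not reproduce a proof but quotes the result from \cite[Proposition 3.2]{CMY16}. However, there is a concrete error in your treatment of the time variable which breaks the key step as written. The function $u(x,t)=e^{-t^{2m}L}(f)(x)$ does \emph{not} satisfy $\partial_t u=-Lu$: since the semigroup parameter is $t^{2m}$, it satisfies $\partial_t u=-2m\,t^{2m-1}Lu$, so the weak formulation you test is $\langle\partial_t u,\varphi\rangle=-2m\,t^{2m-1}a_0(u,\varphi)$. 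Correspondingly, your cutoff cannot have all the properties you list: a function equal to $1$ on $(t_0-r,t_0+r)$ and supported in $(t_0-2r,t_0+2r)$ drops from $1$ to $0$ over a time length $r$, hence necessarily $\|\partial_t\eta\|_{L^\infty}\geq r^{-1}$, which is incompatible with $|\partial_t\eta|\lesssim r^{-2m}$ once $r\gtrsim 1$. With the correct scale $|\partial_t\eta|\lesssim r^{-1}$, the time-derivative term is only $r^{-1}\iint|u|^{2}$, not the needed $r^{-2m}\iint|u|^{2}$, so the conclusion does not follow from the computation as you wrote it.

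The repair must use the hypothesis $t_0\in(3r,\infty)$, which never enters your argument --- a telltale sign of the scaling slip (and the restriction is not cosmetic: as $t\to 0$ the operator $t^{2m}L$ degenerates and $\nabla^m u$ need not be controlled by $u$). Two equivalent fixes: (i) keep the true equation, so the coercive term carries the weight $2m\,t^{2m-1}$; on the cylinder $t\sim t_0>3r$, hence $t^{2m-1}$ is essentially constant there and $\gtrsim r^{2m-1}$, and dividing the tested identity by this weight converts the time term $r^{-1}\iint|u|^{2}$ into $r^{-2m}\iint|u|^{2}$ while leaving $\iint\eta^{2}|\nabla^m u|^{2}$ on the left; or (ii) change variables $s=t^{2m}$, so that $v(\cdot,s)=e^{-sL}f$ does satisfy $\partial_s v=-Lv$, and observe that the transition layers in $s$ have length $(t_0-r)^{2m}-(t_0-2r)^{2m}\sim t_0^{2m-1}r\gtrsim r^{2m}$ (again by $t_0>3r$), so a cutoff with $|\partial_s\eta|\lesssim r^{-2m}$ does exist in the $s$-variable, and the Jacobian $dt\sim t_0^{1-2m}\,ds$ is comparable on both sides of the inequality and cancels. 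With either correction, the rest of your outline (Leibniz expansion, coercivity applied to $\eta u(\cdot,t)$ with commutator terms, Gagliardo--Nirenberg with the correct $r$-scaling, absorption with a small $\varepsilon$) goes through and gives a constant depending only on $n$, $m$, $\lambda_0$, $\Lambda_0$, independent of $f$, $x_0$, $r$, and $t_0$.
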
	

\subsection{Ball Quasi-Banach Function Spaces}\label{feng1}

In this subsection, we recall some
preliminaries on ball quasi-Banach
function spaces introduced
in \cite{SHYY17}. Denote by the
\emph{symbol} ${\mathscr M}(\rn)$ the
set of all measurable functions
on $\rn$. For any given $x\in\rn$ and
$r\in(0,\infty)$, let $B(x,r):=\{y\in\rn:\ |x-y|<r\}$ and
\begin{align}\label{4}
\mathbb{B}:=\{B(x,r):\ x\in\rn\ \text{and}\ r\in(0,\infty)\}.
\end{align}

\begin{definition}\label{Debqfs}
A quasi-Banach space $X\subset{
\mathscr M}(\rn)$, equipped with
a quasi-norm
$\|\cdot\|_X$ which makes sense for
all functions in ${\mathscr M}(\rn)$,
is called a \emph{ball quasi-Banach
function space} if it satisfies that
\begin{itemize}
\item[(i)] for any $f\in {\mathscr M}(\rn)$,
$\|f\|_X=0$ implies that $f=0$ almost everywhere;
\item[(ii)] for any $f,g\in {\mathscr M}(\rn)$,
$|g|\le |f|$ almost everywhere implies
that $\|g\|_X\le\|f\|_X$;
\item[(iii)] for any $\{f_m\}_{m\in\nn}
\subset {\mathscr M}(\rn)$
and $f\in {\mathscr M}(\rn)$, $0\le f_m\uparrow f$
almost everywhere as $m\to\fz$ implies that
$\|f_m\|_X\uparrow\|f\|_X$ as $m\to\fz$;
\item[(iv)] $B\in\BB$ implies that $\mathbf
{1}_B\in X$, where $\BB$ is the same as in \eqref{4}.
\end{itemize}
Moreover, a ball quasi-Banach function space
$X$ is called a \emph{ball Banach function space}
if the norm of $X$ satisfies the triangle
inequality: for any $f,g\in X$,
\begin{align*}
\|f+g\|_X\le \|f\|_X+\|g\|_X,
\end{align*}
and that, for any $B\in \BB$, there exists a
positive constant $C_{(B)}$, depending on $B$, such that,
for any $f\in X$,
\begin{equation*}\label{eq2.3}
\int_B|f(x)|\,dx\le C_{(B)}\|f\|_X.
\end{equation*}
\end{definition}

\begin{remark}
\begin{itemize}
\item[{\rm(i)}] Let $X$ be a ball
quasi-Banach function space on $\rn$.
By \cite[Remark 2.6(i)]{yhyy21},
we conclude that, for any $f\in {\mathscr M}
(\rn)$, $\|f\|_X=0$ if and only if $f=0$
almost everywhere.
\item[{\rm(ii)}] As was mentioned in
\cite[Remark 2.6(ii)]{yhyy21},
we obtain an equivalent formulation of
Definition \ref{Debqfs}
via replacing any ball $B$ by any bounded
 measurable set $E$ therein.
\item[{\rm(iii)}] In Definition \ref{Debqfs},
if we replace any ball $B$ by any measurable
set E with $|E|<\infty$,
we obtain the definition of (quasi-)Banach
function spaces originally introduced in
\cite[Definitions 1.1 and 1.3]{BS88}.
Thus, a (quasi-)Banach function space
is always a ball (quasi-)Banach function space.
\item[{\rm(iv)}] By \cite[Theorem 2]{dfmn21},
we conclude that both (ii) and (iii)
of Definition \ref{Debqfs} imply that
any ball quasi-Banach function space is complete.
\end{itemize}
\end{remark}

It is known that Lebesgue spaces, Lorentz
spaces, variable Lebesgue spaces, and Orlicz
spaces are (quasi-)Banach function spaces
(see, for instance, \cite{SHYY17,WYY20,ZWYY20}).
However, weighted Lebesgue spaces,
mixed-norm Lebesgue spaces, Orlicz-slice
spaces, and Morrey spaces are
not necessary to be quasi-Banach function
spaces (see, for instance \cite{SHYY17,WYY20,ZWYY20,ZYYW}
for more details and examples). Therefore,
in this sense, compared with the concept
of ball (quasi-)Banach  function spaces,
the concept of (quasi-)Banach
function spaces is more restrictive. Based on this,
the ball (quasi-)Banach function spaces were
originally introduced in \cite{SHYY17}
to extend (quasi-)Banach function spaces
further so that weighted Lebesgue spaces,
mixed-norm Lebesgue spaces, Orlicz-slice
spaces, and Morrey spaces are
also included in this more generalized framework.

The following concept of the associate space
of a ball Banach function space can be found in
\cite[Section 2.1]{SHYY17}.
\begin{definition}\label{def-X'}
For any ball Banach function space $X$,
the \emph{associate space} (also called the
\emph{K\"othe dual}) $X'$ is defined by setting
\begin{equation*}
X':=\lf\{f\in{\mathscr M}(\rn):\ \|f\|_{X'}
:=\sup_{\{g\in X:\ \|g\|_X=1\}}\|fg\|
_{L^1(\rn)}<\infty\r\},
\end{equation*}
where $\|\cdot\|_{X'}$ is called the
\emph{associate norm} of $\|\cdot\|_X$.
\end{definition}

\begin{remark}\label{bbf}
By \cite[Proposition 2.3]{SHYY17}, we
find that, if $X$ is a ball Banach function
space, then its associate space $X'$
is also a ball Banach function space.
\end{remark}

\begin{definition}\label{Debf}
Let $X$ be a ball quasi-Banach function
space and $p\in(0,\infty)$.
The \emph{$p$-convexification} $X^{p}$ of
$X$ is defined by setting
$X^{p}:=\{f\in\mathscr M(\rn):\ |f|^{p}
\in X\}$ equipped with
the \emph{quasi-norm} $\|f\|_{X^{p}}:=
\|\,|f|^{p}\,\|_{X}^{1/p}$.
\end{definition}

\begin{definition}\label{defi:2-1-4}
Let $X$ be a ball quasi-Banach function
space. A function $f\in X$ is said to have an
\emph{absolutely continuous quasi-norm} in
$X$ if $\|f\mathbf{1}_{E_j}\|_X\downarrow 0$
whenever $\{E_j\}_{j=1}^\infty$ is a sequence
of measurable sets that satisfy $E_j \supset E_{j+1}$
for any $j \in \nn$ and $\bigcap_{j=1}^\infty
E_j = \emptyset$. Moreover, $X$ is said to
have an \emph{absolutely continuous
quasi-norm} if, for any $f\in X$, $f$
have an absolutely continuous quasi-norm in $X$.
\end{definition}

It is worth pointing out that the Lebesgue
space $L^\fz(\rn)$ and the Morrey space
$\cm^p_q(\rn)$ with $1\le q<p<\fz$ do
\emph{not} have an absolutely continuous
norm (see, for instance, \cite[p.\,10]{SHYY17}).

Denote by $L^{1}_{\rm loc}(\rn)$ the set of
\emph{all locally integrable functions on}
${\mathbb R}^n$.
Recall that the \emph{Hardy--Littlewood
maximal operator} $\cm$ is  defined by setting,
for any $f\in L^1_{\mathrm{loc}}(\rn)$ and $x\in\rn$,
\begin{equation}\label{jidahanshu}
\cm  (f)(x):=\sup_{r\in(0,\fz)}\frac{1
}{|B(x,r)|}\int_{B(x,r)}|f(y)|\,dy.
\end{equation}
For any $\theta\in(0,\infty)$, the \emph{powered
Hardy--Littlewood maximal operator}
$\cm^{(\theta)}$ is defined by setting,
for any $f\in L^{1}_{\rm loc}(\rn)$ and $x\in\rn$,
\begin{equation*}
\cm^{(\theta)}(f)(x):=\lf[\cm\lf(|f|^\theta
\r)(x)\r]^{\frac{1}{\theta}}.
\end{equation*}

To study the Hardy  space $H_{X,\,L}(\rn)$
(see Definition \ref{defi:6-1-2} below), we
need the following assumption on $X$.

\begin{assumption}\label{vector}
Let $X$ be a ball quasi-Banach function space
on $\rn$. Assume that there exist constants
$\theta,s\in(0,1]$ and $C\in(0,\infty)$ such
that, for any $\{f_j\}_{j=1}^\infty\subset
\mathscr{M}(\rn),$
\begin{align*}
\left\|\left\{\sum_{j=1}^\infty\left[\cm
^{(\theta)}(f_j)\right]^s\right\}^{1/s}
\right\|_X\leq C\left\|\left\{\sum_{j=1}^
\infty|f_j|^s\right\}^{1/s}\right\|_X.
\end{align*}
\end{assumption}
\begin{assumption}\label{vector2}
Let $s\in(0,1].$ Assume that  $X^{1/s}$ is a
ball Banach function space on $\rn$  and there
exists a positive constant  $q\in(1,\infty]$
such that, for any $f\in[(X^{1/s})']^{1/(q/s)'}$,
\begin{equation*}
\lf\|\cm(f)\r\|_{[(X^{1/s})']^{1/(q/s)'}}
\le C\|f\|_{[(X^{1/s})']^{1/(q/s)'}},
\end{equation*}
where $C$ is a positive constant independent of $f$.	
\end{assumption}

By a proof similar to that of \cite[Theorem
2.11]{SHYY17}, we have the following conclusion; we omit the details.
\begin{lemma}\label{thm:2-2-3}
Assume that $X$ is a ball quasi-Banach
function space satisfying both Assumptions \ref{vector} and
\ref{vector2} for some $\theta,s\in
(0,1]$ and  $q\in(1,\infty].$  Let $\tau
\in(n[1/\theta-1/q],\infty),$
$\{Q_j\}_{j=1}^\infty \subset {\mathcal Q}$,
and both $\{m_j\}_{j=1}^\infty \subset L^q(\rn)$
and $\{\lambda_j\}_{j=1}^\infty \subset
[0,\infty)$ satisfy that, for any $j\in\nn$
and $k\in\zz_+$,
$$
\lf\|m_j\mathbf{1}_{U_k(Q_j)}\r\|_{L^q(\rn)}\le2^{-\tau k}\frac{|Q_j|^{1/q}}{\|\mathbf{1}_{Q_j}\|_X}
$$
and
\begin{equation*}
\left\|\left\{\sum_{j=1}^\infty\left(
\frac{\lambda_{j}}{\|\mathbf{1}_{Q_{j}}
\|_{X}}\right)^{s}\mathbf{1}_{Q_{j}}
\right\}^{\frac{1}{s}}\right\|_{X}<\infty.
\end{equation*}
Then $\sum_{j=1}^\infty \lambda_jm_j$
converges in ${\mathcal S}'(\rn)$,
$\sum_{j=1}^\infty \lambda_j|m_j|\in X,$ and
there exists a positive constant $C$,
independent of $f$, such that
$$\left\|\sum_{j=1}^\infty \lambda_j
|m_j|\right\|_{X}\le C
\left\|\left\{\sum_{j=1}^\infty\left(
\frac{\lambda_{j}}{\|
\mathbf{1}_{Q_{j}}\|_{X}}\right)^{s}
\mathbf{1}_{Q_{j}}
\right\}^{\frac{1}{s}}\right\|_{X}.
$$
\end{lemma}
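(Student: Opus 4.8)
The plan, following the method of \cite[Theorem 2.11]{SHYY17}, is to reduce the molecular estimate to an atomic one: split each molecule over dyadic annuli, renormalise each annular piece as a multiple of an $(X,q)$-atom on an enlarged cube, bound the resulting atomic sum by duality in the ball Banach function space $X^{1/s}$, and finally sum over the annuli, the needed geometric decay being exactly what $\tau>n[1/\theta-1/q]$ provides. Concretely, I would first write $m_j=\sum_{k\in\zz_+}m_{j,k}$ with $m_{j,k}:=m_j\mathbf{1}_{U_k(Q_j)}$; these pieces have pairwise disjoint supports and $\supp m_{j,k}\subset R_{j,k}:=2^{k+1}Q_j$, so that $\sum_j\lambda_j|m_j|=\sum_j\sum_k\lambda_j|m_{j,k}|$ pointwise. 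Setting $c_{j,k}:=\|m_{j,k}\|_{L^q(\rn)}\|\mathbf{1}_{R_{j,k}}\|_X|R_{j,k}|^{-1/q}$ and $a_{j,k}:=c_{j,k}^{-1}m_{j,k}$ (and $a_{j,k}:=0$ when $c_{j,k}=0$), each $a_{j,k}$ satisfies $\supp a_{j,k}\subset R_{j,k}$ and $\|a_{j,k}\|_{L^q(\rn)}\le|R_{j,k}|^{1/q}/\|\mathbf{1}_{R_{j,k}}\|_X$, i.e. it is an $(X,q)$-atom on $R_{j,k}$, while the molecular size hypothesis together with $|R_{j,k}|=2^{(k+1)n}|Q_j|$ gives $\lambda_jc_{j,k}\|\mathbf{1}_{R_{j,k}}\|_X^{-1}=\lambda_j\|m_{j,k}\|_{L^q(\rn)}|R_{j,k}|^{-1/q}\lesssim 2^{-k(\tau+n/q)}\lambda_j\|\mathbf{1}_{Q_j}\|_X^{-1}$.

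The core step is the following unconditional atomic bound: for any $(X,q)$-atoms $\{a_i\}$ on cubes $\{R_i\}\subset\mathcal{Q}$ (meaning $\supp a_i\subset R_i$ and $\|a_i\|_{L^q(\rn)}\le|R_i|^{1/q}/\|\mathbf{1}_{R_i}\|_X$) and any $\{\mu_i\}\subset[0,\infty)$,
\[
\Big\|\sum_i\mu_i|a_i|\Big\|_X\lesssim\Big\|\Big\{\sum_i\big(\mu_i\|\mathbf{1}_{R_i}\|_X^{-1}\big)^s\mathbf{1}_{R_i}\Big\}^{1/s}\Big\|_X=:J.
\]
To prove this I would use $s\le1$ and the lattice property of $X^{1/s}$ to get $\|\sum_i\mu_i|a_i|\|_X=\|(\sum_i\mu_i|a_i|)^s\|_{X^{1/s}}^{1/s}\le\|\sum_i(\mu_i|a_i|)^s\|_{X^{1/s}}^{1/s}$, and then bound $\|\sum_i(\mu_i|a_i|)^s\|_{X^{1/s}}$ by duality: since $X^{1/s}$ is a ball Banach function space (Assumption \ref{vector2}) it equals its second associate space, so it suffices to estimate $\int_\rn(\sum_i\mu_i^s|a_i|^s)h$ uniformly over all $0\le h$ with $\|h\|_{(X^{1/s})'}\le1$. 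For each $i$, H\"older's inequality with exponents $q/s$ and $(q/s)'$ (note $q/s>1$), the atomic size bound, and the elementary fact that $|R_i|^{-1}\int_{R_i}h^{(q/s)'}\lesssim\cm(h^{(q/s)'})(y)$ for every $y\in R_i$ give, the powers of $|R_i|$ cancelling since $s/q+1/(q/s)'-1=0$, the estimate $\mu_i^s\int_{R_i}|a_i|^sh\lesssim\mu_i^s\|\mathbf{1}_{R_i}\|_X^{-s}\int_{R_i}[\cm(h^{(q/s)'})]^{1/(q/s)'}$; summing over $i$, then applying H\"older's inequality in $X^{1/s}$ and $(X^{1/s})'$ and the boundedness of $\cm$ on $[(X^{1/s})']^{1/(q/s)'}$ (Assumption \ref{vector2}), the total is $\lesssim J^s\|h\|_{(X^{1/s})'}\le J^s$, which yields the atomic bound after raising to the power $1/s$.

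It then remains to apply the atomic bound to the family $\{a_{j,k}\}$ with weights $\mu_{j,k}:=\lambda_jc_{j,k}$ and cubes $R_{j,k}$, and to check that the corresponding coefficient quantity is controlled by the right-hand side of the lemma. Combining the bound for $\lambda_jc_{j,k}\|\mathbf{1}_{R_{j,k}}\|_X^{-1}$, the pointwise inequality $\mathbf{1}_{2^{k+1}Q_j}\lesssim 2^{skn/\theta}[\cm^{(\theta)}(\mathbf{1}_{Q_j})]^s$ (valid because $\cm^{(\theta)}(\mathbf{1}_{Q_j})\gtrsim 2^{-(k+1)n/\theta}$ on $2^{k+1}Q_j$), and the elementary consequence $\|\mathbf{1}_{2^{k+1}Q}\|_X\lesssim 2^{kn/\theta}\|\mathbf{1}_Q\|_X$ of Assumption \ref{vector}, one obtains
\[
\sum_{j,k}\big(\mu_{j,k}\|\mathbf{1}_{R_{j,k}}\|_X^{-1}\big)^s\mathbf{1}_{R_{j,k}}\lesssim\Big(\sum_{k\in\zz_+}2^{-sk[\tau-n(1/\theta-1/q)]}\Big)\sum_j\Big[\cm^{(\theta)}\big(\lambda_j\|\mathbf{1}_{Q_j}\|_X^{-1}\mathbf{1}_{Q_j}\big)\Big]^s,
\]
where the geometric series converges exactly because $\tau-n[1/\theta-1/q]>0$. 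Taking the quasi-norm of the $(1/s)$-th power in $X$ and invoking the lattice property and Assumption \ref{vector} once more, the right-hand side is $\lesssim\|\{\sum_j(\lambda_j\|\mathbf{1}_{Q_j}\|_X^{-1})^s\mathbf{1}_{Q_j}\}^{1/s}\|_X<\infty$; together with the atomic bound and the identity $\sum_j\lambda_j|m_j|=\sum_{j,k}\mu_{j,k}|a_{j,k}|$ this gives both $\sum_j\lambda_j|m_j|\in X$ and the asserted norm estimate (the membership following from the finiteness of the $X$-quasi-norm of the partial sums and property (iii) of Definition \ref{Debqfs}). Finally, the convergence of $\sum_j\lambda_j m_j$ in $\mathcal{S}'(\rn)$ I would obtain as in \cite{SHYY17}: pairing with a Schwartz function and combining the $L^q$-molecular decay in $k$ with the rapid decay of Schwartz functions (to handle cubes $Q_j$ far from the origin) and the finiteness of the right-hand side; these are routine and I would omit the details.

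The step I expect to be the main obstacle is the exponent bookkeeping that pins the admissible range down to $\tau>n[1/\theta-1/q]$: a direct estimate of $\sum_j\lambda_j|m_j|$, without re-expanding the annular pieces as atoms on the enlarged cubes $2^{k+1}Q_j$, only yields the weaker range $\tau>n[1/s-1/q]$, and it is exactly the renormalisation that replaces $\|\mathbf{1}_{Q_j}\|_X$ by $\|\mathbf{1}_{2^{k+1}Q_j}\|_X\lesssim 2^{kn/\theta}\|\mathbf{1}_{Q_j}\|_X$ in the denominator that recovers the factor $2^{kn/\theta}$ needed for the sharp threshold. A secondary but genuine difficulty is that $X$ is only quasi-Banach, so the whole $X$-estimate must be routed through the ball Banach function space $X^{1/s}$ and its associate space, which is where Assumption \ref{vector2} (boundedness of $\cm$ on $[(X^{1/s})']^{1/(q/s)'}$) is indispensable; Assumption \ref{vector} enters both in comparing $\|\mathbf{1}_{2^{k+1}Q}\|_X$ with $\|\mathbf{1}_Q\|_X$ and in re-summing the annular contributions.
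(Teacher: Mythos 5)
Your proposal is correct and follows essentially the same route as the paper, which itself omits the details and simply invokes the argument of \cite[Theorem 2.11]{SHYY17}: annular splitting of each molecule, renormalisation into $(X,q)$-atoms on the dilated cubes, the duality estimate in $X^{1/s}$ using the boundedness of $\cm$ on $[(X^{1/s})']^{1/(q/s)'}$ from Assumption \ref{vector2}, the pointwise bound $\mathbf{1}_{2^{k+1}Q_j}\lesssim 2^{kns/\theta}[\cm^{(\theta)}(\mathbf{1}_{Q_j})]^s$ together with Assumption \ref{vector}, and the geometric series furnished by $\tau>n[1/\theta-1/q]$. Your exponent bookkeeping checks out, and the only part left schematic (the convergence in $\mathcal{S}'(\rn)$) is exactly the part also left to \cite{SHYY17} by the paper.
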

\subsection{Hardy Spaces $H_{X,\, L}(\rn)$}\label{feng2}

In this subsection, we recall the definition
of the Hardy  space $H_{X,\, L}(\rn)$ which
was introduced in \cite[Section 6.1]{SHYY17}.
For any  $\al\in(0,\infty)$ and $x\in\rn$, let
\begin{align}\label{zui}
\Gamma_\al(x):=\left\{(y,t)\in\mathbb{R}^{n+1}_+:
\ \ |x-y|<\al t\right\},
\end{align}
where $\mathbb{R}^{n+1}_+:=\rn\times(0,\fz)$.
When $\al=1$, we denote $\Gamma_\al(x)$ simply
by $\Gamma(x)$. Let $m\in\nn$ and $L$ be a
homogeneous
divergence form $2m$-order elliptic operator
in \eqref{high}. For any $f\in L^2(\rn)$,
the \emph{Lusin area function} $S_L(f)$,
associated with $L$, is defined by setting,
for any $x\in\rn,$
\begin{align*}
S_L(f)(x):=\left[\int_{\Gamma(x)}\left|
t^{2m}Le^{-t^{2m}L}(f)(y)\right|^2\,\frac{dy
\,dt}{t^{n+1}}\right]^{1/2}.
\end{align*}

\begin{definition}\label{defi:6-1-2}
Let $m\in\nn$, $X$ be a ball quasi-Banach function
space on $\rn$, and $L$ a homogeneous
divergence form $2m$-order
elliptic operator  in \eqref{high}.
Then the \emph{Hardy space} $H_{X,\,L}(\rn)$,
associated with both $X$ and $L$, is defined as
the completion of the set
$$
H_{X,\,L}(\rn)\cap L^2(\rn):=\lf\{f \in
L^2(\rn):\ \|S_L(f)\|_{X}<\infty\r\}
$$
with respect to the \emph{quasi-norm} $\|
f\|_{H_{X,\,L}(\rn)}:= \|S_L(f)\|_{X}.$
\end{definition}

\begin{definition}\label{buguoruci}
Let $m\in\nn$, $X$ be a ball quasi-Banach
function space on $\rn$ satisfying Assumption
\ref{vector} for some
$\theta,s\in(0,1]$, and  $L$ a homogeneous
divergence form $2m$-order elliptic operator
 in \eqref{high}
satisfying Ellipticity Condition \ref{ec}.
Assume that $M\in\nn$, $\epsilon\in(0,\fz)$,
and $q\in (p_{-}(L),p_{+}(L)).$ Denote by
$\mathcal{R}(L^M)$ the range of $L^M$.
\begin{itemize}
\item[\rm(i)] A function $\az\in L^q(\rn)$
is called an \emph{$(X,\,q,\,M,\,\epsilon)_L$-molecule}
associated with the ball $B:=B(x_B,r_B)
\subset\rn$, with some $x_B\in\rn$ and
$r_B\in(0,\fz)$, if $\az\in\mathcal{R}(L^M)$
and, for any $k\in\{0,\,\ldots,\,M\}$ and $j\in\zz_+$,
it holds true that
$$\lf\|\lf(r_B^{-2m} L^{-1}\r)^k\az\r\|_{
	L^q(U_j(B))}\le
2^{-j\epsilon}|2^jB|^{1/q}\|\mathbf{1}_{B}
\|_{X}^{-1}.$$
Moreover, if $\az$ is an $(X,\,q,\,M,\,
\epsilon)_L$-molecule for all $q\in(p_{-}(L),p_{+}(L))$,
then $\az$ is called an \emph{$(X,\,M,\,
\epsilon)_L$-molecule}.

\item[\rm(ii)] For any $f\in L^2(\rn)$,
$f=\sum_{j=1}^\fz \lz_j\az_j$ is called a
\emph{molecular $(X,\,q,\,M,\,\epsilon)$-representation}
of $f$ if, for any $j\in\nn$, $\az_j$ is
an $(X,\,q,\,M,\,\epsilon)_L$-molecule associated
with the ball $B_j\subset\rn$, the summation
converges in $L^2(\rn),$ and $\{\lz_j\}_{j\in\nn}
\subset[0,\fz)$ satisfies
\begin{align}\label{eq:6-44}
\Lambda\lf(\{\lz_j\az_j\}_{j\in\nn}\r):=
\lf\|\lf\{\sum_{j=1}^\fz
\lf(\frac{\lz_j}{\|\mathbf{1}_{B_j}\|_{X}}\r)^s
\mathbf{1}_{B_j}\r\}^{1/s}\r\|_X<\fz.
\end{align}
Let
\begin{align*}
\wz{H}^{M,\,q,\,\epsilon}_{X,\,L}(\rn):=\lf\{f\in L^2(\rn):\ f \
\text{has a molecular}\ (X,\,q,\,M,\,\epsilon)
\text{-representation}\r\}
\end{align*}
equipped with the \emph{quasi-norm} $\|
\cdot\|_{H^{M,\,q,\,\epsilon}_{X,\,L}(\rn)}$
given
by setting, for any $f\in\wz{H}^{M,\,q,\,
\epsilon}_{X,\,L}(\rn)$,
\begin{align*}
\|f\|_{H^{M,\,q,\,\epsilon}_{X,\,L}(\rn)}
&:=\inf\Bigg\{\Lambda(\{\lz_j\az_j\}_{j\in\nn}):
\ f=\sum_{j=1}^\fz \lz_j\az_j \\
&\quad\quad\quad\quad \ \text{is
a molecular}
\ (X,\,q,\,M,\,\epsilon)\text{-representation}
\Bigg\},
\end{align*}
where  $\Lambda(\{\lz_j\az_j\}_{j\in\nn})$ is
the same as in \eqref{eq:6-44} and the
infimum is taken over all the molecular
$(X,\,q,\,M,\,\epsilon)$-representations of $f$ as above.

The \emph{molecular Hardy  space}
$H^{M,\,q,\,\epsilon}_{X,\,L}(\rn)$ is then
defined as the completion of $\wz{H}^{M,\,q,
\,\epsilon}_{X,\,L}(\rn)$ with respect to the
quasi-norm $\|\cdot\|_{H^{M,\,q,\,\epsilon}_{X,\,L}(\rn)}$.
\end{itemize}
\end{definition}
The following molecular decomposition of
$H_{X,\,L}(\rn)$ can be found in
\cite[Prosition 6.11]{SHYY17}.

\begin{proposition}\label{prop:6-2-2}
Let $m\in\nn$ and $L$ be a homogeneous divergence
form $2m$-order elliptic operator  in
\eqref{high} satisfying Ellipticity Condition
\ref{ec}. Assume that $X$ is a
ball quasi-Banach function space satisfying both
Assumptions \ref{vector} and \ref{vector2}
for some $\theta,s\in(0,1]$ and  $q\in[2,p_{+}(L)).$
Let $\epsilon\in(n/\tz,\fz)$ and $M
\in(\frac{n}{2m\theta},\infty)\cap\nn$.
Then, for any $f\in H_{X,\,L}(\rn)\cap L^2(\rn)$,
there exists a sequence  $\{\lz_j\}_{j\in\nn}\subset[0,\fz)$
and a sequence $\{\az_j\}_{j\in\nn}$ of $(X,\,M,
\,\epsilon)_L$-molecules
associated, respectively, with the balls $\{B_j
\}_{j\in\nn}$ such that $f=
\sum_{j=1}^\fz \lz_j\az_j$ in $L^2(\rn)$.
Moreover, there exists a positive constant $C$ such that,
for any $f\in H_{X,\,L}(\rn)\cap L^2(\rn)$,
$$\left\|\left\{\sum_{j=1}^\infty
\left(\frac{\lambda_j}{\|\mathbf{1}_{B_j}\|_X}
\right)^s\mathbf{1}_{B_j}\right\}^{\frac{1}{s}}
\right\|_X\le C\|f\|_{H_{X,\,L}(\rn)}.$$
\end{proposition}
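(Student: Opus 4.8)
This is \cite[Proposition 6.11]{SHYY17}; for completeness I indicate the line of argument, which follows the now-standard route of reducing a molecular decomposition to the atomic decomposition of the $X$-tent space $T_X(\mathbb{R}^{n+1}_+)$ via a Calder\'on reproducing formula adapted to $L$. The plan is, first, to pass from $H_{X,\,L}(\rn)$ to a tent space: given $f\in H_{X,\,L}(\rn)\cap L^2(\rn)$, set $F(x,t):=t^{2m}Le^{-t^{2m}L}(f)(x)$ for $(x,t)\in\mathbb{R}^{n+1}_+$ and write $T^2(\mathbb{R}^{n+1}_+):=L^2(\mathbb{R}^{n+1}_+,t^{-1}\,dy\,dt)$. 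By the definitions of $S_L$ and $\|\cdot\|_{H_{X,\,L}(\rn)}$, the $X$-tent quasi-norm of $F$ equals $\|S_L(f)\|_X=\|f\|_{H_{X,\,L}(\rn)}<\fz$, so $F\in T_X(\mathbb{R}^{n+1}_+)$, while the $L^2$ quadratic estimate $\|S_L(f)\|_{L^2(\rn)}\sim\|f\|_{L^2(\rn)}$ (a consequence of the bounded holomorphic functional calculus of $L$ on $L^2(\rn)$) gives $F\in T^2(\mathbb{R}^{n+1}_+)$. By the atomic decomposition of $T_X(\mathbb{R}^{n+1}_+)$, which holds under Assumptions \ref{vector} and \ref{vector2}, I may write $F=\sum_{j\in\nn}\lz_jA_j$, convergent in both $T_X(\mathbb{R}^{n+1}_+)$ and $T^2(\mathbb{R}^{n+1}_+)$, where $\{\lz_j\}_{j\in\nn}\subset[0,\fz)$, each $A_j$ is supported in the tent $\widehat{B_j}:=\{(y,t)\in\mathbb{R}^{n+1}_+:|y-x_{B_j}|+t\le r_{B_j}\}$ with $\|A_j\|_{T^2(\mathbb{R}^{n+1}_+)}\le|B_j|^{1/2}\|\mathbf{1}_{B_j}\|_X^{-1}$, and
$$
\left\|\left\{\sum_{j\in\nn}\left(\frac{\lz_j}{\|\mathbf{1}_{B_j}\|_X}\right)^s\mathbf{1}_{B_j}\right\}^{\frac{1}{s}}\right\|_X\lesssim\|F\|_{T_X(\mathbb{R}^{n+1}_+)}=\|f\|_{H_{X,\,L}(\rn)},
$$
which is already the quasi-norm bound claimed in the proposition.

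Next I would invoke a Calder\'on reproducing formula. Let $\Psi(z):=z^{M+1}e^{-z}$ and $c_\Psi:=(2m)^{-1}\int_0^\fz u^{M+1}e^{-2u}\,du\in(0,\fz)$; the bounded holomorphic functional calculus of $L$ on $L^2(\rn)$, together with the change of variables $u=t^{2m}z$ in the spectral integral, yields
$$
g=\frac{1}{c_\Psi}\int_0^\fz\Psi(t^{2m}L)\left(t^{2m}Le^{-t^{2m}L}g\right)\frac{dt}{t}\quad\text{in }L^2(\rn)
$$
for every $g\in L^2(\rn)$. Put $\pi_\Psi(G):=\int_0^\fz\Psi(t^{2m}L)(G(\cdot,t))\frac{dt}{t}$; a standard almost-orthogonality argument (Schur's lemma fed by the $m$-$L^2$-$L^2$ off-diagonal estimates of Proposition \ref{basic}(iii) for $\Psi(t^{2m}L)$ and $\Psi(s^{2m}L)$) shows that $\pi_\Psi$ is bounded from $T^2(\mathbb{R}^{n+1}_+)$ to $L^2(\rn)$, and the identity above reads $f=c_\Psi^{-1}\pi_\Psi(F)$. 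I then set $\az_j:=c_\Psi^{-1}\pi_\Psi(A_j)=c_\Psi^{-1}\int_0^{r_{B_j}}\Psi(t^{2m}L)(A_j(\cdot,t))\frac{dt}{t}$, the range of integration being $(0,r_{B_j}]$ since $\supp A_j\subset\widehat{B_j}$; the $L^2(\rn)$-boundedness of $\pi_\Psi$ and the $T^2$-convergence of $\sum_j\lz_jA_j$ then give $f=\sum_{j\in\nn}\lz_j\az_j$ in $L^2(\rn)$.

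It remains to check that, up to a fixed multiplicative constant, each $\az_j$ is an $(X,\,M,\,\epsilon)_L$-molecule associated with $B_j$. The factorization $\Psi(t^{2m}L)=L^M[t^{2mM}(t^{2m}Le^{-t^{2m}L})]$ shows $\az_j=L^Mh_j$ with $h_j\in L^2(\rn)$, hence $\az_j\in\mathcal{R}(L^M)$. For the decay, I would fix $k\in\{0,\dots,M\}$ and $i\in\zz_+$, use the identity $(r_{B_j}^{-2m}L^{-1})^k\Psi(t^{2m}L)=(t/r_{B_j})^{2mk}(t^{2m}L)^{M+1-k}e^{-t^{2m}L}$ (with $M+1-k\ge1$ and $(t/r_{B_j})^{2mk}\le1$ because $t\le r_{B_j}$), and estimate $\|(r_{B_j}^{-2m}L^{-1})^k\az_j\|_{L^q(U_i(B_j))}$ by duality: pair with $g\in L^{q'}(U_i(B_j))$, transfer the operator onto the adjoint $L^\ast$ (which enjoys the same off-diagonal and quadratic estimates), apply the Cauchy--Schwarz inequality in $(y,t)\in\widehat{B_j}$, the atom bound $\|A_j\|_{T^2}\le|B_j|^{1/2}\|\mathbf{1}_{B_j}\|_X^{-1}$, and then, for $i\in\{0,1\}$, the $L^2(\rn)$ quadratic estimate for $L^\ast$ together with $\|g\|_{L^2(\rn)}\lesssim|2^iB_j|^{1/q-1/2}\|g\|_{L^{q'}}$, while for $i\ge2$, the $m$-$L^{q'}$-$L^2$ off-diagonal estimate of Proposition \ref{basic}(iii) (licit since $p_-(L)<q<p_+(L)$ and $q'\le2$ as $q\ge2$), which contributes a factor $\exp\{-c\,2^{i\cdot2m/(2m-1)}\}$. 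This produces, for every $N\in(0,\fz)$, a bound $\|(r_{B_j}^{-2m}L^{-1})^k\az_j\|_{L^q(U_i(B_j))}\lesssim2^{-iN}|2^iB_j|^{1/q}\|\mathbf{1}_{B_j}\|_X^{-1}$, in particular the required one with $2^{-i\epsilon}$ in place of $2^{-iN}$; the remaining exponents $q\in(p_-(L),2)$ are then handled by H\"older's inequality on $U_i(B_j)$ from the case $q=2$. Hence each $\az_j$ is an $(X,\,M,\,\epsilon)_L$-molecule, and the proof is complete.

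The main obstacle is the molecular decay near the diagonal, namely the annuli $i\in\{0,1\}$: estimating $\pi_\Psi(A_j)$ directly by Minkowski's integral inequality fails because $\int_0^{r_{B_j}}t^{-1}\,dt=\fz$, and the remedy is precisely the duality step above, which trades the divergent $t$-integral for a square function estimate for $L^\ast$ applied to the test function. One must also be careful with the bookkeeping in the off-diagonal estimates (so as to gain decay faster than any prescribed power of $2^i$) and with the $L^2$-boundedness of $\pi_\Psi$. Note finally that the quantitative hypotheses $M\in(\frac{n}{2m\theta},\fz)\cap\nn$ and $\epsilon\in(n/\tz,\fz)$ are not needed to \emph{produce} the decomposition; they are the standing assumptions under which $H^{M,\,q,\,\epsilon}_{X,\,L}(\rn)$ is a well-behaved space, and they are exactly what makes the converse reconstruction (summing molecules back into $H_{X,\,L}(\rn)$ via Lemma \ref{thm:2-2-3}, with $\tau=\epsilon$) go through.
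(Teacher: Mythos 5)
Your proposal is correct and follows essentially the same route as the proof the paper relies on: the statement is quoted from \cite[Proposition 6.11]{SHYY17}, whose argument is exactly the tent-space reduction you sketch (atomic decomposition of $T_X(\mathbb{R}^{n+1}_+)$ combined with a Calder\'on reproducing formula adapted to $L$, the molecular decay being obtained by duality together with the off-diagonal estimates of Proposition \ref{basic}(iii)). The only point you assert rather than justify is the convergence of the tent-space decomposition of $F$ in $T^2(\mathbb{R}^{n+1}_+)$, which is what yields $f=\sum_{j}\lambda_j\alpha_j$ in $L^2(\mathbb{R}^n)$; this follows from \cite[Lemma 6.9(i)]{SHYY17} because $F\in T^2(\mathbb{R}^{n+1}_+)\cap T_X(\mathbb{R}^{n+1}_+)$, and it is precisely the issue addressed by the paper's remark explaining why the absolutely-continuous-quasi-norm hypothesis of \cite[Proposition 6.11]{SHYY17} is superfluous for $f\in H_{X,\,L}(\mathbb{R}^n)\cap L^2(\mathbb{R}^n)$.
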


\begin{remark}
We point out that Proposition \ref{prop:6-2-2}
was obtained in \cite[Proposition 6.11]{SHYY17}
under the additional assumption that $X$ has an
absolutely continuous quasi-norm.
However, by checking the proof of \cite[Proposition
6.11]{SHYY17} very carefully, we find that,
under the assumption that $f\in H_{X,\,L}(\rn)\cap
L^2(\rn)$, the additional
condition that $X$ has an absolutely continuous
quasi-norm in \cite[Proposition
6.11]{SHYY17}
is  superfluous.
\end{remark}

\section{Maximal Function Characterizations of $H_{X,\, L}(\rn)$}\label{max}

In this section, by making full use of the special
structure of the divergence form elliptic operator and
the extrapolation theorem in the scale of ball
quasi-Banach function spaces, we establish the radial and
the non-tangential maximal function
characterizations of $H_{X,\, L}(\rn)$.

Let $m\in\nn$ and $L$ be a homogeneous
divergence form $2m$-order elliptic
operator  in \eqref{high}.
We first recall the definitions of several maximal
functions associated with $L$. For any given
$\al\in(0,\infty)$ and  for any $f\in L^2(\rn)$,
the \emph{radial maximal function} $\mathcal{R
}^{(\al)}_h(f)$ and the \emph{non-tangential maximal function}
$\mathcal{N}^{(\al)}_h(f)$, associated with the
heat semigroup generated by $L$,
are defined, respectively, by setting,  for
any $x\in\rn,$
\begin{align*}
\mathcal{R}^{(\al)}_h(f)(x):=\sup_{t\in(0,
\infty)}\left[\frac{1}{(\al t)^n}\int
_{B(x,\al t)}
\left|e^{-t^{2m}L}(f)(z)\right|^2\,dz\right]^{1/2}
\end{align*}
and
\begin{align*}
\mathcal{N}^{(\al)}_h(f)(x):=\sup_{(y,t)\in
\Gamma_\al(x)}\left[\frac{1}{(\al t)^n}\int_{B(y,\al t)}
\left|e^{-t^{2m}L}(f)(z)\right|^2\,dz\right]^{1/2},
\end{align*}
where $\Gamma_\al(x)$ is the same as in
\eqref{zui}. For any given $\al\in(0,\infty)$
and for any $f\in L^2(\rn),$ we
also define the \emph{Lusin area function}
$S_h^{(\al)}(f)$,
associated with the heat semigroup generated
by $L$, by setting, for any  $x\in\rn,$
\begin{align*}
S_h^{(\al)}(f)(x):=\left[\int_{\Gamma_\al(x)}
\left|(t\nabla)^m e^{-t^{2m}L}(f)(y)\right|^2\,
\frac{dy\,dt}{t^{n+1}}\right]^{1/2}.
\end{align*}
In what follows, when $\al:=1,$ we remove
the superscript $\al$ for simplicity.
In a similar way, the $S_h$-\emph{adapted},
the $\mathcal{R}_h$-\emph{adapted}, and the
$\mathcal{N}_h$-\emph{adapted Hardy spaces},
$H_{X,\,S_h}(\rn)$, $H_{X,\,\mathcal{R}_h}(\rn)$,
and $H_{X,\,\mathcal{N}_h}(\rn)$, are defined
in the way same as $H_{X,\,L}(\rn)$ with $S_L(f)$
replaced, respectively, by $S_h(f)$,
$\mathcal{R}_h(f),$  and $\mathcal{N}_h(f).$

The main result of this section is stated as follows.

\begin{theorem}\label{th2}
Let  $m\in\nn$ and $L$ be a homogeneous divergence form $2m$-order elliptic operator  in \eqref{high}
satisfying Strong Ellipticity Condition \ref{sec}.
Assume that $X$ is a ball quasi-Banach
function space satisfying both Assumptions
\ref{vector} and \ref{vector2} for some $\theta,s\in(0,1]$ and
$q\in(p_-(L),p_+(L))$. Then the
spaces $H_{X,\,L}(\rn)$, $H_{X,\,S_h}(\rn),$
$H_{X,\,\mathcal{R}_h}(\rn),$
and $H_{X,\,\mathcal{N}_h}(\rn)$ coincide
with equivalent quasi-norms.
\end{theorem}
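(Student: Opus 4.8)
The plan is to establish the chain of equivalences
\[
H_{X,\,\mathcal{N}_h}(\rn)\hookrightarrow H_{X,\,S_h}(\rn)\hookrightarrow H_{X,\,L}(\rn)\hookrightarrow H_{X,\,\mathcal{R}_h}(\rn)\hookrightarrow H_{X,\,\mathcal{N}_h}(\rn),
\]
on the dense subclass of functions also lying in $L^2(\rn)$, and then pass to completions. Two of these inclusions are soft. The inclusion $H_{X,\,\mathcal{R}_h}(\rn)\hookrightarrow H_{X,\,\mathcal{N}_h}(\rn)$ would normally be the subtle direction, but here the good-$\lambda$ inequality of Lemma \ref{good} (advertised in the introduction) is tailor-made for it: it controls $\mathcal{N}_h(f)$ by a truncated $S_h$ plus a small multiple of $\mathcal{N}_h(f)$ itself on the level set, and then the extrapolation machinery (Lemma \ref{waicha}, in the spirit of \cite{cmp11}) upgrades the resulting weighted $L^2$ good-$\lambda$ bound to the $X$-quasi-norm estimate $\|\mathcal{N}_h(f)\|_X\lesssim\|S_h(f)\|_X$; combined with the trivial pointwise bound $\mathcal{R}_h(f)\le\mathcal{N}_h(f)$ this already gives $H_{X,\,\mathcal{R}_h}\approx H_{X,\,\mathcal{N}_h}\hookrightarrow H_{X,\,S_h}$. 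So after this step it suffices to connect $S_h$ (the $(t\nabla)^m e^{-t^{2m}L}$ square function) with $S_L$ (the $t^{2m}Le^{-t^{2m}L}$ square function) and with the maximal functions $\mathcal{R}_h$.

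First I would dispose of the easy comparisons. For $H_{X,\,L}\hookrightarrow H_{X,\,\mathcal{R}_h}$ and $H_{X,\,L}\hookrightarrow H_{X,\,S_h}$ I would use the subordination formula writing $e^{-t^{2m}L}$ and $(t\nabla)^m e^{-t^{2m}L}$ as averages of $s^{2m}Le^{-s^{2m}L}$ against an $L^1$ kernel in $s$, together with the $m$-$L^2$-$L^q$ off-diagonal estimates of Proposition \ref{basic}(iii) and the Caccioppoli inequality (Proposition \ref{cacci}) to gain the extra $\nabla^m$; the resulting pointwise domination of $\mathcal{R}_h(f)$ and $S_h(f)$ by a (possibly dilated-aperture) version of $S_L(f)$ is then absorbed by Assumption \ref{vector} and the standard aperture-change lemma for tent spaces in ball quasi-Banach function spaces. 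Conversely, for $H_{X,\,S_h}\hookrightarrow H_{X,\,L}$ I would exploit the divergence structure: integrating by parts, $\|t^{2m}Le^{-t^{2m}L}f\|_{L^2}^2$ is controlled by $\|(t\nabla)^m e^{-t^{2m}L}f\|_{L^2}^2$ through the coercivity in Strong Ellipticity Condition \ref{sec}, which localizes (again via Caccioppoli) to give the pointwise bound $S_L(f)(x)\lesssim S_h^{(\al)}(f)(x)$ for a suitable aperture $\al$, and then Assumption \ref{vector} finishes it.

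For the remaining link, $H_{X,\,\mathcal{N}_h}\hookrightarrow$ (anything already in the chain), the cleanest route is $H_{X,\,\mathcal{N}_h}\hookrightarrow H_{X,\,S_h}$, proved again by a good-$\lambda$ inequality comparing $S_h(f)$ against its truncation plus $\mathcal{N}_h(f)$: on the complement of $\{\mathcal{N}_h(f)>\lambda\}$ one uses the Caccioppoli inequality on Whitney boxes to turn the $(t\nabla)^m$ of the square function into an $L^2$ average of $e^{-t^{2m}L}f$ over a slightly enlarged box, which is pointwise $\lesssim\mathcal{N}_h(f)$; the contribution of the truncated tail is handled by the $L^2$-boundedness of $S_h$ on the good set. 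Feeding this through Lemma \ref{waicha} produces $\|S_h(f)\|_X\lesssim\|\mathcal{N}_h(f)\|_X$. Assembling the five inclusions closes the loop.

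The main obstacle, as the authors themselves flag, is the good-$\lambda$ inequality itself (Lemma \ref{good}): making the Besicovitch covering argument interact correctly with the Whitney decomposition of the level set, and in particular keeping the higher-order term $\nabla^m u$ under control uniformly in the (finitely many) derivatives requires the Sobolev embedding theorem to convert $L^2$ control of $\nabla^m u$ on a box into pointwise-in-$t$ control of $u$ — a step that is genuinely more delicate for $2m$-order operators than in the second-order case, since one cannot invoke pointwise Gaussian heat-kernel bounds. The secondary difficulty is purely technical: verifying that the $L^2$ good-$\lambda$ estimates satisfy the hypotheses of the ball-quasi-Banach extrapolation lemma (one must track the $A_\infty$/$A_1$ weight classes and the powered maximal operator $\cm^{(\theta)}$ carefully, using Assumptions \ref{vector} and \ref{vector2}), but this is routine once the good-$\lambda$ inequality is in hand.
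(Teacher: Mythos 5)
Your overall architecture (the four-inclusion cycle through $H_{X,\,\mathcal{N}_h}(\rn)$, $H_{X,\,S_h}(\rn)$, $H_{X,\,L}(\rn)$, $H_{X,\,\mathcal{R}_h}(\rn)$ on $L^2(\rn)$ and a density argument) is the same as the paper's, and your third paragraph correctly identifies how Lemma \ref{good} plus extrapolation yields $\|S_h(f)\|_X\lesssim\|\mathcal{N}_h(f)\|_X$. But your first paragraph, which is supposed to deliver the genuinely hard direction $H_{X,\,\mathcal{R}_h}(\rn)\hookrightarrow H_{X,\,\mathcal{N}_h}(\rn)$, contains a real gap. You read Lemma \ref{good} backwards: it estimates the level sets of the \emph{truncated Lusin area function} on the set where $\mathcal{N}_h(f)$ is small, so after integration and extrapolation it gives $\|S_h(f)\|_X\lesssim\|\mathcal{N}_h(f)\|_X$, not $\|\mathcal{N}_h(f)\|_X\lesssim\|S_h(f)\|_X$. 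Moreover, even if one granted the estimate $\|\mathcal{N}_h(f)\|_X\lesssim\|S_h(f)\|_X$, combining it with the trivial pointwise bound $\mathcal{R}_h(f)\le\mathcal{N}_h(f)$ yields nothing in the direction $\|\mathcal{N}_h(f)\|_X\lesssim\|\mathcal{R}_h(f)\|_X$; the deduction ``$H_{X,\,\mathcal{R}_h}\approx H_{X,\,\mathcal{N}_h}$'' is a non sequitur, and as written your chain never controls anything by $\mathcal{R}_h(f)$. The paper's actual argument for this step (Step 3 of its proof) is different and essential: the pointwise inequality $\mathcal{N}_h^{(1/2)}(f)\le 2^{n/2}\mathcal{R}_h(f)$ together with the aperture-invariance of weighted $L^{p_0}_\omega$ norms of non-tangential maximal functions (adapted from \cite[Lemma 6.2]{HM09}), fed into the extrapolation lemma (Lemma \ref{waicha}). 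Some substitute for this is needed; nothing in your sketch supplies it.

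Two of your ``easy comparisons'' are also not available in the pointwise form you assert, and the paper proceeds quite differently there. A pointwise domination $\mathcal{R}_h(f)\lesssim S_L^{(\alpha)}(f)$ (even with enlarged aperture) is not true in general; the paper proves $H_{X,\,L}(\rn)\hookrightarrow H_{X,\,\mathcal{R}_h}(\rn)$ by decomposing $f$ into $(X,\,M,\,\epsilon)_L$-molecules via Proposition \ref{prop:6-2-2}, establishing the decay estimates $\|\mathcal{R}_h(b)\|_{L^q(U_j(B))}\lesssim 2^{-j\beta}|B|^{1/q}\|\mathbf{1}_B\|_X^{-1}$ for each molecule $b$ (using the off-diagonal estimates of Proposition \ref{basic}(iii) and Lemma \ref{6.47}), and then summing with Lemma \ref{thm:2-2-3}. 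Similarly, for $H_{X,\,S_h}(\rn)\hookrightarrow H_{X,\,L}(\rn)$ the paper does not claim a pointwise bound $S_L(f)\lesssim S_h^{(\alpha)}(f)$ (which is doubtful, since $Le^{-t^{2m}L}f$ involves $2m$ derivatives and cannot be locally majorized by $\nabla^m e^{-t^{2m}L}f$); it uses the geometric-mean estimate $S_L(f)\lesssim [S_h^{(2)}(f)]^{1/2}[S_L^{(2)}(f)]^{1/2}$ from \cite{CMY16}, the change-of-aperture Lemma \ref{pro4.9}, and an absorption argument in $L^{p_0}_\omega(\rn)$ before extrapolating. So, beyond the main gap above, these two links of your chain would need to be reworked along those lines rather than via pointwise dominations.
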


\begin{remark}
When $X:=L^p(\rn)$ with $p\in(0,p_+(L))$,
Theorem \ref{th2} is just \cite[Theorem 1.4]{CMY16}.
\end{remark}

To prove Theorem \ref{th2}, we need  several
lemmas. Let  $m\in\nn$, $L$ be a homogeneous divergence
form $2m$-order elliptic operator  in
\eqref{high}, and $\epsilon,R,\al \in(0,\infty)$ with
$\epsilon<R.$ For any $f\in L^2(\rn)$,
the \emph{truncated Lusin area function}
$S_h^{\epsilon,\, R,\, \al}(f)$, associated
with the heat semigroup of $L$, is defined
by setting, for any $x\in\rn,$
\begin{align*}
S_h^{\epsilon,\, R,\, \al}(f)(x):=\left\{
\int_{\Gamma_\al^{\epsilon,\, R}(x)}\left|(t\nabla)^m
e^{-t^{2m}L}(f)(y)\right|^2\,\frac{dy \,
dt}{t^{n+1}}\right\}^{1/2},
\end{align*}
where $\Gamma_\al^{\epsilon,\, R}(x):=\{(y,t)
\in\rn\times (\epsilon,R):\  |y-x|<\al t\}.$
Motivated by \cite[Lemma 3.4]{YY13}, we have
the following conclusion.

\begin{lemma}\label{dtkz}
Let $m\in\nn$, $L$ be a homogeneous divergence
form $2m$-order elliptic operator  in \eqref{high}
satisfying Ellipticity Condition \ref{ec},
$\al\in(0,1),$ and $\epsilon, R\in(0,\infty)$
with $\epsilon<R.$ Then there exists a positive
constant $C$, depending only
on both $\al$ and $n,$ such that, for any
$f\in L^2(\rn)$ and $x\in\rn,$
\begin{align*}
S_h^{\epsilon,\, R,\, \al}(f)(x)\leq C[1+
\ln(R/\epsilon)]^{1/2}\mathcal{N}_h(f)(x).
\end{align*}
\end{lemma}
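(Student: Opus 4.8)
The plan is to run the classical comparison between a truncated Lusin area function and a non-tangential maximal function, while keeping track of constants so that the only dependence on $\epsilon$ and $R$ is the asserted factor $[1+\ln(R/\epsilon)]^{1/2}$. For any $f\in L^2(\rn)$ and $x\in\rn$, write $u(y,t):=e^{-t^{2m}L}(f)(y)$. Since $(t\nabla)^m=t^m\nabla^m$, we have
\begin{align*}
\lf[S_h^{\epsilon,\,R,\,\al}(f)(x)\r]^2
=\int_\epsilon^R\int_{\{y\in\rn:\ |y-x|<\al t\}}t^{2m-n-1}\lf|\nabla^m u(y,t)\r|^2\,dy\,dt.
\end{align*}
Pick $N\in\zz_+$ with $2^N\le R/\epsilon<2^{N+1}$, so that $N+1\le 2+\log_2(R/\epsilon)\ls 1+\ln(R/\epsilon)$. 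For each $k\in\{0,\ldots,N\}$, put $\rho_k:=2^k\epsilon$ and
\begin{align*}
Q_k:=\lf\{(y,t)\in\rn\times(0,\fz):\ t\in[\rho_k,2\rho_k]\cap(\epsilon,R)\ \text{and}\ |y-x|<\al t\r\},
\end{align*}
so that $\Gamma_\al^{\epsilon,\,R}(x)\subset\bigcup_{k=0}^{N}Q_k$. It thus suffices to prove that there exists a constant $C_{(\al,n)}$, independent of $k$, $\epsilon$, $R$, and $f$, such that
\begin{align}\label{dtkz-key}
\int_{Q_k}t^{2m-n-1}\lf|\nabla^m u(y,t)\r|^2\,dy\,dt\le C_{(\al,n)}\lf[\cn_h(f)(x)\r]^2,
\end{align}
since summing \eqref{dtkz-key} over $k\in\{0,\ldots,N\}$ and taking square roots then gives the desired estimate.

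To prove \eqref{dtkz-key}, observe first that $t\sim\rho_k$ on $Q_k$, so its left-hand side is comparable to $\rho_k^{2m-n-1}\int_{Q_k}|\nabla^m u|^2$. Since $Q_k\subset B(x,2\al\rho_k)\times[\rho_k,2\rho_k]$, we cover $Q_k$ by a finite family $\{B(x_i,r)\times(t_i-r,t_i+r)\}_{i=1}^{N_k}$ of space-time cylinders with $r:=c_{(\al)}\rho_k$, $t_i\in[\rho_k,2\rho_k]$, and $x_i\in B(x,2\al\rho_k)$, where the number $N_k$ of cylinders needed is bounded by a constant depending only on $\al$ and $n$, and $c_{(\al)}\in(0,1/12)$ is chosen small enough that (a) $t_i>3r$, so that the Caccioppoli inequality of Proposition \ref{cacci} applies on each cylinder; and (b) for every $t\in(t_i-2r,t_i+2r)$ one has $2r\le t$ and $|x_i-x|<t$, so that $B(x_i,2r)\subset B(x_i,t)$ with $(x_i,t)\in\Gamma(x)$. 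Requirement (b) is exactly where the hypothesis $\al<1$ is used: any cylinder meeting $Q_k$ has center with $|x_i-x|<\al t_i+(1+\al)r$, so (b) reduces to $r<\frac{1-\al}{3+\al}t_i$, which is achievable precisely because $\al<1$ (and forces $c_{(\al)}$ to depend on $\al$).

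With such a cover in hand, we apply Proposition \ref{cacci} on each cylinder and then invoke (b) together with the definition of $\cn_h$, using $\int_{B(x_i,2r)}|u(z,t)|^2\,dz\le\int_{B(x_i,t)}|u(z,t)|^2\,dz\le t^n[\cn_h(f)(x)]^2\ls\rho_k^n[\cn_h(f)(x)]^2$ for each admissible $t$, to obtain
\begin{align*}
\int_{B(x_i,r)\times(t_i-r,t_i+r)}\lf|\nabla^m u\r|^2
\ls\frac{1}{r^{2m}}\int_{t_i-2r}^{t_i+2r}\int_{B(x_i,2r)}\lf|u(z,t)\r|^2\,dz\,dt
\ls\frac{r}{r^{2m}}\,\rho_k^{n}\lf[\cn_h(f)(x)\r]^2.
\end{align*}
Since $r\sim\rho_k$, summing this over $i\in\{1,\ldots,N_k\}$ and multiplying by $\rho_k^{2m-n-1}$ yields $\int_{Q_k}t^{2m-n-1}|\nabla^m u|^2\ls N_k[\cn_h(f)(x)]^2\le C_{(\al,n)}[\cn_h(f)(x)]^2$, which is \eqref{dtkz-key}.

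I expect the only real difficulty to be this bookkeeping: one must fix the cylinder radius $r=c_{(\al)}\rho_k$ small enough that the doubled cylinders simultaneously satisfy the Caccioppoli hypothesis $t_i>3r$ and sit inside the aperture-one cone $\Gamma(x)$ at all heights $t\in(t_i-2r,t_i+2r)$, while keeping the number of cylinders controlled in terms of $\al$ and $n$ only. The analytic input — the Caccioppoli inequality of Proposition \ref{cacci} and the pointwise definition of $\cn_h$ — then does the rest with no further work.
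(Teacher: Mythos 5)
Your proof is correct, and it rests on exactly the same analytic core as the paper's argument: the Caccioppoli inequality of Proposition \ref{cacci} to trade $\nabla^m u$ for $u$ on space--time cylinders of radius comparable to their height, the condition $\al<1$ to guarantee that the doubled cylinders stay inside the aperture-one cone $\Gamma(x)$ (your constraint $r<\frac{1-\al}{3+\al}t_i$ plays the role of the paper's choice $\gamma<(1-\al)/2$), and a count of $\lesssim 1+\ln(R/\epsilon)$ pieces. Where you differ is the covering device: the paper applies the Besicovitch covering lemma (Lemma \ref{covering}) in the metric $\rho$ to balls $B_\rho((z,\tau),\gamma\tau)$ centered at all points of the truncated cone, and counts them by observing that each ball has $\int\frac{dy\,dt}{t^{n+1}}\sim1$ while the enlarged cone $\Gamma^{\epsilon/2,\,2R}(x)$ has total mass $\sim1+\ln(R/\epsilon)$; you instead slice the cone into $\sim\log_2(R/\epsilon)$ dyadic slabs in $t$ and cover each slab by a bounded number of cylinders by an elementary lattice argument. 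Your route avoids Besicovitch entirely and makes the logarithm visible from the dyadic decomposition, which is arguably more elementary and transparent; the paper's route needs no explicit geometry of the slabs and reuses the same Besicovitch machinery again in the proof of Lemma \ref{good}, so it is the more economical choice within the article. One small bookkeeping remark: after placing cylinder centers in $B(x,2\al\rho_k)\times[\rho_k,2\rho_k]$ you should (as you implicitly do) retain only those cylinders meeting $Q_k$, since it is for these that the bound $|x_i-x|<\al t_i+(1+\al)r$, and hence requirement (b), is available; with that understood, the argument is complete, and the resulting constant depends on $\al$, $n$, and (through Caccioppoli) on $L$, exactly as in the paper.
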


To show Lemma \ref{dtkz}, we need the Besicovitch
covering lemma.
For any $(x,t),(z,\tau)\in\mathbb{R}^{n+1}_+,$ let
\begin{align*}
\rho((x,t),(z,\tau)):=\max\{|x-z|,\,|t-\tau|\}.
\end{align*}
It is obvious that $\rho$ is a metric on $\mathbb{R}^{n+1}_+.$ In what follows, we denote
by $B_\rho((z,\tau),r)$ the ball of $\mathbb{R}^{n+1}_+$ with \emph{center} $(z,\tau)$
and \emph{radius} $r$; namely
\begin{align*}
B_\rho((z,\tau),r):=\lf\{(x,t)\in\mathbb{R}^{n+1}_+:\  \rho((x,t),(z,\tau))<r\r\}.
\end{align*}
The following Besicovitch covering lemma is a  part of \cite[Theorem 1.14]{H01}.

\begin{lemma}\label{covering}
Let $\mathcal{F}$ be any collection of closed balls  with uniformly bounded diameter
in $(\mathbb{R}^{n+1}_+,\rho)$ and $A$ the set of centers of balls in $\mathcal{F}.$
Then there exist $N_n$ countable collections
$\{\mathcal{G}_i\}_{i=1}^{N_n}$ of disjoint balls in $\mathcal{F}$ such that
\begin{align*}
A\subset \bigcup_{i=1}^{N_n}\bigcup_{B\in \mathcal{G}_i}B,
\end{align*}
where the positive integer $N_n$ depends only on $n.$
\end{lemma}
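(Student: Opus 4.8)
The plan is to reduce the statement to the classical Besicovitch covering theorem on Euclidean space, after observing that $\rho$ is a \emph{norm-induced} metric. On $\mathbb{R}^{n+1}$ put $\|(x,t)\|_\ast:=\max\{|x|,\,|t|\}$ for $x\in\rn$ and $t\in\rr$, with $|x|$ the Euclidean norm of $x$; this is a genuine norm whose unit ball is the fixed convex body $\{x\in\rn:\,|x|\le1\}\times[-1,1]$, and one has $\rho((x,t),(z,\tau))=\|(x,t)-(z,\tau)\|_\ast$ for all points. Hence, for any $(z,\tau)\in\mathbb{R}^{n+1}_+$ and $r\in(0,\fz)$, the $\rho$-ball $B_\rho((z,\tau),r)$ equals $\wz B((z,\tau),r)\cap\mathbb{R}^{n+1}_+$, where $\wz B((z,\tau),r)$ is the corresponding $\|\cdot\|_\ast$-ball in all of $\mathbb{R}^{n+1}$, and the same identity holds for closed balls. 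First I would therefore enlarge each $B\in\mathcal{F}$ to its full-space version $\wz B\subset\mathbb{R}^{n+1}$, producing a collection $\wz{\mathcal{F}}$ of closed $\|\cdot\|_\ast$-balls in $\mathbb{R}^{n+1}$ with uniformly bounded diameters (indeed $\operatorname{diam}_\rho B=2r=\operatorname{diam}_{\|\cdot\|_\ast}\wz B$ since $n\ge1$) and with the same set $A$ of centers, noting that $A\subset\mathbb{R}^{n+1}_+$.

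Then I would invoke the Besicovitch covering theorem for $(\mathbb{R}^N,\|\cdot\|)$ in the case $N:=n+1$, $\|\cdot\|:=\|\cdot\|_\ast$: this is the form valid for a general norm, with the number of disjoint subfamilies controlled solely by $N$ and the norm's unit ball, which is exactly what is recorded in \cite[Theorem 1.14]{H01}. It yields countable subfamilies $\wz{\mathcal{G}}_1,\ldots,\wz{\mathcal{G}}_{N_n}$ of $\wz{\mathcal{F}}$, each consisting of pairwise disjoint balls, with $A\subset\bigcup_{i=1}^{N_n}\bigcup_{\wz B\in\wz{\mathcal{G}}_i}\wz B$ and $N_n$ depending only on $n$. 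Finally, letting $\mathcal{G}_i$ consist of those original $B\in\mathcal{F}$ whose enlargement lies in $\wz{\mathcal{G}}_i$, and restricting each ball back to $\mathbb{R}^{n+1}_+$, preserves pairwise disjointness (disjointness in $\mathbb{R}^{n+1}$ is only strengthened by intersecting with $\mathbb{R}^{n+1}_+$), while, because $A\subset\mathbb{R}^{n+1}_+$, every $a\in A$ remains covered: $A\subset\bigcup_{i=1}^{N_n}\bigcup_{B\in\mathcal{G}_i}B$. This is the desired conclusion.

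For completeness I would also indicate the mechanism behind the normed-space theorem. One runs a greedy selection from $\mathcal{F}$: repeatedly choose a ball whose center has not yet been covered and whose radius is at least $\tfrac34$ of the supremum of the still-available radii, obtaining a countable sequence $\{B_j=B(a_j,r_j)\}_j$ with $a_j\notin\bigcup_{i<j}B_i$; the concentric balls $\{\tfrac13 B_j\}_j$ are then pairwise disjoint, which (for bounded $A$) forces $A\subset\bigcup_j B_j$, since only finitely many $r_j$ can stay above any fixed positive level while the $a_j$ remain in a bounded region. Two quantitative facts, both consequences of one packing estimate --- that a fixed dilate of the unit ball of $\|\cdot\|_\ast$ contains only boundedly many points pairwise separated in a scale-invariant sense and carrying comparable radii, the bound depending only on $n+1$ and that unit ball --- then yield: every point of $\mathbb{R}^{n+1}$ lies in at most $M_n$ of the $B_j$; and each $B_j$ meets at most $M_n-1$ of the earlier balls $B_i$ with $i<j$. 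Using the latter, one colors $B_1,B_2,\ldots$ in order with colors from $\{1,\ldots,M_n\}$, assigning $B_j$ any color not used by the fewer than $M_n$ earlier balls it meets; the color classes are the sought $N_n:=M_n$ pairwise-disjoint subfamilies.

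The step that would require genuine care, were one to reprove this instead of citing \cite{H01}, is the packing estimate underlying the two quantitative facts --- in particular verifying that the greedy radii of two intersecting selected balls are comparable, which is what legitimizes the coloring step --- together with the reduction of unbounded $A$ to the bounded case (by splitting $\mathbb{R}^{n+1}$ into boundedly overlapping annular regions). Since here the ambient metric is the explicit norm metric $\|\cdot\|_\ast$ and the statement is precisely (a part of) \cite[Theorem 1.14]{H01}, no new difficulty arises, and the reduction in the first two paragraphs furnishes the proof.
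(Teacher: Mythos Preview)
Your proposal is correct and aligns with the paper's approach: the paper does not prove this lemma at all but simply states that it ``is a part of \cite[Theorem 1.14]{H01}.'' Your reduction---observing that $\rho$ is the restriction to $\mathbb{R}^{n+1}_+$ of the norm metric $\|(x,t)\|_\ast=\max\{|x|,|t|\}$ on $\mathbb{R}^{n+1}$, passing to full-space balls, applying Besicovitch there, and restricting back---is exactly the justification one would give for why that citation applies, and is more detailed than what the paper provides.
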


Now, we prove Lemma \ref{dtkz} by using Lemma \ref{covering}.

\begin{proof}[Proof of Lemma \ref{dtkz}]
Let all the symbols be the same as in the present lemma. Fix an $f\in L^2(\rn)$ and an $x\in\rn$, let
\begin{align*}
\mathcal{F}:=\left\{B_\rho((z,\tau),\gamma \tau)\right\}_{(z,\tau)\in \Gamma_\al^{\epsilon,\, R}(x)},
\end{align*}
where $\gamma\in(0,\min\{1/3,(1-\al)/2\})$. Then, by Lemma \ref{covering}, we find that there
exists a collection $\{B_\rho((z_j,\tau_j),\gamma \tau_j)\}_{j\in\Lambda}$ of $\mathcal{F}$ such that
\begin{align}\label{fenglan0}
\Gamma_\al^{\epsilon, R}(x)\subset  \bigcup_{j\in\Lambda}
B_\rho((z_j,\tau_j),\gamma \tau_j)
\ \ \text{and}\ \ \sum_{j\in\Lambda}\mathbf{1}_{B_\rho((z_j,\tau_j),\gamma \tau_j)}\leq N_n,
\end{align}
where $N_n$ is the same as in Lemma \ref{covering}.
In the remainder of this proof, for the simplicity of the presentation, 
for any $j\in\Lambda$, we denote $B_\rho((z_j,\tau_j),\gamma \tau_j)$
simply by $E_j$.

Next, we show that, for any $j\in\Lambda,$
\begin{align}\label{fenglan}
E_j\subset \Gamma^{\epsilon/2,\, 2R}(x).
\end{align}
Indeed, for any $(y,t)\in  E_j,$ we have
\begin{align}\label{gala}
|z_j-y|<\gamma\tau_j\ \ \text{and}\ \ (1-\gamma)\tau_j<t<(1+\gamma)\tau_j.
\end{align}
Since $(z_j,\tau_j)\in\Gamma_\al^{\epsilon,\, R}(x)$, it follows that
\begin{align*}
|x-z_j|<\al \tau_j\ \ \text{and}\ \ \epsilon<\tau_j<R.
\end{align*}
Using this, \eqref{gala}, and $\gamma\in(0,(1-\al)/2),$ we find that, for any $(y,t)\in  E_j,$
\begin{align*}
|x-y|\leq|x-z_j|+|z_j-y|<(\al+\ga)\tau_j<(1-\ga)\tau_j<t
\end{align*}
and
\begin{align*}
\epsilon/2<(1-\ga)\epsilon<(1-\ga)\tau_j<t<(1+\ga)\tau_j<(1+\ga) R<2R,
\end{align*}
which implies that $(y,t)\in \Gamma^{\epsilon/2,\, 2R}(x)$ and hence
$E_j\subset \Gamma^{\epsilon/2,\, 2R}(x).$ That is, \eqref{fenglan} holds true.

On the other hand, we have, for any $j\in\Lambda,$
\begin{align*}
\int_{E_j}\frac{dy\,dt}{t^{n+1}}=\int_{(1-\gamma)\tau_j}^{(1+\gamma)\tau_j}\int_{B(z_j,
\gamma\tau_j)}t^{-n-1}\, dy \, dt\sim 1.
\end{align*}
Using this, \eqref{fenglan}, and \eqref{fenglan0}, we conclude that
\begin{align*}
\mathrm{card}\,(\Lambda)
&\sim\sum_{j\in\Lambda}\int_{E_j}\frac{dy\,dt}{t^{n+1}}
\lesssim
\int_{\Gamma^{\epsilon/2, 2R}(x)}\sum_{j\in\Lambda}\mathbf{1}_{E_j}(y,t)\,\frac{dy\,dt}{t^{n+1}}\\
&\lesssim
\int_{\Gamma^{\epsilon/2, 2R}(x)}\frac{dy\,dt}{t^{n+1}}
\sim1+\ln(R/\epsilon).
\end{align*}
Here and thereafter, $\mathrm{card}\,(\Lambda)$ denotes the \emph{cardinality} of the set $\Lambda$.
From this, \eqref{fenglan0}, the definition of $\mathcal{N}_h(f),$ and Proposition \ref{cacci} with
$x_0:=z_j,\,r:=\gamma \tau_j,$ and $t_0:=\tau_j$, it follows that
\begin{align*}
\left[S_h^{\epsilon,\, R,\, \al}(f)(x)
\right]^2
&=
\int_{\Gamma_\al^{\epsilon,\, R}(x)}\left|(t\nabla)^m e^{-t^{2m}L}(f)(y)\right|^2\,
\frac{dy \, dt}{t^{n+1}}\\
&\lesssim
\sum_{j\in\Lambda}{\tau_j}^{2m-n-1}
\int_{(1-\gamma)\tau_j}^{(1+\gamma)\tau_j}
\int_{B(z_j,\gamma\tau_j)}\left|\nabla^m e^{-t^{2m}L}(f)(y)\right|^2\,dy\,dt\\
&\lesssim
\sum_{j\in\Lambda}{\tau_j}^{-n-1}
\int_{(1-2\gamma)\tau_j}^{(1+2\gamma)\tau_j}\int_{B(z_j,2\gamma\tau_j)}
\left|e^{-t^{2m}L}(f)(y)\right|^2
\,dy\,dt\\
&\lesssim
\sum_{j\in\Lambda}\tau_j^{-1}
\int_{(1-2\gamma)\tau_j}^{(1+2\gamma)\tau_j}t^{-n}\int_{B(z_j,t)}
\left|e^{-t^{2m}L}(f)(y)\right|^2
\,dy\,dt\\
&\lesssim \mathrm{card}\,(\Lambda)[\mathcal{N}_h(f)(x)]^2
\lesssim [1+\ln(R/\epsilon)][\mathcal{N}_h(f)(x)]^2.
\end{align*}
This finishes the proof of Lemma \ref{dtkz}.
\end{proof}

Now, we recall the concept of the Muckenhoupt weight class $A_p(\rn)$ (see, for instance, \cite{g14}).

\begin{definition}\label{weight}
An \emph{$A_p(\rn)$ weight} $\omega$, with $p\in[1,\infty)$, is a nonnegative locally
integrable function on $\rn$ satisfying that, when $p\in(1,\infty),$
$$[\omega]_{A_p(\rn)}:=\sup_{Q \subset\rn}
\left[\frac{1}{|Q|}\int_{Q}\omega(x)\,dx\right]
\left\{\frac{1}{|Q|}\int_Q[\omega(x)]^{\frac{1}{1-p}}\,dx\right\}^{p-1}<\infty$$
and
$$[\omega]_{A_1(\rn)}:=\sup_{Q\subset\rn}\frac{1}{|Q|}\int_Q\omega(x)\,dx\left
[\|\omega^{-1}\|_{L^\infty(Q)}\right]<\infty,$$
where the suprema are taken over all cubes $Q\subset\rn$. Moreover, let
$$A_{\infty}(\rn):=\bigcup_{p\in[1,\infty)}A_p(\rn).$$
\end{definition}

Then we have the following good-$\lambda$ inequality concerning the non-tangential maximal function and
the truncated Lusin area function associated with the heat semigroup of $L$, which plays a key role
in the proof of Theorem \ref{th2}.

\begin{lemma}\label{good}
Let  $m\in\nn$ and $L$ be a homogeneous divergence form $2m$-order elliptic operator  in \eqref{high}
satisfying  Strong Ellipticity Condition \ref{sec}. Assume that $\omega\in A_1(\rn)$ and $\epsilon,
R\in(0,\infty)$ with $\epsilon<R$. Then there exist positive constants $\epsilon_0$ and $C$,
depending only on both $[\omega]_{A_1(\rn)}$ and $L,$ such that, for any $\gamma\in(0,1]$, $\lambda\in(0,\infty)$,
and $f\in L^2(\rn)$,
\begin{align}\label{2022}
&\omega\left(\left\{x\in\rn:\  S_h^{\epsilon,\, R,\, \frac{1}{20}}(f)(x)>2\lambda,\,
\mathcal{N}_h(f)(x)\leq \gamma\lambda\right\}\right)\\\noz
&\quad\leq C\gamma^{\epsilon_0}\omega\left(\left\{x\in\rn:\  S_h^{\epsilon,\, R,\,
\frac{1}{2}}(f)(x)>\lambda
\right\}\right).
\end{align}
\end{lemma}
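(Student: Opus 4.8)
The plan is to prove the good-$\lambda$ inequality by a Whitney-type decomposition of the (super)level set of the less-truncated area function, combined with a localized $L^2$ estimate that trades the extra decay $\gamma^{\epsilon_0}$ against the smallness of the non-tangential maximal function on the ``good'' part. Set $E_\lambda:=\{x\in\rn:\ S_h^{\epsilon,\,R,\,1/2}(f)(x)>\lambda\}$. Since $f\in L^2(\rn)$, one checks that $E_\lambda$ is open and of finite measure (using that $S_h^{\epsilon,\,R,\,1/2}(f)\in L^2(\rn)$ by the off-diagonal estimates of Proposition \ref{basic}), so we may take a Whitney decomposition $E_\lambda=\bigcup_k Q_k$ into dyadic cubes with $\mathrm{diam}(Q_k)\sim\dist(Q_k,\rn\setminus E_\lambda)$. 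It suffices to show, for each $k$,
\begin{align*}
\omega\lf(\lf\{x\in Q_k:\ S_h^{\epsilon,\,R,\,1/20}(f)(x)>2\lambda,\ \mathcal{N}_h(f)(x)\le\gamma\lambda\r\}\r)\le C\gamma^{\epsilon_0}\omega(Q_k),
\end{align*}
since summing over $k$ and using $E_\lambda=\bigcup Q_k$ gives \eqref{2022}. We may also assume the set on the left is nonempty, so there is a point $x_k\in Q_k$ with $\mathcal{N}_h(f)(x_k)\le\gamma\lambda$, and there is a point $y_k\in\rn\setminus E_\lambda$ with $\dist(y_k,Q_k)\lesssim\ell(Q_k)$, whence $S_h^{\epsilon,\,R,\,1/2}(f)(y_k)\le\lambda$.

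\textbf{Localization and the key $L^2$ bound.} The next step is to split, for $x\in Q_k$, the cone integral defining $S_h^{\epsilon,\,R,\,1/20}(f)(x)$ into a ``far'' part, where $(y,t)$ with $t\gtrsim\ell(Q_k)$ — here the aperture $1/20$ is small enough that the truncated cone over $Q_k$ at heights $t\gtrsim\ell(Q_k)$ is contained in the wider truncated cone $\Gamma_{1/2}^{\epsilon,\,R}(y_k)$, so the far part is pointwise dominated by $S_h^{\epsilon,\,R,\,1/2}(f)(y_k)\le\lambda$ and contributes nothing to the event $\{\,\cdot\,>2\lambda\}$ after choosing constants; and a ``near'' part $S_{\mathrm{near}}$ over $(y,t)$ with $|y-x|<t/20$ and $t\lesssim\ell(Q_k)$ (and $t>\epsilon$). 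On the near part we want an $L^2(Q_k)$ estimate controlled by $\mathcal{N}_h(f)(x_k)$. By Fubini, Proposition \ref{cacci} (Caccioppoli), and the geometry,
\begin{align*}
\int_{Q_k}\lf[S_{\mathrm{near}}(f)(x)\r]^2\,\omega(x)\,dx\lesssim\int_{0}^{C\ell(Q_k)}\int_{B(x_{Q_k},C\ell(Q_k))}\lf|(t\nabla)^me^{-t^{2m}L}(f)(y)\r|^2\,\frac{\oz(B(y,t/20))}{t^{n+1}}\,dy\,dt,
\end{align*}
and then, covering the $t$-region by a bounded-overlap family of $\rho$-balls as in Lemma \ref{dtkz} and invoking the definition of $\mathcal{N}_h(f)$ together with Caccioppoli, the integrand is $\lesssim[\mathcal{N}_h(f)(x_k)]^2$ on each piece; the number of pieces is controlled by the measure of the relevant region and, crucially, by the $A_1$ property of $\oz$ one estimates $\oz(B(y,t/20))\lesssim\oz(B(y,C\ell(Q_k)))\lesssim[\oz]_{A_1}\oz(Q_k)$ uniformly. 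This yields $\int_{Q_k}[S_{\mathrm{near}}(f)]^2\,\oz\,dx\lesssim(\gamma\lambda)^2\,\oz(Q_k)$.

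\textbf{Conclusion via Chebyshev.} Combining the two parts: on the good set in $Q_k$ we have $S_h^{\epsilon,\,R,\,1/20}(f)>2\lambda$ but the far part is $\le\lambda$ (after fixing absolute constants), so $S_{\mathrm{near}}(f)>\lambda$ there; by Chebyshev's inequality with respect to $\omega\,dx$,
\begin{align*}
\oz\lf(\lf\{x\in Q_k:\ S_{\mathrm{near}}(f)(x)>\lambda\r\}\r)\le\frac{1}{\lambda^2}\int_{Q_k}\lf[S_{\mathrm{near}}(f)(x)\r]^2\,\oz(x)\,dx\lesssim\frac{(\gamma\lambda)^2}{\lambda^2}\,\oz(Q_k)=\gamma^2\,\oz(Q_k).
\end{align*}
This gives \eqref{2022} with $\epsilon_0=2$. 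I expect the main obstacle to be the bookkeeping in the localization step: one must choose the aperture thresholds and the size of the enlarged cube $B(x_{Q_k},C\ell(Q_k))$ so that (a) the far part genuinely sits inside $\Gamma_{1/2}^{\epsilon,\,R}(y_k)$ for \emph{every} $x\in Q_k$, exploiting that $1/20$ is much smaller than $1/2$ and that Whitney cubes are small relative to their distance to the complement, and (b) the near part stays within the range where Caccioppoli and the $\mathcal N_h$-balls are available, with the lower truncation $t>\epsilon$ handled harmlessly (if $\ell(Q_k)\lesssim\epsilon$ the near part is empty and the estimate is trivial). The $A_1$ hypothesis enters precisely to convert the local $\oz$-mass of small balls at scale $t$ into a fixed multiple of $\oz(Q_k)$, which is what makes the argument dimension-free in the weight; with a general $A_\infty$ weight one would instead use the $A_\infty$ comparison $\oz(S)/\oz(Q)\lesssim(|S|/|Q|)^\delta$ and still obtain a (smaller) power $\epsilon_0$.
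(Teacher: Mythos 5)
Your outer structure (Whitney decomposition of $\{S_h^{\epsilon,R,1/2}(f)>\lambda\}$, discarding the far part of the cone via a point $y_k$ in the complement, reducing to a local Chebyshev estimate) matches the paper, but the key $L^2$ bound in your ``Localization'' step has a genuine gap. You estimate the bulk sawtooth integral $\int_G t^{2m-1}|\nabla^m u|^2\,dy\,dt$ (over all heights $\epsilon<t\lesssim\ell(Q_k)$) directly by Fubini, a Besicovitch covering by $\rho$-balls of radius $\sim\delta t$, Caccioppoli (Proposition \ref{cacci}), and the bound $\mathcal N_h(f)\le\gamma\lambda$. That argument does not close: each dyadic scale $t\sim 2^{-k}\ell(Q_k)$ of the covering contributes $\sim(\gamma\lambda)^2\,\omega(Q_k)$ (or $(\gamma\lambda)^2|Q_k|$ in the unweighted form), and the number of scales is $\sim\log(\ell(Q_k)/\epsilon)$, which is unbounded as $\epsilon\to0$, whereas the constant in \eqref{2022} must be independent of $\epsilon$ and $R$. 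This logarithmic loss is precisely the content of Lemma \ref{dtkz}, which compares the truncated area function with $\mathcal N_h$ at the cost of $[1+\ln(R/\epsilon)]^{1/2}$; your claim that ``the number of pieces is controlled by the measure of the relevant region'' is exactly where the count blows up. The paper avoids this by actually using the divergence structure of $L$: Strong Ellipticity Condition \ref{sec} gives the pointwise lower bound $|\nabla^m u|^2\lesssim\Re\sum a_{\al,\bz}\p^\bz u\,\overline{\p^\al u}$, one inserts a cutoff $\eta$ adapted to the sawtooth, integrates by parts (and, for the $k=0$ term, uses $\p_t u=-2mt^{2m-1}Lu$ to integrate by parts in $t$), and thereby converts the bulk integral into terms supported on the thin layer $G_1\setminus G$, where for each $y$ the relevant $t$'s lie in boundedly many intervals of bounded multiplicative length. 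Only on that thin layer are Caccioppoli, Besicovitch, the Gagliardo--Nirenberg interpolation for intermediate derivatives, and $\mathcal N_h\le\gamma\lambda$ applied, and the resulting count is $\lesssim|Q_j|$ with no logarithm. Your proposal never uses the ellipticity or any integration by parts, so this essential mechanism is missing and the claimed bound $\int_{Q_k}[S_{\mathrm{near}}(f)]^2\omega\,dx\lesssim(\gamma\lambda)^2\omega(Q_k)$ is unproved.

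Two further points. First, controlling every covering piece by $[\mathcal N_h(f)(x_k)]^2$ for a single point $x_k\in Q_k$ is not legitimate: for $(y,t)$ with $t\ll|y-x_k|$ the ball $B(y,t)$ lies outside the cone $\Gamma(x_k)$. You must build the sawtooth over the whole good set (the paper uses $\psi(y)=\dist(y,Q_j\cap F)$) so that each piece has a good point within distance $\sim t$. Second, the paper first removes the weight via the $A_1$ (indeed $A_\infty$) comparison of $\omega$-measure and Lebesgue-measure ratios and proves an unweighted local $L^2$ estimate; this ordering matters, because the integration-by-parts step uses the unweighted sesquilinear form of $L$, so attempting the weighted $L^2$ estimate directly, as you do, would obstruct the very argument needed to repair the main gap.
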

\begin{proof}
Let all the symbols be the same as in the present lemma. Let $\gamma\in(0,1]$, $\lambda\in(0,\infty)$,
$f\in L^2(\rn)$, and
\begin{align*}
O:=\left\{x\in\rn:\  S_h^{\epsilon,\, R,\,\frac{1}{2}}(f)(x)>\lambda\right\}.
\end{align*}
It is obvious that $O$ is an open set of $\rn.$ By the Whitney decomposition of $O,$ we find that there
exists a family $\{Q_j\}_{j\in\nn}$ of dyadic cubes, respectively, with the edge-lengths
$\{l_j\}_{j\in\nn},$ satisfying that
\begin{itemize}
\item[(i)] $O=\bigcup_{j\in\nn}Q_j$ and $\{Q_j\}_{j\in\nn}$ are disjoint;
\item[(ii)] $2Q_j\subset O$ and $4Q_j\cap O^\com\neq \emptyset$ for any $j\in\nn.$
\end{itemize}
By the fact that $\{Q_j\}_{j\in\nn}$ are disjoint and the estimate that $S_h^{\epsilon,\, R,\,
\frac{1}{20}}(f)\leq S_h^{\epsilon,\, R,\,\frac{1}{2}}(f)$, we conclude that,
to prove \eqref{2022}, it suffices to show that,
for any $j\in\nn,$
\begin{align}\label{2022a}
\omega\left(\left\{x\in Q_j\cap F:\  S_h^{\epsilon,\, R,\, \frac{1}{20}}(f)(x)>2\lambda\right\}\right)
\lesssim\gamma^{\epsilon_0}\omega(Q_j),
\end{align}
where
$$F:=\lf\{x\in\rn:\  \mathcal{N}_h(f)(x)\leq\gamma\lambda\r\}$$
and
$$\omega(Q_j):=\int_{Q_j}\omega(x)\,dx.$$
Using both the assumption that $\omega\in A_1(\rn)$ and \cite[Proposition 7.2.8]{g14}, we find that,
to prove \eqref{2022a}, it suffices to show that, for any $j\in\nn,$
\begin{align}\label{2022b}
\left|\left\{x\in Q_j\cap F:\  S_h^{\epsilon,\, R,\, \frac{1}{20}}(f)(x)>2\lambda\right\}\right|
\lesssim \gamma^{2}|Q_j|.
\end{align}

Let $j\in\nn$. We first prove that, if $\epsilon\in[10\sqrt{n}l_j,R)$, then, for any $x\in Q_j$,
\begin{align}\label{leidi}
S_h^{\epsilon,\,R,\,\frac{1}{20}}(f)(x)\leq \lambda.
\end{align}
Indeed, let $x_j\in4 Q_j\cap O^\com$
and $x\in Q_j$. Then, for any $(y,t)\in\Gamma_{1/ 20}^{\epsilon,\, R}(x)$,
we have
\begin{align*}
|x_j-y|\leq |x_j-x|+|x-y|
< \frac{5}{2}\sqrt{n}l_j+\frac{t}{20}
\leq \frac{1}{4}\epsilon+\frac{t}{20}
<\frac{t}{2},
\end{align*}
and hence
$(y,t)\in \Gamma_{1/ 2}^{\epsilon,\, R}(x_j).$
Thus, $\Gamma_{1/ 20}^{\epsilon,\, R}(x)\subset\Gamma_{1/ 2}^{\epsilon,\, R}(x_j).$
This, together with the fact that
$x_j\in O^\com,$ implies that
\begin{align*}
S_h^{\epsilon,\,R,\,\frac{1}{20}}(f)(x)
\leq S_h^{\epsilon,\,R,\,\frac{1}{2}}(f)(x_j)\leq \lambda.
\end{align*}
This proves \eqref{leidi}.

By \eqref{leidi}, we conclude that  \eqref{2022b} holds true if $\epsilon\in[10\sqrt{n}l_j,R)$.
Thus, in the remainder of this proof,
we always assume that $\epsilon\in(0,10\sqrt{n}l_j).$
We claim that, to prove \eqref{2022b} in this case, it suffices to show that
\begin{align}\label{2022c}
	\left|\left\{x\in Q_j\cap F:\  S_h^{\epsilon,\, 10\sqrt{n}l_j,\, \frac{1}{20}}(f)(x)>\lambda\right\}\right|
	\lesssim \gamma^{2}|Q_j|.
\end{align}

Assume that \eqref{2022c} holds true for the moment. If $R\in(0,10\sqrt{n}l_j]$, by the estimate that
$$S_h^{\epsilon,\,10\sqrt{n}l_j,\,\frac{1}{20}}(f)(x)\geq S_h^{\epsilon,\,R,\,\frac{1}{20}}(f)(x)$$
for any $x\in\rn,$ we find that  \eqref{2022b} in this case holds true.
If $R\in(10\sqrt{n}l_j,\infty)$, applying an argument similar to that used in the estimation of \eqref{leidi}, we conclude that, for any $x\in Q_j,$
\begin{align*}
S_h^{10\sqrt{n}l_j,\,R,\,\frac{1}{20}}(f)(x)\leq \lambda;
\end{align*}
using this and the estimate that
\begin{align*}
S_h^{\epsilon,\,R,\,\frac{1}{20}}(f)(x)
\leq S_h^{\epsilon,\,10\sqrt{n}l_j,\,\frac{1}{20}}(f)(x)
+S_h^{10\sqrt{n}l_j,\,R,\,\frac{1}{20}}(f)(x)
\end{align*}
for any $x\in\rn,$
we find that \eqref{2022b} in this case also holds true. Thus, \eqref{2022b} always holds true.

Now, we prove \eqref{2022c}.
By the Chebyshev inequality, we conclude that \eqref{2022c} can be deduced from
\begin{align}\label{2022d}
\int_{Q_j\cap F}\left[S_h^{\epsilon,\, 10\sqrt{n}l_j,\, \frac{1}{20}}(f)(x)\right]^2\,dx
\lesssim (\gamma \lambda)^2|Q_j|.
\end{align}
Next, we prove \eqref{2022d}. If $\epsilon\in[5\sqrt{n}l_j,10\sqrt{n}l_j),$ from Lemma \ref{dtkz}, it follows that
\begin{align*}
\int_{Q_j\cap F}\left[S_h^{\epsilon,\, 10\sqrt{n}l_j,\, \frac{1}{20}}(f)(x)\right]^2\,dx
\lesssim
\int_{Q_j\cap F}\left[\mathcal{N}_h(f)(x)\right]^2\,dx
\lesssim
(\gamma\lambda)^2|Q_j|,
\end{align*}
which proves that \eqref{2022d} in this case holds true. In the remainder of this proof, we assume that $\epsilon\in(0,5\sqrt{n}l
_j).$ For any $y\in\rn,$ let
\begin{align*}
\psi(y):=\dist(y,Q_j\cap F),
\end{align*}
\begin{align*}
G:=\left\{(y,t)\in\rn\times(\epsilon,10\sqrt{n}l_j):\  \psi(y)<\frac{t}{20}\right\}
\end{align*}
and
\begin{align*}
G_1:=\left\{(y,t)\in\rn\times(\epsilon/2,20\sqrt{n}l_j):\  \psi(y)<\frac{t}{10}\right\}.
\end{align*}
Denote by
$C_{\mathrm{c}}^\infty(G_1)$ the set of all the infinitely differentiable functions on $G_1$ with compact support. Then there exists a function $\eta\in C_{\mathrm{c}}^\infty(G_1)$
such that $\eta\equiv1$ on $G$, $0\leq\eta\leq 1,$ and, for any $k\in\{1,\ldots,m\}$ and $(x,t)\in G_1,$
\begin{align}\label{eta}
\sum_{|\al|=k}|\p^\al \eta(x,t)|^2\lesssim \frac{1}{t^{2k}}
\ \ \text{and}\ \
\p_t\eta(x,t)\lesssim \frac{1}{t}.
\end{align}
Let $u(y,t):=e^{-t^{2m}L}(f)(y)$ for any $(y,t)\in\mathbb{R}^{n+1}_+.$
In what follows, for any $\gamma:=(\gamma_1,\ldots,\gamma_n)\in \zz_+^n$ and any weakly differentiable functions $h$ and $g$ on $\rr^{n+1}_+$, let
$\p^\gamma(hg)(y,t):=\p^\gamma(h(y,t)g(y,t))$ for any $(y,t)\in\rr^{n+1}_+$,
where $\p^\gamma:=(\frac{\p}{\p y_1})^{\gamma_1}\cdots (\frac{\p}{\p y_n})^{\gamma_n}$.
By  the Tonelli theorem, Strong
Ellipticity Condition \ref{sec},  the integral by parts, and the Leibniz rule, we obtain
\begin{align*}
&\int_{Q_j\cap F}\left[S_h^{\epsilon,\, 10\sqrt{n}l_j,\, \frac{1}{20}}(f)(x)\right]^2\,dx\\
&\quad=\int_{Q_j\cap F}\int_{\Gamma_{1/20}^{\epsilon,\,10\sqrt{n}l_j}(x)}\left|(t\nabla)^mu(y,t)\right|^2\,\frac{dy\,dt}{t^{n+1}}\,dx\\
&\quad\lesssim
\int_G t^{2m-1}\left|\nabla ^mu(y,t)\right|^2\,dy\,dt\\\noz
&\quad\lesssim
\Re\left\{\int_{G}t^{2m-1}\sum_{|\al|=|\beta|=m}a_{\al,\,\beta}(y)\p^{\beta}u(y,t)
\overline{\p^\al u(y,t)}\,dy\,dt\right\}\\\noz
&\quad\lesssim
\Re\left\{\int_{G_1}t^{2m-1}\sum_{|\al|=|\beta|=m}a_{\al,\,\beta}(y)\p^{\beta}u(y,t)
\overline{\p^\al u(y,t)}\eta(y,t)\,dy\,dt\right\}\\\noz
&\quad\sim
\left|\Re\left\{\int_{G_1}t^{2m-1}\sum_{|\al|=|\beta|=m}(-1)^m\p^\al \left(\eta a_{\al,\,
\beta}\p^\beta u\right)(y,t)\overline{u(y,t)}\,dy\,dt\right\}\right|\\\noz
&\quad\lesssim
\sum_{k=0}^m\Bigg|\Re\Bigg\{\int_{G_1}t^{2m-1}\sum_{|\al|=|\beta|=m}\sum_{|\widetilde{\al}
|=k,\,\widetilde{\al}\leq\al}C_{(\al,\widetilde{\al})}(-1)^m\p^{\widetilde{\al}}\eta(y,t)\\\noz
&\quad\quad\times\p^{\al-\widetilde{\al}}\left(a_{\al,\,\beta}\p^\beta
u\right)(y,t)\overline{u(y,t)}\,dy\,dt\Bigg\}\Bigg|\\\noz
&\quad=:\sum_{k=0}^m \mathrm{J}_k,
\end{align*}
where
the positive constant $C_{(\al,\,\widetilde{\al})}$ depends only on both $\al$ and $\widetilde{\al}$.
This further implies that, to prove \eqref{2022d}, it suffices to show that, for any $k\in\{0,\ldots,m\},$
\begin{align}\label{2022f}
\mathrm{J}_k\lesssim (\gamma\lambda)^2|Q_j|.
\end{align}

We first estimate $\mathrm{J}_0$. Since
$$\frac{\p u(y,t)}{\p t}=-2mt^{2m-1}L(u)(y,t)$$
for any $(y,t)\in\mathbb{R}^{n+1}_+,$ it follows that
\begin{align*}
\frac{\p |u(y,t)|^2}{\p t}
&=-2mt^{2m-1}L(u)(y,t)\overline{u(y,t)}-2mt^{2m-1}u(y,t)\overline{L(u)(y,t)}\\
&=-4mt^{2m-1}\Re{L(u)(y,t)\overline{u(y,t)}},
\end{align*}
which, together with both the integral by parts and \eqref{eta}, further implies that
\begin{align}\label{j0}
\mathrm{J}_0
&\sim\left|\Re\left\{\int_{G_1}t^{2m-1}\eta(y,t) L(u)(y,t) \overline{u(y,t)}\,dy\,dt\right\}\right|\\ \noz
&\sim\left|\int_{G_1}\frac{\p |u(y,t)|^2}{\p  t}\eta(y,t)\,dy\,dt\right|
\sim
\left|\int_{G_1}|u(y,t)|^2\p_t\eta(y,t)\,dy\,dt\right|\\\noz
&\lesssim\int_{G_1\setminus G}\frac{|u(y,t)|^2}{t}\,dy\,dt.
\end{align}
Let $\delta\in(0,\frac{1}{41})$  and
\begin{align*}
\mathcal{F}:=\left\{B_\rho((y,t),\delta t)\right\}_{(y,t)\in G_1\setminus G}.
\end{align*}
By Lemma \ref{covering}, we find that there exists a collection $\{B_\rho((y_i,t_i),\delta t_i)
\}_{i\in\Lambda}$ of $\mathcal{F}$ such that
\begin{align}\label{juren}
G_1\setminus G\subset
\bigcup_{i\in\Lambda} B_\rho((y_i,t_i),\delta t_i)
\ \ \text{and}\ \
\sum_{i\in\Lambda}\mathbf{1}_{B_\rho((y_i,t_i),\delta t_i)}\leq N_n,
\end{align}
where $N_n$ is the same as in Lemma \ref{covering}. For simplicity, let $E_i:=B_\rho((y_i,t_i),\delta t_i)$
for any $i\in\Lambda.$ From both the fact that
\begin{align*}
G_1\setminus G&\subset
\left\{(y,t)\in\rn\times(\epsilon/2,20\sqrt{n}l_j):\  \frac{t}{20}\leq\psi(y)<\f t {10}\right\}\\
&\quad\cup\left\{(y,t)\in\rn\times(\epsilon/2,20\sqrt{n}l_j):\  \psi(y)<\f t {10},\,
\frac{\epsilon}{2}\leq t<\epsilon\right\}\\
&\quad\cup\left\{(y,t)\in\rn\times(\epsilon/2,20\sqrt{n}l_j):\  \psi(y)<\f t {10},\,
10\sqrt{n}l_j\leq t<20\sqrt{n}l_j\right\}
\end{align*}
and $\delta\in(0,\frac{1}{41})$, we deduce that, for any $i\in\Lambda,$
\begin{align}\label{1951}
E_i\subset G_2,
\end{align}
where
\begin{align*}
G_2:&=
\left\{(y,t)\in\rn\times(\epsilon/5,30\sqrt{n}l_j):\  \frac{t}{40}\leq\psi(y)<\f t 2\right\}\\
&\quad\cup\left\{(y,t)\in\rn\times(\epsilon/5,30\sqrt{n}l_j):\  \psi(y)<\f t 2,\,
\frac{\epsilon}{5}\leq t<2\epsilon\right\}\\
&\quad\cup\left\{(y,t)\in\rn\times(\epsilon/5,30\sqrt{n}l_j):\  \psi(y)<\f t 2,\,
5\sqrt{n}l_j\leq t<30\sqrt{n}l_j\right\}.
\end{align*}
It is easy to prove that, for any $i\in\Lambda$,
\begin{align}\label{202213}
\int_{E_i}|u(y,t)|^2\,dy\,dt
\leq\int_{\widetilde{E}_i}|u(y,t)|^2\,dy\,dt
\lesssim (\gamma\lambda)^2t_i^{n+1},
\end{align}
where $\widetilde{E}_i:=B_\rho((y_i,t_i),2\delta t_i).$
Indeed, by  both $(y_i,t_i)\in G_1$ and $\delta\in(0,\frac{1}{41})$, we conclude that there exists an
$x_{i,j}\in Q_j\cap F$ such that
\begin{align*}
|x_{i,j}-y_i|<\frac{t_i}{10}<(1-2\delta)t_i.
\end{align*}
From this and the definition of $\mathcal{N}_h(f)$, it follows that
\begin{align*}
\int_{\widetilde{E}_i}|u(y,t)|^2\,dy\,dt
&=\int_{(1-2\delta)t_i}^{(1+2\delta)t_i}\int_{B(y_i,2\delta t_i)}
|u(y,t)|^2\,dy\,dt\\
&\lesssim t_i^n\int_{(1-2\delta)t_i}^{(1+2\delta)t_i}\frac{1}{t^n}\int_{B(y_i,t)}
|u(y,t)|^2\,dy\,dt\\
&\lesssim  [\mathcal{N}_h(f)(x_{i,j})]^2t_i^n\int_{(1-2\delta)t_i}^{(1+2\delta)t_i}\,dt
\lesssim (\gamma\lambda)^2t_i^{n+1},
\end{align*}
which implies that \eqref{202213} holds true. Then, using \eqref{juren}, \eqref{202213},
and \eqref{1951}, we find that
\begin{align}\label{20221957}
\int_{G_1\setminus G}
\f{|u(y,t)|^2}{t}\,dy\,dt
&\leq\sum_{i\in\Lambda}\int_{E_i}
\f{|u(y,t)|^2}{t}\,dy\,dt\\\noz
&\lesssim t_i^{-1}\sum_{i\in\Lambda}\int_{E_i}|u(y,t)|^2\,dy\,dt
\lesssim\sum_{i\in\Lambda}(\gamma\lambda)^2t_i^{n}\\\noz
&\sim(\gamma\lambda)^2\sum_{i\in\Lambda}\int_{E_i}t^{-1}\,dy\,dt
\lesssim(\gamma\lambda)^2\int_{G_2}t^{-1}\,dy\,dt\\\noz
&\lesssim(\gamma\lambda)^2\int_{H_1}\left\{\int_{2\psi(y)}^{40\psi(y)}\,\frac{dt}{t}+
\int_{\epsilon/5}^{2\epsilon}\,\frac{dt}{t}+
\int_{5\sqrt{n}l_j}^{30\sqrt{n}l_j}\,\frac{dt}{t}\right\}\,dy\\\noz
&\sim(\gamma\lambda)^2|H_1|,
\end{align}
where
\begin{align*}
H_1:=\left\{y\in\rn:\  \text{there exists a}\ t\in(\epsilon/5,30\sqrt{n}l_j)\ \text{such that}\ (y,t)\in G_2 \right\}.
\end{align*}
By the definitions of both $H_1$ and $G_2,$ we conclude that, for any $y\in H_1,$  there exists a
$t\in(\epsilon/5,30\sqrt{n}l_j)$ and an $x\in Q_j$ such that
\begin{align*}
|x-y|<\f t 2<15\sqrt{n}l_j,
\end{align*}
which further implies that $y\in 32Q_j.$ Therefore, $H_1\subset 32Q_j.$ From this, \eqref{20221957},
and \eqref{j0}, we deduce that
\begin{align}\label{2022fj0}
\mathrm{J}_0\lesssim (\gamma\lambda)^2|Q_j|.
\end{align}

Let $k\in\{1,\ldots,m\}$. Next, we deal with $\mathrm{J}_k$. Via the integral by parts,
\eqref{juren}, and the H\"older inequality, we find that
\begin{align}\label{2022fjk}
\mathrm{J}_k
&\lesssim \sum_{|\al|=|\beta|=m}\sum_{|\widetilde{\al}|=k,\,\widetilde{\al}\leq\al}
\left|\int_{G_1\setminus G}t^{2m-1}
\p^{\widetilde{\al}}\eta (y,t)\overline{u(y,t)}\p^{\al-\widetilde{\al}}\left(a_{\al,\,\beta}\p^\beta u\right)(y,t)
\,dy\,dt\right|\\\noz
&\sim \sum_{|\al|=|\beta|=m}\sum_{|\widetilde{\al}|=k,\,\widetilde{\al}\leq\al}
\left|\int_{G_1\setminus G}t^{2m-1}a_{\al,\beta}(y)
\p^\beta u(y,t)
\overline{\p^{\al-\widetilde{\al}}\left(\left(\p^{\widetilde{\al}}\eta\right)u\right)(y,t)}\,dy\,dt\right|\\\noz
&\lesssim
\sum_{|\al|=|\beta|=m}\sum_{|\widetilde{\al}|=k,\,\widetilde{\al}\leq\al}
\sum_{i\in\Lambda}
\int_{E_i}\left|t^{2m-1}a_{\al,\beta}(y) \p^\beta u(y,t)
\p^{\al-\widetilde{\al}}\left(\left(\p^{\widetilde{\al}}\eta\right)u\right)(y,t)\right|\,dy\,dt\\\noz
&\lesssim
\sum_{|\al|=|\beta|=m}\sum_{|\widetilde{\al}|=k,\,\widetilde{\al}\leq\al}
\sum_{i\in\Lambda}
\left\{\int_{E_i}t^{2m-1}\left|\p^\beta
u(y,t)\right|^2\,dy\,dt\right\}^{1/2}\\\noz
&\quad\times
\left\{\int_{E_i}t^{2m-1}\left|\p^{\al-\widetilde{\al}}\left(\left(\p^{\widetilde{\al}}\eta\right)u\right)(y,t)\right|^2\,dy\,dt\right\}^{1/2}\\\noz
&=:\sum_{|\al|=|\beta|=m}\sum_{|\widetilde{\al}|=k,\,\widetilde{\al}\leq\al}\sum_{i\in\Lambda}
\mathrm{J}_{k,\,i,\,\al,\,\widetilde{\al},\,\beta}.
\end{align}
Using both Proposition \ref{cacci}
and \eqref{202213}, we conclude that, for any $i\in\Lambda$,
\begin{align}\label{yourenma}
\int_{E_i}t^{2m-1}\left|\p^\beta
u(y,t)\right|^2\,dy\,dt
&\lesssim{t_i}^{2m-1}\int_{E_i}\left|\nabla^m
u(y,t)\right|^2\,dy\,dt\\\noz
&\lesssim{t_i}^{-1}\int_{\widetilde{E}_i}|u(y,t)|^2\,dy\,dt
\lesssim (\gamma\lambda)^2t_i^{n}.
\end{align}
On the other hand, from \cite[Theorem 5.2]{AF03} (with some slight modifications), it follows that,
for any ball $B$, any $g\in W^{m,2}(B)$ (see, for instance, \cite[Chapter 3]{AF03} for the precise definition),
and $\ell\in\{0,1,\ldots,m\}$,
\begin{align*}
\left\|\nabla ^\ell g\right\|_{L^2(B)}
\leq  C_{(n,\,m)}\left\|\nabla^m g\right\|_{L^2(B)}^{\f \ell m}
\left\|g\right\|_{L^2(B)}^{1-\f \ell m},
\end{align*}
where the positive constant $C_{(n,\,m)}$ depends only on both $n$ and $m.$ By this, \eqref{eta}, the H\"older inequality,
Proposition \ref{cacci}, and \eqref{202213}, we find that
\begin{align*}
&\int_{E_i}t^{2m-1}\left|\p^{\al-\widetilde{\al}}\left(\left(\p^{\widetilde{\al}}\eta\right)u\right)(y,t)\right|^2\,dy\,dt\\
&\quad\lesssim
\int_{E_i}\sum_{|\widetilde{\gamma}|=h,\,
\widetilde{\gamma}\leq \al-\widetilde{\al}}t^{2m-2|\al-\widetilde{\gamma}|-1}\left|\p^{\widetilde{\gamma}} u(y,t)\right|^2\,dy\,dt
\\\noz
&\quad\lesssim \sum_{|\widetilde{\gamma}|=h,\,\widetilde{\gamma}\leq \al-\widetilde{\al}}t_i^{2m-2|\al-\widetilde{\gamma}|-1}\int_{(1-\delta )t_i}^{(1+\delta)t_i}\left\{\int_{B(y_i,\,\delta t_i)}
\left|\nabla ^mu(y,t)\right|^2
\,dy\right\}^{\frac{|\widetilde{\gamma}|}{m}}\\\noz
&\quad\quad\times
\left\{\int_{B(y_i,\,\delta t_i)}\left| u(y,t)\right|^2\,dy\right\}^{1-\frac{|\widetilde{\gamma}|}{m}}dt\\\noz
&\quad\lesssim
\sum_{|\widetilde{\gamma}|=h,\,\widetilde{\gamma}\leq \al-\widetilde{\al}}t_i^{2m-2|\al-\widetilde{\gamma}|-1}\left\{\int_{E_i}
\left|\nabla^m u(y,t)\right|^2\,dy\,dt\right\}
^{\frac{|\widetilde{\gamma}|}{m}}
\left\{\int_{E_i}\left| u(y,t)\right|^2\,dy\,dt\right\}^{1-\frac{|\widetilde{\gamma}|}{m}}\\\noz
&\quad\lesssim \sum_{|\widetilde{\gamma}|=h,\,\widetilde{\gamma}\leq \al-\widetilde{\al}}t_i^{2m-2|\al-\widetilde{\gamma}|-1}\left\{\frac{1}{t_i^{2m}}\int_{\widetilde{E}_i}
\left|u(y,t)\right|^2\,dy\,dt\right\}
^{\frac{|\widetilde{\gamma}|}{m}}
\left\{\int_{E_i}\left| u(y,t)\right|^2\,dy\,dt\right\}^{1-\frac{|\widetilde{\gamma}|}{m}}\\\noz
&\quad\lesssim (\gamma\lambda)^2t_i^n.
\end{align*}
This, combined with \eqref{yourenma}, \eqref{2022fjk}, and the estimation of \eqref{20221957}, implies that
\begin{align*}
\mathrm{J}_k\lesssim(\gamma\lambda)^2\sum_{i\in\Lambda}t_i^n
\lesssim (\gamma\lambda)^2|H_1|
\lesssim(\gamma\lambda)^2|Q_j|,
\end{align*}
which, together with \eqref{2022fj0}, further implies that \eqref{2022f} holds true. This finishes
the proof of Lemma \ref{good}.
\end{proof}

For any $M\in\zz_+,$ $f\in L^2(\rn)$, and $x\in\rn,$  let
\begin{align*}
\mathcal{R}_{h,\, M}(f)(x):=\sup_{t\in(0,\infty)}\left[\frac{1}{t^n}\int_{B(x,t)}
\left|\left(t^{2m}L\right)^Me^{-t^{2m}L}(f)(y)\right|^2\,dy\right]^{1/2}.
\end{align*}
By an argument similar to that used in the estimations of \cite[(6.48) and (6.49)]{HM09},
we have the following conclusion; we omit the details here.

\begin{lemma}\label{6.47}
Let  $m\in\nn$, $L$ be a homogeneous divergence form $2m$-order elliptic operator  in \eqref{high} satisfying
Ellipticity Condition \ref{ec}, $q\in(p_{-}(L),p_{+}(L))$, and $M\in\zz_+.$  Then there exists
a positive constant $C$ such that, for any $f\in L^2(\rn)\cap L^q(\rn),$
\begin{align*}
\left\|\mathcal{R}_{h,\, M}(f)\right\|_{L^q(\rn)}\leq C\|f\|_{L^q(\rn)}.
\end{align*}
\end{lemma}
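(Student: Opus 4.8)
The plan is to reduce the $L^q(\rn)$-boundedness of $\mathcal{R}_{h,\,M}$ to a pointwise domination by a powered Hardy--Littlewood maximal operator, and then to invoke the classical Hardy--Littlewood maximal inequality. Since $q\in(p_-(L),p_+(L))$ and, by Proposition \ref{basic}(i), $p_-(L)<2<p_+(L)$, we may fix an auxiliary exponent $p_0\in(p_-(L),\min\{q,2\})$. The key claim to be proved is that, for any $f\in L^2(\rn)\cap L^q(\rn)$ and any $x\in\rn$,
\begin{align*}
\mathcal{R}_{h,\,M}(f)(x)\ls\lf[\cm\lf(|f|^{p_0}\r)(x)\r]^{1/p_0}.
\end{align*}
Granting this, the boundedness of $\cm$ on $L^{q/p_0}(\rn)$ (note that $q/p_0>1$) together with the identity $\||g|^{p_0}\|_{L^{q/p_0}(\rn)}^{1/p_0}=\|g\|_{L^q(\rn)}$ immediately gives $\|\mathcal{R}_{h,\,M}(f)\|_{L^q(\rn)}\ls\|f\|_{L^q(\rn)}$, which is the desired estimate.

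To prove the pointwise claim, I would fix $x\in\rn$ and $t\in(0,\fz)$, set $B:=B(x,t)$, and split $f=\sum_{j=0}^\fz f\mathbf{1}_{U_j(B)}$; each $f\mathbf{1}_{U_j(B)}$ lies in $L^2(\rn)\cap L^{p_0}(\rn)$ (since $|U_j(B)|<\fz$ and $p_0\le 2$) and is supported in $\ov{U_j(B)}$. Rewriting the average in the definition of $\mathcal{R}_{h,\,M}$ as $|B|^{-1/2}\|(t^{2m}L)^Me^{-t^{2m}L}(f)\|_{L^2(B)}$ and applying to each summand the $m$-$L^{p_0}$-$L^2$ off-diagonal estimates of Proposition \ref{basic}(iii), with $k:=M$ and semigroup parameter $t^{2m}$, I get, for $j=0$ (where $U_0(B)=2B=B(x,2t)$ and $\dist(U_0(B),B)=0$),
\begin{align*}
|B|^{-1/2}\lf\|(t^{2m}L)^Me^{-t^{2m}L}\lf(f\mathbf{1}_{2B}\r)\r\|_{L^2(B)}\ls t^{-n/p_0}\lf\|f\mathbf{1}_{2B}\r\|_{L^{p_0}(\rn)}\ls\lf[\cm\lf(|f|^{p_0}\r)(x)\r]^{1/p_0},
\end{align*}
and, for each $j\in\nn$ (where $U_j(B)\subset B(x,2^{j+1}t)$ and $\dist(U_j(B),B)\gs 2^{j-1}t$, so that $[\dist(U_j(B),B)]^{\frac{2m}{2m-1}}(t^{2m})^{-\frac{1}{2m-1}}\gs 2^{(j-1)\frac{2m}{2m-1}}$),
\begin{align*}
|B|^{-1/2}\lf\|(t^{2m}L)^Me^{-t^{2m}L}\lf(f\mathbf{1}_{U_j(B)}\r)\r\|_{L^2(B)}\ls 2^{jn/p_0}\exp\lf\{-C2^{(j-1)\frac{2m}{2m-1}}\r\}\lf[\cm\lf(|f|^{p_0}\r)(x)\r]^{1/p_0}.
\end{align*}
Summing over $j\in\zz_+$, using that $\sum_{j\in\zz_+}2^{jn/p_0}\exp\{-C2^{(j-1)\frac{2m}{2m-1}}\}<\fz$ (the super-exponential decay in $j$ dominates the polynomial growth), and finally taking the supremum over $t\in(0,\fz)$, yields the pointwise claim.

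The whole argument is this off-diagonal bookkeeping, so there is no essential obstacle; the only points requiring care are the rescaling between the semigroup time $t^{2m}$ and the geometric scale $t$---which is exactly what turns the decay factor $[\dist(E,F)]^{\frac{2m}{2m-1}}(t^{2m})^{-\frac{1}{2m-1}}$ into $(\dist(E,F)/t)^{\frac{2m}{2m-1}}$---and the admissibility of the exponent pair $(p_0,2)$ in Proposition \ref{basic}(iii), which is guaranteed by $p_-(L)<p_0\le2\le p_+(L)$, itself a consequence of Proposition \ref{basic}. The higher order of $L$ introduces no new difficulty here, and this is precisely the strategy behind the estimations of \cite[(6.48) and (6.49)]{HM09}.
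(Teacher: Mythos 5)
Your proposal is correct and is essentially the argument the paper intends: the paper omits the proof, referring to the estimations of (6.48) and (6.49) in \cite{HM09}, which are exactly this annular decomposition combined with the $m$-$L^{p_0}$-$L^2$ off-diagonal estimates to dominate $\mathcal{R}_{h,\,M}(f)$ pointwise by $[\cm(|f|^{p_0})]^{1/p_0}$ with $p_-(L)<p_0<\min\{q,2\}$, followed by the Hardy--Littlewood maximal inequality on $L^{q/p_0}(\rn)$. Your bookkeeping of the scaling $t\mapsto t^{2m}$ and of the admissibility $p_-(L)<p_0\le 2<p_+(L)$ is accurate, so no gap remains.
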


The following extrapolation theorem is  a slight variant of a special case of \cite[Theorem 4.6]{cmp11}
via replacing Banach function spaces by ball Banach function spaces; we omit the details here.

\begin{lemma}\label{waicha}
Assume that $X$ is a ball quasi-Banach function space on $\rn$ and $p_0\in(0,\infty)$.
Let $\mathcal{F}$ be the set of all pairs $(F,G)$ of nonnegative measurable functions such that,
for any given $\omega\in A_1(\rn)$,
$$
\int_{\rn}[F(x)]^{p_0}\omega(x)\,dx\leq C_{(p_0,[\omega]_{A_1(\rn)})}
\int_{\rn}[G(x)]^{p_0}\omega(x)\,dx,
$$
where $C_{(p_0,[\omega]_{A_1(\rn)})}$ is a positive constant independent of $(F,G)$, but depends on both $p_0$
and $[\omega]_{A_1(\rn)}$. Assume further that
$X^{1/p_0}$ is a ball Banach function space and $\cm$ is bounded on $(X^{1/p_0})'$,
where $\cm$ is the same as in \eqref{jidahanshu}.
Then there exists a positive constant $C_0$ such that, for any $(F,G)\in\mathcal{F}$,
$$\|F\|_{X}\leq C_0\|G\|_{X}.$$
\end{lemma}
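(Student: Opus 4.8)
The plan is to run the classical Rubio de Francia extrapolation (iteration) scheme. Since the lemma is only claimed to be a ``slight variant'' of \cite[Theorem~4.6]{cmp11} with Banach function spaces replaced by ball Banach function spaces, all that has to be checked is that the three ingredients of that scheme remain available in the ball setting: the associate space $X'$ (a ball Banach function space, by Remark \ref{bbf}), the H\"older-type inequality $\int_{\rn}|fg|\,dx\le\|f\|_X\|g\|_{X'}$, and the Lorentz--Luxemburg duality $\|f\|_X=\sup\{\int_{\rn}|fg|\,dx:\ \|g\|_{X'}\le1\}$, the latter two being part of the associate-space theory in \cite[Section~2.1]{SHYY17}. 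I would first reduce to the case $p_0=1$: set $Y:=X^{1/p_0}$, which by hypothesis is a ball Banach function space on which $\cm$ is bounded on $Y'$; a direct computation from Definition \ref{Debf} shows $(X^{1/p_0})^{p_0}=X$ with equal quasi-norms, so $\|F\|_X=\|F^{p_0}\|_Y^{1/p_0}$ for every nonnegative $F\in\mathscr M(\rn)$, and if $(F,G)\in\mathcal F$ then the pair $(F^{p_0},G^{p_0})$ satisfies the hypothesis with exponent $1$ (the constant $C_{(p_0,\cdot)}$ being taken non-decreasing in the $A_1$-characteristic). Thus it suffices to treat $p_0=1$ with $X$ a ball Banach function space and $\cm$ bounded on $X'$, and then replace $C_0$ by $C_0^{1/p_0}$ at the end.

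Next I would build the Rubio de Francia operator. Writing $\|\cm\|:=\|\cm\|_{X'\to X'}$, for $0\le h\in X'$ set $\mathcal{R}h:=\sum_{k=0}^{\infty}(2\|\cm\|)^{-k}\cm^{k}h$, where $\cm^{k}$ denotes the $k$-fold iterate of $\cm$ and $\cm^{0}h:=h$. Three properties hold: (a) $0\le h\le\mathcal{R}h$ pointwise; (b) $\|\mathcal{R}h\|_{X'}\le2\|h\|_{X'}$, by the triangle inequality in $X'$ together with $\|\cm^{k}h\|_{X'}\le\|\cm\|^{k}\|h\|_{X'}$; and (c) $\cm(\mathcal{R}h)\le2\|\cm\|\,\mathcal{R}h$ pointwise, by sublinearity of $\cm$, whence $\mathcal{R}h\in A_1(\rn)$ with $[\mathcal{R}h]_{A_1(\rn)}\le2\|\cm\|$, a bound \emph{independent of} $h$.

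Finally I would combine these with truncation and duality. Let $(F,G)\in\mathcal F$; we may assume $\|G\|_X<\infty$. For $N\in\nn$ put $F_N:=\min\{F,N\}\mathbf 1_{B(\mathbf 0,N)}$; by (ii) and (iv) of Definition \ref{Debqfs} one has $F_N\in X$, and $(F_N,G)\in\mathcal F$ since $F_N\le F$. By the Lorentz--Luxemburg duality choose $0\le h_N\in X'$ with $\|h_N\|_{X'}\le1$ and $\|F_N\|_X\le2\int_{\rn}F_Nh_N\,dx$. Using (a), then the hypothesis applied to $(F_N,G)$ with the $A_1$ weight $\mathcal{R}h_N$ and the uniform bound in (c), then the H\"older-type inequality and (b),
\begin{align*}
\|F_N\|_X&\le2\int_{\rn}F_N\,h_N\,dx\le2\int_{\rn}F_N\,\mathcal{R}h_N\,dx\le2\,C_{(1,\,2\|\cm\|)}\int_{\rn}G\,\mathcal{R}h_N\,dx\\
&\le2\,C_{(1,\,2\|\cm\|)}\|G\|_X\,\|\mathcal{R}h_N\|_{X'}\le4\,C_{(1,\,2\|\cm\|)}\|G\|_X.
\end{align*}
Since $0\le F_N\uparrow F$ almost everywhere, property (iii) of Definition \ref{Debqfs} gives $\|F\|_X=\lim_{N\to\infty}\|F_N\|_X\le4\,C_{(1,\,2\|\cm\|)}\|G\|_X$, which is the claim with $C_0:=4C_{(1,\,2\|\cm\|)}$ (and $C_0^{1/p_0}$ after undoing the first reduction). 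I expect the main obstacle to be purely bookkeeping: one must verify carefully that the Lorentz--Luxemburg duality and the H\"older-type inequality genuinely hold for ball Banach function spaces (they do, by \cite[Section~2.1 and Proposition~2.3]{SHYY17}), and the truncation step is precisely what lets us avoid any a priori finiteness assumption on $\|F\|_X$; beyond that, the Rubio de Francia scheme goes through verbatim.
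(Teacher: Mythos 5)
Your argument is correct and is essentially the proof the paper points to: the paper omits the details and simply invokes \cite[Theorem 4.6]{cmp11}, whose Rubio de Francia iteration $\mathcal{R}h:=\sum_{k}(2\|\cm\|)^{-k}\cm^{k}h$, duality, and the uniform bound $[\mathcal{R}h]_{A_1(\rn)}\le2\|\cm\|$ you reproduce verbatim in the ball-function-space setting, together with the clean reduction to $p_0=1$ and the truncation/Fatou step. The only point to tidy is the citation for the Lorentz--Luxemburg duality $X=X''$ (with equal norms) for ball Banach function spaces, which is true but is established in the literature following \cite{SHYY17} rather than in \cite[Section 2.1]{SHYY17} itself.
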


For any  $\al\in(0,\infty)$, any measurable function $F$ on $ \mathbb{R}^{n+1}_+$, and any $x\in\rn,$  let
\begin{align}\label{zhangpeng}
\mathcal{A}^{(\al)}(F)(x):=\left\{\int_{\Gamma_\al(x)}|F(y,t)|^2\,\frac{dy\,dt}{t^{n+1}}\right\}^{1/2},
\end{align}
where $\Gamma_\al(x)$ is the same as in \eqref{zui}.
We have the following lemma, which is a part of \cite[Proposition 4.9]{CMP20}.

\begin{lemma}\label{pro4.9}
Let $\al,\beta\in(0,\infty)$, $p\in(0,2],$ and $\omega\in A_1(\rn).$ Then there exist
positive constants $C_1$ and $C_2$, depending only on both $p$ and $[\omega]_{A_1(\rn)},$ such that, for any measurable function $F$ on $\mathbb{R}^{n+1}_+$,
\begin{align*}
C_1\left\|\mathcal{A}^{(\beta)}(F)\right\|_{L^p_\omega(\rn)}
\leq\left\|\mathcal{A}^{(\al)}(F)\right\|_{L^p_\omega(\rn)}
\leq C_2\left\|\mathcal{A}^{(\beta)}(F)\right\|_{L^p_\omega(\rn)}.
\end{align*}
\end{lemma}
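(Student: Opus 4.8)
The two displayed estimates together assert that $\lf\|\mathcal{A}^{(\al)}(F)\r\|_{L^p_\omega(\rn)}$ and $\lf\|\mathcal{A}^{(\bz)}(F)\r\|_{L^p_\omega(\rn)}$ are comparable, so my plan is to bound each of these norms by the other. One direction is free: since $\Gamma_\al(x)\st\Gamma_\bz(x)$ whenever $\al\le\bz$, we have the pointwise bound $\mathcal{A}^{(\al)}(F)\le\mathcal{A}^{(\bz)}(F)$, hence $\lf\|\mathcal{A}^{(\al)}(F)\r\|_{L^p_\omega(\rn)}\le\lf\|\mathcal{A}^{(\bz)}(F)\r\|_{L^p_\omega(\rn)}$ when $\al\le\bz$. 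Thus the real task is to show, for $0<\al<\bz$,
\begin{equation}\label{p49-main}
\lf\|\mathcal{A}^{(\bz)}(F)\r\|_{L^p_\omega(\rn)}\ls\lf\|\mathcal{A}^{(\al)}(F)\r\|_{L^p_\omega(\rn)},
\end{equation}
i.e., that enlarging the aperture costs only a constant. I would prove \eqref{p49-main} by first settling the exponent $p=2$ through a direct Fubini computation and then bootstrapping to the range $p\in(0,2]$.

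For $p=2$ I would invoke the Tonelli theorem together with the elementary identity $\mathbf{1}_{\Gamma_\gamma(x)}(y,t)=\mathbf{1}_{B(y,\gamma t)}(x)$ to write, for any $\gamma\in(0,\fz)$,
\begin{align*}
\lf\|\mathcal{A}^{(\gamma)}(F)\r\|_{L^2_\omega(\rn)}^2=\int_{\urn}|F(y,t)|^2\,\frac{\omega(B(y,\gamma t))}{t^{n+1}}\,dy\,dt .
\end{align*}
Applying this with $\gamma=\bz$ and with $\gamma=\al$, and using that $\omega\in A_1(\rn)$ is doubling --- precisely, $\omega(B(y,\bz t))\le[\omega]_{A_1(\rn)}(\bz/\al)^n\,\omega(B(y,\al t))$ for all $y\in\rn$ and $t\in(0,\fz)$ --- I would obtain \eqref{p49-main} for $p=2$ with a constant depending only on $n$, $[\omega]_{A_1(\rn)}$ and $\bz/\al$. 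Note that this computation uses nothing beyond the doubling of $\omega$, hence it is valid for every $A_\infty(\rn)$ weight.

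To pass from $p=2$ to $p\in(0,2]$ I see two natural routes. (i) Establish, for a fixed $q\in(0,p)$ and every $x\in\rn$, the pointwise estimate
\begin{equation}\label{p49-ptwise}
\mathcal{A}^{(\bz)}(F)(x)\le C_{(n,\,q,\,\bz/\al)}\lf[\cm\lf(\lf[\mathcal{A}^{(\al)}(F)\r]^{q}\r)(x)\r]^{1/q};
\end{equation}
since $p/q>1$ and $\omega\in A_1(\rn)\st A_{p/q}(\rn)$, the operator $\cm$ is bounded on $L^{p/q}_\omega(\rn)$, so \eqref{p49-ptwise} gives
\begin{align*}
\lf\|\mathcal{A}^{(\bz)}(F)\r\|_{L^p_\omega(\rn)}^{q}
&=\lf\|\lf[\mathcal{A}^{(\bz)}(F)\r]^{q}\r\|_{L^{p/q}_\omega(\rn)}\\
&\ls\lf\|\cm\lf(\lf[\mathcal{A}^{(\al)}(F)\r]^{q}\r)\r\|_{L^{p/q}_\omega(\rn)}
\ls\lf\|\mathcal{A}^{(\al)}(F)\r\|_{L^p_\omega(\rn)}^{q},
\end{align*}
which is \eqref{p49-main}. (ii) Alternatively, by the previous paragraph the family of pairs $\{(\mathcal{A}^{(\bz)}(F),\mathcal{A}^{(\al)}(F))\}_{F}$ satisfies the weighted $L^2$ inequality for \emph{every} $\omega\in A_\infty(\rn)$, with constant controlled by $[\omega]_{A_\infty(\rn)}$; the Rubio de Francia extrapolation theorem in the $A_\infty(\rn)$ scale --- the same circle of ideas underlying Lemma \ref{waicha} --- then upgrades this to the $L^p_\omega(\rn)$ inequality for all $p\in(0,\fz)$ and $\omega\in A_\infty(\rn)$, which in particular covers $p\in(0,2]$ and $\omega\in A_1(\rn)$.

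The step I expect to be the main obstacle is \eqref{p49-ptwise} (equivalently, producing the $p=2$ estimate uniformly over $A_\infty(\rn)$ in the quantitative form that extrapolation needs). The difficulty is genuinely geometric: when $\bz>\al$, a point $(y,t)\in\Gamma_\bz(x)$ lies in $\Gamma_\al(w)$ only for $w\in B(y,\al t)$, and at each height this set is a fixed fraction --- of measure $\sim(\al/\bz)^n$ --- of the corresponding horizontal slice of $\Gamma_\bz(x)$, so the aperture cannot be changed by a crude cone inclusion. Following the classical change-of-aperture arguments of Coifman--Meyer--Stein, the remedy is to insert $1=|B(y,\al t)|^{-1}\int_{B(y,\al t)}\mathbf{1}_{\Gamma_\al(w)}(y,t)\,dw$ into the integral defining $\mathcal{A}^{(\bz)}(F)(x)^2$ and then, through a Whitney-type covering/stopping argument organised around $x$, to dominate the whole contribution by a Hardy--Littlewood $\cm$-average of $\lf[\mathcal{A}^{(\al)}(F)\r]^{q}$ over a ball centred near $x$; the freedom to take the exponent $q<p\le 2$ small is exactly what lets one absorb the geometric loss $\sim(\bz/\al)^{n/q}$ into the $q$-dependent constant. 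Throughout, one must verify that every constant depends only on $n$, $p$, $\bz/\al$ and $[\omega]_{A_1(\rn)}$. The remaining ingredients --- the doubling property of $A_1(\rn)$ weights, the boundedness of $\cm$ on $L^{p/q}_\omega(\rn)$, and the extrapolation theorem --- are standard (the last one being exactly of the type recorded in Lemma \ref{waicha}).
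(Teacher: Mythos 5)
You should first note that the paper itself gives no argument for Lemma \ref{pro4.9}: it is quoted verbatim from \cite[Proposition 4.9]{CMP20}. Measured against that, your route (ii) is a correct, essentially self-contained derivation. The Tonelli computation
\[
\left\|\mathcal{A}^{(\gamma)}(F)\right\|_{L^2_\omega(\rn)}^2
=\int_{\mathbb{R}^{n+1}_+}|F(y,t)|^2\,\frac{\omega(B(y,\gamma t))}{t^{n+1}}\,dy\,dt ,
\]
combined with the quantitative doubling of $A_\infty(\rn)$ weights, gives the case $p=2$ for every $\omega\in A_\infty(\rn)$ with constant depending only on $n$, $\beta/\alpha$, and the weight characteristic; the $A_\infty$ extrapolation theorem in \cite{cmp11} (note: the $A_\infty$ variant, not the $A_1$-into-$X$ variant recorded in Lemma \ref{waicha}) then upgrades this to all $p\in(0,\infty)$ and all $\omega\in A_\infty(\rn)$, which contains the stated range $p\in(0,2]$, $\omega\in A_1(\rn)$; together with the trivial pointwise comparison for nested cones this proves the lemma with constants depending only on $n$, $p$, $\beta/\alpha$, and $[\omega]_{A_1(\rn)}$. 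For the extrapolation hypothesis you should simply record the $L^2$ constant in terms of the $A_\infty$ characteristic, which controls the doubling constant.

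Route (i), however, is not just ``the main obstacle'': the pointwise estimate you propose is false for every $q\in(0,\infty)$, and it is not equivalent to the extrapolation input (which your second paragraph already establishes in full). Take $\alpha=1$, $\beta=2$, $x=\mathbf{0}$, and let $F$ be a sum of $N$ bumps with pairwise disjoint supports, the $k$-th bump concentrated near $(y_k,t_k)$ with $t_k=2^k$, $|y_k|=\frac{3}{2}t_k$, and normalized so that it contributes $1$ to the square of the area functional. Every bump lies inside $\Gamma_2(\mathbf{0})$, so $\mathcal{A}^{(2)}(F)(\mathbf{0})\sim N^{1/2}$; on the other hand, a fixed $w$ sees at most a bounded number of bumps through $\Gamma_1(w)$ (the balls $B(y_k,2t_k)$ have bounded overlap), so $\|\mathcal{A}^{(1)}(F)\|_{L^\infty(\rn)}\ls1$ and hence $[\cm([\mathcal{A}^{(1)}(F)]^q)(\mathbf{0})]^{1/q}\ls1$ for every $q$. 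Thus no constant $C_{(n,\,q,\,\beta/\alpha)}$ can work: the failure comes from the accumulation of unboundedly many scales, not from the geometric factor $(\beta/\alpha)^{n/q}$, so taking $q$ small cannot repair it; and inserting $1=|B(y,\alpha t)|^{-1}\int_{B(y,\alpha t)}\mathbf{1}_{\Gamma_\alpha(w)}(y,t)\,dw$ only bounds $[\mathcal{A}^{(\beta)}(F)(x)]^2$ by an integral of $[\mathcal{A}^{(\alpha)}(F)(w)]^2$ against the non-integrable kernel $|w-x|^{-n}$. The genuine Coifman--Meyer--Stein change-of-aperture argument is distributional, not pointwise: one proves an $L^2$ bound for $\mathcal{A}^{(\beta)}(F)$ on the points of global density of $\{\mathcal{A}^{(\alpha)}(F)\le\lambda\}$ and then compares level sets (a good-$\lambda$-type step), which is what adapts to $A_\infty$ weights. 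So either run that argument or, more simply, keep route (ii) and delete route (i).
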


Now, we show Theorem \ref{th2} by using Lemmas \ref{good}, \ref{6.47}, \ref{waicha}, and \ref{pro4.9}.

\begin{proof}[Proof of Theorem \ref{th2}]
Let all the symbols be the same as in the present theorem. The proof of the present theorem is divided into
the following four steps.

{\bf Step 1}. In this step, we show that
\begin{align}\label{yys}
\left[L^2(\rn)\cap H_{X,\, S_h}(\rn)\right]
\subset\left[L^2(\rn)\cap H_{X,\, L}(\rn)\right].
\end{align}
Let $p_0:=s.$ By Lemma \ref{waicha}, to prove \eqref{yys}, it suffices to show that, for any
$\omega\in A_1(\rn)$, there exists a positive constant $C_{(p_0,\,[\omega]_{A_1(\rn)})}$,
depending only on both $p_0$ and $[\omega]_{A_1(\rn)},$ such that, for any $f\in L^2(\rn)\cap H_{X,\,S_h}(\rn),$
\begin{align}\label{weightyys}
\int_{\rn}\left[S_{L}(f)(x)\right]^{p_0}\omega(x)\,dx\leq
C_{(p_0,\,[\omega]_{A_1(\rn)})}\int_{\rn}\left[S_{h}(f)(x)\right]^{p_0}\omega(x)\,dx.
\end{align}

Recall that, for any $f\in L^2(\rn)$ and $x\in\rn$,
\begin{align*}
S_{h}^{(2)}(f)(x):=\left[\int_{\Gamma_2(x)}\left|(t\nabla)^me^{-t^{2m}L}(f)(y)\right|^2
\frac{dy\,dt}{t^{n+1}}\right]^{1/2}	
\end{align*}
and
\begin{align*}
S_{L}^{(2)}(f)(x):=\left[\int_{\Gamma_2(x)}\left|t^{2m}Le^{-t^{2m}L}(f)(y)\right|^2
\frac{dy\,dt}{t^{n+1}}\right]^{1/2}.	
\end{align*}
By \cite[(3.30)]{CMY16}, we find that, for any $f\in L^2(\rn)$ and almost every $x\in\rn,$
\begin{align*}
S_L(f)(x)\lesssim\left[S_{h}^{(2)}(f)(x)\right]^{1/2}\left[S_{L}^{(2)}(f)(x)\right]^{1/2},
\end{align*}
and hence, for any $\epsilon\in(0,\infty),$
\begin{align*}
S_L(f)(x)\lesssim
\frac{1}{\epsilon}S_{h}^{(2)}(f)(x)
+\epsilon S_{L}^{(2)}(f)(x).
\end{align*}
From this and Lemma \ref{pro4.9}, it follows that, for any
$\epsilon\in(0,\infty)$ and $f\in L^2(\rn)\cap H_{X,\, S_h}(\rn)$,
\begin{align}\label{e3.25}
\left\|S_L(f)\right\|_{L^{p_0}_\omega(\rn)}
&\lesssim\frac{1}{\epsilon}
\left\|S_h^{(2)}(f)\right\|_{L^{p_0}_\omega(\rn)}
+\epsilon\left\|S_L^{(2)}(f)\right\|_{L^{p_0}_\omega(\rn)}\\ \noz
&\sim\frac{1}{\epsilon}\left\|S_h(f)\right\|_{L^{p_0}_\omega(\rn)}
+\epsilon\left\|S_L(f)\right\|_{L^{p_0}_\omega(\rn)}
\end{align}
with the implicit positive constants independent of both $f$ and $\epsilon.$ Letting $\epsilon$ be
sufficiently small in \eqref{e3.25}, we then obtain \eqref{weightyys}.

{\bf Step 2}. In this step, we prove that
\begin{align}\label{nn}
\left[L^2(\rn)\cap H_{X,\, \mathcal{N}_h}(\rn)\right]\subset \left[L^2(\rn)
\cap H_{X,\, S_h}(\rn)\right].
\end{align}
Let $p_0:=s$ with $s$  in the present theorem. We first show that, for any $\omega\in A_1(\rn),$ there exists a positive constant
$C_{(p_0,\,[\omega]_{A_1(\rn)})}$, depending only on both $p_0$ and $[\omega]_{A_1(\rn)}$, such that, for any
$f\in L^2(\rn)\cap H_{X,\, \mathcal{N}_h}(\rn),$
\begin{align}\label{guxinglei}
\int_\rn \left[S_h(f)(x)\right]^{p_0}\omega(x)\,dx
\leq C_{(p_0,\,[\omega]_{A_1(\rn)})} \int_\rn \left[\mathcal{N}_h(f)(x)\right]^{p_0}\omega(x)\,dx.
\end{align}
Without loss of generality, we may assume that
\begin{align}\label{cs2}
\int_\rn [\mathcal{N}_h(f)(x)]^{p_0}\omega(x)\,dx<\infty,
\end{align}
otherwise \eqref{guxinglei} holds true automatically. By both the Fubini theorem and Lemma \ref{good}, we conclude that,
for any $\gamma\in(0,1]$ and $\epsilon, R\in(0,\infty)$ with $\epsilon< R,$
\begin{align}\label{bs}
&\int_\rn\left[S_h^{\epsilon,\,  R,\, \frac{1}{20}}(f)(x)\right]^{p_0}\omega(x)\,dx\\\noz
&\quad\sim
\int_0^\infty t^{p_0-1}\omega\left(\left\{x\in\rn:\  S_h^{\epsilon,\, R,\,
\frac{1}{20}}(f)(x)>t\right\}\right)\,dt\\\noz
&\quad\lesssim
\int_0^\infty t^{p_0-1}\omega\left(\left\{x\in\rn:\  S_h^{\epsilon,\, R,\,
\frac{1}{20}}(f)(x)>t,\,\mathcal{N}_h(f)(x)\leq\gamma t\right\}\right)\,dt\\\noz
&\quad\quad+
\int_0^\infty t^{p_0-1}\omega\left(\left\{x\in\rn:\  \mathcal{N}_h(f)(x)> \gamma t\right\}\right)\,dt\\\noz
&\quad\lesssim
\gamma^{\epsilon_0}\int_0^\infty t^{p_0-1}\omega\left(\left\{x\in\rn:\
S_h^{\epsilon,\, R,\, \frac{1}{2}}(f)(x)>\frac{t}{2}
\right\}\right)\,dt\\\noz
&\quad\quad+
\int_0^\infty t^{p_0-1}\omega\left(\left\{x\in\rn:\  \mathcal{N}_h(f)(x)>
\gamma t\right\}\right)\,dt\\\noz
&\quad\sim\gamma^{\epsilon_0}\int_\rn\left[S_h^{\epsilon,\,  R,\, \frac{1}{2}}(f)(x)
\right]^{p_0}\omega(x)\,dx+
\frac{1}{\gamma}\int_\rn\left[\mathcal{N}_h(f)(x)\right]^{p_0}\omega(x)\,dx.
\end{align}
On the other hand, using both Lemma \ref{dtkz} and \eqref{cs2}, we find that
\begin{align}\label{cs}
\int_\rn\left[S_h^{\epsilon,\,  R,\, \frac{1}{2}}(f)(x)\right]^{p_0}\omega(x)\,dx
\lesssim\int_\rn\left[\mathcal{N}_h(f)(x)\right]^{p_0}\omega(x)\,dx<\infty.
\end{align}
Moreover, from Lemma \ref{pro4.9}, we deduce that
\begin{align*}
\int_\rn\left[S_h^{\epsilon,\,  R,\, \frac{1}{20}}(f)(x)\right]^{p_0}\omega(x)\,dx
&\sim\int_\rn\left[S_h^{\epsilon,\,  R,\, \frac{1}{2}}(f)(x)\right]^{p_0}\omega(x)\,dx\\
&\sim\int_\rn\left[S_h^{\epsilon,\,  R,\, 1}(f)(x)\right]^{p_0}\omega(x)\,dx
\end{align*}
with the positive equivalence constants independent of $f$, $\epsilon,$ and $R.$ By this, \eqref{cs},
and \eqref{bs} with $\gamma$ sufficient small, we conclude that
\begin{align}\label{e3.31}
\int_\rn\left[S_h^{\epsilon,\,  R,\, 1}(f)(x)\right]^{p_0}\omega(x)\,dx
\lesssim\int_\rn\left[\mathcal{N}_h(f)(x)\right]^{p_0}\omega(x)\,dx<\infty.
\end{align}
Letting both $\epsilon\to0$ and $R\to\infty$ in \eqref{e3.31}, we find that \eqref{guxinglei} holds true.
Then, from Lemma \ref{waicha} and \eqref{guxinglei}, it follows that, for any $f\in L^2(\rn)\cap
H_{X,\, \mathcal{N}_h}(\rn),$ $f\in H_{X,\,S_h}(\rn)$ and
\begin{align*}
\|S_h(f)\|_{X}\lesssim \|\mathcal{N}_h(f)\|_X,
\end{align*}
which implies that \eqref{nn} holds true.

\textbf{Step 3}. In this step, we show that
\begin{align}\label{nn2}
\left[L^2(\rn)\cap H_{X,\, \mathcal{R}_h}(\rn)\right]\subset
\left[L^2(\rn)\cap H_{X,\, \mathcal{N}_h}(\rn)\right].
\end{align}
Let $p_0:=s$ with $s$ in the present theorem. By Lemma \ref{waicha}, to prove \eqref{nn2},
it suffices to show that, for any $\omega\in A_1(\rn),$
there exists a positive constant $\widetilde{C}_{(p_0,\,[\omega]_{A_1(\rn)})}$, depending only on both
$p_0$ and $[\omega]_{A_1(\rn)}$, such that, for any $f\in L^2(\rn)\cap H_{X,\,\mathcal{R}_h}(\rn),$
\begin{align}\label{guxinglei2}
\int_\rn \left[\mathcal{N}_h(f)(x)\right]^{p_0}
\omega(x)\,dx
\leq\widetilde{C}_{(p_0,\,[\omega]_{A_1(\rn)})} \int_\rn \left[\mathcal{R}_h(f)(x)\right]^{p_0}\omega(x)\,dx.
\end{align}
It is obvious that, for any $f\in L^2(\rn)$ and $x\in\rn,$
\begin{align}\label{df}
\mathcal{N}_h^{(\f 1 2)}(f)(x)\leq2^{n/2}\mathcal{R}_h(f)(x).
\end{align}
On the other hand, applying an argument similar to that used in the proof of \cite[Lemma 6.2]{HM09}, we conclude that,
for any $\al,\beta\in(0,\infty)$,
\begin{align*}
\int_\rn \left[\mathcal{N}_h^{(\al)}(f)(x)
\right]^{p_0}\omega(x)\,dx
\sim\int_\rn \left[\mathcal{N}_h^{(\beta)}(f)(x)
\right]^{p_0}\omega(x)\,dx,
\end{align*}
where the positive equivalence constants depend only on both $p_0$ and $[\omega]_{A_1(\rn)}.$ From this and \eqref{df},
we deduce that \eqref{guxinglei2} holds true.

\textbf{Step 4}. In this step, we prove that
\begin{align}\label{nn3}
\left[L^2(\rn)\cap H_{X,\, L}(\rn)\right]
\subset\left[L^2(\rn)\cap H_{X,\, \mathcal{R}_h}(\rn)\right].
\end{align}
Let $f\in L^2(\rn)\cap H_{X,\, L}(\rn),$ $\epsilon\in (n/\theta,\infty)$, and $M\in \nn$
be sufficiently large. By Proposition \ref{prop:6-2-2}, we conclude that there exist
$\{\lz_i\}_{i\in\nn}\subset[0,\fz)$ and a sequence $\{m_i\}_{i\in\nn}$ of $(X,\,M,\,\epsilon)_L$-molecules associated,
respectively, with the balls $\{B_i\}_{i\in\nn}$ such that
\begin{align}\label{xiaoxiang}
f=\sum_{i=1}^\fz \lz_im_i
\end{align}
in $L^2(\rn)$, and
\begin{align}\label{fengong}
\left\|\left\{\sum_{i=1}^\infty
\left(\frac{\lambda_i}{\|\mathbf{1}_{B_i}\|_X}\right)^s\mathbf{1}_{B_i}
\right\}^{\frac{1}{s}}\right\|_X\lesssim\|f\|_{H_{X,\,L}(\rn)}.
\end{align}

Next, we show that there exists a positive constant $\beta\in(n/\theta-n/q,\,\infty)$ such that,
for any $j\in\zz_+$ and any $(X,\,M,\,\epsilon)_L$-molecule $b$ associated with the ball
$B:=B(x_B,r_B)\subset\rn$ for some $x_B\in\rn$ and $r_B\in(0,\infty),$
\begin{align}\label{hexin0}
\left\|\mathcal{R}_h(b)\right\|_{L^q(U_j(B))}\lesssim |B|^{1/q}\|\mathbf{1}_{B}\|_X^{-1},
\end{align}
where $\theta$ is the same as in Assumption \ref{vector},  $U_0(B):=2B,$ and $U_j(B):=(2^{j+1}B)\setminus(2^jB)$
for any $j\in\nn$.

When $j=0,$ from Lemma \ref{6.47} and the assumption that $b$ is a molecule, we deduce that
\begin{align*}
\left\|\mathcal{R}_h(b)\right\|_{L^q(U_0(B))}\lesssim \|b\|_{L^q(\rn)}\lesssim
|B|^{1/q} \|\mathbf{1}_B\|_X^{-1}.
\end{align*}
For any $j\in\nn$ and $x\in U_j(B),$ we have
\begin{align*}
\mathcal{R}_h(b)(x)
&\leq \left(\sup_{t\in(0,2^{j/2-2}r_B)}+\sup_{t\in[2^{j/2-2}r_B,\infty)}\right)\left[\frac{1}{t^n}\int_{B(x,t)}\left|e^{-t^{2m}L}(b)(y)
\right|^2\,dy\right]^{1/2}
\\
&=:I_j(x)+II_j(x).
\end{align*}
For any $j\in\nn,$ let $S_j(B):=(2^{j+3}B)/(2^{j-3}B),$ $R_j(B):=(2^{j+5}B)/(2^{j-5}B),$ and
$E_j(B):=[R_j(B)]^\com.$ It is obvious that, for any $j\in\nn,$ $t\in(0,2^{j/2-2}r_B)$,
and $x\in U_j(B),$ we have $B(x,t)\subset S_j(B)$ and $\dist(S_j(B),E_j(B))\sim 2^jr_B$.
By this, Proposition \ref{basic}(iii), and Definition \ref{buguoruci}(i), we conclude that
\begin{align}\label{2053}
&\left\|\sup_{t\in(0,2^{j/2-2}r_B)}\left[\frac{1}{t^n}\int_{B(x,t)}\left|e^{-t^{2m}L}
\lf(b\mathbf{1}_{E_j(B)}\r)(y)\right|^2\,dy\right]^{1/2}\right\|_{L^q(U_j(B))}\\\noz
&\quad\lesssim\sup_{t\in(0,2^{j/2-2}r_B)}\left[\frac{1}{t^n}\int_{S_j(B)}\left|e^{-t^{2m}L}
\lf(b\mathbf{1}_{E_j(B)}\r)(y)\right|^2\,dy\right]^{1/2}|2^jB|^{1/q}\\\noz
&\quad\lesssim\sup_{t\in(0,2^{j/2-2}r_B)}t^{-n/2}
\exp\left\{-c\left(\frac{2^j r_B}{t}\right)^{2m/(2m-1)}
\right\}\|b\|_{L^2(E_j(B))}|2^jB|^{1/q}\\\noz
&\quad\lesssim\sup_{t\in(0,2^{j/2-2}r_B)}t^{-n/2}\left(\frac{t}{2^jr_B}\right)^N
\|b\|_{L^2(E_j(B))}|2^jB|^{1/q}\\\noz
&\quad\lesssim 2^{-j(n/4+N/2)}r_B^{-n/2}\|b\|_{L^2(\rn)}|2^jB|^{1/q}
\lesssim2^{-j(n/4+N/2-n/q)}|B|^{1/q}\|\mathbf{1}_B\|_{X}^{-1},
\end{align}
where $N$ is a positive number satisfying $N\in(2n/\theta-n/2,\infty).$
On the other hand, from Lemma \ref{6.47} and Definition \ref{buguoruci}(i), it follows that
\begin{align*}
&\left\|\sup_{t\in(0,2^{j/2-2}r_B)}\left[\frac{1}{t^n}\int_{B(x,t)}\left|e^{-t^{2m}L}
\lf(b\mathbf{1}_{R_j(B)}\r)(y)\right|^2\,dy\right]^{1/2}\right\|_{L^q(U_j(B))}\\\noz
&\quad\lesssim\lf\|\mathcal{R}_h(b\mathbf{1}_{R_j(B)})\r\|_{L^q(\rn)}\lesssim \|b\|_{L^q(R_j(B))}
\lesssim 2^{-j(\epsilon-n/q)}|B|^{1/q}\|\mathbf{1}_B\|_X^{-1}.
\end{align*}
By this and \eqref{2053}, we find that, for any $j\in\nn,$
\begin{align}\label{hehehehe}
\|I_j\|_{L^q(U_j(B))}\lesssim [2^{-j(\epsilon-n/q)}+2^{-j(n/4+N/2-n/q)}]
|B|^{1/q}\|\mathbf{1}_{B}\|_X^{-1}.
\end{align}
Moreover, it is obvious that, for any $j\in\nn$ and $x\in U_j(B),$
\begin{align*}
II_j(x)&\lesssim 2^{-mMj}\sup_{t\in(2^{j/2-2}r_B,\infty)}\left[\frac{1}{t^n}
\int_{B(x,t)}\left|\left(t^{2m}L\right)^Me^{-t^{2m}L}\left(r_B^{-2mM}L^{-M}(b)\right)
(y)\right|^2\,dy\right]^{1/2}\\
&\lesssim 2^{-mMj}\mathcal{R}_{h,\,M}\left(r_B^{-2M}L^{-M}(b)\right)(x).
\end{align*}
From this, Lemma \ref{6.47}, and Definition \ref{buguoruci}(i), we deduce that
\begin{align*}
\|II_j\|_{L^q(U_j(B))}
&\lesssim 2^{-mMj}\left\|\mathcal{R}_{h,\,M}\left(r_B^{-2M}L^{-M}(b)\right)\right\|_{L^q(\rn)}\\
&\lesssim 2^{-mMj}\left\|r_B^{-2M}L^{-M}(b)\right\|_{L^q(\rn)}
\lesssim2^{-mMj}|B|^{1/q}\|\mathbf{1}_B\|_X^{-1}.
\end{align*}
By this and \eqref{hehehehe}, we conclude that
\begin{align*}
\left\|\mathcal{R}_h(b)\right\|_{L^q(U_j(B))}\lesssim \left[2^{-jmM}+2^{-j(\epsilon-n/q)}
+2^{-j(n/4+N/2-n/q)}\right]|B|^{1/q}\|\mathbf{1}_{B}\|_X^{-1},
\end{align*}
which implies that \eqref{hexin0} holds true with $\beta:=\min\{mM,\,\epsilon-n/q,\,
n/4+N/2-n/q\}\in(n/\theta-n/q,\infty)$.

Furthermore, from \eqref{xiaoxiang} and the fact that $\mathcal{R}_h$ is bounded on $L^2(\rn),$
it follows that, for almost every $x\in\rn,$
\begin{align*}
\lf|\mathcal{R}_h(f)(x)\r|
\leq\sum_{i=1}^\infty\lf|\ld_i\mathcal{R}_h(m_i)(x)\r|=\sum_{i=1}^\infty
\sum_{j=0}^\infty\lf|\ld_i\mathcal{R}_h(m_i)(x)\r|\mathbf{1}_{U_j(B_i)}(x).
\end{align*}
Using this, \eqref{hexin0} with $b:=m_i$ for $i\in\nn$, Lemma \ref{thm:2-2-3},
and \eqref{fengong}, we find that
\begin{align*}
\lf\|\mathcal{R}_h(f)\r\|_X
&\lesssim \left\|\sum_{i=1}^\infty\sum_{j=0}^\infty\lf|\ld_i
\mathcal{R}_h(m_i)\r|\mathbf{1}_{U_j(B_i)}\right\|_X\lesssim\left\|\left\{\sum_{i=1}^\infty
\left(\frac{\lambda_i}{\|\mathbf{1}_{B_i}\|_X}\right)^s\mathbf{1}_{B_i}\right\}^{\frac{1}{s}}
\right\|_X\lesssim\|f\|_{H_{X,\,L}(\rn)},
\end{align*}
which further implies that \eqref{nn3} holds true.

Then, by the conclusions obtained in Steps 1 though 4, we conclude that
\begin{align*}
\left[L^2(\rn)\cap H_{X,\, S_h}(\rn)\right]
&=\left[L^2(\rn)\cap H_{X,\, \mathcal{R}_h}(\rn)\right]	
=\left[L^2(\rn)\cap H_{X,\, \mathcal{N}_h}(\rn)\right]\\
&= \left[L^2(\rn)\cap H_{X,\, L}(\rn)\right]
\end{align*}
with equivalent quasi-norms. From this and a density argument, we deduce that the spaces
$H_{X,\, S_h}(\rn),\, H_{X,\, \mathcal{N}_h}(\rn),\,$
$H_{X,\, \mathcal{R}_h}(\rn),$ and $H_{X,\, L}(\rn)$
coincide with the equivalent quasi-norms. This finishes the proof of Theorem \ref{th2}.
\end{proof}

\section{Riesz Transform Characterization of $H_{X,\, L}(\rn)$}\label{Riesz}

In this section, we establish the Riesz transform characterization of $H_{X,\, L}(\rn)$.
In Subsection \ref{s1}, we prove that the Riesz transform $\nabla^mL^{-1/2}$ is
bounded from $H_{X,\,L}(\rn)$ to $H_{X}(\rn)$ by using the
molecular characterization of both $H_{X,\, L}(\rn)$ and $H_X(\rn),$
where $H_X(\rn)$ denotes the Hardy space associated with $X$ introduced in \cite{SHYY17}.
In Subsection \ref{s2}, we first introduce  the homogeneous Hardy--Sobolev space $\dot{H}_{m,\,X}(\rn)$
and then establish its atomic decomposition which plays a key role in the proof of the
Riesz transform characterization of $H_{X,\, L}(\rn)$. Then, using the atomic decomposition
of $\dot{H}_{m,\,X}(\rn)$ and borrowing some ideas from the proof of \cite[Proposition 5.17]{hmm11}, we obtain
the Riesz transform characterization of $H_{X,\,L}(\rn)$.

Let $m\in\nn$ and $L$ be a homogeneous divergence form $2m$-order elliptic operator
in \eqref{high} satisfying Ellipticity Condition \ref{ec}. Then the Riesz transform $\nabla^m L^{-1/2}$
is defined by setting, for any $f\in L^2(\rn)$ and $x\in\rn,$
\begin{align*}
\nabla^m L^{-1/2}(f)(x):=\frac{1}{2\sqrt{\pi}}\int_0^\infty\nabla^m e^{-sL}(f)(x)\frac{ds}{\sqrt{s}}.
\end{align*}

\begin{definition}
Let $X$ be a ball quasi-Banach function space on $\rn$, $m\in\nn$, and $L$ be a homogeneous divergence
form $2m$-order elliptic operator  in \eqref{high}.
The \emph{Hardy  space} $H_{X,\,L,\,\mathrm{Riesz}}(\rn)$, associated with $L$,
is defined as the completion of the set
\begin{align*}
\left\{f\in L^2(\rn):\ \|f\|_{H_{X,\,L,\,\mathrm{Riesz}}(\rn)}:
=\left\|\nabla^m L^{-1/2}(f)\right\|_{H_X(\rn)}<\infty\right\}
\end{align*}
with respect to the \emph{quasi-norm} $\|\cdot\|_{H_{X,\,L,\,\mathrm{Riesz}}(\rn)}$,
where $H_X(\rn)$ denotes the Hardy space associated with $X$ (see Definition \ref{def-HX}
below for its definition).
\end{definition}

The following theorems are the main results of this section.
\begin{theorem}\label{r}
Let $m\in\nn$ and $L$ be a homogeneous divergence form $2m$-order elliptic operator  in \eqref{high}
satisfying Ellipticity Condition \ref{ec}. Assume that $X$ is a ball quasi-Banach function
space satisfying both Assumptions \ref{vector} and \ref{vector2} for some $\theta\in(n/(n+m),1]$, $s\in(\theta,1]$,
and $q\in(p_-(L),\min\{p_{+}(L),\,q_{+}(L)\})$. Then there exists a positive constant $C$ such that,
for any $f\in H_{X,\, L}(\rn),$ $f\in H_{X,\,L,\,\mathrm{Riesz}}(\rn)$ and
\begin{align*}
\|f\|_{H_{X,\,L,\,\mathrm{Riesz}}(\rn)}\leq C\|f\|_{H_{X,\, L}(\rn)}.
\end{align*}
\end{theorem}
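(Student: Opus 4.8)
\textbf{Proof proposal for Theorem \ref{r}.}
The plan is to reduce the assertion, via the molecular characterizations of both $H_{X,\,L}(\rn)$ and $H_X(\rn)$, to a uniform molecular estimate for the image $\nabla^m L^{-1/2}(b)$ of a single molecule $b$. First I would invoke Proposition \ref{prop:6-2-2}: given $f\in H_{X,\,L}(\rn)\cap L^2(\rn)$, write $f=\sum_{i\in\nn}\lz_i m_i$ in $L^2(\rn)$, where each $m_i$ is an $(X,\,M,\,\epsilon)_L$-molecule adapted to a ball $B_i$, with $M\in\nn$ large (say $M>n/(2m\theta)$) and $\epsilon>n/\theta$, and with
$$
\left\|\left\{\sum_{i=1}^\infty\left(\frac{\lz_i}{\|\mathbf 1_{B_i}\|_X}\right)^s\mathbf 1_{B_i}\right\}^{1/s}\right\|_X\lesssim\|f\|_{H_{X,\,L}(\rn)}.
$$
Since $\nabla^m L^{-1/2}$ is bounded on $L^2(\rn)$ (a standard consequence of Ellipticity Condition \ref{ec}; see \cite{A02,CMY16}), the series $\sum_i\lz_i\nabla^m L^{-1/2}(m_i)$ converges to $\nabla^m L^{-1/2}(f)$ in $L^2(\rn)$, hence in $\cs'(\rn)$.

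The heart of the argument is to show that there is a constant $\wt M\in\nn$ and an exponent $\wt\epsilon>n/\theta$ so that, for every $(X,\,M,\,\epsilon)_L$-molecule $b$ adapted to a ball $B=B(x_B,r_B)$, the function $c_b:=\nabla^m L^{-1/2}(b)$ is (a constant multiple of) an $(X,\,q,\,\wt M,\,\wt\epsilon)$-molecule in the sense of the molecular theory for $H_X(\rn)$ from \cite{SHYY17}; that is, one needs the size estimates
$$
\left\|c_b\mathbf 1_{U_j(B)}\right\|_{L^q(\rn)}\lesssim 2^{-j\wt\epsilon}|2^jB|^{1/q}\|\mathbf 1_B\|_X^{-1}\qquad(j\in\zz_+)
$$
together with the requisite vanishing-moment (cancellation) condition. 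To get these, I would split $L^{-1/2}=\frac1{2\sqrt\pi}\int_0^\fz e^{-sL}\,\frac{ds}{\sqrt s}$ at the scale $s\sim r_B^{2m}$, i.e. write $c_b=\frac1{2\sqrt\pi}(\int_0^{r_B^{2m}}+\int_{r_B^{2m}}^\fz)\nabla^m e^{-sL}(b)\frac{ds}{\sqrt s}$. For the near part $s<r_B^{2m}$, use the $m$-$L^q$-$L^q$ and $L^p$-$L^q$ off-diagonal estimates for $\sqrt s\,\nabla^m e^{-sL}$ from Proposition \ref{basic}(iii) (valid since $q<q_+(L)$) against the annular decomposition of $b$, exactly as in the proof that $\mathcal R_h$ maps molecules to molecules in Step 4 of Theorem \ref{th2}; the annuli far from $B$ contribute the exponential Gaussian-type decay, the annuli near $B$ contribute the molecular bounds on $b$. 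For the far part $s\ge r_B^{2m}$, exploit $b\in\mathcal R(L^M)$: write $b=L^M(r_B^{2mM}(r_B^{-2m}L^{-1})^Mb)=:L^M\wt b$ with $\wt b$ still satisfying molecular-type $L^q$-bounds, so that $\nabla^m e^{-sL}(b)=\nabla^m (L^M e^{-sL})(\wt b)$ gains the extra factor $s^{-M}\sim$ (after integrating in $s$ against $ds/\sqrt s$) a power $r_B^{-2mM}$, producing the decay $2^{-jmM}$ on the $j$-th annulus after combining with the off-diagonal bounds. The cancellation condition for $c_b$ will follow from $b=L\bigl(L^{-1}b\bigr)$ and the identity $\nabla^m L^{-1/2}L = $ (a derivative of) $\nabla^m L^{1/2}$, i.e. $c_b$ lies in the range of a divergence-type operator, so it integrates to zero against polynomials of the appropriate degree; here the hypothesis $\theta>n/(n+m)$ guarantees that only cancellation up to order strictly less than $m$ is needed, which is automatic for $\nabla^m(\cdots)$.

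Once the uniform molecular estimate is in hand, I would apply Lemma \ref{thm:2-2-3} (with $X$ in place of $X$, using Assumptions \ref{vector} and \ref{vector2} with the stated $\theta,s,q$ and $\tau$ chosen in $(n[1/\theta-1/q],\wt\epsilon)$) to the family $\{\lz_i c_{m_i}\}_{i\in\nn}$ adapted to $\{B_i\}_{i\in\nn}$, which yields both $L^2$- (hence $\cs'$-) convergence of $\sum_i\lz_i c_{m_i}$ and the quantitative bound
$$
\left\|\nabla^m L^{-1/2}(f)\right\|_{H_X(\rn)}\lesssim\left\|\left\{\sum_{i=1}^\infty\left(\frac{\lz_i}{\|\mathbf 1_{B_i}\|_X}\right)^s\mathbf 1_{B_i}\right\}^{1/s}\right\|_X\lesssim\|f\|_{H_{X,\,L}(\rn)};
$$
more precisely one first works with $f\in H_{X,\,L}(\rn)\cap L^2(\rn)$ and then passes to the completion by density. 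I expect the main obstacle to be the bookkeeping in the far part $s\ge r_B^{2m}$: one must verify that $\wt b=r_B^{2mM}(r_B^{-2m}L^{-1})^Mb$ inherits annular $L^q$-estimates of exactly the molecular form (with a possibly worse but still summable $\epsilon$), and then carefully track the interplay between the $s$-integration, the extra $s^{-M}$ gain, the off-diagonal Gaussian factor, and the normalization $|2^jB|^{1/q}\|\mathbf 1_B\|_X^{-1}$ so that the resulting exponent $\wt\epsilon=\min\{mM,\,\epsilon-\text{(something)},\,\ldots\}$ still exceeds $n/\theta$; this is where the smallness restrictions $\theta\in(n/(n+m),1]$ and $q<\min\{p_+(L),q_+(L)\}$ are genuinely used, and where choosing $M$ large at the outset is essential.
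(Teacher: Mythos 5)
Your overall architecture is the same as the paper's: decompose $f\in H_{X,\,L}(\rn)\cap L^2(\rn)$ into $(X,\,M,\,\epsilon)_L$-molecules via Proposition \ref{prop:6-2-2}, prove a uniform annular $L^q$ size estimate plus a vanishing-moment property for $\nabla^m L^{-1/2}(b)$ so that it becomes an $H_X(\rn)$-molecule, and conclude by the molecular characterization of $H_X(\rn)$ and density. Two citation slips first: the final step must invoke Lemma \ref{thm:3-4-3} (molecular characterization of $H_X(\rn)$), not Lemma \ref{thm:2-2-3}, which only controls $\|\sum_j\lambda_j|m_j|\|_X$ and says nothing about $H_X(\rn)$; and Proposition \ref{basic}(iii) contains no off-diagonal estimates for $\sqrt{s}\,\nabla^m e^{-sL}$ --- gradient-type bounds are exactly what Lemma \ref{guoqing2} and Lemma \ref{b} supply, and this is where $q<q_+(L)$ enters. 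Your cancellation argument (that $\nabla^m L^{-1/2}(b)$ is an $m$-th order gradient, so it has vanishing moments up to order $m-1$, which suffices since $\theta\in(n/(n+m),1]$ gives $d_X\le m-1$) agrees in substance with the paper's appeal to \cite{CY12}.

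The genuine gap is in your treatment of the near part $\int_0^{r_B^{2m}}\nabla^m e^{-sL}(b)\,\frac{ds}{\sqrt s}$, which you declare routine (``exactly as in Step 4 of Theorem \ref{th2}'') while flagging only the far part as delicate. For the annuli of $b$ adjacent to the target annulus $U_j(B)$ there is no off-diagonal gain, and the only bound your toolkit provides is the operator-norm estimate $\|\nabla^m e^{-sL}\|_{L^q\to L^q}\lesssim s^{-1/2}$, so you are left with $\int_0^{r_B^{2m}} s^{-1}\,ds=\infty$: the molecule $b$ has no smoothness to tame $s\to0$, and its cancellation $b\in\mathcal{R}(L^M)$ only helps for large $s$. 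The analogy with Step 4 breaks precisely here, because there the diagonal term was absorbed by the $L^q$-boundedness of the maximal operator $\mathcal{R}_h$ (Lemma \ref{6.47}); the fact you would need instead is uniform $L^q$-boundedness of the truncated Riesz transforms $\int_0^T\nabla^m e^{-sL}\,\frac{ds}{\sqrt s}$, which does not follow from the off-diagonal estimates you cite (it can be derived from the bounded $H^\infty$ functional calculus of $L$ on $L^q$, but that is an extra ingredient you would have to introduce and justify). The paper avoids the issue by never splitting the $s$-integral bare: it inserts $I=(I-e^{-r_B^{2m}L})^M+[I-(I-e^{-r_B^{2m}L})^M]$, estimates the composite operators $\nabla^m L^{-1/2}(I-e^{-r_B^{2m}L})^M$ and $\nabla^m L^{-1/2}(tLe^{-tL})^M(r_B^{-2m}L^{-1})^M$ off the diagonal via Lemma \ref{guoqing2}, and handles the diagonal via Lemma \ref{b} together with uniform $L^q$ bounds of the semigroup factors and the annular decay of $b$ and $(r_B^{-2m}L^{-1})^Mb$. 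If you replace your near-part argument by this insertion (or supply the uniform truncation bounds), the rest of your plan --- the far-part gain of $s^{-M}$ from $b=L^M\widetilde b$, the resulting exponent $\tau=\min\{\cdots\}>n(1/\theta-1/q)$, and the passage to general $f$ by density --- matches the paper's proof.
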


\begin{remark}
When $X:=L^p(\rn)$ with $p\in(n/(n+m),1],$ Theorem \ref{r} is just \cite[Theorem 6.2]{CY12}.
\end{remark}

\begin{theorem}\label{riesz}
Let $m\in\nn$, $L$ be a homogeneous divergence form $2m$-order elliptic operator  in \eqref{high}
satisfying Ellipticity Condition \ref{ec}, and the family $\{e^{-tL}\}_{t\in(0,\infty)}$
of operators satisfy $m-L^r(\rn)-L^2(\rn)$ off-diagonal estimates with some $r\in(1,2].$
Assume that $X$ is a  ball quasi-Banach function space satisfying both
Assumptions \ref{vector} and \ref{vector2} for some $s\in (0,1],$ $\theta\in(\frac{nr}{n+mr},1],$
and $q\in[2,p_+(L)).$ Then there exists a positive constant $C$ such that, for any
$h\in H_{X,\,L,\,\mathrm{Riesz}}(\rn),$ $h\in H_{X,\,L}(\rn)$ and
\begin{align}\label{shediao}
\|h\|_{H_{X,\,L}(\rn)}\leq C\|h\|_{H_{X,\,L,\,\mathrm{Riesz}}(\rn)}.
\end{align}
\end{theorem}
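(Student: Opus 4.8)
The plan is to prove \eqref{shediao} via the atomic decomposition of the homogeneous Hardy--Sobolev space $\dot{H}_{m,\,X}(\rn)$ established earlier in this section, combined with an argument reducing the estimate on each Hardy--Sobolev atom to a molecular estimate in $H_{X,\,L}(\rn)$. First I would take $h\in H_{X,\,L,\,\mathrm{Riesz}}(\rn)\cap L^2(\rn)$, so that $g:=\nabla^m L^{-1/2}(h)\in H_X(\rn)\cap L^2(\rn)$, with $\|g\|_{H_X(\rn)}=\|h\|_{H_{X,\,L,\,\mathrm{Riesz}}(\rn)}$. One would like to write $h$ in terms of $g$ via (a component of) the inverse operator $L^{1/2}(\nabla^m)^{-1}$, but this is only a left inverse; the correct device, following \cite{hmm11,CY12}, is to observe that each component of $g$ lies in the Hardy--Sobolev space $\dot{H}_{m,\,X}(\rn)$ (the point being that $g$ is a gradient of order $m$ of a function controlled in $H_X(\rn)$), so one expands $g$ — or rather the underlying potential — using the atomic decomposition of $\dot{H}_{m,\,X}(\rn)$:
\begin{align*}
h=\sum_{i=1}^\infty \lambda_i\, b_i \quad\text{in } L^2(\rn),\qquad
\left\|\left\{\sum_{i=1}^\infty\left(\frac{\lambda_i}{\|\mathbf{1}_{B_i}\|_X}\right)^s\mathbf{1}_{B_i}\right\}^{1/s}\right\|_X\lesssim \|h\|_{H_{X,\,L,\,\mathrm{Riesz}}(\rn)},
\end{align*}
where each $b_i$ is (up to the normalization $L^{1/2}$ applied to) a Hardy--Sobolev atom supported in the ball $B_i$. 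The $L^2$-convergence, together with boundedness of $S_L$ on $L^2(\rn)$, lets one pass the estimate through the sum.

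The core of the argument is then the claim that $L^{1/2}$ applied to a Hardy--Sobolev $(m,X,q)$-atom — call the result $a$, supported near a ball $B:=B(x_B,r_B)$ — is, up to a harmless dimensional constant, an $(X,\,q,\,M,\,\epsilon)_L$-molecule in the sense of Definition \ref{buguoruci}, for a suitable large $M\in\nn$ and suitable $\epsilon$. Concretely, using that the Hardy--Sobolev atom has $m$ vanishing-moment-type cancellation and appropriate $L^q$ size bounds, one estimates, for each $k\in\{0,\ldots,M\}$ and $j\in\zz_+$,
\begin{align*}
\left\|\left(r_B^{-2m}L^{-1}\right)^k L^{1/2}(a)\right\|_{L^q(U_j(B))}\lesssim 2^{-j\epsilon}|2^jB|^{1/q}\|\mathbf{1}_B\|_X^{-1}
\end{align*}
by writing $L^{-k+1/2}=c\int_0^\infty t^{2k-2}\, e^{-t^{2m}L}\cdot(\text{derivative data})\,dt$ (a Calderón-type reproducing formula for the relevant fractional power of $L$), splitting the $t$-integral at $t\sim 2^jr_B$, and invoking the $m$-$L^q$-$L^q$ off-diagonal estimates of Proposition \ref{basic}(iii) together with the $m-L^r$-$L^2$ off-diagonal hypothesis on $\{e^{-tL}\}$ to convert the cancellation of $a$ into the decay factor $2^{-j\epsilon}$. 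The membership $L^{1/2}(a)\in\mathcal{R}(L^M)$ is automatic from the reproducing formula. The hypotheses $\theta\in(\frac{nr}{n+mr},1]$, $q\in[2,p_+(L))$, and $s\in(0,1]$ are exactly what is needed to make the resulting $\epsilon$ exceed $n/\theta$ (so that Lemma \ref{thm:2-2-3} and the molecular characterization Proposition \ref{prop:6-2-2} apply) and to make the exponents in the off-diagonal bounds admissible.

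Granting the molecule claim, one finishes as follows. Since $h=\sum_i\lambda_i b_i$ with $L^{1/2}(b_i)$ (times a constant) an $(X,q,M,\epsilon)_L$-molecule associated with $B_i$, one has $S_L(h)=S_L\big(\sum_i\lambda_i b_i\big)$, and using that $S_L(b)\lesssim S_L$ applied to a molecule obeys the standard pointwise/$L^q$ tail bounds on each annulus $U_j(B)$ — the same bounds \eqref{hexin0}-type estimate proved in the maximal-function section, now for $S_L$ in place of $\mathcal{R}_h$ — together with Lemma \ref{thm:2-2-3} and the coefficient estimate above, one obtains
\begin{align*}
\|h\|_{H_{X,\,L}(\rn)}=\|S_L(h)\|_X\lesssim \left\|\left\{\sum_{i=1}^\infty\left(\frac{\lambda_i}{\|\mathbf{1}_{B_i}\|_X}\right)^s\mathbf{1}_{B_i}\right\}^{1/s}\right\|_X\lesssim \|h\|_{H_{X,\,L,\,\mathrm{Riesz}}(\rn)}.
\end{align*}
A density argument then extends the inequality from $H_{X,\,L,\,\mathrm{Riesz}}(\rn)\cap L^2(\rn)$ to all of $H_{X,\,L,\,\mathrm{Riesz}}(\rn)$, completing the proof. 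The main obstacle I expect is the molecule claim: verifying that $L^{1/2}$ maps Hardy--Sobolev atoms to $L$-molecules requires carefully tracking how the atom's cancellation interacts with the fractional power $L^{1/2}$ and the semigroup, and this is where the $m-L^r$-$L^2$ off-diagonal estimate (rather than just $L^2$-$L^2$) is essential and where the precise numerology relating $\theta$, $r$, $m$, $q$, and $\epsilon$ must be checked; the rest is bookkeeping already set up by Lemma \ref{thm:2-2-3}, Proposition \ref{prop:6-2-2}, and the annular estimates from Section \ref{max}.
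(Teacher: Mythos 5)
Your overall skeleton (decompose $f:=L^{-1/2}h$ via the Hardy--Sobolev atomic decomposition of Theorem \ref{HS}, estimate per atom on annuli, resum with Lemma \ref{thm:2-2-3}) matches the paper, but the central step you rely on --- that $L^{1/2}$ of an $(\dot{H}_{m,\,X},\,q)$-atom $a$ is, up to a constant, an $(X,\,q,\,M,\,\epsilon)_L$-molecule with $M$ large and $\epsilon>n/\theta$ --- is a genuine gap, and I do not believe it can be repaired. The molecule condition requires bounds on $(r_B^{-2m}L^{-1})^kL^{1/2}(a)=r_B^{-2mk}L^{-(k-\frac12)}(a)$ for every $k\in\{0,\ldots,M\}$, i.e.\ negative powers of $L$ applied to $a$. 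A Hardy--Sobolev atom in the sense of Definition \ref{zhanshen} has only spatial size/derivative bounds and no $L$-adapted cancellation (it need not lie in $\mathcal{R}(L^k)$, and no vanishing moments are required), so the integral $\int_0^\infty t^{k-\frac32}e^{-tL}(a)\,dt$ defining $L^{-(k-\frac12)}(a)$ does not even converge at $t=\infty$ in general; your remark that membership in $\mathcal{R}(L^M)$ is ``automatic from the reproducing formula'' is not correct. Even in the model case $L=(-\Delta)^m$ with $k=1$, where $L^{-\frac12}(a)$ is the Riesz potential of order $m$ of $a$, the best annular decay one extracts from the atom's $L^1$-size is $2^{-j(n-m)}$, which is strictly smaller than the threshold $\epsilon>n/\theta\geq n$ needed for Proposition \ref{prop:6-2-2} and Lemma \ref{thm:2-2-3}. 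So the numerology cannot be fixed by choosing $M$, $\epsilon$, or $q$ differently: the obstruction is structural, not bookkeeping.

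The paper's proof avoids negative powers of $L$ acting on the atoms altogether. It first invokes Lemma \ref{pf} to reduce $\|h\|_{H_{X,\,L}(\rn)}$ to $\|S_1(h)\|_X$, where $S_1$ is built from $t^mL^{1/2}e^{-t^{2m}L}$ (the molecular decomposition hidden in Lemma \ref{pf} is performed on $h$ itself, following \cite[Proposition 6.11]{SHYY17}, where the molecules are manufactured from the tent-space decomposition and hence genuinely lie in $\mathcal{R}(L^M)$). Then, for each Hardy--Sobolev atom $a$, it estimates $\|S_1(L^{1/2}a)\|_{L^q(S_i(B))}$ directly: only the operators $t^{2m}Le^{-t^{2m}L}$ (positive powers of $L$ times the semigroup) hit $a$, and the decay $2^{-i(\frac{n}{r}+m-\frac{n}{q})}$ in \eqref{yida} comes from Lemma \ref{2.26} (the $m-L^r(\rn)-L^2(\rn)$ off-diagonal estimate and the $L^r\to L^2$ bound with norm $t^{\frac{1}{2m}(\frac n2-\frac nr)}$), the Minkowski and H\"older inequalities, and the atom's size on $B$; the hypothesis $\theta\in(\frac{nr}{n+mr},1]$ is exactly what makes this exponent exceed $n/\theta-n/q$ so that Lemma \ref{thm:2-2-3} applies. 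The passage from the atomic decomposition of $\partial^\al f$ to the pointwise bound $S_1(L^{1/2}f)\leq\sum_j\lambda_jS_1(L^{1/2}a_j)$ uses the $L^2$ estimate $\|S_1(L^{1/2}u)\|_{L^2(\rn)}\lesssim\|\nabla^mu\|_{L^2(\rn)}$, another ingredient your sketch glosses over. You should restructure the per-atom step along these lines rather than through the molecule claim.
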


\begin{remark}
When $X:=L^p(\rn)$ with $p\in(nr/(n+mr),1],$ Theorem \ref{riesz} is just \cite[Propostion 6.6]{CY12}.
\end{remark}
The proofs of Theorems \ref{r} and \ref{riesz} are given, respectively, in Subsections
\ref{s1} and \ref{s2} below.

\subsection{Proof of Theorem \ref{r}}\label{s1}

In this subsection, we prove Theorem \ref{r}. To this end, we need the molecular characterization
of the Hardy space $H_{X}(\rn)$ which was obtained in \cite{SHYY17}.

Let $\Phi\in\cs(\rn)$ satisfy that $\int_\rn\Phi(x)\,dx\neq0$ and $\Phi_t(x):=t^{-n}\Phi(x/t)$
for any $x\in\rn$ and $t\in(0,\infty).$ For any $f\in\cs'(\rn)$ and $x\in\rn,$ let
$$M_{\mathrm{b}}^{**}(f,\Phi)(x):=
\sup_{(y,t)\in\mathbb{R}^{n+1}_+}\frac{|(\Phi_t*f)(x-y)|}{(1+t^{-1}|y|)^b}$$
with $b\in(0,\infty)$ sufficiently large, where, for any $t\in(0,\fz)$, $\Phi_t(\cdot):=\frac{1}{t^n}\Phi(\frac{\cdot}{t})$
and $\Phi_t*f$ denotes the convolution of $\Phi_t$ and $f$.

\begin{definition}\label{def-HX}
Let $X$ be a ball quasi-Banach function space on $\rn$. Then the \emph{Hardy space} $H_X(\rn)$
associated with $X$ is defined as
\begin{align*}
H_X(\rn):=\left\{f\in\cs'(\rn):\ \|f\|_{H_X(\rn)}:=\left\|M_{\mathrm{b}}^{**}
(f,\Phi)\right\|_X<\infty\right\}.
\end{align*}
\end{definition}

\begin{definition}
Let $X$ be a ball quasi-Banach function space satisfying Assumption \ref{vector} for some $\tz,s\in(0,1]$,
$q\in[1,\infty]$, $d\in\mathbb{N}\cap[d_X,\infty)$, and $\tau\in(0,\fz)$, where
$d_X:=\lfloor n(1/\theta-1)\rfloor.$ A measurable function $m$ on $\rn$
is called an \emph{$(X,\,q,\,d,\,\tau)$-molecule} centered at a cube $Q\in{\mathcal Q}$ if it satisfies
that, for any $j\in\zz_+$,
\begin{equation*}
\left\|\mathbf{1}_{U_j(Q)}m\right\|_{L^{q}(\rn)}\le2^{-\tau j}
\frac{|Q|^{\frac{1}{q}}}{\|\mathbf{1}_{Q}\|_{X}}
\end{equation*}
and, for any $\al\in\zz_+^n$ with $|\al|\leq d,$
\begin{align*}
\int_\rn m(x)x^\al\,dx=0.
\end{align*}
Similarly, in the above definition, if any $Q\in\mathcal{Q}$ is replaced
by any ball $B$, then one obtains the definition of an
\emph{$(X,\,q,\,d,\,\tau)$-molecule} centered at a ball $B$.
\end{definition}

The following molecular characterization of the Hardy space $H_{X}(\rn)$ is just \cite[Theorem 3.9]{SHYY17}.

\begin{lemma}\label{thm:3-4-3}
Assume that $X$ is a  ball quasi-Banach function space satisfying
both Assumptions \ref{vector} and \ref{vector2} for some $\tz,s \in (0,1]$ and $q\in(1,\infty].$
Let $\tau\in(n[1/\theta-1/q],\fz)$. Then $f\in H_X(\rn)$ if and only if there exists a
sequence $\{m_j\}_{j=1}^\fz$ of $(X,\,q,\,d_X,\,\tau)$-molecules
centered, respectively,  at the cubes $\{Q_j\}_{j=1}^\fz\subset {\mathcal Q}$ and $\{\lambda_{j}\}_{j=1}^\infty\subset[0,\fz)$
satisfying
\begin{align*}
\left\|\left\{\sum_{j=1}^\infty\left(\frac{\lambda_j}{\|\mathbf{1}_{Q_j}\|_X}\right
)^s\mathbf{1}_{Q_j}\right\}^{1/s}\right\|_X<\fz
\end{align*}
such that
\begin{equation*}
f=\sum_{j=1}^\infty \lambda_{j}m_{j}
\end{equation*}
in ${\mathcal S}'(\rn).$ Moreover,
\begin{equation*}
\|f\|_{H_X(\rn)}\sim\left\|\left\{\sum_{j=1}^\infty
\left(\frac{\lambda_{j}}{\|\mathbf{1}_{Q_{j}}\|_{X}}\right)^{s}\mathbf{1}_{Q_{j}}
\right\}^{\frac{1}{s}}\right\|_{X},
\end{equation*}
where the  positive equivalence constants are independent of $f$.
\end{lemma}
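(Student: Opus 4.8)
The statement is precisely \cite[Theorem 3.9]{SHYY17}, so the proof I would give is a reference to that paper. For orientation I record here the shape of the argument reproduced from there, which also explains why Assumptions \ref{vector} and \ref{vector2} (equivalently, the conclusion of Lemma \ref{thm:2-2-3}) are exactly the hypotheses needed.

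\emph{From a molecular sum to $H_X(\rn)$.} Suppose $f=\sum_{j=1}^\fz\lambda_j m_j$ with each $m_j$ an $(X,q,d_X,\tau)$-molecule centered at $Q_j\in\cq$ and $\|\{\sum_j(\lambda_j/\|\mathbf1_{Q_j}\|_X)^s\mathbf1_{Q_j}\}^{1/s}\|_X<\fz$. Since $M_{\mathrm b}^{**}(\cdot,\Phi)$ is sublinear, $M_{\mathrm b}^{**}(f,\Phi)\le\sum_j\lambda_jM_{\mathrm b}^{**}(m_j,\Phi)$ pointwise. The key pointwise computation is that, combining the vanishing moments of $m_j$ up to order $d_X=\lfloor n(1/\theta-1)\rfloor$ with its $L^q$ size/decay, the function $M_{\mathrm b}^{**}(m_j,\Phi)$ itself enjoys molecular-type decay: for some $\widetilde\tau\in(n[1/\theta-1/q],\fz)$ and every $k\in\zz_+$,
\begin{align*}
\left\|M_{\mathrm b}^{**}(m_j,\Phi)\mathbf1_{U_k(Q_j)}\right\|_{L^q(\rn)}\ls 2^{-\widetilde\tau k}\,\frac{|Q_j|^{1/q}}{\|\mathbf1_{Q_j}\|_X}.
\end{align*}
Lemma \ref{thm:2-2-3} then applies verbatim to the family $\{M_{\mathrm b}^{**}(m_j,\Phi)\}_j$ with the coefficients $\{\lambda_j\}_j$, yielding
\begin{align*}
\|M_{\mathrm b}^{**}(f,\Phi)\|_X\le\left\|\sum_{j=1}^\fz\lambda_jM_{\mathrm b}^{**}(m_j,\Phi)\right\|_X\ls\left\|\left\{\sum_{j=1}^\fz\left(\frac{\lambda_j}{\|\mathbf1_{Q_j}\|_X}\right)^s\mathbf1_{Q_j}\right\}^{1/s}\right\|_X,
\end{align*}
so $f\in H_X(\rn)$ with the stated bound (and, along the way, the series converges in $\cs'(\rn)$).

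\emph{From $H_X(\rn)$ to a molecular decomposition.} For the converse one invokes, again from \cite{SHYY17}, the atomic characterization of $H_X(\rn)$: any $f\in H_X(\rn)$ can be written as $\sum_j\lambda_j a_j$ in $\cs'(\rn)$, where $a_j$ is supported in a cube $Q_j$, satisfies $\|a_j\|_{L^\infty(\rn)}\le\|\mathbf1_{Q_j}\|_X^{-1}$ and $\int_\rn a_j(x)x^\az\,dx=0$ for $|\az|\le d_X$, and $\|\{\sum_j(\lambda_j/\|\mathbf1_{Q_j}\|_X)^s\mathbf1_{Q_j}\}^{1/s}\|_X\ls\|f\|_{H_X(\rn)}$; this is the genuinely substantial part of \cite{SHYY17} and rests on a Calder\'on reproducing formula together with tent-space/area-function arguments. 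One then notes that such an $a_j$ obeys $\mathbf1_{U_k(Q_j)}a_j=0$ for all $k\in\nn$ and $\|\mathbf1_{U_0(Q_j)}a_j\|_{L^q(\rn)}\le|Q_j|^{1/q}\|\mathbf1_{Q_j}\|_X^{-1}$; hence every atom is in particular an $(X,q,d_X,\tau)$-molecule for every $\tau$, so the atomic decomposition is already the required molecular one, with comparable coefficient norms.

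\emph{Main obstacle.} The delicate direction is the first one. Because $X$ has no explicit integral expression for its norm, one cannot argue by duality or interpolation; the passage from the $L^q$-control of $M_{\mathrm b}^{**}$ of a \emph{single} molecule to $X$-norm control of the whole superposition must be routed through a Fefferman--Stein-type vector-valued maximal inequality (Assumption \ref{vector}) together with boundedness of $\cm$ on the appropriate associate space (Assumption \ref{vector2}) --- exactly the machinery packaged into Lemma \ref{thm:2-2-3}. Extracting, uniformly in $j$, a decay rate $\widetilde\tau>n[1/\theta-1/q]$ for $M_{\mathrm b}^{**}(m_j,\Phi)$ out of the original parameters $\tau$ and $d_X$ is where the moment conditions on the molecules are genuinely consumed.
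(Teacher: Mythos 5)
Your proposal matches the paper exactly: the paper does not prove this lemma but simply quotes it as \cite[Theorem 3.9]{SHYY17}, which is precisely the reference you give, and your accompanying sketch of the two directions (molecular-to-$H_X$ via the estimate on $M_{\mathrm b}^{**}$ of a single molecule plus Lemma \ref{thm:2-2-3}, and the converse via the atomic decomposition of $H_X(\rn)$) is consistent with how that theorem is established there. Nothing further is needed.
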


To prove Theorem \ref{r}, we need the following conclusion whose proof is a slight modification
of the proof of \cite[Lemma 6.1]{CY12};
we omit the details here.

\begin{lemma}\label{guoqing2}
Let  $m\in\nn$, $L$ be a homogeneous divergence form $2m$-order elliptic operator  in \eqref{high}
satisfying Ellipticity Condition \ref{ec}, $M\in\nn$, and $q\in(p_-(L),
\min\{p_{+}(L),\,q_{+}(L)\}).$  Then there exists a positive constant $C$ such that, for any
$t\in(0,\infty),$ any closed subsets $E,F$ of $\rn$ with $\dist(E,F)\in(0,\infty)$, and any
$f\in L^q(\rn)$ with $\supp\,(f)\subset E,$
\begin{align*}
\left\|\nabla^m L^{-1/2}\left(I-e^{-tL}\right)^{M}(f)\right\|_{L^q(F)}
\leq C \left(\frac{t}{[\dist(E,F)]^{2m}}\right)^{M}\|f\|_{L^q(E)}
\end{align*}
and
\begin{align*}
\left\|\nabla^m L^{-1/2}\left(tLe^{-tL}\right)^{M}(f)\right\|_{L^q(F)}
\leq C \left(\frac{t}{[\dist(E,F)]^{2m}}\right)^{M}\|f\|_{L^q(E)}.
\end{align*}
\end{lemma}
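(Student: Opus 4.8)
The plan is to prove Lemma \ref{guoqing2} by expanding the operator $(I-e^{-tL})^M$ (respectively $(tLe^{-tL})^M$) via the functional calculus so that it acts like a high-order "smoothing" of the heat semigroup, then combining the resulting integral representation with the $m$-$L^q$-$L^q$ off-diagonal estimates recorded in Proposition \ref{basic}(iii) together with the $L^q$-boundedness of $\sqrt{s}\nabla^m e^{-sL}$ (valid since $q\in(q_-(L),q_+(L))$, which is exactly why we assume $q<q_+(L)$ in addition to $q<p_+(L)$).

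First I would write, for $f$ supported in $E$,
\begin{align*}
\nabla^m L^{-1/2}(I-e^{-tL})^M(f)=\frac{1}{2\sqrt{\pi}}\int_0^\infty \nabla^m e^{-sL}(I-e^{-tL})^M(f)\,\frac{ds}{\sqrt{s}},
\end{align*}
and split the $s$-integral at $s=[\dist(E,F)]^{2m}$. For the range $s\ge [\dist(E,F)]^{2m}=:d_{EF}^{2m}$ one uses the elementary bound $\|(I-e^{-tL})^M g\|_{L^q}\lesssim \min\{1,(t/s)^M\}\|g\|_{L^q}$ — obtained by rewriting $(I-e^{-tL})^M$ as an $M$-fold integral of $tLe^{-\sigma L}$ over $\sigma\in(0,t)^M$ (or, more cleanly, by observing $I-e^{-tL}=\int_0^t Le^{-\sigma L}\,d\sigma$ and iterating), then applying boundedness of $\nabla^m e^{-sL}$ on $L^q$ with the gain $s^{-1/2}$; the factor $(t/s)^M$ then makes $\int_{d_{EF}^{2m}}^\infty (t/s)^M s^{-1}\,ds\sim (t/d_{EF}^{2m})^M$ after the $s^{-1/2}\cdot s^{1/2}$ from the gradient bound is absorbed. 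Wait — more carefully, the gradient bound gives $\|\nabla^m e^{-sL}h\|_{L^q}\lesssim s^{-1/2}\|h\|_{L^q}$ only in the homogeneous scaling; here one should instead keep track of the $2m$-homogeneity, so $\|\nabla^m e^{-sL}h\|_{L^q(F)}\lesssim s^{-\frac{1}{2m}}\|h\|_{L^q}$ up to off-diagonal decay, and the $ds/\sqrt s$ combines to a convergent integral producing the stated power $(t/d_{EF}^{2m})^M$.

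For the near range $0<s\le d_{EF}^{2m}$, I would expand $(I-e^{-tL})^M=\sum_{k=0}^M\binom{M}{k}(-1)^k e^{-ktL}$ and use the $m$-$L^q$-$L^q$ off-diagonal estimates of Proposition \ref{basic}(iii) for each composition $\nabla^m e^{-sL}e^{-ktL}$ acting on functions supported in $E$ and measured on $F$: these give a factor $\exp\{-c\,d_{EF}^{2m/(2m-1)}/(s+kt)^{1/(2m-1)}\}$, which for $s\le d_{EF}^{2m}$ is super-polynomially small in $d_{EF}^{2m}/(s+kt)$ and in particular dominates any power $(d_{EF}^{2m}/t)^M$; integrating $s^{-1/2m}\cdot(\text{exponential})$ against $ds/\sqrt s$ over $(0,d_{EF}^{2m})$ then yields the required bound. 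The second inequality (with $tLe^{-tL}$ in place of $I-e^{-tL}$) is handled identically, using that $(tLe^{-tL})^M=t^M L^M e^{-MtL}$ and absorbing the $L^M$ into $(\sigma L)^M e^{-\sigma L}$-type bounds from Proposition \ref{basic}(iii) with $k=M$.

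\textbf{Main obstacle.} The delicate point is organizing the $s$-integral so that the $s^{-1/2m}$ (or $s^{-1/2}$) singularity from the Riesz-transform kernel $\nabla^m e^{-sL}$ near $s=0$ is genuinely integrable \emph{after} combining with the off-diagonal Gaussian-type factor, and simultaneously extracting the clean polynomial rate $(t/d_{EF}^{2m})^M$ in the large-$s$ regime without losing it to the $\sqrt s$ weight — in other words, bookkeeping the exponents of $2m$-homogeneity correctly throughout. A secondary technical nuisance is that Proposition \ref{basic}(iii) is stated for $(tL)^k e^{-tL}$ rather than for $\nabla^m e^{-tL}$, so one must first upgrade it to an off-diagonal bound for $\nabla^m e^{-sL}$ by factoring $\nabla^m e^{-sL}=\nabla^m e^{-sL/2}\circ e^{-sL/2}$, applying the $L^q$-boundedness of $\sqrt s\,\nabla^m e^{-sL/2}$ to the first factor and the off-diagonal decay of $e^{-sL/2}$ to the second; this is exactly the step where the hypothesis $q<q_+(L)$ enters, and it is essentially the content of \cite[Lemma 6.1]{CY12} whose proof we are modifying, so I would cite that lemma's argument and indicate only the changes needed for general $q$.
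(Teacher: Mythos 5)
Your overall strategy (the integral representation $\nabla^m L^{-1/2}=\frac{1}{2\sqrt\pi}\int_0^\infty\nabla^m e^{-sL}\,\frac{ds}{\sqrt s}$, a split of the $s$-integral at $[\dist(E,F)]^{2m}$, off-diagonal estimates plus the $L^q$-boundedness of $\sqrt s\,\nabla^m e^{-sL}$) is the right one — the paper gives no proof and simply refers to \cite[Lemma 6.1]{CY12}, whose argument is of exactly this type. However, your treatment of the near range $0<s\le[\dist(E,F)]^{2m}$ for the first inequality has a genuine gap: expanding $(I-e^{-tL})^M=\sum_{k=0}^M\binom{M}{k}(-1)^ke^{-ktL}$ and estimating each term separately destroys the cancellation that produces the factor $t^M$. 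Indeed, the $k=0$ term is $\nabla^m e^{-sL}f$, which contains no $t$ at all, and its off-diagonal bound $\exp\{-c([\dist(E,F)]^{2m}/s)^{1/(2m-1)}\}$ is a fixed quantity that cannot be dominated by $(t/[\dist(E,F)]^{2m})^M$ as $t\to0$ with $E,F,s$ fixed; the same is true of every individual term when $t\le[\dist(E,F)]^{2m}$. The fix is standard but must be carried out: first reduce to the case $t\le[\dist(E,F)]^{2m}$ (otherwise the claim is trivial from the $L^q$-boundedness of $\nabla^mL^{-1/2}$ and of $(I-e^{-tL})^M$), then keep the cancellation by writing $(I-e^{-tL})^M=\int_{[0,t]^M}L^Me^{-(\sigma_1+\cdots+\sigma_M)L}\,d\sigma$, so that one needs off-diagonal bounds for $\nabla^m(uL)^Me^{-uL}$ with $u:=s+\sigma_1+\cdots+\sigma_M\lesssim[\dist(E,F)]^{2m}$ (obtained by the factorization $\nabla^m e^{-uL/2}\circ(uL)^Me^{-uL/2}$ you describe); the measure of $[0,t]^M$ then supplies the $t^M$, the operator bound supplies $u^{-M-1/2}$, and converting the exponential into $(u/[\dist(E,F)]^{2m})^N$ with $N\ge M+1/2$ yields $(t/[\dist(E,F)]^{2m})^M$ after the $ds/\sqrt s$ integration. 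Your second inequality does not need this device, since $(tLe^{-tL})^M$ carries the factor $t^M$ explicitly.

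A second, smaller but consequential slip: your ``more careful'' replacement of the bound $\|\nabla^m e^{-sL}h\|_{L^q(\rn)}\lesssim s^{-1/2}\|h\|_{L^q(\rn)}$ by $s^{-1/(2m)}$ is backwards. Since $L$ has order $2m$, the $m$ derivatives cost $(s^{1/(2m)})^{-m}=s^{-1/2}$, which is precisely the normalization built into the definition of $q_\pm(L)$ via the family $\{\sqrt t\,\nabla^me^{-tL}\}_{t\in(0,\infty)}$; with the exponent $s^{-1/(2m)}$ the dimensional bookkeeping in your far-range integral no longer produces $(t/[\dist(E,F)]^{2m})^M$ (one picks up a spurious power of $\dist(E,F)$). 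Keeping $s^{-1/2}$, your far-range computation $\int_{[\dist(E,F)]^{2m}}^\infty(t/s)^M\,\frac{ds}{s}\sim(t/[\dist(E,F)]^{2m})^M$ is correct.
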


Furthermore, we also need the following boundedness of $\nabla^m L^{-1/2},$ which can be found
in \cite[p.\,68]{A02}.

\begin{lemma}\label{b}
Let $m\in\nn$ and $L$ be a homogeneous divergence form $2m$-order elliptic operator  in \eqref{high}
satisfying Ellipticity Condition \ref{ec}. Assume that $q\in(q_-(L),q_+(L))$.
Then the Riesz transform $\nabla^m L^{-1/2}$ is bounded on $L^q(\rn).$
\end{lemma}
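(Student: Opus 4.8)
The plan is to prove the boundedness of $T:=\nabla^mL^{-1/2}$ on $L^q(\rn)$ by treating separately the endpoint $q=2$, the range $q\in(2,q_+(L))$, and the range $q\in(q_-(L),2)$, and then assembling. Throughout one uses that, by the definitions of $q_\pm(L)$ and $p_\pm(L)$ together with Proposition \ref{basic}, the family $\{\sqrt{t}\,\nabla^me^{-tL}\}_{t\in(0,\infty)}$ is uniformly bounded on $L^p(\rn)$ for every $p\in(q_-(L),q_+(L))$, while $\{(tL)^ke^{-tL}\}_{t\in(0,\infty)}$ satisfies the $m$-$L^p$-$L^q$ off-diagonal estimates of Proposition \ref{basic}(iii) for $p_-(L)<p\le q\le p_+(L)$; in particular $p_-(L)<2<\min\{p_+(L),q_+(L)\}$, so these intervals are open around $2$.

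The endpoint $q=2$ is the conceptual core and, I expect, the only genuinely hard step. By Ellipticity Condition \ref{ec} and the definition of the form $a_0$, one has $\lambda_0\|\nabla^mg\|_{L^2(\rn)}^2\le\Re\langle Lg,g\rangle$ and $|\langle Lg,g\rangle|\le\Lambda_0\|\nabla^mg\|_{L^2(\rn)}^2$ for $g\in\dot W^{m,2}(\rn)$; combined with the higher-order Kato square root estimate $\|\nabla^mg\|_{L^2(\rn)}\sim\|L^{1/2}g\|_{L^2(\rn)}$ for operators of the form \eqref{high} under Ellipticity Condition \ref{ec} (which also holds for $L^*$), this yields $\|Tf\|_{L^2(\rn)}=\|\nabla^mL^{-1/2}f\|_{L^2(\rn)}\sim\|L^{1/2}L^{-1/2}f\|_{L^2(\rn)}=\|f\|_{L^2(\rn)}$. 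Thus the $L^2$-boundedness of $T$ is essentially the Kato estimate, which I would invoke as a known deep fact (this is precisely the content of the cited reference \cite{A02}); everything that follows is a now-standard extrapolation off $L^2$.

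For $q\in(2,q_+(L))$ I would run a good-$\lambda$ / sharp-maximal-function argument of the kind designed for operators beyond Calder\'on--Zygmund theory, with the heat-semigroup approximations $A_r:=I-(I-e^{-r^{2m}L})^M$ for a large integer $M$, so that $I-A_r=(I-e^{-r^{2m}L})^M$ and $A_r=\sum_{k=1}^Mc_ke^{-kr^{2m}L}$. For each ball $B$ of radius $r$ one verifies: (i) the $L^2$-$L^2$ off-diagonal decay of $T(I-A_r)=\nabla^mL^{-1/2}(I-e^{-r^{2m}L})^M$, which is precisely Lemma \ref{guoqing2} with exponent $2$; and (ii) an integrability-improving estimate bounding the $L^q$-average of $TA_rf$ over $B$ by the $L^2$-average of $f$ over a dilate of $B$, with rapidly decaying tails over the annuli $2^{j+1}B\setminus2^jB$, obtained from $TA_r=\sum_kc_k\nabla^mL^{-1/2}e^{-kr^{2m}L}$, the subordination formula $L^{-1/2}=c\int_0^\infty e^{-uL}u^{-1/2}\,du$, the $L^p$-boundedness of $\sqrt{t}\,\nabla^me^{-tL}$ for $p$ near $q_+(L)$, the $L^2$-$L^q$ off-diagonal estimates of Proposition \ref{basic}(iii), and the Caccioppoli inequality (Proposition \ref{cacci}). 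Feeding (i) and (ii) into the extrapolation theorem and interpolating with the $q=2$ bound gives $\|Tf\|_{L^q(\rn)}\ls\|f\|_{L^q(\rn)}$. For the remaining range $q\in(q_-(L),2)$ I would use a Calder\'on--Zygmund decomposition, exploiting $q_-(L)=p_-(L)$ so that $\{e^{-tL}\}$ enjoys $L^q$-$L^2$ off-diagonal bounds: at height $\lambda$, decompose $f=g+\sum_ib_i$ with $\|g\|_{L^2(\rn)}^2\ls\lambda^{2-q}\|f\|_{L^q(\rn)}^q$ and the $b_i$ supported in cubes $Q_i$ of side-length $r_i$ with $\sum_i|Q_i|\ls\lambda^{-q}\|f\|_{L^q(\rn)}^q$; bound $|\{|Tg|>\lambda/2\}|$ by Chebyshev and the $L^2$ estimate, and each $Tb_i$ via the splitting $T=\nabla^mL^{-1/2}(I-e^{-r_i^{2m}L})^M+\sum_kc_k\nabla^mL^{-1/2}e^{-kr_i^{2m}L}$, controlling the first summand on the annuli around $Q_i$ by Lemma \ref{guoqing2} and the second by the $L^q$-$L^2$ mapping of $\{e^{-tL}\}$ followed by the $L^2$ bound on $T$; this gives the weak $(q,q)$ estimate, and Marcinkiewicz interpolation with the $L^2$ bound upgrades it to $L^q(\rn)$-boundedness. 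Combining the three ranges completes the proof.
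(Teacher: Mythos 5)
You should first note what the paper itself does here: Lemma \ref{b} is not proved in the paper at all, but quoted from Auscher's memoir \cite[p.\,68]{A02}. So your proposal is really being measured against the cited argument, and in outline it follows exactly that program: the $L^2$ case is the (higher order) Kato square root estimate, the range $q\in(2,q_+(L))$ is treated by the Auscher--Coulhon--Duong--Hofmann good-$\lambda$/sharp maximal function criterion with the approximations $A_r:=I-(I-e^{-r^{2m}L})^M$, and the range $q\in(q_-(L),2)$ by a Blunck--Kunstmann type Calder\'on--Zygmund decomposition exploiting $q_-(L)=p_-(L)$ and the $L^q$--$L^2$ off-diagonal estimates. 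Identifying these three regimes and reducing $q=2$ to the Kato estimate is correct.

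However, two steps have genuine gaps as stated. For $q\in(2,q_+(L))$ you propose to bound the local $L^q$ average of $TA_rf$ over $B$ by $L^2$ averages of $f$ over the annuli $2^{j+1}B\setminus 2^jB$ ``with rapidly decaying tails''. This is not achievable: writing $\nabla^mL^{-1/2}e^{-kr^{2m}L}=c\int_0^\infty\nabla^me^{-(u+kr^{2m})L}u^{-1/2}\,du$, the long-time part $u\gtrsim(2^jr_B)^{2m}$ carries no off-diagonal gain and contributes an annulus-to-ball $L^2\to L^q$ norm of order $(2^jr_B)^{-n(1/2-1/q)}$; after normalizing the averages the resulting coefficients are of order $2^{jn/q}$, which grow rather than decay (this reflects the fact that $\nabla^mL^{-1/2}$ has only Calder\'on--Zygmund-type, non-summable, decay against $|f|$). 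The correct second hypothesis in the ACDH criterion bounds the local $L^q$ average of $TA_rf$ by averages of $|Tf|$ (equivalently of $|\nabla^mu|$ with $u=L^{-1/2}f$, using that $A_r$ commutes with $L^{-1/2}$), and verifying it for higher order operators is where the real work in \cite{A02} lies: one needs $L^2$--$L^q$ off-diagonal estimates for $\sqrt t\,\nabla^me^{-tL}$ \emph{together with} the conservation property $e^{-tL}P=P$ for polynomials of degree less than $m$ and a Poincar\'e-type inequality, none of which appear in your sketch; Caccioppoli goes in the wrong direction for this purpose. Secondly, in the range $q\in(q_-(L),2)$, treating ``each $Tb_i$'' separately and applying the $L^2$ bound of $T$ to $A_{r_i}b_i$ cube by cube does not close: it yields $\sum_i\|A_{r_i}b_i\|_{L^2(\rn)}\lesssim\lambda\sum_i|Q_i|^{1/2}$, an $\ell^1$ quantity where an $\ell^2$ one is needed. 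The standard argument instead estimates $\|\sum_iA_{r_i}b_i\|_{L^2(\rn)}^2\lesssim\lambda^2\sum_i|Q_i|$ by an almost-orthogonality/duality argument using the off-diagonal decay of the semigroup and the bounded overlap of the dilated cubes, and only then applies the $L^2$ boundedness of $T$ and the Chebyshev inequality. With these two ingredients supplied, your outline would indeed reproduce the proof of the cited result; without them, the $q>2$ step in particular would fail as written.
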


Now, we prove Theorem \ref{r} by using Lemmas \ref{thm:3-4-3}, \ref{guoqing2}, and \ref{b}.

\begin{proof}[Proof of Theorem \ref{r}]
Let all the symbols be the same as in the present theorem, $f\in H_{X,\,L}(\rn)\cap L^2(\rn),$ $\epsilon\in
(\max\{n/\theta,\,n/q\},\infty)$, and  $M\in \nn\cap(n[1/(2m\theta)-1/(2mq)],\infty)$.
By Proposition \ref{prop:6-2-2}, we find that there exists a  sequence $\{\ld_j\}_{j\in\nn}\subset [0,\infty)$
and a sequence $\{\al_j\}_{j\in\nn}$ of $(X,\,M,\,\epsilon)_L$-molecules associated, respectively, with the
balls $\{B_j\}_{j\in\nn}$ such that
\begin{align}\label{1600}
f=\sum_{j=1}^\infty\ld_j\al_j,
\end{align}
in $L^2(\rn)$, and
\begin{align*}
\left\|\left\{\sum_{j=1}^\infty
\left(\frac{\lambda_j}{\|\mathbf{1}_{B_j}\|_X}\right)^s\mathbf{1}_{B_j}\right\}^{\frac{1}{s}}
\right\|_X\lesssim\|f\|_{H_{X,\,L}(\rn)}.
\end{align*}
Then, from \eqref{1600} and Lemma \ref{b}, it follows that
\begin{align*}
\nabla^m L^{-1/2}(f)=\sum_{j=1}^\infty\ld_j\nabla^m L^{-1/2}(\al_j)
\end{align*}
in $L^2(\rn)$.

Next, we show that, for any $(X,\,M,\,\epsilon)_L$-molecule $b$ associated with
the ball $B:=B(x_B,\,r_B)\subset\rn$ with some $x_B\in\rn$ and  $r_B\in(0,\infty),$ and for any $j\in\zz_+,$
\begin{align}\label{pa}
\left\|\nabla^m L^{-1/2}(b)\right\|_{L^q(U_j(B))}
\lesssim2^{-j\tau}|B|^{1/q}\|\mathbf{1}_B\|_X^{-1},
\end{align}
where both the implicit positive constant and $\tau\in(n[1/\theta-1/q],\infty)$ are independent of $m$.

When $j=0,$ by Lemma \ref{b}, Definition \ref{buguoruci}(i), and $\epsilon\in(n/q,\infty)$, we conclude that
\begin{align*}
\left\|\nabla^m L^{-1/2}(b)\right\|_{L^q(U_j(B))}
&\lesssim\|b\|_{L^q(\rn)}
=\sum_{j=0}^\infty \|b\|_{L^q(U_j(B))}\\
&\lesssim\sum_{j=0}^\infty2^{-j\epsilon} |2^jB|^{1/q}\|\mathbf{1}_B\|_X^{-1}
\lesssim |B|^{1/q}\|\mathbf{1}_B\|_X^{-1}.
\end{align*}
Furthermore, for any $j\in\nn,$ we have
\begin{align}\label{jugg}
&\left\|\nabla^m L^{-1/2}(b)\right\|_{L^q(U_j(B))}\\\noz
&\quad\leq
\left\|\nabla^m L^{-1/2}\left(I-e^{-r_B^{2m}L}\right)^{M}(b)\right\|_{L^q(U_j(B))}\\\noz
&\quad\quad+
\left\|\nabla^m L^{-1/2}\left[I-\left(I-e^{-r_B^{2m}L}\right)^{M}\right](b)\right\|_{L^q(U_j(B))}\\\noz
&\quad\lesssim
\left\|\nabla^m L^{-1/2}\left(I-e^{-r_B^{2m}L}\right)^{M}(b)\right\|_{L^q(U_j(B))}\\\noz
&\quad\quad+\sum_{k=1}^{M}\left\|\nabla^m L^{-1/2}\left(\frac{k}{M}r_B^{2m}L
e^{-(k/M)r_B^{2m}L}\right)^{M}\lf(r_B^{-2m}L^{-1}\r)^{M}(b)\right\|_{L^q(U_j(B))}\\\noz
&\quad=:\mathrm{I}+\mathrm{II}.
\end{align}
For any $j\in\nn,$ let $S_j(B):=(2^{j+2}B)\setminus(2^{j-1}B).$ It is obvious that
$\dist([S_j(B)]^\complement,U_j(B))\sim 2^jr_B.$ From this, Lemmas \ref{guoqing2} and \ref{b},
Proposition \ref{basic}(iii) with $p:=q$, and Definition \ref{buguoruci}(i), we deduce that
\begin{align}\label{1}
\mathrm{I}&\lesssim \left\|\nabla^m L^{-1/2}\left(I-e^{-r_B^{2m}L}\right)^{M}\lf(b
\mathbf{1}_{S_j(B)}\r)\right\|_{L^q(U_j(B))}\\\noz
&\quad+\left\|\nabla^m L^{-1/2}\left(I-e^{-r_B^{2m}L}\right)^{M}\lf(b
\mathbf{1}_{[S_j(B)]^\complement}\r)\right\|_{L^q(U_j(B))}\\\noz
&\lesssim \left\|\left(I-e^{-r_B^{2m}L}\right)^{M}\lf(b\mathbf{1}_{S_j(B)}\r)\right\|_{L^q(\rn)}
+2^{-2mMj}\|b\|_{L^q(\rn)}\\\noz
&\lesssim
\sum_{k=0}^M \left\|e^{-kr_B^{2m}L}\lf(b\mathbf{1}_{S_j(B)}\r)\right\|_{L^q(\rn)}
+\sum_{i=0}^\infty2^{-2mMj}\|b\|_{L^q(U_i(B))}\\\noz
&\lesssim \|b\|_{L^q(S_j(B))}+\sum_{i=0}^\infty2^{-2mMj}\|b\|_{L^q(U_i(B))}
\lesssim\left[2^{-j(\epsilon-n/q)}+2^{-2mMj}\right]|B|^{1/q}\|\mathbf{1}_B\|_X^{-1}.
\end{align}
Similarly to \eqref{1}, we also have
\begin{align*}
\mathrm{II}
&\lesssim \sum_{k=1}^{M}\left\|\nabla^m L^{-1/2}\left(\frac{k}{M}r_B^{2m}Le^{-(k/M)
r_B^{2m}L}\right)^{M}\left[\left(r_B^{-2m}L^{-1}\right)^{M}(b)\mathbf{1}_{S_j(B)}\right]
\right\|_{L^q(U_j(B))}\\
&\quad+\sum_{k=1}^{M}\left\|\nabla^mL^{-1/2}\left(\frac{k}{M}r_B^{2m}L
e^{-(k/M)r_B^{2m}L}\right)^{M}\left[\left(r_B^{-2m}L^{-1}\right)^{M}(b)\mathbf{1}_{
[S_j(B)]^\complement}\right]\right\|_{L^q(U_j(B))}\\
&\lesssim \left\|\left(r_B^{-2m}L^{-1}\right)^{M}(b)\right\|_{L^q(S_j(B))}+2^{-2mMj}
\left\|\left(r_B^{-2m}L^{-1}\right)^M(b)\right\|_{L^q(\rn)}\\
&\lesssim \left\|\left(r_B^{-2m}L^{-1}\right)^{M}(b)\right\|_{L^q(S_j(B))}+\sum_{i=0}^\infty2^{-2mMj}
\left\|\left(r_B^{-2m}L^{-1}\right)^M(b)\right\|_{L^q(U_i(B))}\\
&\lesssim\left[2^{-j(\epsilon-n/q)}+2^{-2mMj}\right]|B|^{1/q}\|\mathbf{1}_B\|_X^{-1},
\end{align*}
which, combined with both \eqref{1} and \eqref{jugg}, further implies that \eqref{pa} holds true with
$\tau:=\min\{\epsilon-n/q,2mM\}\in(n[1/\theta-1/q],\infty).$

Moreover, by \cite[p.\,1434]{CY12}, we find that, for any $(X,\,M,\,\epsilon)_L$-molecule $b$ and
any $\al\in\zz_+^n$ with $|\al|\leq m-1$,
\begin{align*}
\int_\rn b(x)x^\al\,dx=0,
\end{align*}
which, together with the assumption that $\theta\in(n/(n+m),1]$, \eqref{pa}, and Lemma
\ref{thm:3-4-3}, further implies that
\begin{align*}
\left\|\nabla^m L^{-1/2}(f)\right\|_{H_X(\rn)}
\lesssim\left\|\left\{\sum_{j=1}^\infty
\left(\frac{\lambda_j}{\|\mathbf{1}_{B_j}\|_X}\right)^s\mathbf{1}_{B_j}\right\}^{
\frac{1}{s}}\right\|_X\lesssim\|f\|_{H_{X,\,L}(\rn)}.
\end{align*}
This, combined with the fact that $H_{X,\,L}(\rn)\cap L^2(\rn)$ is dense in $H_{X,\,L}(\rn)$, then
finishes the proof of Theorem \ref{r}.
\end{proof}

\subsection{Proof of Theorem \ref{riesz}}\label{s2}
In this subsection, we prove Theorem \ref{riesz}. We first establish the atomic decomposition
of the homogeneous Hardy--Sobolev space $\dot{H}_{m,\,X}(\rn)$.

\begin{definition}
Let $X$ be a ball quasi-Banach function space on $\rn$ and $m\in\nn$. Then the \emph{homogeneous
Hardy--Sobolev space} $\dot{H}_{m,\,X}(\rn)$, associated with $X$, is defined as
$$
\dot{H}_{m,\,X}(\rn):=\left\{f\in\cs'(\rn): \|f\|_{\dot{H}_{m,\,X}(\rn)}:=
\sum_{\al\in\zz_+^n,\,|\al|=m}\left\|\p^\al f\right\|_{H_X(\rn)}<\infty\right\}.
$$
\end{definition}
Let us recall the definition of  weak derivatives of the locally 
integrable function. For any $\al\in\zz_+^n$
and any locally integrable functions $f$ and $g$, 
$g$ is called the \emph{$\al$-order weak derivative} of $f$ if, 
for any $\varphi\in C_\mathrm{c}^\infty(\rn)$ (the set of all the 
infinitely differentiable functions on $\rn$ with compact support),
\begin{align*}
	\int_\rn f(x)\p^\al \varphi(x)\,dx=(-1)^{|\al|}\int_\rn g(x)\varphi(x)\,dx.
\end{align*}
Moreover, we denote $g$ by $\p^\al f.$
\begin{definition}\label{zhanshen}
Let $X$ be a ball quasi-Banach function space, $m\in\nn,$ and $p\in(1,\infty).$ Then a function $a$
is called an \emph{$(\dot{H}_{m,\,X},\,p)$-atom} if
there exists a ball $B\subset\rn$ such that,
for any $\al\in\zz_+^n$ with $|\al|=m$,
\begin{itemize}
\item [$\mathrm{(i)}$] $\supp\,(a):=\{x\in\rn:\  a(x)\not=0\}\subset B;$
\item [$\mathrm{(ii)}$] $\| a\|_{L^p(\rn)}\leq |B|^{1/p+m/n}\|\mathbf{1}_B\|_X^{-1};$
\item [$\mathrm{(iii)}$] $\|\p^\al a\|_{L^p(\rn)}\leq |B|^{1/p}\|\mathbf{1}_B\|_X^{-1}.$
\end{itemize}
\end{definition}

Then we have the following atomic decomposition theorem for the space $\dot{H}_{m,\,X}(\rn)$.
\begin{theorem}\label{HS}
Let $m\in\nn$, $p\in(1,\infty)$, and
$X$ be  a  ball quasi-Banach function space satisfying both Assumptions
\ref{vector} and \ref{vector2} for some $\theta,s \in (0,1]$ and   $q\in(1,\infty).$ Assume that $f\in \dot{H}_{m,\,X}(\rn)$ and $\p^\al f\in  L^2(\rn)$ for
any $\al\in\zz_+^n$ with $|\al|=m.$
Then there exists a sequence $\{\lambda_j\}_{j=1}^\infty\subset[0,\fz)$ and
a sequence $\{a_j\}_{j=1}^\infty$ of $(\dot{H}_{m,\,X},\,p)$-atoms associated, respectively, with the
balls $\{B_j\}_{j=1}^\infty$ such that, for any $\al\in\zz_+^n$ with $|\al|=m,$
\begin{align}\label{jishu}
\p^\al f=\sum_{j=1}^\infty \ld_j \p^\al a_j,
\end{align}
in  $L^2(\rn)$, and
\begin{align}\label{norm}
\lf\|\lf\{\sum_{j=1}^\fz
\lf(\frac{\lz_j}{\|\mathbf{1}_{B_j}\|_{X}}\r)^s\mathbf{1}_{B_j}\r\}^{1/s}\r\|_X\lesssim
\| f\|_{\dot{H}_{m,\,X}(\rn)},
\end{align}
where the implicit positive constant is independent of $f$.
\end{theorem}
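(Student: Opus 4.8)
The plan is to lift $f$ to a function on $\mathbb R^{n+1}_+$ lying in the $X$-tent space $T_X(\mathbb R^{n+1}_+)$, to apply the (known) atomic decomposition of $T_X(\mathbb R^{n+1}_+)$, and then to project each tent-space atom back down via a Calder\'on reproducing formula so as to produce an $(\dot H_{m,\,X},\,p)$-atom. First I would fix a radial $\varphi\in C_{\mathrm c}^\infty(B(\mathbf 0,1))$ having vanishing moments up to some large order $N_0$, depending only on $n,m,\theta,s,q,p$, put $c_0:=\int_0^\infty|\widehat\varphi(s\mathbf e_1)|^2\,\frac{ds}{s}\in(0,\infty)$ and $\psi:=c_0^{-1}\varphi$, so that, by radiality, the Calder\'on reproducing formula $g=\int_0^\infty\psi_t*\varphi_t*g\,\frac{dt}{t}$ holds in $L^2(\rn)$. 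Since $\varphi$ has vanishing moments up to order $m\le N_0$, one can write $\varphi=\sum_{|\bz|=m}\p^\bz\Phi^\bz$ with $\widehat{\Phi^\bz}(\xi)=\binom m\bz\,\overline{(i\xi)^\bz}\,\widehat\varphi(\xi)/|\xi|^{2m}$; each $\Phi^\bz$ then has vanishing moments up to order $N_0-m$ and, for $N_0$ large, enough smoothness and decay to be an admissible Lusin-area kernel for $H_X(\rn)$.

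The key step is the lift. Using $(\p^\bz\Phi)_t=t^{m}\p^\bz(\Phi_t)$ for $|\bz|=m$ and $\p^\bz(\Phi^\bz_t*f)=\Phi^\bz_t*\p^\bz f$, the function $G(y,t):=(\varphi_t*f)(y)$—well defined since $\varphi$ has many vanishing moments—satisfies $G(\cdot,t)=\sum_{|\bz|=m}t^{m}\,\Phi^\bz_t*\p^\bz f$, so that
\begin{align*}
\mathcal A\big(t^{-m}G\big)(x)\le\sum_{|\bz|=m}\left(\int_{\Gamma(x)}\big|\Phi^\bz_t*\p^\bz f(y)\big|^2\,\frac{dy\,dt}{t^{n+1}}\right)^{1/2}=\sum_{|\bz|=m}S_{\Phi^\bz}(\p^\bz f)(x),
\end{align*}
where $\mathcal A$ is as in \eqref{zhangpeng}. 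By the Lusin-area (square function) characterization of $H_X(\rn)$ from \cite{SHYY17}, the right-hand side has $X$-quasi-norm $\lesssim\sum_{|\bz|=m}\|\p^\bz f\|_{H_X(\rn)}=\|f\|_{\dot H_{m,\,X}(\rn)}$; hence $t^{-m}G\in T_X(\mathbb R^{n+1}_+)$, and since $\p^\bz f\in L^2(\rn)$ and the Lusin-area operator is bounded on $L^2(\rn)$, also $t^{-m}G\in T^2_2(\mathbb R^{n+1}_+)$. Applying the atomic decomposition of $T_X(\mathbb R^{n+1}_+)$ gives $t^{-m}G=\sum_j\lambda_j A_j$, converging both in $T_X$ and in $T^2_2$, where each $A_j$ is a $T_X$-atom supported in the tent $\widehat{B_j}\subset B_j\times(0,r_{B_j})$ over a ball $B_j$ and
\begin{align*}
\left\|\left\{\sum_j\left(\frac{\lambda_j}{\|\mathbf 1_{B_j}\|_X}\right)^s\mathbf 1_{B_j}\right\}^{1/s}\right\|_X\lesssim\big\|\mathcal A(t^{-m}G)\big\|_X\lesssim\|f\|_{\dot H_{m,\,X}(\rn)}.
\end{align*}

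Next I would set $a_j(x):=\int_0^\infty t^{m}\,(\psi_t*A_j(\cdot,t))(x)\,\frac{dt}{t}$. Because $\psi\in C_{\mathrm c}^\infty(B(\mathbf 0,1))$ and $A_j$ is supported in $\widehat{B_j}$, each $\psi_t*A_j(\cdot,t)$ is supported in $2B_j$, so $\supp a_j\subset 2B_j$. Differentiating under the integral and using $t^{m}\p^\al\psi_t=(\p^\al\psi)_t$ for $|\al|=m$ gives $\p^\al a_j=\int_0^\infty(\p^\al\psi)_t*A_j(\cdot,t)\,\frac{dt}{t}$. The bounded projections $F\mapsto\int_0^\infty\eta_t*F(\cdot,t)\,\frac{dt}{t}$ from $T^p_2(\mathbb R^{n+1}_+)$ to $L^p(\rn)$ for $p\in(1,\infty)$ and $\eta\in\{\psi\}\cup\{\p^\al\psi:|\al|=m\}$ (each having a vanishing moment), applied to $A_j$ and to $t^{m}A_j$, together with the $T_X$-atom normalization, the bound $t^{m}\le r_{B_j}^{m}\sim|B_j|^{m/n}$ on $\widehat{B_j}$, and $\|\mathbf 1_{2B_j}\|_X\sim\|\mathbf 1_{B_j}\|_X$, yield $\|\p^\al a_j\|_{L^p(\rn)}\lesssim|B_j|^{1/p}\|\mathbf 1_{B_j}\|_X^{-1}$ and $\|a_j\|_{L^p(\rn)}\lesssim|B_j|^{1/p+m/n}\|\mathbf 1_{B_j}\|_X^{-1}$, so a fixed multiple $c^{-1}a_j$ is an $(\dot H_{m,\,X},\,p)$-atom associated with $2B_j$. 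Finally, applying $\p^\al$ to the $L^2$-Calder\'on formula for $\p^\al f$ and moving the derivative onto the outer kernel gives $\p^\al f=\int_0^\infty(\p^\al\psi)_t*\big(t^{-m}G(\cdot,t)\big)\,\frac{dt}{t}$; since the $(\p^\al\psi)$-projection is bounded from $T^2_2$ to $L^2$, the $T^2_2$-convergence of $\sum_j\lambda_j A_j$ to $t^{-m}G$ yields $\p^\al f=\sum_j\lambda_j\p^\al a_j=\sum_j(c\lambda_j)\p^\al(c^{-1}a_j)$ in $L^2(\rn)$, which is \eqref{jishu}; replacing $\lambda_j$ by $c\lambda_j$ and using Assumption \ref{vector} to absorb the dilation $B_j\hookrightarrow 2B_j$ in the $X$-quasi-norm then gives \eqref{norm}.

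The main obstacle is the lift step: one must justify, at the level of $\cs'(\rn)$, both the well-definedness of $\varphi_t*f$ for the tempered distribution $f$ and the identity $\varphi_t*f=\sum_{|\bz|=m}t^{m}\,\Phi^\bz_t*\p^\bz f$ (interchanging derivatives with convolutions), and then correctly invoke the square function characterization of $H_X(\rn)$ to land $t^{-m}G$ in $T_X(\mathbb R^{n+1}_+)$. Tracking the vanishing-moment order $N_0$ so that all the auxiliary kernels $\Phi^\bz$ and $\p^\al\psi$ are simultaneously admissible for the $H_X$-square-function estimate and for the $T^p_2\to L^p$ projection estimates, and threading $L^2$-convergence through both halves of the Calder\'on formula and through the tent-space decomposition, are the remaining delicate points.
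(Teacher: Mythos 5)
Your proposal follows the same skeleton as the paper's proof — lift to the $X$-tent space, apply the atomic decomposition of $T_X(\mathbb{R}^{n+1}_+)$ from Lemma \ref{thm:3-6-1}, push each tent atom down by a $\pi_\psi$-type operator (Lemma \ref{touying}) to produce $(\dot{H}_{m,\,X},\,p)$-atoms, and recover \eqref{jishu} by threading a Calder\'on reproducing formula through the $T^2$-convergence of the decomposition — but your lift is different, and that is where the remaining work sits. The paper never forms $\varphi_t\ast f$: it normalizes $\varphi$ by $\int_0^\infty t^{2m-1}|\xi|^{2m}[\widehat{\varphi}(t\xi)]^2\,dt=1$ and lifts directly via $F(x,t):=\sum_{|\al|=m}\frac{m!}{\al!}(\p^\al f)\ast(\p^\al\varphi)_t(x)$, so only the $L^2$ functions $\p^\al f$ and the compactly supported smooth kernels $\p^\al\varphi$ ever appear; $F\in T_X(\mathbb{R}^{n+1}_+)$ then comes from the $H_X(\rn)\to X$ bound for $S_{\p^\al\varphi}$ (adapted from the proof of Theorem 3.21 in \cite{SHYY17}), and $\p^\beta f=\pi_{\p^\beta\varphi}(F)$ is proved by an argument in the style of \cite{FJW91}, using the symmetry $\p^\beta f\ast(\p^\al\varphi)_t=\p^\al f\ast(\p^\beta\varphi)_t$. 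Your lift $t^{-m}\varphi_t\ast f=\sum_{|\bz|=m}\Phi^{\bz}_t\ast\p^\bz f$, with $\widehat{\Phi^\bz}$ obtained by dividing by $|\xi|^{2m}$, buys a cleaner reproducing step: applying the scalar Calder\'on formula to $\p^\al f\in L^2(\rn)$ (not to $f$, which need not lie in $L^2(\rn)$ — your phrasing should make this explicit) and commuting $\p^\al$ onto $\psi_t$ is legitimate and replaces the paper's FJW-type computation.

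The price of your lift is the one substantive unproved ingredient in the proposal: the estimate $\|S_{\Phi^\bz}(\p^\bz f)\|_X\lesssim\|\p^\bz f\|_{H_X(\rn)}$ for kernels $\Phi^\bz$ that are neither compactly supported nor Schwartz — for any finite $N_0$ they have only finite-order polynomial decay and finitely many vanishing moments. The square-function bound in \cite{SHYY17} that the paper adapts is stated for nice ($C^\infty_{\mathrm c}$) kernels; extending it to your $\Phi^\bz$ is plausible for $N_0$ large, but it is a genuine extra lemma, and you must track that $N_0$ dominates both $d_X=\lfloor n(1/\theta-1)\rfloor$-type moment conditions and the decay needed for the pointwise comparison of square functions; the paper's choice of lift avoids this entirely. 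Two smaller points: cite the $T^2$-convergence of the tent-space atomic decomposition (the paper uses \cite[Lemma 6.9(i)]{SHYY17}) rather than asserting it, since it is exactly what lets you interchange $\pi_{\p^\al\psi}$ with the sum; and the distributional identity $\varphi_t\ast f=\sum_{|\bz|=m}t^m\Phi^\bz_t\ast\p^\bz f$ deserves a sentence (any polynomial part of $f$ has degree at most $m-1$ because $\p^\bz f\in L^2(\rn)$, and $\widehat{\varphi}$ vanishes to high order at the origin). Finally, your atoms are in fact supported in $B_j$ itself, since $A_j$ lives in the tent $\widehat{B_j}$ and $\psi_t$ is supported in $B(\mathbf{0},t)$ with $t\le r_{B_j}$, so the dilation to $2B_j$ and the ensuing adjustment of \eqref{norm} are unnecessary.
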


Before proving Theorem \ref{HS}, let us first recall the concept of the tent space.
For an open set $O\subset{\mathbb R}^n$, define the \emph{tent $\widehat{O}$} over $O$ by setting
$$
\widehat{O}:=\{(x,t) \in {\mathbb R}^{n+1}_+:\ B(x,t) \subset O\}.
$$

Coifman et al. \cite{CMS85} introduced the tent space $T^p(\rr^{n+1}_+)$ for any given $p\in(0,\fz)$.
Recall that a measurable function $g$ is said to belong to the \emph{tent space} $T^p(\rr^{n+1}_+)$,
with $p\in(0,\fz)$, if $\|g\|_{T^p(\rr^{n+1}_+)}:=\|\ca(g)\|_{L^p(\rn)}<\fz$, where $\mathcal{A}$
is the same as in \eqref{zhangpeng} with $\al:=1.$

For a given ball quasi-Banach function space $X$, the \emph{$X$-tent space} $T_X(\rr^{n+1}_+)$
is defined to be the set of all the  measurable functions
$g:\ {\mathbb R}^{n+1}_+ \to {\mathbb C}$ with the finite \emph{quasi-norm}
$$\|g\|_{T_X(\rr^{n+1}_+)}:=\|{\mathcal A}(g)\|_X$$
(see \cite[p.\,28]{SHYY17}).
We need the atomic decomposition
of the $X$-tent space $T_X(\mathbb{R}^{n+1}_+),$ which is a part of   \cite[Theorem 3.19]{SHYY17}.

\begin{definition}\label{defi:3-6-2}
Let $X$ be a ball quasi-Banach function space and $p \in(1,\infty)$. A measurable function
$a:\ {\mathbb R}^{n+1}_+ \to {\mathbb C}$ is called a \emph{$(T_X,\,p)$-atom} if there
exists a ball $B\subset{\mathbb R}^n$ such that
\begin{enumerate}
\item[(i)] ${\rm supp}\,(a):=\{(x,t)\in{\mathbb R}^{n+1}_+:\ a(x,t)\neq0\} \subset \widehat{B}$,

\item[(ii)] $\|a\|_{T^p(\rr^{n+1}_+)} \le \dfrac{|B|^{1/p}}{\|\mathbf{1}_B\|_X}$.
\end{enumerate}
Furthermore, if $a$ is a $(T_X,\,p)$-atom for any $p\in (1,\fz)$, then $a$ is called a \emph{$(T_X,\fz)$-atom}.
\end{definition}

\begin{lemma}\label{thm:3-6-1}
Assume that $X$ is a  ball quasi-Banach function space satisfying
both Assumptions \ref{vector} and \ref{vector2} for some $\theta,s \in (0,1]$ and
$q\in(1,\infty].$ Let $f\in T_X(\mathbb{R}^{n+1}_+)$. Then there exists a sequence
$\{\lambda_j\}_{j=1}^\infty\subset[0,\fz)$ and a sequence $\{a_j\}_{j=1}^\infty$ of
$(T_X,\,\infty)$-atoms associated, respectively,  with the balls $\{B_j\}_{j=1}^\infty$ such that,
for almost every $(x,t)\in{\mathbb R}^{n+1}_+$,
\begin{equation*}
f(x,t)=\sum_{j=1}^\infty \lambda_j a_j(x,t), \quad |f(x,t)|=\sum_{j=1}^\infty \lambda_j|a_j(x,t)|,
\end{equation*}
and
\begin{align*}
\lf\|\lf\{\sum_{j=1}^\fz
\lf(\frac{\lz_j}{\|\mathbf{1}_{B_j}\|_{X}}\r)^s\mathbf{1}_{B_j}\r\}^{1/s}\r\|_X
\lesssim\|f\|_{T_X(\rr^{n+1}_+)},
\end{align*}
where the implicit positive constant is independent of $f$.
\end{lemma}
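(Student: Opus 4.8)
The plan is to adapt the classical Coifman--Meyer--Stein construction (cf.\ \cite{CMS85,SHYY17}) to the ball quasi-Banach setting. First, by Lemma \ref{pro4.9} (with $p:=s$, for every $\omega\in A_1(\rn)$) and the extrapolation Lemma \ref{waicha} (with $p_0:=s$), one has the change of aperture $\|\mathcal{A}^{(2)}(g)\|_X\sim\|\mathcal{A}(g)\|_X$ for every measurable $g$ on $\mathbb{R}^{n+1}_+$, where $\mathcal{A}^{(\alpha)}$ is as in \eqref{zhangpeng}; hence we may and do work throughout with the aperture-$2$ area functional. For each $k\in\zz$, put
\begin{align*}
O_k:=\left\{x\in\rn:\ \mathcal{A}^{(2)}(f)(x)>2^k\right\},\qquad
O_k^{\ast}:=\left\{x\in\rn:\ \cm(\mathbf{1}_{O_k})(x)>\tfrac12\right\}.
\end{align*}
Since $\mathcal{A}^{(2)}(f)\in X$, it is finite almost everywhere, so the $O_k$ (hence the $O_k^{\ast}$) are open and nested decreasing in $k$, $O_k\subset O_k^{\ast}$, and $\bigcap_k O_k$ is null. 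Let $\{Q_{k,j}\}_j$ be a Whitney decomposition of $O_k^{\ast}$ into pairwise disjoint dyadic cubes with $\operatorname{diam}Q_{k,j}\le\dist(Q_{k,j},(O_k^{\ast})^{\com})\le4\operatorname{diam}Q_{k,j}$, let $B_{k,j}$ be the smallest ball containing $Q_{k,j}$, and fix a dimensional constant $C_0$ so large that $B(y,t)\subset C_0B_{k,j}$ whenever $y\in Q_{k,j}$ and $0<t\le5\operatorname{diam}Q_{k,j}$.

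With $\widehat{E}$ denoting the tent over $E$, put $\Delta_{k,j}:=(\widehat{O_k^{\ast}}\setminus\widehat{O_{k+1}^{\ast}})\cap(Q_{k,j}\times(0,\infty))$. Since the $\widehat{O_k^{\ast}}$ are nested decreasing and, for fixed $k$, the sets $\{Q_{k,j}\times(0,\infty)\}_j$ partition $O_k^{\ast}\times(0,\infty)\supset\widehat{O_k^{\ast}}$, the $\{\Delta_{k,j}\}_{k,j}$ are pairwise disjoint, and, by a routine argument as in \cite{CMS85} (using the lower semicontinuity and a.e.\ finiteness of $\mathcal{A}^{(2)}(f)$ and the $\tfrac12$-density characterization of $O_k^{\ast}$), $f=\sum_{k,j}f\mathbf{1}_{\Delta_{k,j}}$ and $|f|=\sum_{k,j}|f|\mathbf{1}_{\Delta_{k,j}}$ almost everywhere on $\mathbb{R}^{n+1}_+$. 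Moreover, if $(y,t)\in\Delta_{k,j}$, then $y\in Q_{k,j}$ and $B(y,t)\subset O_k^{\ast}$, so $t\le\dist(y,(O_k^{\ast})^{\com})\le5\operatorname{diam}Q_{k,j}$; hence $\supp(f\mathbf{1}_{\Delta_{k,j}})\subset\widehat{C_0B_{k,j}}$.

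The core estimate is the Carleson-measure bound: there is $C\in(0,\infty)$, independent of $f,k,j$, with
\begin{align*}
\sup_{B'\subset\rn}\frac{1}{|B'|}\iint_{\widehat{B'}}\left|f\mathbf{1}_{\Delta_{k,j}}(y,t)\right|^2\,\frac{dy\,dt}{t}\le C\,2^{2k}.
\end{align*}
Indeed, $\Delta_{k,j}\subset(\widehat{O_{k+1}^{\ast}})^{\com}$, so for $(y,t)\in\Delta_{k,j}$ there is $w\in B(y,t)$ with $\cm(\mathbf{1}_{O_{k+1}})(w)\le\tfrac12$, whence $|B(y,2t)\cap O_{k+1}^{\com}|\ge|B(w,t)\cap O_{k+1}^{\com}|\ge\tfrac12|B(w,t)|\gtrsim t^n$. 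Expanding the constant $1$ on $\Delta_{k,j}$ by this, applying the Tonelli theorem, and using that $B(y,2t)\subset4B'$ for $(y,t)\in\widehat{B'}$ and that $(y,t)\in\Gamma_2(z)$ for $z\in B(y,2t)$, one obtains
\begin{align*}
\iint_{\widehat{B'}}\left|f\mathbf{1}_{\Delta_{k,j}}\right|^2\,\frac{dy\,dt}{t}
\lesssim\int_{4B'\cap O_{k+1}^{\com}}\left[\mathcal{A}^{(2)}(f)(z)\right]^2\,dz
\le2^{2(k+1)}|4B'|\lesssim2^{2k}|B'|,
\end{align*}
since $\mathcal{A}^{(2)}(f)\le2^{k+1}$ on $O_{k+1}^{\com}$. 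Taking $B'$ a fixed dilate of $C_0B_{k,j}$ and using $\|g\|_{T^2(\mathbb{R}^{n+1}_+)}^2\sim\iint_{\mathbb{R}^{n+1}_+}|g|^2\,dy\,dt/t$, this also gives $\|f\mathbf{1}_{\Delta_{k,j}}\|_{T^2(\mathbb{R}^{n+1}_+)}\lesssim2^k|B_{k,j}|^{1/2}$; consequently $f\mathbf{1}_{\Delta_{k,j}}$ is supported in the tent $\widehat{C_0B_{k,j}}$ and $\|f\mathbf{1}_{\Delta_{k,j}}\|_{T^p(\mathbb{R}^{n+1}_+)}\lesssim2^k|B_{k,j}|^{1/p}$ for every $p\in(1,\infty)$ --- for $p\in(1,2]$ by the H\"older inequality (as $\mathcal{A}(f\mathbf{1}_{\Delta_{k,j}})$ is supported in $C_0B_{k,j}$) together with the $T^2$-bound, and for $p\in(2,\infty)$ by the classical good-$\lambda$ inequality between the area and Carleson functionals applied to the tent-supported function $f\mathbf{1}_{\Delta_{k,j}}$, whose Carleson norm is $\lesssim2^k$. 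Setting $\lambda_{k,j}:=\widetilde{C}\,2^k\|\mathbf{1}_{C_0B_{k,j}}\|_X$ with $\widetilde{C}$ suitably large, the functions $a_{k,j}:=\lambda_{k,j}^{-1}f\mathbf{1}_{\Delta_{k,j}}$ are then $(T_X,\infty)$-atoms associated with the balls $C_0B_{k,j}$, and $f=\sum_{k,j}\lambda_{k,j}a_{k,j}$ with the identities recorded above. I expect this size estimate to be the main obstacle: one must choose the aperture (here $2$) so that the density step closes and handle the whole range $p\in(1,\infty)$ via the Carleson-measure/good-$\lambda$ route.

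Finally, $\lambda_{k,j}/\|\mathbf{1}_{C_0B_{k,j}}\|_X=\widetilde{C}\,2^k$, so it remains to bound $\|\{\sum_{k,j}(\widetilde{C}\,2^k)^s\mathbf{1}_{C_0B_{k,j}}\}^{1/s}\|_X$. Since $|Q_{k,j}|\sim|C_0B_{k,j}|$ and $Q_{k,j}\subset C_0B_{k,j}$, one has $\cm(\mathbf{1}_{Q_{k,j}})\gtrsim1$ on $C_0B_{k,j}$, hence $\mathbf{1}_{C_0B_{k,j}}\lesssim\cm^{(\theta)}(\mathbf{1}_{Q_{k,j}})$; combining this with the elementary inequality $\sum_j c_j\le(\sum_j c_j^{s})^{1/s}$ ($c_j\ge0$, $s\in(0,1]$) and Assumption \ref{vector} --- applied first with $f_{k,j}:=2^k\mathbf{1}_{Q_{k,j}}$ (and $\sum_j\mathbf{1}_{Q_{k,j}}=\mathbf{1}_{O_k^{\ast}}$), and then with $f_k:=2^k\mathbf{1}_{O_k}$ (using $\mathbf{1}_{O_k^{\ast}}\lesssim\cm^{(\theta)}(\mathbf{1}_{O_k})$, which holds because $\cm^{(\theta)}(\mathbf{1}_{O_k})=[\cm(\mathbf{1}_{O_k})]^{1/\theta}>2^{-1/\theta}$ on $O_k^{\ast}$) --- we reduce to bounding $\|\{\sum_k2^{ks}\mathbf{1}_{O_k}\}^{1/s}\|_X$. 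Since the $O_k=\{\mathcal{A}^{(2)}(f)>2^k\}$ are nested decreasing, a geometric summation gives $\sum_k2^{ks}\mathbf{1}_{O_k}\lesssim_s[\mathcal{A}^{(2)}(f)]^s$ pointwise, so this is $\lesssim\|\mathcal{A}^{(2)}(f)\|_X\sim\|\mathcal{A}(f)\|_X=\|f\|_{T_X(\mathbb{R}^{n+1}_+)}$ by the change of aperture fixed at the outset. Relabeling the pairs $\{(k,j)\}$ as a single sequence would complete the proof.
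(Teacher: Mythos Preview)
Your argument is correct and follows exactly the Coifman--Meyer--Stein tent-space construction adapted to the ball quasi-Banach setting. The paper itself does not supply a proof of this lemma; it merely quotes it as a part of \cite[Theorem~3.19]{SHYY17}, whose proof is essentially the one you have written out.

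One step deserves to be made explicit. Your change-of-aperture at the outset invokes Lemma~\ref{waicha} with $p_0:=s$, which requires $\cm$ to be bounded on $(X^{1/s})'$, whereas Assumption~\ref{vector2} only asserts boundedness of $\cm$ on $[(X^{1/s})']^{1/(q/s)'}$. These are not literally the same hypothesis, but the latter implies the former: writing $Y:=(X^{1/s})'$ and $r:=(q/s)'\ge1$, boundedness of $\cm$ on $Y^{1/r}$ is the same as boundedness of $\cm^{(r)}$ on $Y$, and $\cm\le\cm^{(r)}$ pointwise for $r\ge1$ by H\"older's inequality. With this observation your appeal to Lemma~\ref{waicha} (and hence the reduction to aperture~$1$ at the end) is fully justified. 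The remaining steps --- the Whitney/tent partition, the Carleson-measure bound via the $\tfrac12$-density of $(O_{k+1}^\ast)^\com$, the $T^p$ estimates for all $p\in(1,\infty)$ via H\"older ($p\le2$) and the area/Carleson good-$\lambda$ inequality ($p>2$), and the two successive applications of Assumption~\ref{vector} to pass from $\sum_{k,j}\mathbf{1}_{C_0B_{k,j}}$ to $\sum_k\mathbf{1}_{O_k}$ --- are all standard and carried out correctly.
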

Let $\psi\in \mathcal{S}(\rn)$ with $\int_{\rn}\psi(x)\,dx=0.$
For any $g\in T^p(\mathbb{R}^{n+1}_+)$ with compact support and for any $x\in\rn$, let
\begin{align}\label{pisuanzi}
\pi_{\psi}(g)(x):=\int_0^\infty g(\cdot,t)\ast\psi_t(x)\,\frac{dt}{t}.
\end{align}
By \cite[Theorem 6(1)]{CMS85}, we have the following lemma.
\begin{lemma}\label{touying}
Let $p\in(1,\infty)$,
$\psi\in \mathcal{S}(\rn)$ with $\int_{\rn}\psi(x)\,dx=0,$
and $\pi_\psi$
be the same as in \eqref{pisuanzi}. Then the operator $\pi_\psi$ can extend to be
a bounded liner operator from
$T^p(\mathbb{R}^{n+1}_+)$ to $L^p(\rn).$
\end{lemma}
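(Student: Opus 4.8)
The plan is to prove the a priori estimate $\|\pi_\psi(g)\|_{L^p(\rn)}\lesssim\|g\|_{T^p(\mathbb{R}^{n+1}_+)}$ for every $g\in T^p(\mathbb{R}^{n+1}_+)$ with compact support, with the implicit constant independent of $g$; since such $g$ are dense in $T^p(\mathbb{R}^{n+1}_+)$, a routine continuity argument then yields the asserted bounded extension of $\pi_\psi$. Because $p\in(1,\infty)$, I would realize the $L^p(\rn)$-norm by duality against $L^{p'}(\rn)$ and compute the adjoint pairing. Setting $\widetilde\psi(z):=\overline{\psi(-z)}$, which is again a Schwartz function with $\int_\rn\widetilde\psi(x)\,dx=0$, and $Q_th:=\widetilde\psi_t\ast h$ for $t\in(0,\infty)$, a Fubini argument recasts $\int_\rn\pi_\psi(g)(x)\,\overline{h(x)}\,dx$ as the tent-space pairing $\int_{\mathbb{R}^{n+1}_+}g(y,t)\,\overline{(Q_th)(y)}\,\frac{dy\,dt}{t}$. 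Thus the whole matter reduces to bounding this bilinear form by $\|g\|_{T^p(\mathbb{R}^{n+1}_+)}$ and $\|h\|_{L^{p'}(\rn)}$.

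For the first factor I would invoke the Coifman--Meyer--Stein tent-space duality \cite[Theorem 1]{CMS85}, namely the Cauchy--Schwarz-type bound $|\int_{\mathbb{R}^{n+1}_+}F(y,t)\overline{G(y,t)}\,\frac{dy\,dt}{t}|\lesssim\|F\|_{T^p(\mathbb{R}^{n+1}_+)}\|G\|_{T^{p'}(\mathbb{R}^{n+1}_+)}$, valid for $F\in T^p(\mathbb{R}^{n+1}_+)$ and $G\in T^{p'}(\mathbb{R}^{n+1}_+)$; with $F:=g$ and $G(y,t):=(Q_th)(y)$ this gives $|\int_\rn\pi_\psi(g)(x)\overline{h(x)}\,dx|\lesssim\|g\|_{T^p(\mathbb{R}^{n+1}_+)}\,\|(y,t)\mapsto(Q_th)(y)\|_{T^{p'}(\mathbb{R}^{n+1}_+)}$. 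By the very definition of $\|\cdot\|_{T^{p'}(\mathbb{R}^{n+1}_+)}=\|\mathcal{A}^{(1)}(\cdot)\|_{L^{p'}(\rn)}$, with $\mathcal{A}^{(1)}$ as in \eqref{zhangpeng}, the remaining factor equals $\|S_{\widetilde\psi}(h)\|_{L^{p'}(\rn)}$, the Littlewood--Paley area-function norm of $h$ associated with $\widetilde\psi$. Since $\widetilde\psi\in\cs(\rn)$ and $\int_\rn\widetilde\psi(x)\,dx=0$, one has $\int_0^\infty|\widehat{\widetilde\psi}(t\xi)|^2\,\frac{dt}{t}\leq C_\psi<\infty$ for all $\xi\in\rn\setminus\{\mathbf{0}\}$ with $C_\psi$ independent of $\xi$ (the vanishing of $\widehat{\widetilde\psi}$ at the origin controls the rescaled integral for large $t$, and the Schwartz decay of $\widehat{\widetilde\psi}$ controls it for small $t$), so the Plancherel theorem gives $\|S_{\widetilde\psi}(h)\|_{L^2(\rn)}\lesssim\|h\|_{L^2(\rn)}$; viewing $h\mapsto(Q_th)_{t\in(0,\infty)}$ as an $L^2((0,\infty),dt/t)$-valued convolution operator, vector-valued Calder\'on--Zygmund theory then upgrades this to $\|S_{\widetilde\psi}(h)\|_{L^{p'}(\rn)}\lesssim\|h\|_{L^{p'}(\rn)}$ for every $p'\in(1,\infty)$ (one may alternatively cite \cite[Theorem 6]{CMS85} for this bound directly). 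Taking the supremum over $h\in L^{p'}(\rn)$ with $\|h\|_{L^{p'}(\rn)}\leq1$ then closes the estimate.

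The step I expect to be the main obstacle is the justification of the Fubini interchange in the adjoint computation: the $t$-integral in \eqref{pisuanzi} runs over all of $(0,\infty)$ against $dt/t$, which has infinite mass at both ends, so absolute convergence must be argued, not assumed. Here I would work first with, say, continuous $g$ having compact support in $\mathbb{R}^{n+1}_+$ --- which confines $t$ to a compact subinterval of $(0,\infty)$ and $y$ to a bounded set --- and use the rapid spatial decay of $\psi$ together with that of $\widetilde\psi_t\ast h$ to see that $\int_{\mathbb{R}^{n+1}_+}|g(y,t)|\,|(Q_th)(y)|\,\frac{dy\,dt}{t}$ is finite; once this is secured, the interchange is legitimate and the rest is the routine assembly of the two cited classical facts, after which a density argument removes the extra regularity on $g$. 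It is worth stressing that the hypothesis $\int_\rn\psi(x)\,dx=0$ is used essentially, and only, in the $L^2$-orthogonality step above: without it no pointwise domination of $\pi_\psi(g)$ by an average of $|g|$ over Carleson boxes could work, precisely because $dt/t$ is not a finite measure.
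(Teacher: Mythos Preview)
Your proposal is correct and follows exactly the standard duality argument behind \cite[Theorem~6(1)]{CMS85}, which is precisely what the paper cites (without further proof) for this lemma. In other words, the paper simply invokes the result, while you have unpacked the proof that lies behind that citation: pair against $h\in L^{p'}(\rn)$, use Fubini to move to the tent-space pairing, apply the $T^p$--$T^{p'}$ duality inequality from \cite{CMS85}, and close with the $L^{p'}$-boundedness of the Littlewood--Paley square function $S_{\widetilde\psi}$.
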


Now, we show Theorem \ref{HS} via
using Lemmas \ref{thm:3-6-1} and \ref{touying}.
\begin{proof}[Proof of Theorem \ref{HS}]
Let all the symbols be the same as in the present theorem.
By the proof of \cite[Lemma 1.1]{FJW91}, we conclude that there
exists a radial function $\varphi\in C^\infty_{\mathrm{c}}(\rn)$ such that
$\supp\,(\varphi)
\subset B(\mathbf{0},1)$,
$\int_{\rn}\varphi(x)\,dx=0,$
and, for any $\xi\in \rn\setminus\{\mathbf{0}\}$,
\begin{align*}
\int_0^\infty t^{2m-1}|\xi|^{2m}
\left[\widehat{\varphi}(t\xi)\right]^2\,dt=1,
\end{align*}
where $\widehat{\varphi}$ denotes the \emph{Fourier transform} of $\varphi,$
namely, for any $\xi\in\rn,$
$$\widehat{\varphi}(\xi):=\int_\rn\varphi(x)e^{-2\pi ix\cdot \xi}\,dx.$$
For any measurable function $h$ on $\rn$ and any $t\in(0,\infty),$ let
$h_t(\cdot):=t^{-n}h(\cdot/t).$
For any $(x,t)\in\mathbb{R}^{n+1}_+,$ let
$$F(x,t):= \sum_{\al\in\zz_+^n,\,|\al|=m}\frac{m!}{\al!}(\p^\al f)\ast
\left(\p^\al \varphi\right)_t(x).$$
For any  $g\in H_X(\rn)$,
any $\psi\in \mathcal{S}(\rn)$ with
$\int_\rn\psi(x)\,dx=0,$ and any
$x\in\rn,$ let
\begin{align*}
S_{\psi}(g)(x):=\left\{\iint_{\Gamma(x)}\left|g\ast\psi_t(y)\right|^2\,\frac{dy\,dt}{t^{n+1}}\right\}^{1/2}.
\end{align*}
Let $\al\in\zz_+^n$ with $|\al|=m.$ By the proof of \cite[Theorem 3.21]{SHYY17}, we conclude
that $S_{\p^\al\varphi}$ is bounded from $H_X(\rn)$ to $X$. Therefore,
\begin{align*}
\left\|S_{\p^\al\varphi}\lf(\p^\al f\r)\right\|_X\lesssim \lf\|\p^\al f\r\|_{H_X(\rn)},
\end{align*}
which further implies that
\begin{align}\label{woniu}
\|F\|_{T_X(\mathbb{R}^{n+1}_+)}\lesssim \|f\|_{\dot{H}_{m,\,X}(\rn)}.
\end{align}
Moreover, from Lemma \ref{thm:3-6-1}, we deduce that there exists a sequence
$\{\lambda_j\}_{j=1}^\infty\subset[0,\fz)$ and a sequence $\{b_j\}_{j=1}^\infty$ of
$(T_X,\,\infty)$-atoms associated, respectively, with the balls $\{B_j\}_{j=1}^\infty$ such that,
for almost every $(x,t)\in{\mathbb R}^{n+1}_+$,
\begin{equation*}
F(x,t)=\sum_{j=1}^\infty \lambda_j b_j(x,t), \quad |F(x,t)|=\sum_{j=1}^\infty \lambda_j|b_j(x,t)|,
\end{equation*}
and
\begin{align*}
\lf\|\lf\{\sum_{j=1}^\fz
\lf(\frac{\lz_j}{\|\mathbf{1}_{B_j}\|_{X}}\r)^s\mathbf{1}_{B_j}\r\}^{1/s}\r\|_X
\lesssim\|F\|_{T_X(\rr^{n+1}_+)}.
\end{align*}
This, combined with \eqref{woniu}, implies that \eqref{norm} holds true.

For any   $j\in\nn$ and $x\in\rn,$
let
$$a_j(x):=\int_0^\infty b_j(\cdot,t)\ast\varphi_t(x)t^{m-1}\,dt.$$
Next, we prove that, for any $j\in\nn,$ $a_j$ is an $(\dot{H}_{m,\,X},\,p)$-atom up to a harmless
constant multiple.

Let $j\in\nn.$
Since $b_j$ is supported in $\widehat{B_j}$  and $\varphi$
is supported in $B(\mathbf{0},1)$, it follows that $\supp\,(a_j)\subset B_j.$
We first show that
\begin{align}\label{yuanzi3}
	\|a_j\|_{L^p(\rn)}\lesssim |B_j|^{1/p+m/n}\|\mathbf{1}_{B_j}\|_X^{-1},
\end{align}
where the implicit positive constant is independent of $j$.
By the Tonelli theorem,  the Fubini
theorem, the H\"older inequality,
and the fact that $\supp \,(b_j)\subset
\widehat{B_j}$,
we find that, for any $h\in L^{p'}(\rn),$
\begin{align*}
\lf|\int_{\rn}a_j(x)h(x)\,dx\r|
&=\left|\int_\rn\int_0^\infty b_j(\cdot,t)\ast\varphi_t(x)t^{m-1}\,dt h(x)\,dx\right|\\
&=\left|\int_0^\infty\int_\rn b_j(\cdot,t)\ast\varphi_t(x)h(x)\,dxt^{m-1}\,dt \right|\\
&=\lf|\int_0^\infty\int_{\rn} \int_\rn \varphi_t(x-y)b_j(y,t)\,dyh(x)\,dxt^{m-1}\,dt\r|\\
&=\lf|\int_0^\infty\int_\rn h\ast\varphi_t(y)b_j(y,t)\,dyt^{m-1}\,dt\r|\\
&\lesssim
r_{B_j}^m\int_{\mathbb{R}^{n+1}_+}\int_{\rn}\mathbf{1}_{B(y,t)}(x)\,dx
\left|h\ast\varphi_t(y)b_j(y,t)\right|\frac{dydt}{t^{n+1}}\\
&\sim r_{B_j}^m\int_\rn\iint_{\Gamma(x)}
\lf|h\ast\varphi_t(y)b_j(y,t)\r|\,\frac{dydt}{t^{n+1}}\,dx\\
&\lesssim r_{B_j}^m\int_\rn S_{\varphi}(h)(x)\mathcal{A}(b_j)(x)\,dx,
\end{align*}
here and thereafter, $\frac{1}{p}+\frac{1}{p'}=1.$
Using this, the H\"older inequality,  and the facts that $S_\varphi$ is bounded on $L^{p'}(\rn)$
(see, for instance, \cite[Theorem 7.8]{FS82}) and that $b_j$ is a $(T_X,\,\infty)$-atom, we conclude that
\begin{align*}
\left|\int_{\rn}a_j(x)h(x)\,dx\right|
&\lesssim r_{B_j}^m\|b_j\|_{T^p(\mathbb{R}^{n+1}_+)}\lf\|S_{\varphi}(h)\r\|_{L^{p'}(\rn)}
\lesssim |B_j|^{1/p+m/n}\|\mathbf{1}_{B_j}\|_X^{-1}\|h\|_{L^{p'}(\rn)},
\end{align*}
which further implies that
\eqref{yuanzi3} holds true.

Then we prove that, for any $\al\in\zz_+^n$ with $|\al|=m$,
\begin{align}\label{ruodaoshu}
	\p^\al a_j=\pi_{\p^\al \varphi}(b_j)
\end{align}
and
\begin{align}\label{xiyang}
\lf\|\p^\al a_j\r\|_{L^p(\rn)}
\lesssim
|B_j|^{1/p}\|\mathbf{1}_{B_j}\|_X^{-1},
\end{align}
where the implicit positive constant is independent of $j$.
Let $\al\in\zz_+^n$ with $|\al|=m$
and $\pi_{\p^\al\varphi}$ be the same as \eqref{pisuanzi} with $\psi$ replaced by
$\p^\al\varphi.$
Let $b_{j,k}(y,t):=b_{j}(y,t)\mathbf{1}_{(1/k,k)}(t)$ for any $(y,t)\in\mathbb{R}^{n+1}_+$ and $k\in\nn.$
It is obvious that $b_{j,k}\to b_{j}$
in $T^p(\mathbb{R}^{n+1}_+)$ as $k\to\infty.$
By this and Lemma \ref{touying} with
$\psi$ replaced by
$\p^\al\varphi$, we obtain that
$\pi_{\p^\al\varphi}(b_{j,k})
\to\pi_{\p^\al \varphi}(b_{j})$
in $L^p(\rn)$ as $k\to\infty.$
This, together with the H\"older inequality, implies that, for any $\psi\in C_{\mathrm{c}}^\infty(\rn)$,
\begin{align*}
\lim_{k\to\infty}\int_\rn\pi_{\p^\al \varphi}(b_{j,k})(x)\psi(x)\,dx=
\int_\rn\pi_{\p^\al \varphi}(b_j)(x)\psi(x)\,dx.
\end{align*}
Using this,
the Tonelli theorem, and the Fubini
theorem, we find that, for any
$\psi\in C_{\mathrm{c}}^\infty(\rn)$,
\begin{align*}
\int_\rn a_j(x)\p^\al \psi(x)\,dx
&=\int_0^\infty\int_\rn b_j(\cdot,t)\ast
\varphi_t(x)\p^\al\psi(x)\,dxt^{m-1}\,dt\\
&=\lim_{k\to\infty}\int_{1/k}^k\int_\rn\int_\rn\varphi_t(x-y)b_j(y,t)\,dy\p^\al\psi(x)\,dxt^{m-1}\,dt\\
&=\lim_{k\to\infty}\int_{1/k}^k\int_\rn\int_\rn\varphi_t(x-y)\p^\al\psi(x)\,dxb_j(y,t)\,dyt^{m-1}\,dt\\
&=(-1)^m\lim_{k\to\infty}\int_{1/k}^k\int_\rn\int_\rn(\p^\al \varphi)_t(x-y)
\psi(x)\,dxb_j(y,t)\,dy\,\frac{dt}{t}\\
&=(-1)^m\lim_{k\to\infty}\int_{1/k}^k\int_\rn\int_\rn(\p^\al \varphi)_t(x-y)b_j(y,t)\,dy
\psi(x)\,dx\,\frac{dt}{t}\\
&=(-1)^m\lim_{k\to\infty}\int_\rn\int_{1/k}^k(\p^\al\varphi)_t\ast b_j(\cdot,t)(x)\,
\frac{dt}{t}\psi(x)\,dx\\
&=(-1)^m\int_\rn\pi_{\p^\al \varphi}(b_j)(x)\psi(x)\,dx.
\end{align*}
This implies that \eqref{ruodaoshu} holds true.
Furthermore, applying Lemma
\ref{touying} with $\psi$ replaced by
$\p^\al\varphi$ and  the fact that
$b_j$ is a $(T_X,\,\infty)$-atom,
we find that
\begin{align*}
\lf\|\p^\al a_j\r\|_{L^p(\rn)}=\lf\|\pi_{\p^\al \varphi}(b_j)\r\|_{L^p(\rn)}\lesssim
\|b_j\|_{T^p(\mathbb{R}^{n+1}_+)}\lesssim |B_j|^{1/p}\|\mathbf{1}_{B_j}\|_X^{-1}.
\end{align*}
This proves \eqref{xiyang}. By \eqref{yuanzi3} and \eqref{xiyang}, we conclude that $a_j$ is an $(\dot{H}_{m,\,X},\,p)$-atom up to a harmless
constant multiple.

To complete the proof of Theorem \ref{HS}, it suffices to show that \eqref{jishu} holds true.
For any $\al\in\zz_+^n$ with $|\al|=m$,
since $S_{\p^\al\varphi}$ is bounded
on $L^2(\rn)$ (see, for instance,
\cite[Theorem 7.8]{FS82}),
it follows that
\begin{align*}
\lf\|S_{\p^\al\varphi}\lf(\p^\al f\r)\r\|_{L^2(\rn)}\lesssim\lf\|\p^\al f\r\|_{L^2(\rn)}.
\end{align*}
Thus, $F\in T^2(\mathbb{R}^{n+1}_+)$. Moreover, by \cite[Lemma 6.9(i)]{SHYY17}, we conclude that
\begin{align}\label{jieyao}
F=\sum_{j=1}^\infty \lambda_j b_j
\end{align}
in $T^2(\mathbb{R}^{n+1}_+).$
Let $\beta\in\zz_+^n$ with $|\beta|=m$.
From \eqref{jieyao}, Lemma
\ref{touying} with $\psi$ replaced by
$\p^\al\varphi$, and \eqref{ruodaoshu},
we deduce that
\begin{align}\label{fuq}
\pi_{\p^\beta \varphi}(F)=
\sum_{j=1}^\infty\ld_j\pi_{\p^\beta \varphi}(b_j)=
\sum_{j=1}^\infty\ld_j\p^\beta a_j
\end{align}
in $L^2(\rn).$
By an argument similar to that used in the proof of \cite[Theorem 1.2]{FJW91},
we find that
\begin{align}\label{moyan}
\lim_{k\to\infty}\sum_{\al\in\zz_+^n,\,|\al|=m}\frac{m!}{\al!}\int_{1/k}^k \p^\beta f
\ast\lf(\p^\al \varphi\r)_t\ast \lf(\p^\al
\varphi\r)_t\,\frac{dt}{t}= \p^\beta f
\end{align}
in $L^2(\rn).$ On the other hand, applying an argument similar to that used in the estimation of
\eqref{yuanzi3}, we obtain
\begin{align*}
\lim_{k\to\infty}\int_{1/k}^kF(\cdot,t)\ast\lf(\p^\beta \varphi\r)_t\,\frac{dt}{t}
=\pi_{\p^\beta \varphi}(F)
\end{align*}
in $L^2(\rn),$
which, together with the fact that
\begin{align*}
\int_{1/k}^kF(\cdot,t)\ast\lf(\p^\beta \varphi\r)_t\,\frac{dt}{t}
&=
\sum_{\al\in\zz_+^n,\,|\al|=m}\frac{m!}{\al!}\int_{1/k}^k \p^\al f\ast\lf(\p^\al \varphi\r)_t
\ast \lf(\p^\beta \varphi\r)_t\,\frac{dt}{t}\\
&=\sum_{\al\in\zz_+^n,\,|\al|=m}\frac{m!}{\al!}\int_{1/k}^k \p^\beta f\ast\lf(\p^\al \varphi\r)_t
\ast \lf(\p^\al \varphi\r)_t\,\frac{dt}{t},
\end{align*}
further implies that
\begin{align*}
\lim_{k\to\infty}\sum_{\al\in\zz_+^n,\,|\al|=m}\frac{m!}{\al!}\int_{1/k}^k \p^\beta f\ast\lf(\p^\al \varphi\r)_t
\ast \lf(\p^\al \varphi\r)_t\,\frac{dt}{t}
=\pi_{\p^\beta \varphi}(F)
\end{align*}
in $L^2(\rn).$
By this and \eqref{moyan},
we conclude that,
for any $\beta\in\zz_+^n$ with $|\beta|=m,$
$\p^\beta f(x)=\pi_{\p^\beta
\varphi}(F)(x)$ for almost every $x\in\rn.$ From this and \eqref{fuq}, it follows that
\eqref{jishu} holds true. This finishes the proof of Theorem \ref{HS}.
\end{proof}

By an argument similar to that used in the proof of \cite[Lemma 2.26]{hmm11},
we have the following lemma; we omit  the details here.

\begin{lemma}\label{2.26}
Let $m\in\nn$, $L$ be a homogeneous divergence form $2m$-order elliptic operator  in
\eqref{high}  satisfying Ellipticity Condition  \ref{ec}, and $r\in[1,2]$. Assume
that the family  $\{e^{-tL}\}_{t\in(0,\infty)}$ of operator satisfies the  $m-L^r(\rn)-L^2(\rn)$
off-diagonal estimate. Then the family $\{tLe^{-tL}\}_{t\in(0,\infty)}$ of operators also satisfies the
$m-L^r(\rn)-L^2(\rn)$ off-diagonal estimate and is bounded from $L^r(\rn)$ to $L^2(\rn)$
with norm bounded by $Ct^{\frac{1}{2m}(\frac{n}{2}-\frac{n}{r})}$, where $C$ is a positive
constant independent of $t$.
\end{lemma}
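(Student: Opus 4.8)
The plan is to factor $tLe^{-tL}$ through $L^2(\rn)$ and to combine two off-diagonal estimates. Using the semigroup law, I would write $tLe^{-tL}=A_t\circ B_t$, where $A_t:=tLe^{-tL/2}$ and $B_t:=e^{-tL/2}$. Since $A_t=2(sL)e^{-sL}$ with $s:=t/2$, Proposition \ref{basic}(iii), applied with $k=1$ and $p=q=2$ (admissible because $2\in(p_-(L),p_+(L))$ by Proposition \ref{basic}(i)), shows that $\{A_t\}_{t\in(0,\infty)}$ satisfies $m-L^2(\rn)-L^2(\rn)$ off-diagonal estimates; the substitution $t\mapsto t/2$ only alters the constants $C_1$ and $C_2$ there. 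A routine cube-decomposition argument then also gives $\sup_{t\in(0,\infty)}\|A_t\|_{L^2(\rn)\to L^2(\rn)}<\infty$. By hypothesis (applied with $t/2$ in place of $t$), $\{B_t\}_{t\in(0,\infty)}$ satisfies $m-L^r(\rn)-L^2(\rn)$ off-diagonal estimates, and the standard passage from such estimates to $L^r(\rn)$-$L^2(\rn)$ boundedness (see, for instance, \cite{A02}) yields $\|B_t\|_{L^r(\rn)\to L^2(\rn)}\lesssim t^{\frac{1}{2m}(\frac n2-\frac nr)}$ uniformly in $t$. Composing, $\|tLe^{-tL}\|_{L^r(\rn)\to L^2(\rn)}=\|A_tB_t\|_{L^r(\rn)\to L^2(\rn)}\lesssim t^{\frac{1}{2m}(\frac n2-\frac nr)}$, which is the asserted norm bound.

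To obtain the off-diagonal estimate for $\{A_tB_t\}$, I would fix closed sets $E,F\subset\rn$ with $d:=\dist(E,F)\in(0,\infty)$, fix $t\in(0,\infty)$ and $f\in L^2(\rn)\cap L^r(\rn)$ supported in $E$, and split $\rn=G\cup G^\com$ with $G:=\{x\in\rn:\ \dist(x,F)>d/2\}$ (so $G$ is open and $G^\com$ is closed), so that
\begin{align*}
A_tB_tf=A_t\lf(\mathbf 1_G B_tf\r)+A_t\lf(\mathbf 1_{G^\com} B_tf\r).
\end{align*}
The function $\mathbf 1_G B_tf$ is supported in the closure of $G$, which has distance at least $d/2$ from $F$, so the $m-L^2-L^2$ off-diagonal estimate for $A_t$ together with the global $L^r$-$L^2$ bound for $B_t$ controls the first term by $t^{\frac{1}{2m}(\frac n2-\frac nr)}\exp\{-c\,d^{2m/(2m-1)}/t^{1/(2m-1)}\}\|f\|_{L^r(E)}$. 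For the second term, one checks that $\dist(G^\com,E)\geq d/2$, because $|x-y|\geq\dist(y,F)-\dist(x,F)\geq d-d/2$ whenever $x\in G^\com$ and $y\in E$; hence the $m-L^r(\rn)-L^2(\rn)$ off-diagonal estimate for $B_t$ gives $\|\mathbf 1_{G^\com}B_tf\|_{L^2(\rn)}\lesssim t^{\frac{1}{2m}(\frac n2-\frac nr)}\exp\{-c\,d^{2m/(2m-1)}/t^{1/(2m-1)}\}\|f\|_{L^r(E)}$, and the uniform $L^2$-boundedness of $A_t$ disposes of this term. Adding the two estimates produces the claimed $m-L^r(\rn)-L^2(\rn)$ off-diagonal estimate for $tLe^{-tL}$.

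The argument is essentially routine; the only point requiring attention is the bookkeeping of the decay exponents, namely checking that the rescalings $t\mapsto t/2$ and $d\mapsto d/2$ and the product of two Gaussian-type factors change only the constants $C_1$ and $C_2$, and not the structural form $t^{\frac{1}{2m}(\frac n2-\frac nr)}\exp\{-c\,d^{2m/(2m-1)}/t^{1/(2m-1)}\}$ of the bound. I do not anticipate any genuine obstacle beyond this.
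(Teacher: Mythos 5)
Your proof is correct and follows essentially the same route the paper intends: the authors omit the argument and cite \cite[Lemma 2.26]{hmm11}, whose proof is precisely this factorization $tLe^{-tL}=(tLe^{-tL/2})\,e^{-tL/2}$ combined with a splitting of $\rn$ according to $\dist(\cdot,F)$ and the triangle inequality $|x-y|\ge \dist(y,F)-\dist(x,F)$. Note only that, under the paper's formulation of $m$-$L^p$-$L^q$ off-diagonal estimates (arbitrary closed sets $E,F$, so $E=F=\rn$ is allowed), the uniform $L^2$-boundedness of $tLe^{-tL/2}$ and the bound $\|e^{-tL/2}\|_{L^r(\rn)\to L^2(\rn)}\lesssim t^{\frac{1}{2m}(\frac{n}{2}-\frac{n}{r})}$ are immediate special cases of Proposition \ref{basic}(iii) and of the hypothesis, respectively, so the cube-decomposition step you invoke is not actually needed.
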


For any  $h\in L^2(\rn)$ and $x\in\rn$, let
\begin{align*}
S_1(h)(x):=\left[\iint_{\Gamma(x)}\left|t^mL^{1/2}e^{-t^{2m}L}(h)(y)\right|^2\,
\frac{dy\,dt}{t^{n+1}}\right]^{1/2}.
\end{align*}
Then we have the following conclusion.
\begin{lemma}\label{pf}
Let $m\in\nn$ and $L$ be a homogeneous divergence form $2m$-order elliptic operator  in \eqref{high}
satisfying Ellipticity Condition \ref{ec}. Assume that $X$ is a ball quasi-Banach
function space satisfying both Assumptions \ref{vector} and
\ref{vector2} for some $\theta,s\in(0,1]$
and  $q\in[2,p_{+}(L)).$ Then there exists a positive constant $C$ such that,
for any $h\in L^2(\rn)$ with $\|S_1(h)\|_X<\infty,$
\begin{align*}
\|h\|_{H_{X,\,L}(\rn)}\leq C\|S_1(h)\|_X.
\end{align*}
\end{lemma}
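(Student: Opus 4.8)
The plan is to establish the inequality $\|h\|_{H_{X,\,L}(\rn)}=\|S_L(h)\|_X\lesssim\|S_1(h)\|_X$ by comparing the kernel $t^{2m}Le^{-t^{2m}L}$ appearing in $S_L$ with the kernel $t^mL^{1/2}e^{-t^{2m}L}$ appearing in $S_1$, and then appealing to the extrapolation machinery (Lemma \ref{waicha}) exactly as in the proof of Theorem \ref{th2}. First I would write, for $f\in L^2(\rn)$, the operator identity
\begin{align*}
t^{2m}Le^{-t^{2m}L}=\lf(t^mL^{1/2}\r)\lf(t^mL^{1/2}e^{-t^{2m}L/2}\r)e^{-t^{2m}L/2},
\end{align*}
or more conveniently the factorization $t^{2m}Le^{-t^{2m}L}=c\,t^mL^{1/2}e^{-t^{2m}L/2}\circ\lf(t^mL^{1/2}e^{-t^{2m}L/2}\r)$, so that the Lusin area function of $L$ is dominated by a composition involving $S_1$-type quantities. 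The key point is to reduce everything to a weighted $L^{p_0}$ estimate with $p_0:=s$: I claim that, for every $\omega\in A_1(\rn)$, there is a constant $C_{(p_0,[\omega]_{A_1(\rn)})}$ such that
\begin{align}\label{pf-weighted}
\int_\rn\lf[S_L(h)(x)\r]^{p_0}\omega(x)\,dx\le C_{(p_0,[\omega]_{A_1(\rn)})}\int_\rn\lf[S_1(h)(x)\r]^{p_0}\omega(x)\,dx.
\end{align}
Given \eqref{pf-weighted}, Lemma \ref{waicha} (applied with this $p_0=s$, using that $X^{1/s}$ is a ball Banach function space on which $\cm$ is bounded on the associate space, which follows from Assumptions \ref{vector} and \ref{vector2}) immediately yields $\|S_L(h)\|_X\lesssim\|S_1(h)\|_X$, hence $h\in H_{X,\,L}(\rn)$ with the desired norm control, after the usual density remark that $h\in L^2(\rn)$ with $\|S_1(h)\|_X<\infty$ places $h$ in the relevant dense class.

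To prove the weighted estimate \eqref{pf-weighted}, I would argue as follows. Using the resolution of the identity $1=c\int_0^\infty (s^{2m}L)e^{-s^{2m}L}\,\frac{ds}{s}$ (or the analogous one adapted to $L^{1/2}$) together with $L^2$ functional calculus, write $t^{2m}Le^{-t^{2m}L}(h)$ as an average against $s^mL^{1/2}e^{-s^{2m}L}(h)$ over scales $s$ comparable to $t$, with a rapidly decaying kernel in $\log(t/s)$; concretely $t^{2m}Le^{-t^{2m}L}=\int_0^\infty K(t,s)\,s^mL^{1/2}e^{-s^{2m}L}\,\frac{ds}{s}$ for a scalar kernel $K$ with $\int_0^\infty|K(t,s)|(s/t)^{\pm\eta}\,\frac{ds}{s}\lesssim 1$. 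Then Minkowski's integral inequality in the $\frac{dy\,dt}{t^{n+1}}$-measure, followed by the change-of-aperture lemma (Lemma \ref{pro4.9}) to absorb the slight spreading of the cones, gives the pointwise-in-$L^{p_0}_\omega$ bound
\begin{align*}
\lf\|S_L(h)\r\|_{L^{p_0}_\omega(\rn)}\lesssim\lf\|S_1(h)\r\|_{L^{p_0}_\omega(\rn)},
\end{align*}
since the $A_1$-weighted $L^{p_0}$ norm of a Lusin area function with a wider cone is comparable, with constant depending only on $p_0$ and $[\omega]_{A_1(\rn)}$, to that with aperture $1$. An alternative route, mirroring Step 1 of the proof of Theorem \ref{th2}, is to first prove the pointwise bound $S_L(h)(x)\lesssim \frac1\varepsilon S_1^{(2)}(h)(x)+\varepsilon\,\wt S(h)(x)$ for a suitable auxiliary area function $\wt S$ that is itself dominated by $S_1$, using $\lf|t^{2m}Le^{-t^{2m}L}(h)\r|\le \lf|t^mL^{1/2}e^{-t^{2m}L/2}\r|\cdot\lf|t^mL^{1/2}e^{-t^{2m}L/2}(h)\r|$ and Cauchy–Schwarz over the cone, then optimize in $\varepsilon$; the Caccioppoli inequality (Proposition \ref{cacci}) and the off-diagonal estimates of Proposition \ref{basic}(iii) control the factor without $h$.

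The main obstacle I anticipate is controlling the composition $t^mL^{1/2}e^{-t^{2m}L/2}$ acting on the ambient function uniformly in $t$: unlike $e^{-t^{2m}L}$ or $(t^{2m}L)^ke^{-t^{2m}L}$, the half-power $L^{1/2}$ is not known a priori to be bounded on $L^q(\rn)$ for the full range needed, and its interplay with the heat semigroup and with the area-function machinery is delicate. This is exactly where the structure of a homogeneous \emph{divergence form} operator must be exploited — through the Caccioppoli inequality of Proposition \ref{cacci} and the boundedness of $\sqrt{t}\nabla^m e^{-tL}$ on $L^q$ for $q\in(q_-(L),q_+(L))\supset[2,p_+(L))$ — to trade the $L^{1/2}$ for gradient quantities and close the $L^2$ (and then weighted) estimates. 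Once the scalar kernel estimate on $K(t,s)$ is in hand, the rest is the now-standard combination of Minkowski's inequality, Lemma \ref{pro4.9}, and the extrapolation Lemma \ref{waicha}, so the real work is concentrated in this single functional-calculus comparison step.
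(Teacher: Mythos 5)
Your reduction of Lemma \ref{pf} to a weighted $L^{p_0}_\omega(\rn)$ comparison of $S_L(h)$ with $S_1(h)$ plus the extrapolation Lemma \ref{waicha} is a coherent strategy in principle, but the step you yourself call ``the real work'' is exactly where the outline breaks down, in three places. First, the pointwise bound $|t^{2m}Le^{-t^{2m}L}(h)|\le |t^mL^{1/2}e^{-t^{2m}L/2}|\cdot|t^mL^{1/2}e^{-t^{2m}L/2}(h)|$ is not meaningful: $t^mL^{1/2}e^{-t^{2m}L/2}$ is a non-local operator and a composition of operators admits no such pointwise factorization. The inequality \cite[(3.30)]{CMY16} used in Step 1 of the proof of Theorem \ref{th2} is of a different nature (it is obtained by integration by parts in $t$ and compares $S_L$ with $S_h^{(2)}$ and $S_L^{(2)}$, so that the $\epsilon$-term can be absorbed because it is again an $S_L$-type quantity); there is no analogous absorption available for your auxiliary $\wt S$. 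Second, the claimed representation $t^{2m}Le^{-t^{2m}L}=\int_0^\infty K(t,s)\,s^mL^{1/2}e^{-s^{2m}L}\,\frac{ds}{s}$ with a \emph{scalar} kernel satisfying $\int_0^\infty|K(t,s)|(s/t)^{\pm\eta}\,\frac{ds}{s}\lesssim1$ cannot hold: at the level of the holomorphic functional calculus the kernel may be taken of ratio type, and its Mellin transform would have to equal $\Gamma(z+1)/\Gamma(z+1/2)$, which grows like $|\mathrm{Im}\,z|^{1/2}$ on vertical lines, so no absolutely integrable $K$ exists; one genuinely needs operator-valued factors of the form $\Theta_{t,s}\,(s^{2m}L)^{1/2}e^{-s^{2m}L}$ coming from a Calder\'on reproducing formula, together with $L^2$ off-diagonal estimates and a tent-space/Schur and change-of-angle argument. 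Third, even granting a usable representation, Minkowski's integral inequality is not available at the level of $L^{p_0}_\omega(\rn)$ with $p_0=s\in(0,1]$ (a quasi-norm), so ``Minkowski plus Lemma \ref{pro4.9}'' does not deliver the weighted inequality with constants depending only on $p_0$ and $[\omega]_{A_1(\rn)}$. In short, the crucial comparison of the two conical square functions is asserted rather than proved, and the mechanisms you propose for it do not work as stated.

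For comparison, the paper's proof of Lemma \ref{pf} avoids all of this: it simply re-runs the molecular decomposition argument of \cite[Proposition 6.11]{SHYY17} with the tent-space function $F(\cdot,t):=t^mL^{1/2}e^{-t^{2m}L}(h)$ in place of $t^{2m}Le^{-t^{2m}L}(h)$. The atomic decomposition of $T_X(\mathbb{R}^{n+1}_+)$ uses only $\|\mathcal{A}(F)\|_X=\|S_1(h)\|_X<\infty$, and a Calder\'on reproducing formula adapted to $z\mapsto z^{1/2}e^{-z}$ converts the tent-space atoms into $(X,\,M,\,\epsilon)_L$-molecules of $h$ whose coefficients satisfy the $\Lambda$-bound by $\|S_1(h)\|_X$; the molecular characterization \cite[Theorem 6.12]{SHYY17} then yields $\|h\|_{H_{X,\,L}(\rn)}\lesssim\|S_1(h)\|_X$ directly, with no weighted estimates, good-$\lambda$ inequalities, or extrapolation needed. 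If you want to salvage your route, you would have to supply a genuine weighted comparison theorem for conical square functions built from different functions of $L$ (in the spirit of \cite{CMP20}), which is a substantial task; the molecular route is both shorter and already within the toolkit the paper has set up.
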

\begin{proof}
Let all the symbols be the same as in the present lemma. Let $h\in L^2(\rn)$ with $\|S_1(h)\|_X<\infty,$
$\epsilon\in(n/\theta,\infty)$, and $M\in\nn$ be sufficiently large.  Then, repeating
the proof of \cite[Proposition 6.11]{SHYY17}, we conclude that there exists a sequence $\{\lz_j\}_{j\in\nn}
\subset[0,\fz)$ and a sequence $\{\az_j\}_{j\in\nn}$ of $(X,\,M,\,\epsilon)_L$-molecules
associated, respectively, with the balls $\{B_j\}_{j\in\nn}$ such that $h=\sum_{j=1}^\fz \lz_j\az_j$ in $L^2(\rn)$.
Moreover, there exists a positive constant $C$ such that, for any $h\in L^2(\rn)$ with $\|S_1(h)\|_X<\infty$,
$$\left\|\left\{\sum_{j=1}^\infty
\left(\frac{\lambda_j}{\|\mathbf{1}_{B_j}\|_X}\right)^s\mathbf{1}_{B_j}\right\}^{\frac{1}{s}}\right\|_X\le
C\|S_1(h)\|_X.$$
From this and \cite[Theorem 6.12]{SHYY17}, we deduce that $h\in H_{X,\,L}(\rn)$ and
\begin{align*}
\|h\|_{H_{X,\,L}(\rn)}\lesssim \left\|\left\{\sum_{j=1}^\infty
\left(\frac{\lambda_j}{\|\mathbf{1}_{B_j}\|_X}\right)^s\mathbf{1}_{B_j}\right\}^{\frac{1}{s}}
\right\|_X\lesssim\|S_1(h)\|_X.
\end{align*}
This finishes the proof of Lemma \ref{pf}.
\end{proof}

Now, we show Theorem  \ref{riesz} by using Theorems \ref{HS}, \ref{2.26}, and \ref{pf}.
\begin{proof}[Proof of Theorem \ref{riesz}]
Let all the symbols be the same as in the present theorem. Assume that $h\in L^2(\rn)\cap
H_{X,\,L,\,\mathrm{Riesz}}(\rn)$ and $f:=L^{-1/2}(h).$
By Lemma \ref{pf}, we find that
\begin{align*}
\|h\|_{H_{X,\,L}(\rn)}\lesssim \|S_1(h)\|_X.
\end{align*}
From this, we deduce that, to prove \eqref{shediao}, it suffices to show that
\begin{align}\label{shediao2}
\left\|S_1\lf(L^{1/2}f\r)\right\|_X\lesssim \| f\|_{\dot{H}_{m,\, X}(\rn)}.
\end{align}

By both the boundedness of the Riesz transform $\nabla^mL^{-1/2}$ on $L^2(\rn)$
(see Lemma \ref{b}) and the assumption that $h\in L^2(\rn)\cap H_{X,\,L,\,\mathrm{Riesz}}(\rn)$,
we conclude that $f\in \dot{H}_{m,\,X}(\rn)$ and $\p^\al f\in L^2(\rn)$
for any $\al\in\zz_+^n$ with $|\al|=m.$ From this and Theorem \ref{HS},
it follows that there exists a sequence $\{\lambda_j\}_{j=1}^\infty\subset[0,\fz)$ and
a sequence $\{a_j\}_{j=1}^\infty$ of $(\dot{H}_{m,\,X},\,q)$-atoms
associated, respectively,
with the balls $\{B_j\}_{j=1}^\infty$ such that, for any $\al\in\zz_+^n$ with $|\al|=m,$
\begin{align}\label{jishu10}
\p^\al f=\sum_{j=1}^\infty \ld_j \p^\al a_j,
\end{align}
in $L^2(\rn)$, and
\begin{align}\label{norm10}
\lf\|\lf\{\sum_{j=1}^\fz
\lf(\frac{\lz_j}{\|\mathbf{1}_{B_j}\|_{X}}\r)^s\mathbf{1}_{B_j}\r\}^{1/s}\r\|_X
\lesssim\| f\|_{\dot{H}_{m,\,X}(\rn)}.
\end{align}

For any ball $B\subset \rn$, let $S_0(B):=B$ and $S_i(B):=(2^{i}B)\setminus(2^{i-1}B)$
any $i\in\nn$.
Next, we prove that, for any $(\dot{H}_{m,\,X},\,q)$-atom $a$ associated with the ball $B$ and for
any $i\in\zz_+,$
\begin{align}\label{yida}
\left\|S_1\lf(L^{1/2}a\r)\right\|_{L^q(S_i(B))}\lesssim 2^{-i(1+n/r-n/q)}|B|^{1/q}\|\mathbf{1}_B\|_X^{-1},
\end{align}
where the implicit positive constant is independent of $a$ and $j\in\nn$.

When $i\in\{0,\,1,\,2\},$ by both the fact that $S_1$ is bounded in $L^q(\rn)$ (see, for instance,
\cite[Corollary 6.10 and p.\,67]{A02}) and the assumption that $a$ is an
$(\dot{H}_{m,\,X},\,q)$-atom, we find that
\begin{align}\label{meiguihua}
\left\|S_1\lf(L^{1/2}a\r)\right\|_{L^q(S_i(B))}
\lesssim \lf\|L^{1/2}a\r\|_{L^q(\rn)}
\lesssim \|\nabla^m a\|_{L^q(\rn)}\lesssim |B|^{1/q}\|\mathbf{1_B}\|_X^{-1}.
\end{align}

Let $i\in\nn\cap [3,\infty)$  and $\mathcal{R}(S_i(B)):=\cup_{x\in S_i(B)}\Gamma(x).$ Then,
from the Minkowski integral inequality, we deduce that
\begin{align}\label{aiziyou}
\left\|S_1\lf(L^{1/2}a\r)\right\|_{L^q(S_i(B))}^2
&\lesssim\iint_{\mathcal{R}(S_i(B))}
\left|t^mLe^{-t^{2m}L}(a)(y)\right|^2t^{-n-1+\frac{2n}{q}}\,dy\,dt\\\noz
&\lesssim\int_{\rn\setminus2^{i-2}B}\int_0^{2^ir_B}\left|t^{2m}Le^{-t^{2m}L}(a)(y)
\right|^2t^{-n-1-2m+\frac{2n}{q}}\,dt\,dy\\\noz
&\quad+\int_{\rn\setminus2^{i-2}B}\int_{2^ir_B}^\infty\cdots\,dt\,dy
+\int_{2^{i-2}B}\int_{2^{i-2}r_B}^\infty\cdots\,dt\,dy\\\noz
&=:\mathrm{I}+\mathrm{II}+\mathrm{III}.
\end{align}
By Lemma \ref{2.26}, the H\"older inequality, and the assumption that $a$ is an $(\dot{H}_{m,\,X},\,q)$-atom,
we conclude that
\begin{align}\label{fufei}
\mathrm{III}
&\lesssim \|a\|_{L^r(B)}^2\int_{2^{i-2}r_B}^\infty t^{-\frac{2n}{r}-1-2m+\frac{2n}{q}}\,dt
\lesssim (2^ir_B)^{-\frac{2n}{r}-2m+\frac{2n}{q}}|B|^{\frac{2}{r}-\frac{2}{q}}\|a\|_{L^q(B)}^2\\\noz
&\lesssim 2^{-2i(\frac{n}{r}+m-\frac{n}{q})}|B|^{2/q}\|\mathbf{1}_B\|_X^{-2}.
\end{align}
Moreover, using an argument similar to that used in the estimation of \eqref{fufei}, we find that
\begin{align*}
\mathrm{II}\lesssim 2^{-2i(\frac{n}{r}+m-\frac{n}{q})}|B|^{2/q}\|\mathbf{1}_B\|_X^{-2}
\end{align*}
and
\begin{align*}
\mathrm{I}\lesssim \|a\|_{L^r(B)}^2\int_0^{2^jr_B}t^{-\frac{2n}{r}-1-2m+\frac{2n}{q}}
e^{-\frac{(2^jr_B)^2}{ct^2}}\,dt\lesssim 2^{-2i(\frac{n}{r}+m-\frac{n}{q})}|B|^{2/q}\|\mathbf{1}_B\|_X^{-2}.
\end{align*}
From this, \eqref{fufei},  and \eqref{aiziyou}, it follows that \eqref{yida} holds trues.

Similarly to \eqref{meiguihua}, we have
\begin{align*}
\left\|S_1\lf(L^{1/2}f\r)\right\|_{L^2(\rn)}
\lesssim \lf\|L^{1/2}f\r\|_{L^2(\rn)}\lesssim \|\nabla^m f\|_{L^2(\rn)},
\end{align*}
which, together with \eqref{jishu10}, further implies that, for almost every $x\in\rn,$
\begin{align*}
S_1\lf(L^{1/2}f\r)(x)\leq \sum_{j=1}^\infty \ld_jS_1\lf(L^{1/2}a_j\r)(x).
\end{align*}
By this, \eqref{yida}, Lemma \ref{thm:2-2-3}, and \eqref{norm10}, we find that
\begin{align*}
\left\|S_1\lf(L^{1/2}f\r)\right\|_X
\leq
\left\|\sum_{j=1}^\infty \ld_jS_1\lf(L^{1/2}a_j\r)\right\|_X\lesssim\lf\|\lf\{\sum_{j=1}^\fz
\lf(\frac{\lz_j}{\|\mathbf{1}_{B_j}\|_{X}}\r)^s\mathbf{1}_{B_j}\r\}^{1/s}\r\|_X
\lesssim\| f\|_{\dot{H}_{m,\,X}(\rn)}.
\end{align*}
This finishes the proof of \eqref{shediao2}, and hence of Theorem \ref{riesz}.
\end{proof}

\section{Applications}\label{appl}
In this section, we apply Theorems \ref{th2}, \ref{r}, and \ref{riesz}, respectively, to
weighted Hardy spaces associated with $L$ (see Subsection \ref{wls} below),
variable Hardy spaces associated with $L$ (see Subsection \ref{vls} below),
mixed-norm Hardy spaces associated with $L$ (see Subsection \ref{mls} below),
Orlicz--Hardy spaces associated with $L$ (see Subsection \ref{os} below),
Orlicz-slice Hardy spaces associated with $L$ (see Subsection \ref{oss} below),
and Morrey--Hardy spaces associated with $L$ (see Subsection \ref{hms} below).
These applications explicitly indicate the generality and the flexibility
of the main results of this article and more applications
to new function spaces are obviously possible.

\subsection{Weighted Hardy Spaces}\label{wls}

In this subsection, we apply Theorems \ref{th2}, \ref{r}, and \ref{riesz}
to the weighted Hardy space associated with $L$. We begin with recalling the definition of the weighted
Lebesgue space.

\begin{definition}\label{twl}
Let $p\in(0,\infty)$ and $\omega\in A_{\infty}(\rn).$ Then the \emph{weighted Lebesgue space}
$L^p_{\omega}(\rn)$ is defined to be the set of all the measurable functions $f$ on $\rn$ such that
$$\|f\|_{L^p_{\omega}(\mathbb{R}^n)}:=\left	[\int_{\mathbb{R}^n}|f(x)|^p\omega(x)\,dx
\right]^{\frac{1}{p}}<\infty.$$
\end{definition}

We point out that the space $L^p_\omega(\rn)$ with $p\in(0,\infty)$ and $\omega\in A_\infty(\rn)$
is a ball quasi-Banach function space, but it may not be a (quasi-)Banach function space
(see, for instance, \cite[Section 7.1]{SHYY17}).

When $X:=L^p_\omega(\rn)$, the Hardy space $H_{X,\,L}(\rn)$ is just
the \emph{weighted Hardy space associated with $L$}; in this case, we denote $H_{X,\,L}(\rn)$
simply by $H^p_{L,\,\omega}(\rn)$.
Moreover, for any given $s\in(0,1),$ $p\in(s,\infty)$, and $\omega\in A_{p/s}(\rn),$ let
\begin{align}\label{gouqi}
\epsilon_{(p,\,s,\,\omega)}:=\frac{\log([\omega]_{A_{p/s}(\rn)}^{\frac{1}{p/s-1}})
-\log([\omega]_{A_{p/s}(\rn)}^{\frac{1}{p/s-1}}-2^{-p})}{2(n+1)\log2}.
\end{align}
Then, applying Theorem \ref{th2} to the weighted Hardy
space $H^p_{L,\,\omega}(\rn)$, we have the the following conclusion.

\begin{theorem}\label{wls1}
Let $m\in\nn$ and $L$ be a homogeneous divergence form $2m$-order elliptic operator in \eqref{high}
satisfying  Strong Ellipticity Condition \ref{sec}.
Assume that $s\in(0,1],$ $p\in(s,\infty),$ and $\omega\in A_{p/s}(\rn)$ satisfy that
$$p<\frac{p_+(L)\epsilon_{(p,\,s,\,\omega)}}{\epsilon_{(p,\,s,\,\omega)}+1},$$
where $\epsilon_{(p,\,s,\,\omega)}$ is the same as in \eqref{gouqi}.
Then the conclusion of Theorem \ref{th2} holds true with $H_{X,\,L}(\rn)$
replaced by $H^{p}_{L,\,\omega}(\rn).$
\end{theorem}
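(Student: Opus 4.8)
The plan is to deduce Theorem \ref{wls1} from the general Theorem \ref{th2} by verifying that, under the stated hypotheses, the weighted Lebesgue space $X:=L^p_\omega(\rn)$ satisfies both Assumption \ref{vector} and Assumption \ref{vector2} for suitable parameters $\theta,s\in(0,1]$ and $q\in(p_-(L),p_+(L))$. Since $L^p_\omega(\rn)$ is a ball quasi-Banach function space (as recalled after Definition \ref{twl}) and Theorem \ref{th2} already supplies the equivalence of $H_{X,\,L}(\rn)$, $H_{X,\,S_h}(\rn)$, $H_{X,\,\mathcal R_h}(\rn)$, and $H_{X,\,\mathcal N_h}(\rn)$ once the two Assumptions hold, the entire content of the proof is the verification of these two Assumptions; everything else is a direct invocation.

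First I would check Assumption \ref{vector}. Fix the exponent $s$ from the statement and choose $\theta:=s$ (so that the vector-valued inequality reduces to the Fefferman--Stein vector-valued maximal inequality). Since $\omega\in A_{p/s}(\rn)$ and $p/s\in(1,\infty)$, the operator $\mathcal M$ is bounded on $L^{p/s}_\omega(\rn)$, and hence the classical weighted Fefferman--Stein vector-valued inequality in $L^{p/s}_\omega(\rn)$ (applied to $|f_j|^s$) yields
\begin{align*}
\left\|\left\{\sum_{j=1}^\infty\left[\mathcal M^{(s)}(f_j)\right]^s\right\}^{1/s}\right\|_{L^p_\omega(\rn)}
\lesssim\left\|\left\{\sum_{j=1}^\infty|f_j|^s\right\}^{1/s}\right\|_{L^p_\omega(\rn)},
\end{align*}
which is exactly Assumption \ref{vector} with $\theta=s$. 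Next I would verify Assumption \ref{vector2}. One has $X^{1/s}=L^{p/s}_\omega(\rn)$, which is a ball Banach function space since $p/s>1$, and its associate space is the weighted Lebesgue space $L^{(p/s)'}_{\omega^{1-(p/s)'}}(\rn)$ with $\omega^{1-(p/s)'}\in A_{(p/s)'}(\rn)$ (by the standard duality of $A_t$ classes). The point is to produce $q\in(p_-(L),p_+(L))$ with $q>p$ such that $\mathcal M$ is bounded on $[(X^{1/s})']^{1/(q/s)'}$. Computing, $[(X^{1/s})']^{1/(q/s)'}$ is again a weighted Lebesgue space whose underlying exponent is $(p/s)'(q/s)'$ and whose weight is a power of $\omega^{1-(p/s)'}$; boundedness of $\mathcal M$ there is equivalent to a Muckenhoupt condition that, by the sharp reverse-Hölder/openness properties of $A_t$ weights, holds precisely when $q/p$ is close enough to $1$. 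This is where the quantity $\epsilon_{(p,\,s,\,\omega)}$ in \eqref{gouqi} enters: it is (a lower bound for) the openness exponent of the relevant $A_{p/s}$ condition, expressed via $[\omega]_{A_{p/s}(\rn)}$, so the hypothesis $p<p_+(L)\epsilon_{(p,\,s,\,\omega)}/(\epsilon_{(p,\,s,\,\omega)}+1)$ is exactly what guarantees we can pick such a $q$ inside $(p_-(L),p_+(L))$ — indeed rearranging gives $p(1+\epsilon_{(p,s,\omega)})/\epsilon_{(p,s,\omega)}<p_+(L)$, and $q:=p(1+\epsilon_{(p,s,\omega)})/\epsilon_{(p,s,\omega)}$ (or anything slightly smaller) does the job, since $p>s\ge p_-(L)$-type lower bounds are automatic.

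With both Assumptions verified for the parameters $\theta=s$, $s$, and the chosen $q\in(p_-(L),p_+(L))$, Theorem \ref{th2} applies verbatim and gives the coincidence, with equivalent quasi-norms, of $H^p_{L,\,\omega}(\rn)$ with its $S_h$-, $\mathcal R_h$-, and $\mathcal N_h$-adapted versions, which is the assertion. The main obstacle I anticipate is purely bookkeeping: tracking the chain of exponents $X\mapsto X^{1/s}\mapsto (X^{1/s})'\mapsto [(X^{1/s})']^{1/(q/s)'}$ through the $A_t$-duality and the $p$-convexification, and then matching the resulting quantitative Muckenhoupt bound against the explicit expression \eqref{gouqi} so that the inequality $p<p_+(L)\epsilon_{(p,s,\omega)}/(\epsilon_{(p,s,\omega)}+1)$ really does produce an admissible $q$; the analytic input (weighted Fefferman--Stein, openness of $A_t$, duality of $A_t$) is entirely standard.
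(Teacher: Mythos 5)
Your overall strategy is the paper's: check Assumptions \ref{vector} and \ref{vector2} for $X:=L^p_\omega(\rn)$ and then quote Theorem \ref{th2}. However, there is a genuine gap in your verification of Assumption \ref{vector}: you set $\theta:=s$, and then the required inequality, after raising to the power $s$ and writing $g_j:=|f_j|^s$, becomes $\bigl\|\sum_j\mathcal{M}(g_j)\bigr\|_{L^{p/s}_\omega(\rn)}\lesssim\bigl\|\sum_j g_j\bigr\|_{L^{p/s}_\omega(\rn)}$, i.e.\ the $\ell^1$-valued endpoint of the Fefferman--Stein vector-valued maximal inequality. This endpoint is false even in the unweighted case (take $g_j:=\mathbf{1}_{[j/N,(j+1)/N]}$ in $\mathbb{R}$: the left-hand side grows like $\log N$ while the right-hand side stays bounded), so ``the classical weighted Fefferman--Stein inequality applied to $|f_j|^s$'' does not exist at this exponent. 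The paper avoids this by choosing $\theta\in(0,s)$ strictly: then $[\mathcal{M}^{(\theta)}(f_j)]^s=[\mathcal{M}(|f_j|^\theta)]^{s/\theta}$ has inner exponent $s/\theta\in(1,\infty)$, and since $\omega\in A_{p/s}(\rn)\subset A_{p/\theta}(\rn)$ one can apply the weighted Fefferman--Stein inequality (Andersen--John) in $L^{p/\theta}_\omega(\rn)$. Nothing downstream in Theorem \ref{th2} requires $\theta=s$, so this is the correct fix, but as written your step relies on a false inequality.

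Your treatment of Assumption \ref{vector2} has the right skeleton (compute $[(X^{1/s})']^{1/(q/s)'}$ as a weighted Lebesgue space, use duality and openness of Muckenhoupt classes, and use the hypothesis to fit $q$ below $p_+(L)$), but two details are backwards. First, the space is $L^{(p/s)'/(q/s)'}_{\omega^{1-(p/s)'}}(\rn)$ (a quotient of exponents, not a product), and the Muckenhoupt condition $\omega^{1-(p/s)'}\in A_{(p/s)'/(q/s)'}(\rn)$ becomes \emph{easier} as $q$ increases; the algebra shows it is equivalent (with $h:=\frac{(p/s)'+\epsilon_{(p,s,\omega)}}{1+\epsilon_{(p,s,\omega)}}$) to $q\geq p(\epsilon_{(p,s,\omega)}+1)/\epsilon_{(p,s,\omega)}$, so taking $q$ ``slightly smaller'' than this threshold goes the wrong way. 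Second, you must also ensure $q>p_-(L)$; your remark that ``$p>s\ge p_-(L)$-type lower bounds are automatic'' is reversed, since $p_-(L)\ge 1\ge s$. The paper's choice $q\in(\max\{p_-(L),\,p(\epsilon_{(p,s,\omega)}+1)/\epsilon_{(p,s,\omega)}\},\,p_+(L))$, which is nonempty exactly because of the hypothesis $p<p_+(L)\epsilon_{(p,s,\omega)}/(\epsilon_{(p,s,\omega)}+1)$, handles both points; with that and the $\theta<s$ correction your argument coincides with the paper's proof.
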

\begin{proof}
Let all the symbols be the same as in the present theorem. By Theorem \ref{th2}, to show the present theorem,
it suffices to prove that $X:=L^{p}_\omega(\rn)$ satisfies both Assumptions \ref{vector}
and \ref{vector2} for some $\theta\in(0,1],\,s\in(\theta,1],$ and $q\in(p_{-}(L),p_{+}(L)).$

Let $s$ be the same as in the present theorem, $\theta\in(0,s),$  and
$$q\in(\max\{p_-(L),\,p(\epsilon_{(p,\,s,\,\omega)}+1)/\epsilon_{(p,\,s,\,\omega)}\},p_+(L)).$$
From the Fefferman--Stein vector-valued maximal inequality on the weighted Lebesgue space
(see, for instance, \cite[Theorem 3.1(b)]{AJ80}), we deduce that $X:=L^p_\omega(\rn)$
satisfies Assumption \ref{vector} for such a $\theta$ and an $s.$
Moreover, it is easy to show that
\begin{align}\label{sjj}
\left[\left(X^{\frac{1}{s}}\right)'\right]^{\frac{1}{(\frac{q}{s})'}}=L^{(p/s)'/(q/s)'}_{\omega^{1-(p/s)'}}(\rn).
\end{align}
Furthermore, since $\omega\in A_{p/s}(\rn)$, it follows that $\omega^{1-(p/s)'}\in A_{(p/s)'}(\rn).$
By this and the proofs of both \cite[Theorem 7.2.2]{g14} and \cite[Corollary 7.6(1)]{D01}, we conclude
that $\omega^{1-(p/s)'}\in A_{h}(\rn)$, where
$$h:=\frac{(p/s)'+\epsilon_{(p,\,s,\,\omega)}}{1+\epsilon_{(p,\,s,\,\omega)}},$$
which, together with the assumption that $h<(p/s)'/(q/s)'$, further implies that
$\omega^{1-(p/s)'}\in A_{h}(\rn)\subset A_{(p/s)'/(q/s)'}(\rn).$ From this, \eqref{sjj},
and the boundedness of the Hardy--Littlewood maximal operator $\cm$ on the weighted Lebesgue space
(see, for instance, \cite[Theorem 7.1.9]{g14}), we deduce that $X:=L^{p}_\omega(\rn)$ satisfies
Assumption \ref{vector2} for such an $s$ and a $q$.
This finishes the proof of Theorem \ref{wls1}.
\end{proof}

Moreover, by both Theorems \ref{r} and \ref{riesz}, we have the following results; since their proofs are
similar to that of Theorem \ref{wls1}, we omit the details here.

\begin{theorem}\label{wsl2}
Let $m\in\nn$ and $L$ be a homogeneous divergence form $2m$-order elliptic operator  in \eqref{high}
satisfying Ellipticity Condition \ref{ec}.
Assume that $s\in(n/(n+m),1],$ $p\in(s,\infty),$ and $\omega\in A_{p/s}(\rn)$ satisfy that
$$p<\frac{\min\{p_+(L),\,q_+(L)\}\epsilon_{(p,\,s,\,\omega)}}{\epsilon_{(p,\,s,\,\omega)}+1},$$
where $\epsilon_{(p,\,s,\,\omega)}$ is the same as in \eqref{gouqi}.
Then the conclusion of Theorem \ref{r} holds true with $H_{X,\,L}(\rn)$
replaced by $H^{p}_{L,\,\omega}(\rn).$
\end{theorem}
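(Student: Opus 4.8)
The plan is to reduce Theorem \ref{wsl2} to Theorem \ref{r} by verifying that the weighted Lebesgue space $X := L^p_\omega(\rn)$, with the given parameters, satisfies both Assumptions \ref{vector} and \ref{vector2} for appropriate $\theta \in (n/(n+m), 1]$, $s \in (\theta, 1]$, and $q \in (p_-(L), \min\{p_+(L), q_+(L)\})$. This is entirely parallel to the proof of Theorem \ref{wls1}; the only change is that the endpoint $p_+(L)$ there must be replaced by $\min\{p_+(L), q_+(L)\}$ in order to match the hypothesis of Theorem \ref{r}, and one must additionally check the range constraint $s \in (n/(n+m), 1]$ required by Theorem \ref{r}, which is granted by hypothesis. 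So the structure is: fix $s$ as in the statement, pick $\theta \in (n/(n+m), s)$ (possible since $s > n/(n+m)$), and pick
$$q \in \lf(\max\lf\{p_-(L),\, p(\epsilon_{(p,\,s,\,\omega)}+1)/\epsilon_{(p,\,s,\,\omega)}\r\},\ \min\{p_+(L),\,q_+(L)\}\r),$$
which is a nonempty interval precisely because of the hypothesis $p < \min\{p_+(L),q_+(L)\}\epsilon_{(p,\,s,\,\omega)}/(\epsilon_{(p,\,s,\,\omega)}+1)$.

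First I would verify Assumption \ref{vector}: by the Fefferman--Stein vector-valued maximal inequality on weighted Lebesgue spaces (see \cite[Theorem 3.1(b)]{AJ80}, applied with exponents $p/\theta$ and $s/\theta$ together with the fact $\omega \in A_{p/s}(\rn) \subset A_{p/\theta}(\rn)$ since $\theta < s$), the space $L^p_\omega(\rn)$ satisfies the inequality in Assumption \ref{vector} with these $\theta$ and $s$. Next, for Assumption \ref{vector2}, I would compute the relevant associate-convexification space: a direct calculation gives
$$\lf[\lf(X^{1/s}\r)'\r]^{1/(q/s)'} = L^{(p/s)'/(q/s)'}_{\omega^{1-(p/s)'}}(\rn).$$
Since $\omega \in A_{p/s}(\rn)$, its dual weight satisfies $\omega^{1-(p/s)'} \in A_{(p/s)'}(\rn)$. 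The key quantitative point, exactly as in Theorem \ref{wls1}, is the reverse-Hölder sharpening: by the proofs of \cite[Theorem 7.2.2]{g14} and \cite[Corollary 7.6(1)]{D01}, one actually has $\omega^{1-(p/s)'} \in A_h(\rn)$ with $h = ((p/s)' + \epsilon_{(p,\,s,\,\omega)})/(1 + \epsilon_{(p,\,s,\,\omega)})$, and the lower bound on $q$ chosen above is exactly equivalent to $h < (p/s)'/(q/s)'$, so $\omega^{1-(p/s)'} \in A_{(p/s)'/(q/s)'}(\rn)$. The boundedness of $\cm$ on this weighted Lebesgue space then follows from \cite[Theorem 7.1.9]{g14}, giving Assumption \ref{vector2}.

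The only genuinely delicate point—the same one already handled in the proof of Theorem \ref{wls1}—is translating the hypothesis on $p$ into the existence of an admissible $q$; specifically one must check that the lower endpoint $p(\epsilon_{(p,\,s,\,\omega)}+1)/\epsilon_{(p,\,s,\,\omega)}$ lies strictly below $\min\{p_+(L), q_+(L)\}$, which is exactly the stated assumption
$$p < \frac{\min\{p_+(L),\,q_+(L)\}\,\epsilon_{(p,\,s,\,\omega)}}{\epsilon_{(p,\,s,\,\omega)}+1},$$
and simultaneously below $p_+(L)$ alone (automatic, since $q_+(L) \in (2,\infty)$ and the displayed bound already forces it). With $X = L^p_\omega(\rn)$ and these $\theta, s, q$ verified to satisfy both Assumptions, Theorem \ref{r} applies verbatim with $H_{X,\,L}(\rn)$ replaced by $H^p_{L,\,\omega}(\rn)$, which is the assertion. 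Since this is a routine adaptation of the argument for Theorem \ref{wls1}, I would present it compactly and omit the repeated details.
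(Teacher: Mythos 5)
Your proposal is correct and follows exactly the route the paper intends: the paper omits the proof of Theorem \ref{wsl2} precisely because it is the same adaptation of the proof of Theorem \ref{wls1} that you carry out, namely choosing $\theta\in(n/(n+m),s)$ and $q\in(\max\{p_-(L),\,p(\epsilon_{(p,\,s,\,\omega)}+1)/\epsilon_{(p,\,s,\,\omega)}\},\min\{p_+(L),\,q_+(L)\})$, verifying Assumptions \ref{vector} and \ref{vector2} for $L^p_\omega(\rn)$ via the weighted Fefferman--Stein inequality and the reverse-H\"older improvement of $\omega^{1-(p/s)'}$, and then invoking Theorem \ref{r}. Your check that the lower bound on $q$ is equivalent to $h<(p/s)'/(q/s)'$ is also the correct quantitative point.
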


\begin{theorem}\label{wsl3}
Let $m\in\nn$, $L$ be a homogeneous divergence form $2m$-order elliptic operator  in \eqref{high}
satisfying Ellipticity Condition \ref{ec}, and the family $\{e^{-tL}\}_{t\in(0,\infty)}$
of operators satisfy  the  $m-L^r(\rn)-L^2(\rn)$ off-diagonal estimate for some $r\in(1,2].$
Assume that $s\in(nr/(n+mr),1],$ $p\in(s,\infty),$ and $\omega\in A_{p/s}(\rn)$ satisfy
$$p<\frac{p_+(L)\epsilon_{(p,\,s,\,\omega)}}{\epsilon_{(p,\,s,\,\omega)}+1},$$
where $\epsilon_{(p,\,s,\,\omega)}$ is the same as in \eqref{gouqi}.
Then the conclusion of Theorem \ref{riesz} holds true with $H_{X,\,L}(\rn)$
replaced by $H^{p}_{L,\,\omega}(\rn).$
\end{theorem}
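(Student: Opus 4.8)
The plan is to reduce everything to Theorem \ref{riesz} by verifying that $X := L^p_\omega(\rn)$, with the given parameters, satisfies both Assumptions \ref{vector} and \ref{vector2} for some $s' \in (0,1]$, $\theta \in (\frac{nr}{n+mr},1]$, and $q \in [2, p_+(L))$. Once these structural assumptions are checked, Theorem \ref{riesz} applies verbatim to $H_{X,\,L}(\rn) = H^p_{L,\,\omega}(\rn)$ and yields the desired Riesz transform characterization. So the whole proof is an exercise in weight theory, entirely parallel to the proof of Theorem \ref{wls1} given above.

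First I would fix $\theta$ slightly below $s$ (with $s$ the parameter in the statement, which lies in $(\frac{nr}{n+mr},1]$, so $\theta$ can be chosen in $(\frac{nr}{n+mr}, s)$) and choose
$$
q \in \lf(\max\lf\{2,\ p_-(L),\ p\tfrac{\epsilon_{(p,\,s,\,\omega)}+1}{\epsilon_{(p,\,s,\,\omega)}}\r\},\ p_+(L)\r),
$$
which is a nonempty interval precisely because of the hypothesis $p < \frac{p_+(L)\epsilon_{(p,\,s,\,\omega)}}{\epsilon_{(p,\,s,\,\omega)}+1}$ together with $2 < p_+(L)$ (note $q_+(L) \in (2,\infty)$ and $(p_-(L),p_+(L)) \supset [\frac{2n}{n+2m},\frac{2n}{n-2m}]$ by Proposition \ref{basic}, so $2 < p_+(L)$ always holds). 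Assumption \ref{vector} for this $\theta$ and $s$ follows from the Fefferman--Stein vector-valued maximal inequality on weighted Lebesgue spaces, exactly as in the proof of Theorem \ref{wls1} (cite \cite[Theorem 3.1(b)]{AJ80}), since $\omega \in A_{p/s}(\rn)$. For Assumption \ref{vector2}, I would again use the identity
$$
\lf[\lf(X^{1/s}\r)'\r]^{1/(q/s)'} = L^{(p/s)'/(q/s)'}_{\omega^{1-(p/s)'}}(\rn),
$$
observe that $\omega \in A_{p/s}(\rn)$ implies $\omega^{1-(p/s)'} \in A_{(p/s)'}(\rn)$, and then invoke the reverse H\"older / openness estimate (the proofs of \cite[Theorem 7.2.2]{g14} and \cite[Corollary 7.6(1)]{D01}) to upgrade this to $\omega^{1-(p/s)'} \in A_h(\rn)$ with $h := \frac{(p/s)'+\epsilon_{(p,\,s,\,\omega)}}{1+\epsilon_{(p,\,s,\,\omega)}}$; the choice of $q$ above guarantees $h < (p/s)'/(q/s)'$, so $\omega^{1-(p/s)'} \in A_{(p/s)'/(q/s)'}(\rn)$ and the Hardy--Littlewood maximal operator is bounded on the space displayed above by \cite[Theorem 7.1.9]{g14}. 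This gives Assumption \ref{vector2}.

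With both assumptions verified, Theorem \ref{riesz} applies (its hypothesis $s \in (0,1]$, $\theta \in (\frac{nr}{n+mr},1]$, and $q \in [2,p_+(L))$ are all met by construction, and the operator hypothesis on $\{e^{-tL}\}_t$ is exactly the one in the present statement), and one concludes that for any $h \in H^p_{L,\,\omega,\,\mathrm{Riesz}}(\rn)$ one has $h \in H^p_{L,\,\omega}(\rn)$ with $\|h\|_{H^p_{L,\,\omega}(\rn)} \le C \|h\|_{H^p_{L,\,\omega,\,\mathrm{Riesz}}(\rn)}$, which is the conclusion of Theorem \ref{riesz} transported to $H^p_{L,\,\omega}(\rn)$. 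The only genuinely delicate point — and the one place where the precise form of the exponent $\epsilon_{(p,\,s,\,\omega)}$ matters — is the quantitative $A_\infty$ openness step that produces the right value of $h$ and hence the sharp numerical condition on $p$; everything else is a routine matter of unwinding definitions and citing standard weighted inequalities. Since this step is identical to the corresponding step in the proof of Theorem \ref{wls1}, I would simply note the parallel and omit the repeated details.
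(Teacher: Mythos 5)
Your proposal is correct and follows exactly the route the paper intends: the paper omits the proof of this theorem, stating it is similar to that of Theorem \ref{wls1}, and your argument is precisely that verification (Fefferman--Stein for Assumption \ref{vector}, the duality identity plus the quantitative $A_\infty$ openness for Assumption \ref{vector2}) with the only necessary adjustments being $\theta\in(\tfrac{nr}{n+mr},s)$ and $q>\max\{2,p_-(L),p\tfrac{\epsilon_{(p,\,s,\,\omega)}+1}{\epsilon_{(p,\,s,\,\omega)}}\}$ so that the hypotheses of Theorem \ref{riesz} are met. No gaps.
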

\begin{remark}
To the best of our knowledge, the conclusions of Theorems \ref{wls1}, \ref{wsl2}, and
\ref{wsl3} are totally new.
\end{remark}

\subsection{Variable Hardy Spaces}\label{vls}

In this subsection, we apply Theorems \ref{th2}, \ref{r}, and \ref{riesz}
to the variable Hardy space associated with $L$. We first recall the definition of the variable
Lebesgue space.

Let $r:\ \rn\to(0,\infty)$ be a measurable function,
$$
\widetilde{r}_-:=\underset{x\in\rn}{\essinf}\,r(x),\ \ \text{and}\ \ \
\widetilde r_+:=\underset{x\in\rn}{\esssup}\,r(x).
$$
A function $r:\ \rn\to(0,\infty)$ is said to be \emph{globally
log-H\"older continuous} if there exists an $r_{\infty}\in\rr$ and a positive constant $C$
such that, for any $x,y\in\rn$,
$$
|r(x)-r(y)|\le \frac{C}{\log(e+1/|x-y|)}\ \ \ \text{and}\ \ \
|r(x)-r_\infty|\le \frac{C}{\log(e+|x|)}.
$$
The \emph{variable Lebesgue space $L^{r(\cdot)}(\rn)$} associated with the function
$r:\ \rn\to(0,\infty)$ is defined to be the set of all the measurable functions $f$ on $\rn$ with
the finite \emph{quasi-norm}
$$
\|f\|_{L^{r(\cdot)}(\rn)}:=\inf\lf\{\lambda
\in(0,\infty):\ \int_\rn\lf[\frac{|f(x)|}{\lambda}\r]^{r(x)}\,dx\le1\r\}.
$$
Then it is known that $L^{r(\cdot)}(\rn)$ is a ball quasi-Banach function space
(see, for instance, \cite[Section 7.8]{SHYY17}). In particular, when $1<\widetilde r_-\le \widetilde
r_+<\infty$, $(L^{r(\cdot)}(\rn), \|\cdot\|_{L^{r(\cdot)}(\rn)})$ is a Banach function space in the
terminology of Bennett and Sharpley \cite{BS88} and hence also a ball Banach function space
(see, for instance, \cite[p.\,94]{SHYY17}). More results on variable Lebesgue spaces can be found
in \cite{N1,N2,NS12,CUF,CUW,DHR} and the references therein.

When $X:=L^{r(\cdot)}(\rn)$, the Hardy space $H_{X,\,L}(\rn)$ is just
the \emph{variable Hardy space associated with $L$}; in this case, we denote $H_{X,\,L}(\rn)$
simply by $H^{r(\cdot)}_{L}(\rn)$.
Then, applying Theorem \ref{th2} to
the variable Hardy space $H^{r(\cdot)}_{L}(\rn)$, we have the following conclusion.

\begin{theorem}\label{vls1}
Let $m\in\nn$ and $L$ be a homogeneous divergence form $2m$-order elliptic operator
in \eqref{high} satisfying  Strong Ellipticity Condition \ref{sec}.
Assume that $r:\ \rn\to(0,\infty)$ is globally log-H\"older continuous and $0< \wt r_{-}\leq
\wt r_{+}<p_{+}(L)$. Then the conclusion of Theorem \ref{th2} holds true with $H_{X,\,L}(\rn)$ replaced
by $H^{r(\cdot)}_{L}(\rn).$
\end{theorem}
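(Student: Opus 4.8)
The plan is to deduce Theorem~\ref{vls1} directly from Theorem~\ref{th2}, following the same scheme as in the proof of Theorem~\ref{wls1}: it suffices to show that $X:=L^{r(\cdot)}(\rn)$ is a ball quasi-Banach function space satisfying both Assumption~\ref{vector} and Assumption~\ref{vector2} for some $\theta\in(0,1]$, $s\in(\theta,1]$, and $q\in(p_-(L),p_+(L))$. That $L^{r(\cdot)}(\rn)$ is a ball quasi-Banach function space is recorded in, for instance, \cite[Section~7.8]{SHYY17}. Since $r$ is globally log-H\"older continuous, it is bounded, so $\wt{r}_+<\fz$; combining this with the hypothesis $\wt{r}_+<p_+(L)$ and with $p_-(L)<p_+(L)$, I would fix once and for all
$$
s\in\lf(0,\min\{1,\wt{r}_-\}\r),\qquad \theta\in(0,s),\qquad q\in\lf(\max\{p_-(L),\wt{r}_+\},\,p_+(L)\r),
$$
so that $1<\wt{r}_-/s\le\wt{r}_+/s<q/s<\fz$ and $s/\theta>1$.

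For Assumption~\ref{vector}, I would invoke the Fefferman--Stein vector-valued maximal inequality on variable Lebesgue spaces: since $r(\cdot)/\theta$ is globally log-H\"older continuous with $1<(r(\cdot)/\theta)_-=\wt{r}_-/\theta\le(r(\cdot)/\theta)_+=\wt{r}_+/\theta<\fz$ and $s/\theta\in(1,\fz)$, the operator $\cm$ is bounded on $\ell^{s/\theta}(L^{r(\cdot)/\theta}(\rn))$ (see, for instance, \cite{CUF} or \cite[Section~7.8]{SHYY17}). Rewriting this inequality by means of the identities $\cm^{(\theta)}(f_j)=[\cm(|f_j|^\theta)]^{1/\theta}$, $X^{1/\theta}=L^{r(\cdot)/\theta}(\rn)$, and $\|g^{1/\theta}\|_X=\|g\|_{X^{1/\theta}}^{1/\theta}$ yields precisely the estimate in Assumption~\ref{vector} with this $\theta$ and $s$.

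For Assumption~\ref{vector2}, I would first note that $X^{1/s}=L^{r(\cdot)/s}(\rn)$ is a ball Banach function space because $(r(\cdot)/s)_-=\wt{r}_-/s>1$. Using the standard description of the associate space of a variable Lebesgue space (see, for instance, \cite{CUF}), $(X^{1/s})'=L^{(r(\cdot)/s)'}(\rn)$ with equivalent norms, and hence
$$
\lf[\lf(X^{1/s}\r)'\r]^{1/(q/s)'}=L^{(r(\cdot)/s)'/(q/s)'}(\rn)
$$
with equivalent norms. The exponent function $t(\cdot):=(r(\cdot)/s)'/(q/s)'$ is globally log-H\"older continuous and bounded above, and, since $r(x)<q$ for almost every $x\in\rn$, one has $\essinf_{x\in\rn}t(x)=(\wt{r}_+/s)'/(q/s)'>1$; consequently $\cm$ is bounded on $L^{t(\cdot)}(\rn)$ (see, for instance, \cite{CUF}), which is exactly Assumption~\ref{vector2} for this $s$ and $q$. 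An application of Theorem~\ref{th2} then completes the argument.

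The argument contains no real obstacle: everything reduces to the boundedness and vector-valued boundedness of the Hardy--Littlewood maximal operator on variable Lebesgue spaces, which are classical, and to carefully tracking the exponent functions through the $\theta$-convexification, the $1/s$-convexification, the passage to the associate space, and the $1/(q/s)'$-convexification, checking at each stage that global log-H\"older continuity is preserved and that the relevant one-sided bounds hold. The only genuine use of the hypotheses is that $\wt{r}_+<p_+(L)$ makes the interval for $q$ nonempty, while $0<\wt{r}_-\le\wt{r}_+<\fz$ (automatic from global log-H\"older continuity) together with the freedom to shrink $s$ and $\theta$ ensures the needed strict inequalities above $1$.
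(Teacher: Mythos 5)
Your proposal is correct and follows essentially the same route as the paper: choose $\theta\in(0,\wt r_-)$, $s\in(\theta,\min\{1,\wt r_-\})$, and $q\in(\max\{\wt r_+,p_-(L)\},p_+(L))$, verify Assumption \ref{vector} via the Fefferman--Stein vector-valued maximal inequality on variable Lebesgue spaces and Assumption \ref{vector2} via the duality identity $[(X^{1/s})']^{1/(q/s)'}=L^{(r(\cdot)/s)'/(q/s)'}(\rn)$ together with the boundedness of $\cm$ there, and then apply Theorem \ref{th2}. Your extra bookkeeping (boundedness of $r$ from log-H\"older continuity, the computation $\essinf\,(r(\cdot)/s)'/(q/s)'=(\wt r_+/s)'/(q/s)'>1$) only spells out details the paper leaves implicit, so there is no substantive difference.
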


\begin{proof}
Let all the symbols be the same as in the present theorem. By Theorem \ref{th2},
to prove the present theorem, it suffices to show that $X:=L^{r(\cdot)}(\rn)$
satisfies both Assumptions \ref{vector} and \ref{vector2}
for some $\theta\in(0,1],\,s\in(\theta,1],$ and $q\in(p_{-}(L),p_{+}(L)).$

Let $\theta\in(0,\wt r_{-})$,
$s\in\lf(\theta,\min\lf\{1,\,\wt r_{-}\r\}\r)$, and
$q\in\lf(\max\lf\{1,\,\wt r_+,\,p_{-}(L)\r\},p_+(L)\r).$
Then, from  the Fefferman--Stein vector-valued  maximal inequality on  variable Lebesgue spaces
(see, for instance, \cite[Lemma 2.4]{NS12}), we deduce that $X:=L^{r(\cdot)}(\rn)$ satisfies
Assumptions \ref{vector} for such a $\theta$ and an $s$. Moreover, by the dual result on variable Lebesgue
spaces (see, for instance, \cite[Theorem 2.80]{CUF}), we conclude that
\begin{align*}
\left[\left(X^{\frac{1}{s}}\right)'\right]^{\frac{1}{(\frac{q}{s})'}}=L^{(r(\cdot)/s)'/(q/s)'}(\rn),
\end{align*}
which, together with the assumption that $q\in(\max\{1,\,\wt r_+,\,p_{-}(L)\},p_+(L))$ and the
boundedness of the Hardy--Littlewood maximal operator $\cm$ on variable Lebesgue spaces (see,
for instance, \cite[Theorem 3.16]{CUF}), further implies that $X:=L^{r(\cdot)}(\rn)$ satisfies
Assumption \ref{vector2} for such an $s$ and a $q$. This finishes the proof of Theorem \ref{vls1}.
\end{proof}

By both Theorems \ref{r} and \ref{riesz}, we have the following conclusions; since their proofs are similar
to that of Theorem \ref{vls1}, we omit the details here.

\begin{theorem}\label{vsl2}
Let $m\in\nn$, $L$ be a homogeneous divergence form $2m$-order elliptic operator  in \eqref{high}
satisfying Ellipticity Condition \ref{ec}, and $r:\ \rn\to(0,\infty)$ be
globally log-H\"older continuous. Assume that $\frac{n}{n+m}< \wt r_{-}\leq \wt r_{+}<
\min\{p_{+}(L),q_{+}(L)\}$. Then the conclusion of Theorem \ref{r} holds true with
$H_{X,\,L}(\rn)$ replaced by $H^{r(\cdot)}_{L}(\rn).$
\end{theorem}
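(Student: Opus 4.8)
The plan is to reduce the statement to Theorem \ref{r} by verifying that $X:=L^{r(\cdot)}(\rn)$ satisfies Assumptions \ref{vector} and \ref{vector2} with parameters that meet the hypotheses of Theorem \ref{r}, namely with $\theta\in(n/(n+m),1]$, $s\in(\theta,1]$, and $q\in(p_-(L),\min\{p_+(L),q_+(L)\})$. This mirrors exactly the structure of the proof of Theorem \ref{vls1}, so the bulk of the argument is a bookkeeping exercise once the indices are chosen correctly.

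First I would fix the parameters. Since $\frac{n}{n+m}<\wt r_-\le\wt r_+<\min\{p_+(L),q_+(L)\}$, I choose $\theta\in(n/(n+m),\wt r_-)$, then $s\in(\theta,\min\{1,\wt r_-\})$, and finally $q\in(\max\{1,\wt r_+,p_-(L)\},\min\{p_+(L),q_+(L)\})$; all these intervals are nonempty by the hypotheses on $r$ together with the general facts about $p_\pm(L)$ and $q_\pm(L)$ in Proposition \ref{basic} (in particular $q_+(L)\in(2,\infty)$ and $p_-(L)=q_-(L)$). Note that with this choice one automatically has $\theta>n/(n+m)$ and $s>\theta$, as required by Theorem \ref{r}.

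Next, for Assumption \ref{vector}: the Fefferman--Stein vector-valued maximal inequality on variable Lebesgue spaces (\cite[Lemma 2.4]{NS12}) applies because $r$ is globally log-H\"older continuous and $s<\wt r_-$, giving the powered vector-valued bound for $\cm^{(\theta)}$ with exponents $\theta,s$ as above. For Assumption \ref{vector2}: by the duality formula for variable Lebesgue spaces (\cite[Theorem 2.80]{CUF}) one identifies $[(X^{1/s})']^{1/(q/s)'}=L^{(r(\cdot)/s)'/(q/s)'}(\rn)$; since $q>\wt r_+$, the exponent function $(r(\cdot)/s)'/(q/s)'$ is bounded below away from $1$ and remains globally log-H\"older continuous, so the Hardy--Littlewood maximal operator is bounded on it by \cite[Theorem 3.16]{CUF}.

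The only real point requiring care---and the step I expect to be the minor obstacle---is checking that the index $q$ lands strictly below $q_+(L)$ as well as below $p_+(L)$, which is needed in Theorem \ref{r} but not in Theorem \ref{th2}; this is why the hypothesis here is $\wt r_+<\min\{p_+(L),q_+(L)\}$ rather than just $\wt r_+<p_+(L)$. Once the parameters are pinned down, applying Theorem \ref{r} to $X=L^{r(\cdot)}(\rn)$ yields the boundedness of $\nabla^m L^{-1/2}$ from $H^{r(\cdot)}_L(\rn)$ to $H^{r(\cdot)}(\rn)$, which is the assertion of the theorem. I omit the remaining routine details, as the argument is entirely parallel to that of Theorem \ref{vls1}.
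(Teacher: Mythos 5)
Your proposal is correct and follows exactly the route the paper intends: the paper omits the proof, noting it is analogous to that of Theorem \ref{vls1}, and your argument is that analogue with the parameter ranges tightened to $\theta\in(n/(n+m),\min\{1,\wt r_-\})$, $s\in(\theta,\min\{1,\wt r_-\})$, and $q\in(\max\{1,\wt r_+,p_-(L)\},\min\{p_+(L),q_+(L)\})$ so that Theorem \ref{r} (rather than Theorem \ref{th2}) applies. The only cosmetic point is to state the upper bound $\min\{1,\wt r_-\}$ for $\theta$ explicitly, since Theorem \ref{r} and Assumption \ref{vector} require $\theta\le 1$; otherwise the verification of Assumptions \ref{vector} and \ref{vector2} via \cite[Lemma 2.4]{NS12} and \cite[Theorems 2.80 and 3.16]{CUF} is exactly as in the paper.
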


\begin{theorem}\label{vsl3}
Let $m\in\nn$, $L$ be a homogeneous divergence form $2m$-order elliptic operator  in \eqref{high}
satisfying Ellipticity Condition \ref{ec}, and the family $\{e^{-tL}\}_{t\in(0,\infty)}$
of operators satisfy the  $m-L^r(\rn)-L^2(\rn)$ off-diagonal estimate with some $r\in(1,2].$ Let $r:\
\rn\to(0,\infty)$ be globally log-H\"older continuous. Assume that $\frac{nr}{n+mr}<\wt r_{-}\leq
\wt r_{+}<p_{+}(L)$. Then the conclusion of Theorem \ref{riesz} holds true with
$H_{X,\,L}(\rn)$ replaced by $H^{r(\cdot)}_{L}(\rn).$
\end{theorem}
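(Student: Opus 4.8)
The plan is to obtain Theorem \ref{vsl3} as an instance of Theorem \ref{riesz} with $X:=L^{r(\cdot)}(\rn)$, arguing exactly as in the proof of Theorem \ref{vls1} but carrying the extra lower restriction on the convexity exponent that Theorem \ref{riesz} imposes. Since $L^{r(\cdot)}(\rn)$ is a ball quasi-Banach function space (see, for instance, \cite[Section 7.8]{SHYY17}), $\wt r_{+}<\infty$ automatically because $r$ is globally log-H\"older continuous, and the $m$-$L^{r}(\rn)$-$L^{2}(\rn)$ off-diagonal hypothesis on $L$ is already assumed, it suffices to produce $s,\theta\in(0,1]$ with $\theta\in(\frac{nr}{n+mr},1]$ and $q\in[2,p_{+}(L))$ for which $X$ satisfies both Assumptions \ref{vector} and \ref{vector2}, and then to invoke Theorem \ref{riesz} together with the density of $L^{2}(\rn)\cap H^{r(\cdot)}_{L,\,\mathrm{Riesz}}(\rn)$ (that is, $H_{X,\,L,\,\mathrm{Riesz}}(\rn)$ with $X:=L^{r(\cdot)}(\rn)$) in $H^{r(\cdot)}_{L,\,\mathrm{Riesz}}(\rn)$.

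First I would fix the parameters. Since $\frac{nr}{n+mr}\in(0,1)$ and, by hypothesis, $\frac{nr}{n+mr}<\wt r_{-}$, the interval $(\frac{nr}{n+mr},\min\{1,\wt r_{-}\})$ is nonempty; choose $\theta$ in it and then $s\in(\theta,\min\{1,\wt r_{-}\})$, so that $\frac{nr}{n+mr}<\theta<s\le1$ and $s<\wt r_{-}$. By Proposition \ref{basic}(i), $p_{+}(L)=\infty$ when $n\le 2m$, while $\frac{2n}{n-2m}\in(p_{-}(L),p_{+}(L))$ when $n>2m$, so in both cases $p_{+}(L)>2$; combined with the hypothesis $\wt r_{+}<p_{+}(L)$ and the trivial bound $p_{-}(L)<p_{+}(L)$, the interval $(\max\{2,\wt r_{+},p_{-}(L)\},p_{+}(L))$ is nonempty, and I choose $q$ in it, so that $q\in[2,p_{+}(L))$ and $q>\wt r_{+}$. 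With these choices, Assumption \ref{vector} for $X=L^{r(\cdot)}(\rn)$ follows from the Fefferman--Stein vector-valued maximal inequality on variable Lebesgue spaces (see, for instance, \cite[Lemma 2.4]{NS12}), since $\theta<s\le\wt r_{-}$; and for Assumption \ref{vector2}, note that $X^{1/s}=L^{r(\cdot)/s}(\rn)$ has lower exponent $\wt r_{-}/s>1$, hence is a ball Banach function space (see, for instance, \cite[p.\,94]{SHYY17}), the duality for variable Lebesgue spaces (see, for instance, \cite[Theorem 2.80]{CUF}) gives $[(X^{1/s})']^{1/(q/s)'}=L^{(r(\cdot)/s)'/(q/s)'}(\rn)$, and, because $q>\wt r_{+}$, a direct computation shows the lower exponent of $(r(\cdot)/s)'/(q/s)'$ exceeds $1$, so the Hardy--Littlewood maximal operator $\cm$ is bounded on this space (see, for instance, \cite[Theorem 3.16]{CUF}). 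Thus $X$ satisfies Assumption \ref{vector2} for this $s$ and $q$, and Theorem \ref{riesz} applies and yields the claim.

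The only point requiring care is the simultaneous feasibility of the three parameters: the lower bound $\theta>\frac{nr}{n+mr}$ is precisely the restriction under which the atomic machinery of the Hardy--Sobolev space $\dot H_{m,\,X}(\rn)$ used inside the proof of Theorem \ref{riesz} functions, while the requirements $q\ge2$ and $q>\wt r_{+}$ force $p_{+}(L)>\max\{2,\wt r_{+}\}$; the hypothesis $\wt r_{+}<p_{+}(L)$ and the universal bound $p_{+}(L)>2$ are exactly what make this achievable. Everything else is the routine transfer, already carried out in the proof of Theorem \ref{vls1}, of the vector-valued maximal inequality and of the boundedness of $\cm$ on $L^{r(\cdot)}(\rn)$ and on the associate space of its convexification, so these details may be omitted.
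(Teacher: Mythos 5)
Your proposal is correct and follows essentially the same route as the paper, which omits the details precisely because they amount to the verification, carried out as in the proof of Theorem \ref{vls1}, that $X:=L^{r(\cdot)}(\rn)$ satisfies Assumptions \ref{vector} and \ref{vector2} for suitable $\theta\in(\frac{nr}{n+mr},1]$, $s\in(\theta,1]$, and $q\in[2,p_+(L))$ (via the Fefferman--Stein vector-valued maximal inequality, variable-exponent duality, and the boundedness of $\cm$ on variable Lebesgue spaces), followed by an application of Theorem \ref{riesz}. The only caveat is that your parenthetical claim $\frac{nr}{n+mr}\in(0,1)$ is not automatic for all $n,m,r$; it is an implicit presupposition already built into the hypothesis $\theta\in(\frac{nr}{n+mr},1]$ of Theorem \ref{riesz}, so your argument matches the paper's on exactly the same terms.
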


\begin{remark}
When $m:=1$ and $\wt r_{+}\in(0,1]$, Theorem \ref{vls1} is just \cite[Theorem 5.3]{YZZ18}.
Meanwhile, when $m:=1$ and $\frac{n}{n+1}<\wt r_{-}\leq \wt r_{+}\leq1$,
Theorem \ref{vsl2} is just \cite[Theorem 5.17]{YZZ18}. Moreover, to the best of our knowledge,
Theorem \ref{vsl3} is totally new even when $m:=1$.
\end{remark}

\subsection{Mixed-norm Hardy Spaces}\label{mls}

In this subsection, we apply Theorems \ref{th2}, \ref{r}, and \ref{riesz}
to the mixed-norm Hardy space associated with $L$.
We begin with recalling the definition of the mixed-norm Lebesgue space.

For a given vector $\vec{r}:=(r_1,\ldots,r_n)\in(0,\infty]^n$, the \emph{mixed-norm Lebesgue
space $L^{\vec{r}}(\rn)$} is defined to be the set of all the measurable functions $f$ on
$\rn$ with the finite \emph{quasi-norm}
$$
\|f\|_{L^{\vec{r}}(\rn)}:=\lf\{\int_{\rr}
\cdots\lf[\int_{\rr}|f(x_1,\ldots,
x_n)|^{r_1}\,dx_1\r]^{\frac{r_2}{r_1}}\cdots\,dx_n\r\}^{\frac{1}{r_n}},
$$
where the usual modifications are made when $r_i:=\infty$ for some $i\in\{1,\ldots,n\}$.
Here and in the remainder of this subsection, let
$r_-:=\min\{r_1, \ldots , r_n\}$ and
$r_+:=\max\{r_1, \ldots , r_n\}$.

It is worth pointing out that the space $L^{\vec{r}}(\rn)$ with $\vec{r}\in(0,\infty)^n$ is a ball
quasi-Banach function space; but $L^{\vec{r}}(\rn)$ with $\vec{r}\in[1,\infty]^n$ may not be a Banach
function space (see, for instance, \cite[Remark 7.21]{ZWYY20}).
The study of mixed-norm Lebesgue spaces can be traced back to H\"ormander \cite{H1} and Benedek
and Panzone \cite{BP}. More results on mixed-norm Lebesgue spaces and other mixed-norm
function spaces can be found in \cite{l70,CGN19,CGN17,cgn17bs,tn,noss20,HLYY} and
the references therein.

In particular, when $X:=L^{\vec{r}}(\rn)$, the Hardy space $H_{X,\,L}(\rn)$ is just
the \emph{mixed-norm Hardy space associated with $L$}; in this case, we denote $H_{X,\,L}(\rn)$
simply by $H^{\vec{r}}_{L}(\rn)$. Then, applying Theorem \ref{th2}
to the mixed-norm Hardy space $H^{\vec{r}}_{L}(\rn)$, we have the following conclusion.

\begin{theorem}\label{mls1}
Let $m\in\nn$ and $L$ be a homogeneous divergence form $2m$-order elliptic operator  in \eqref{high}
satisfying  Strong Ellipticity Condition \ref{sec}. Assume that $\vec{r}:=(r_1,\ldots,r_n)
\in(0,\infty)^n$ with $r_+\in(0,p_+(L)).$ Then the conclusion of Theorem \ref{th2} holds true
with $H_{X,\,L}(\rn)$ replaced by $H^{\vec{r}}_L(\rn).$
\end{theorem}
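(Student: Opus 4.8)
The plan is to reduce Theorem \ref{mls1} to Theorem \ref{th2} exactly as was done for the weighted case in the proof of Theorem \ref{wls1}. That is, it suffices to verify that the ball quasi-Banach function space $X:=L^{\vec{r}}(\rn)$ with $\vec{r}\in(0,\infty)^n$ and $r_+\in(0,p_+(L))$ satisfies both Assumption \ref{vector} and Assumption \ref{vector2} for a suitable choice of the parameters $\theta,s\in(0,1]$ with $s>\theta$ and $q\in(p_-(L),p_+(L))$. Once these two assumptions are checked, Theorem \ref{th2} immediately gives that $H^{\vec{r}}_L(\rn)$, $H_{X,\,S_h}(\rn)$, $H_{X,\,\mathcal R_h}(\rn)$, and $H_{X,\,\mathcal N_h}(\rn)$ coincide with equivalent quasi-norms.

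First I would fix the parameters. Choose $\theta\in(0,r_-)$ small, then $s\in(\theta,\min\{1,r_-\})$, and finally $q\in(\max\{1,r_+,p_-(L)\},p_+(L))$; this last interval is nonempty precisely because of the standing hypothesis $r_+<p_+(L)$ together with $p_-(L)<p_+(L)$. For Assumption \ref{vector}, after passing to $|f_j|^s$ one needs a Fefferman--Stein vector-valued maximal inequality of the form
\begin{align*}
\left\|\left\{\sum_{j}\left[\cm^{(\theta)}(f_j)\right]^s\right\}^{1/s}\right\|_{L^{\vec r}(\rn)}
\lesssim\left\|\left\{\sum_{j}|f_j|^s\right\}^{1/s}\right\|_{L^{\vec r}(\rn)};
\end{align*}
since $\cm^{(\theta)}(f)=[\cm(|f|^\theta)]^{1/\theta}$ and $s/\theta>1$, this is equivalent to the vector-valued maximal inequality with exponents $(\vec r/\theta, s/\theta)$ on $L^{\vec r/\theta}(\rn)$, which holds by the known Fefferman--Stein inequality for mixed-norm Lebesgue spaces (see, for instance, \cite{HLYY} or \cite{cgn17bs}) because all entries of $\vec r/\theta$ lie in $(1,\infty)$ and $s/\theta>1$. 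For Assumption \ref{vector2}, I would first compute the relevant associate-space convexification: by the duality theory of mixed-norm Lebesgue spaces (the Köthe dual of $L^{\vec p}(\rn)$ is $L^{\vec p\,'}(\rn)$ for $\vec p\in[1,\infty)^n$), one obtains
\begin{align*}
\left[\left(X^{1/s}\right)'\right]^{1/(q/s)'}=L^{(\vec r/s)'/(q/s)'}(\rn),
\end{align*}
and the vector exponent $(\vec r/s)'/(q/s)'$ has all entries in $(1,\infty]$ thanks to the choice $q>r_+\ge s$ and $q>1$; hence the Hardy--Littlewood maximal operator $\cm$ is bounded on this space (boundedness of $\cm$ on $L^{\vec p}(\rn)$ for $\vec p\in(1,\infty]^n$ is classical, going back to Benedek--Panzone \cite{BP}), which is exactly Assumption \ref{vector2}.

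The only mildly delicate point — the likely main obstacle — is bookkeeping with the exponents to guarantee that the three intervals from which $\theta$, $s$, and $q$ are drawn are simultaneously nonempty and that the two cited maximal inequalities genuinely apply with the shifted exponents $\vec r/\theta$, $\vec r/s$, $(\vec r/s)'/(q/s)'$; in particular one must make sure $r_-/\theta>1$ and $s/\theta>1$ for the first inequality, and that $q>r_+$ forces every entry of $(\vec r/s)'/(q/s)'$ to exceed $1$ for the second. All of this is routine once the inequalities above are granted, so the proof is short: it mirrors verbatim the structure of the proof of Theorem \ref{wls1}, replacing the weighted Fefferman--Stein inequality by its mixed-norm analogue and the weighted $A_p$ computation by the mixed-norm duality identity. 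I would therefore write: ``By Theorem \ref{th2}, it suffices to show that $X:=L^{\vec r}(\rn)$ satisfies both Assumptions \ref{vector} and \ref{vector2} for some $\theta\in(0,1]$, $s\in(\theta,1]$, and $q\in(p_-(L),p_+(L))$,'' then carry out the two-paragraph verification sketched above, and conclude with ``This finishes the proof of Theorem \ref{mls1}.''
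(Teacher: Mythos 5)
Your proposal is correct and follows essentially the same route as the paper: reduce to Theorem \ref{th2}, choose $\theta\in(0,r_-)$, $s\in(\theta,\min\{1,r_-\})$, $q\in(\max\{1,r_+,p_-(L)\},p_+(L))$, verify Assumption \ref{vector} via the Fefferman--Stein vector-valued maximal inequality on mixed-norm Lebesgue spaces, and verify Assumption \ref{vector2} via the Benedek--Panzone duality identity $[(X^{1/s})']^{1/(q/s)'}=L^{(\vec r/s)'/(q/s)'}(\rn)$ together with the boundedness of $\cm$ on mixed-norm spaces with all exponents greater than one. Your extra rescaling remarks and exponent bookkeeping are just a more explicit version of the same verification the paper carries out.
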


\begin{proof}
Let all the symbols be same as in the present theorem. By Theorem \ref{th2}, to show the present theorem,
it suffices to prove that $X:=L^{\vec{r}}(\rn)$ satisfies both Assumptions \ref{vector} and
\ref{vector2} for some $\theta\in(0,1],\,s\in(\theta,1],$ and $q\in(p_{-}(L),p_{+}(L)).$

Let $\theta\in(0, r_{-})$,
$s\in(\theta,\min\{1,\,r_{-}\})$,
and
$q\in(\max\{1,\,r_+,\,p_{-}(L)\},p_+(L)).$
Then, by the Fefferman--Stein vector-valued maximal inequality on mixed-norm Lebesgue spaces
(see, for instance, \cite[Lemma 3.7]{HLYY}), we find that $X:=L^{\vec{r}}(\rn)$ satisfies
Assumptions \ref{vector} for such a $\theta$ and an $s$. Furthermore, from the dual result of mixed-norm
Lebesgue spaces (see, for instance, \cite[Theorems 1 and 2]{BP}), it is easy to follow that,
when $X:=L^{\vec{r}}(\rn)$,
\begin{align}\label{ta}
\left[\left(X^{\frac{1}{s}}\right)'\right]^{\frac{1}{(\frac{q}{s})'}}=L^{(\vec{r}/s)'/(q/s)'}(\rn),
\end{align}
where $(\vec{r}/s)':=((r_1/s)',\ldots,(r_n/s)')$ and $\frac{1}{(r_i/s)'}+\frac{1}{r_i/s}=1$
for any $i\in\{1,\ldots,n\}.$ Moreover, it is well known that the Hardy--Littlewood maximal
operator $\cm$ is bounded on the  mixed-norm Lebesgue space $L^{\vec{r}}(\rn)$
when $\vec{r}\in(1,\infty)^n$ (see, for instance, \cite[Lemma 3.5]{HLYY}). From this, the assumption that
$q\in(\max\{1,\, r_+,\,p_{-}(L)\},p_+(L)),$ and \eqref{ta}, it follows that $X:=L^{\vec{r}}(\rn)$
satisfies Assumption \ref{vector2} for such an $s$ and a $q$.
This finishes the proof of Theorem \ref{mls1}.
\end{proof}

By both Theorems \ref{r} and \ref{riesz}, we have the following results; since their  proofs are similar
to that of Theorem \ref{mls1}, we omit the details here.

\begin{theorem}\label{mls2}
Let $m\in\nn$ and $L$ be a homogeneous divergence form $2m$-order elliptic operator
in \eqref{high} satisfying Ellipticity Condition \ref{ec}.
Assume that $\vec{r}:=(r_1,\ldots,r_n)\in(0,\infty)^n$ satisfies   $\frac{n}{n+m}<
r_{-}\leq  r_{+}<\min\{p_{+}(L),q_{+}(L)\}$. Then the conclusion of Theorem \ref{r}
holds true with $H_{X,\,L}(\rn)$ replaced by $H^{\vec{r}}_L(\rn).$
\end{theorem}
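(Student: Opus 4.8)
The plan is to deduce Theorem~\ref{mls2} from Theorem~\ref{r} in exactly the fashion in which Theorem~\ref{mls1} was deduced from Theorem~\ref{th2}. Thus the whole matter reduces to verifying that the mixed-norm Lebesgue space $X:=L^{\vec r}(\rn)$ satisfies both Assumption~\ref{vector} and Assumption~\ref{vector2} for some triple $(\theta,s,q)$ lying in the ranges demanded by the hypotheses of Theorem~\ref{r}, namely $\theta\in(n/(n+m),1]$, $s\in(\theta,1]$, and $q\in(p_-(L),\min\{p_+(L),q_+(L)\})$.

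First I would fix the parameters. Choose $\theta\in(n/(n+m),r_-)$ and $s\in(\theta,\min\{1,r_-\})$; this is possible because the hypothesis $r_-\in(n/(n+m),\infty)$ together with $n/(n+m)<1$ forces $n/(n+m)<\min\{1,r_-\}$, so the interval $(n/(n+m),\min\{1,r_-\})$ has positive length and contains a pair $\theta<s$ with $s\le1$ as required by Theorem~\ref{r}. Next choose $q\in(\max\{1,r_+,p_-(L)\},\min\{p_+(L),q_+(L)\})$; this interval is nonempty since $p_+(L)>2$ and $q_+(L)>2$ by Proposition~\ref{basic}, since $p_-(L)<p_+(L)$ and $p_-(L)=q_-(L)<q_+(L)$, and since $r_+<\min\{p_+(L),q_+(L)\}$ by hypothesis. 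Note that this choice yields $\theta<r_i$, $s<r_i$, and $q>r_i$ for every $i\in\{1,\ldots,n\}$.

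With these parameters fixed, the two assumptions are checked just as in the proof of Theorem~\ref{mls1}. For Assumption~\ref{vector}: since $\theta<r_-$ and $\theta<s$, the Fefferman--Stein vector-valued maximal inequality on mixed-norm Lebesgue spaces (see, for instance, \cite[Lemma~3.7]{HLYY}) gives the required $\ell^s$-valued estimate for the powered operator $\cm^{(\theta)}$ on $L^{\vec r}(\rn)$. For Assumption~\ref{vector2}: since $r_i/s>1$ for every $i$, the convexification $X^{1/s}=L^{\vec r/s}(\rn)$ is a ball Banach function space, and the duality of mixed-norm Lebesgue spaces (see, for instance, \cite[Theorems~1 and~2]{BP}) identifies $[(X^{1/s})']^{1/(q/s)'}$ with $L^{(\vec r/s)'/(q/s)'}(\rn)$, where $(\vec r/s)':=((r_1/s)',\ldots,(r_n/s)')$; because $q>r_+$, each coordinate exponent $(r_i/s)'/(q/s)'$ exceeds $1$, so the Hardy--Littlewood maximal operator $\cm$ is bounded on $L^{(\vec r/s)'/(q/s)'}(\rn)$ (see, for instance, \cite[Lemma~3.5]{HLYY}), which is precisely Assumption~\ref{vector2}. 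Applying Theorem~\ref{r} then yields the conclusion with $H_{X,\,L}(\rn)$ replaced by $H^{\vec r}_L(\rn)$.

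There is no substantial difficulty: the argument is a transcription of the proof of Theorem~\ref{mls1}, with Theorem~\ref{th2} replaced by Theorem~\ref{r}, the admissible upper endpoint $p_+(L)$ for $q$ replaced by $\min\{p_+(L),q_+(L)\}$, and the additional lower bound $\theta>n/(n+m)$ inserted into the choice of parameters. The only point requiring a moment's care --- and the one I would double-check first --- is that this new lower bound on $\theta$ remains compatible with the upper bound $\theta<r_-$ needed for the vector-valued maximal inequality, which is guaranteed exactly by the hypothesis $r_->n/(n+m)$.
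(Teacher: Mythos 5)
Your proposal is correct and follows exactly the route the paper intends: the paper omits the proof of Theorem \ref{mls2} precisely because it is the proof of Theorem \ref{mls1} (Fefferman--Stein vector-valued inequality for Assumption \ref{vector}, Benedek--Panzone duality plus boundedness of $\cm$ for Assumption \ref{vector2}) with the parameter ranges adjusted to those of Theorem \ref{r}, which is what you carry out. Your verification that $\theta,s$ can be chosen in $(n/(n+m),\min\{1,r_-\})$ with $\theta<s$ and that $q$ can be taken in $(\max\{1,r_+,p_-(L)\},\min\{p_+(L),q_+(L)\})$ is the only new point, and it is handled correctly.
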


\begin{theorem}\label{mls3}
Let $m\in\nn$, $L$ be a homogeneous divergence form $2m$-order elliptic operator  in \eqref{high}
satisfying Ellipticity Condition \ref{ec}, and the family
$\{e^{-tL}\}_{t\in(0,\infty)}$ of operators satisfy the  $m-L^r(\rn)-L^2(\rn)$ off-diagonal estimate
for some $r\in(1,2].$ Assume that $\vec{r}:=(r_1,\ldots,r_n)\in(0,\infty)^n$ satisfies $\frac{nr}{n+mr}<
r_{-}\leq  r_{+}<p_{+}(L)$. Then the conclusion of Theorem \ref{riesz} holds true with
$H_{X,\,L}(\rn)$ replaced by $H^{\vec{r}}_L(\rn).$
\end{theorem}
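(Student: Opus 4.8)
The plan is to deduce Theorem \ref{mls3} directly from the general Riesz transform characterization in Theorem \ref{riesz}, exactly in the spirit of the proofs of Theorems \ref{wsl3} and \ref{mls1}. Thus the only real task is to verify that, for a suitable choice of the parameters $\theta,s\in(0,1]$ and $q\in[2,p_+(L))$, the mixed-norm Lebesgue space $X:=L^{\vec r}(\rn)$ satisfies the hypotheses of Theorem \ref{riesz}: namely both Assumptions \ref{vector} and \ref{vector2} hold, and $\theta$ can be taken in the interval $(\frac{nr}{n+mr},1]$. Once this is checked, Theorem \ref{riesz} applied with $X=L^{\vec r}(\rn)$ gives exactly the assertion that $H^{\vec r}_{L,\,\mathrm{Riesz}}(\rn)\subset H^{\vec r}_L(\rn)$ with the corresponding quasi-norm inequality, which is the conclusion of Theorem \ref{riesz} transcribed to this concrete space.

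First I would fix the parameters. Given $\vec r\in(0,\infty)^n$ with $\frac{nr}{n+mr}<r_-\le r_+<p_+(L)$, I would choose $\theta\in(\frac{nr}{n+mr},\min\{r_-,1\})$, then $s\in(0,1]$ with $s<r_-$ (the value of $s$ is flexible since Theorem \ref{riesz} only asks $s\in(0,1]$), and finally $q\in(\max\{2,\,r_+,\,p_-(L)\},\,p_+(L))$; such a $q$ exists because $r_+<p_+(L)$ and, by Proposition \ref{basic}(i), $p_+(L)>2$. Note $\frac{nr}{n+mr}<1$ since $r\le 2$, so the interval for $\theta$ is nonempty and $\theta<1$, giving $\theta\in(\frac{nr}{n+mr},1]$ as required. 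With these choices, Assumption \ref{vector} for $X=L^{\vec r}(\rn)$ follows from the Fefferman--Stein vector-valued maximal inequality on mixed-norm Lebesgue spaces (\cite[Lemma 3.7]{HLYY}), exactly as in the proof of Theorem \ref{mls1}. For Assumption \ref{vector2}, the identification $[(X^{1/s})']^{1/(q/s)'}=L^{(\vec r/s)'/(q/s)'}(\rn)$ is obtained from the duality of mixed-norm Lebesgue spaces (\cite[Theorems 1 and 2]{BP}), exactly as in \eqref{ta}; since $q>r_+$ we have $(\vec r/s)'/(q/s)'\in(1,\infty)^n$, so the boundedness of $\cm$ on that space follows from \cite[Lemma 3.5]{HLYY}. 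Hence both assumptions hold.

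Finally I would invoke Theorem \ref{riesz} with this $X$, $\theta$, $s$, $q$, and with the same $r\in(1,2]$ and the same off-diagonal hypothesis on $\{e^{-tL}\}_{t\in(0,\infty)}$ as in the statement of Theorem \ref{mls3}; this yields a positive constant $C$ such that, for any $h\in H^{\vec r}_{L,\,\mathrm{Riesz}}(\rn)$, one has $h\in H^{\vec r}_L(\rn)$ and $\|h\|_{H^{\vec r}_L(\rn)}\le C\|h\|_{H^{\vec r}_{L,\,\mathrm{Riesz}}(\rn)}$, which is precisely the conclusion of Theorem \ref{riesz} rephrased for $H^{\vec r}_L(\rn)$. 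There is no genuine obstacle here: the proof is entirely a verification that the abstract hypotheses specialize correctly. The only point requiring a little care is confirming the chain of inclusions among parameters — specifically that the constraint $\theta\in(\frac{nr}{n+mr},1]$ can be met simultaneously with $\theta<r_-$ and $\theta\le s\le 1$ while also keeping $q$ inside $(\max\{2,r_+\},p_+(L))$ — but this is immediate from $r_+<p_+(L)$, $p_+(L)>2$, and $\frac{nr}{n+mr}<r_-$. Accordingly the details are omitted, as is done for the analogous Theorems \ref{wsl2}, \ref{wsl3}, \ref{mls2}.
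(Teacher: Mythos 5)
Your proposal is correct and follows essentially the same route as the paper, which omits the proof of Theorem \ref{mls3} precisely because it is the argument of Theorem \ref{mls1} (verify Assumptions \ref{vector} and \ref{vector2} for $L^{\vec r}(\rn)$ via the mixed-norm Fefferman--Stein inequality and the duality of mixed-norm spaces, then invoke Theorem \ref{riesz}). The only small point to tighten is that $s$ should be chosen strictly larger than $\theta$ (as in the proof of Theorem \ref{mls1}), since the vector-valued maximal inequality in Assumption \ref{vector} requires the $\ell^{s/\theta}$-exponent to exceed $1$; with $\theta\in(\frac{nr}{n+mr},\min\{r_-,1\})$ and $s\in(\theta,\min\{1,r_-\})$ your parameter chain goes through verbatim.
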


\begin{remark}
To the best of our knowledge, Theorems \ref{mls1}, \ref{mls2}, and \ref{mls3}  are totally new.
\end{remark}

\subsection{Orlicz--Hardy Spaces}\label{os}

In this subsection, we apply Theorems \ref{th2}, \ref{r}, and \ref{riesz}
to the Orlicz--Hardy space associated with $L$.
We begin with recalling the definition of the Orlicz function.

A non-decreasing function $\Phi:\ [0,\infty)\ \to\ [0,\infty)$ is called an \emph{Orlicz function}
if $\Phi(0)= 0$, $\Phi(t)>0$ for any $t\in(0,\infty)$, and $\lim_{t\to\infty}\Phi(t)=\infty$.
Moreover, an Orlicz function $\Phi$ is said to be of \emph{lower} [resp., \emph{upper}] \emph{type}
$r$ for some $r\in\rr$ if there exists a positive constant $C_{(r)}$ such that, for any
$t\in[0,\infty)$ and $s\in(0,1)$ [resp., $s\in[1,\infty)$],
$$\Phi(st)\le C_{(r)} s^r
\Phi(t).$$

In the remainder of this subsection, we \emph{always} assume that $\Phi:\ [0,\infty)\ \to\ [0,\infty)$
is an Orlicz function with positive lower type $r_{\Phi}^-$ and positive upper type $r_{\Phi}^+$.
The \emph{Orlicz norm} $\|f\|_{L^\Phi(\rn)}$ of a measurable function $f$ on $\rn$ is then
defined by setting
$$\|f\|_{L^\Phi(\rn)}:=\inf\lf\{\lambda\in(0,\infty):\ \int_{\rn}\Phi\lf(\frac{|f(x)|}
{\lambda}\r)\,dx\le1\r\}.$$
Then the \emph{Orlicz space $L^\Phi(\rn)$} is defined to be the set of all the measurable functions
$f$ on $\rn$ with finite norm $\|f\|_{L^\Phi(\rn)}$.

It is easy to prove that the Orlicz space $L^\Phi(\rn)$ is a ball quasi-Banach function space
(see, for instance, \cite[Section 7.6]{SHYY17}). In particular, if $1\leq r_{\Phi}^-\leq r_{\Phi}^+<\infty$,
then $L^\Phi(\rn)$ is a Banach function space (see \cite[p.\,67, Theorem 10]{rr91}), and hence is also
a ball Banach function space. For more results on Orlicz spaces, we refer the reader to
\cite{km91,RR02,ZWYY20} and the references therein.

In particular, when $X:=L^{\Phi}(\rn)$, the Hardy space $H_{X,\,L}(\rn)$ is just
the \emph{Orlicz--Hardy space associated with $L$}; in this case, we denote $H_{X,\,L}(\rn)$
simply by $H_{\Phi,\,L}(\rn)$. Then, applying Theorem \ref{th2}
to the Orlicz--Hardy space $H_{\Phi,\,L}(\rn)$, we have the following conclusion.

\begin{theorem}\label{os1}
Let $m\in\nn$ and $L$ be a homogeneous divergence form $2m$-order elliptic operator  in \eqref{high}
satisfying  Strong Ellipticity Condition \ref{sec}. Assume that $\Phi$ is an Orlicz function
with positive lower type $r_{\Phi}^-$ and positive upper type $r_{\Phi}^+\in(0,p_{+}(L))$.
Then the conclusion of Theorem \ref{th2} holds true with $H_{X,\,L}(\rn)$
replaced by $H_{\Phi,\,L}(\rn).$
\end{theorem}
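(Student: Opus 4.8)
The plan is to invoke Theorem \ref{th2} and reduce the claim to verifying that $X:=L^\Phi(\rn)$ is a ball quasi-Banach function space satisfying both Assumption \ref{vector} and Assumption \ref{vector2} for a suitable choice of parameters $\theta,s\in(0,1]$ with $s>\theta$ and $q\in(p_-(L),p_+(L))$. Since we are given that $r_\Phi^-$ is a positive lower type and $r_\Phi^+\in(0,p_+(L))$ is a positive upper type, the free parameters will be chosen as $\theta\in(0,\min\{1,r_\Phi^-\})$, then $s\in(\theta,1]$ with $s$ also less than $\min\{1,r_\Phi^-\}$ if needed, and finally $q\in(\max\{1,r_\Phi^+,p_-(L)\},p_+(L))$; note the hypothesis $r_\Phi^+<p_+(L)$ together with $p_-(L)<p_+(L)$ and the fact (Proposition \ref{basic}) that this interval is nonempty guarantees such a $q$ exists.

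First I would record that $L^\Phi(\rn)$ is a ball quasi-Banach function space, citing \cite[Section 7.6]{SHYY17} (already mentioned in the text just above the theorem). Next I would verify Assumption \ref{vector}: this is the Fefferman--Stein vector-valued maximal inequality on Orlicz spaces, that is, for the chosen $\theta$ and $s$ one has
\begin{align*}
\left\|\left\{\sum_{j}\left[\cm^{(\theta)}(f_j)\right]^s\right\}^{1/s}\right\|_{L^\Phi(\rn)}
\lesssim\left\|\left\{\sum_{j}|f_j|^s\right\}^{1/s}\right\|_{L^\Phi(\rn)},
\end{align*}
which follows from the known vector-valued maximal estimate on Orlicz spaces (for instance, one can use that $[L^\Phi(\rn)]^{1/s}$ is a ball Banach function space on which $\cm^{(\theta/s\cdot\text{something})}$-type bounds hold, or directly cite a reference such as the Orlicz-space analogue used elsewhere in the literature, e.g. the argument in \cite{SHYY17} combined with the lower/upper type conditions on $\Phi$). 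Then I would verify Assumption \ref{vector2}: one first checks that $X^{1/s}=L^{\Phi\circ(\cdot)^s}(\rn)$ is a ball Banach function space (because, after rescaling, the relevant Orlicz function has lower type $\ge1$ for $s$ small enough relative to $r_\Phi^-$), then identifies the iterated associate/convexification space $[(X^{1/s})']^{1/(q/s)'}$ as an Orlicz space $L^{\Psi}(\rn)$ with $\Psi$ having lower type strictly bigger than $1$ (this uses that $q>r_\Phi^+$, so the exponent $(q/s)'$ is subcritical), whence the Hardy--Littlewood maximal operator $\cm$ is bounded on it, again by the standard boundedness of $\cm$ on Orlicz spaces with lower type greater than $1$ (see, e.g., \cite{km91,RR02} or \cite[Section 7.6]{SHYY17}).

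The main obstacle is bookkeeping with the type indices: one must check that the three parameter constraints ($\theta$ small relative to $r_\Phi^-$, $s\in(\theta,1]$, $q\in(\max\{1,r_\Phi^+,p_-(L)\},p_+(L))$) are simultaneously satisfiable and that under them the convexified and associate Orlicz spaces indeed have type indices in the ranges required for (i) the vector-valued maximal inequality and (ii) the scalar maximal boundedness. This is precisely the Orlicz-space counterpart of the weighted computation carried out in the proof of Theorem \ref{wls1} and the variable-exponent computation in the proof of Theorem \ref{vls1}, so I would follow those proofs essentially verbatim, replacing the weighted (resp.\ variable) Fefferman--Stein inequality and duality by their Orlicz analogues. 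Once both Assumptions are verified for the selected $\theta,s,q$, Theorem \ref{th2} applies directly and yields that $H_{\Phi,\,L}(\rn)$, $H_{X,\,S_h}(\rn)$, $H_{X,\,\mathcal R_h}(\rn)$, and $H_{X,\,\mathcal N_h}(\rn)$ coincide with equivalent quasi-norms, which is the desired conclusion. I omit the routine details, as in the analogous theorems above.
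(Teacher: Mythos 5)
Your proposal follows essentially the same route as the paper's proof: choose $\theta\in(0,r_\Phi^-)$, $s\in(\theta,\min\{1,r_\Phi^-\})$, $q\in(\max\{1,r_\Phi^+,p_-(L)\},p_+(L))$, verify Assumption \ref{vector} via the Orlicz-space Fefferman--Stein vector-valued maximal inequality, identify $[(X^{1/s})']^{1/(q/s)'}$ as an Orlicz space $L^\Psi(\rn)$ with lower type greater than $1$ (using $q>r_\Phi^+$) so that $\cm$ is bounded on it, and then apply Theorem \ref{th2}. The only blemish is the parenthetical identification $X^{1/s}=L^{\Phi\circ(\cdot)^s}(\rn)$, which should read $\Phi\circ(\cdot)^{1/s}$ (your subsequent reasoning about the lower type being at least $1$ for $s\le r_\Phi^-$ already corresponds to the correct formula), so this is a typo rather than a gap.
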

\begin{proof}
Let all the symbols be the same as in the present theorem. By Theorem \ref{th2}, to prove the present theorem,
it suffices to show that $X:=L^\Phi(\rn)$ satisfies both Assumptions \ref{vector} and \ref{vector2}
for some $\theta\in(0,1],\,s\in(\theta,1],$ and $q\in(p_{-}(L),p_{+}(L)).$

Let $\theta\in(0, r_{\Phi}^-),\,s\in(\theta,\min\{1,\,r_{\Phi}^-\}),$ and $q\in(\max\{1,\,
r_{\Phi}^+,\,p_{-}(L)\},p_+(L)).$ Then, from \cite[Theorem 7.14(i)]{SHYY17}, we deduce that
$X:=L^{\Phi}(\rn)$ satisfies Assumption \ref{vector} for such a $\theta$ and an $s$.
Furthermore, by the dual theorem on Orlicz spaces (see, for instance, \cite[Theorem 13]{RR02}),
we find that, when $X:=L^{\Phi}(\rn)$,
\begin{align}\label{sa}
\left[\left(X^{\frac{1}{s}}\right)'\right]^{\frac{1}{(\frac{q}{s})'}}=L^{\Psi}(\rn),
\end{align}
where, for any $t\in[0,\fz)$,
$$\Psi(t):=\sup_{h\in(0,\infty)}\lf[t^{1/(q/s)'}h-\Phi\lf(h^{1/s}\r)\r].$$
Then, from \cite[Proposition 7.8]{SHYY17} and the assumption that $q\in(\max\{1,\,
r_{\Phi}^+,\,p_{-}(L)\},p_+(L))$, it follows that $\Psi$ is an Orlicz function with positive
lower type $r_{\Psi}^-:=(r_{\Phi}^+/s)'/(q/s)'\in(1,\infty)$, which, combined with both \eqref{sa}
and the boundedness of the Hardy--Littlewood maximal operator $\cm$ on Orlicz spaces
(see, for instance, \cite[Theorem 7.12]{SHYY17}), further implies that
$X:=L^{\Phi}(\rn)$ satisfies Assumption \ref{vector2} for such an $s$ and a $q$.
This finishes the proof of Theorem \ref{os1}.
\end{proof}

Moreover, by both Theorems \ref{r} and \ref{riesz}, we have the following conclusions; since their  proofs are
similar to that of Theorem \ref{os1}, we omit the details here.

\begin{theorem}\label{os2}
Let $m\in\nn$ and $L$ be a homogeneous divergence form $2m$-order elliptic operator  in
\eqref{high} satisfying Ellipticity Condition  \ref{ec}.
Assume that $\Phi$ is an Orlicz function with positive lower type $r_{\Phi}^-$ and
positive upper type $r_{\Phi}^+$. Assume further that $\frac{n}{n+m}<r_{\Phi}^-\leq r_{\Phi}^+
<\min\{p_+(L),q_+(L)\}$. Then the conclusion of Theorem \ref{r} holds true with $H_{X,\,L}(\rn)$
replaced by $H_{\Phi,\,L}(\rn).$
\end{theorem}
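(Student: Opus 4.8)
The plan is to reduce Theorem \ref{os2} to Theorems \ref{r} and \ref{riesz} exactly as Theorem \ref{os1} was reduced to Theorem \ref{th2}, i.e.\ the whole content is to verify that the Orlicz space $X:=L^\Phi(\rn)$ satisfies Assumptions \ref{vector} and \ref{vector2} with the correct ranges of the parameters dictated by the hypotheses of Theorems \ref{r} and \ref{riesz}. For Theorem \ref{r} we need $\theta\in(n/(n+m),1]$, $s\in(\theta,1]$, and $q\in(p_-(L),\min\{p_+(L),q_+(L)\})$; for Theorem \ref{riesz} (with the off-diagonal exponent $r\in(1,2]$) we need $s\in(0,1]$, $\theta\in(\frac{nr}{n+mr},1]$, and $q\in[2,p_+(L))$. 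So the proof will split into these two assertions, each of which says ``the conclusion of Theorem \ref{r} (resp.\ Theorem \ref{riesz}) holds true with $H_{X,\,L}(\rn)$ replaced by $H_{\Phi,\,L}(\rn)$,'' and each is handled by a choice of $\theta,s,q$ together with the standard Orlicz facts quoted in the proof of Theorem \ref{os1}.

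Concretely, for the first assertion I would pick $\theta\in(n/(n+m),r_\Phi^-)$, $s\in(\max\{\theta,n/(n+m)\},\min\{1,r_\Phi^-\})$, and $q\in(\max\{1,r_\Phi^+,p_-(L)\},\min\{p_+(L),q_+(L)\})$; the hypothesis $n/(n+m)<r_\Phi^-\le r_\Phi^+<\min\{p_+(L),q_+(L)\}$ is precisely what makes all three intervals nonempty (note also that, because $p_-(L)<2<q_+(L)$ by Proposition \ref{basic}, the upper endpoint of the $q$-interval exceeds its lower endpoint once $r_\Phi^+<\min\{p_+(L),q_+(L)\}$). Then \cite[Theorem 7.14(i)]{SHYY17} gives Assumption \ref{vector} for this $\theta,s$; the dual computation \eqref{sa} from the proof of Theorem \ref{os1} identifies $[(X^{1/s})']^{1/(q/s)'}$ with the Orlicz space $L^\Psi(\rn)$ whose Sup (conjugate) function $\Psi$ has positive lower type $(r_\Phi^+/s)'/(q/s)'>1$ by \cite[Proposition 7.8]{SHYY17}; hence $\cm$ is bounded on $L^\Psi(\rn)$ by \cite[Theorem 7.12]{SHYY17}, which is Assumption \ref{vector2}. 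Invoking Theorem \ref{r} then yields $f\in H_{\Phi,\,L,\,\mathrm{Riesz}}(\rn)$ with $\|f\|_{H_{\Phi,\,L,\,\mathrm{Riesz}}(\rn)}\lesssim\|f\|_{H_{\Phi,\,L}(\rn)}$ for all $f\in H_{\Phi,\,L}(\rn)$.

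For the second assertion (the Riesz-transform characterization in the converse direction, governed by Theorem \ref{riesz} and the $m$-$L^r$-$L^2$ off-diagonal hypothesis), the verification is the same except that now I must choose $\theta\in(\frac{nr}{n+mr},r_\Phi^-)$, $s\in(\max\{\theta, \cdot\},\min\{1,r_\Phi^-\})$ (any admissible $s\in(0,1]$ works, but keeping $s<r_\Phi^-$ lets me reuse the type bookkeeping), and $q\in[2,p_+(L))$; the constraint $\frac{nr}{n+mr}<r_\Phi^-\le r_\Phi^+<p_+(L)$ guarantees these intervals are nonempty and, in particular, that $q=2$ is admissible since $2<p_+(L)$. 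Again \cite[Theorem 7.14(i)]{SHYY17}, the identity \eqref{sa}, \cite[Proposition 7.8]{SHYY17}, and \cite[Theorem 7.12]{SHYY17} give Assumptions \ref{vector} and \ref{vector2}, and then Theorem \ref{riesz} provides $\|h\|_{H_{\Phi,\,L}(\rn)}\lesssim\|h\|_{H_{\Phi,\,L,\,\mathrm{Riesz}}(\rn)}$.

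The proof is essentially a bookkeeping exercise; the only place where any care is needed --- and thus the ``main obstacle'' --- is checking that the numerical ranges for $\theta$, $s$, and $q$ can be chosen simultaneously, i.e.\ that the three intervals $(\,\cdot\,,r_\Phi^-)$, $(\theta,\min\{1,r_\Phi^-\})$, and $(\max\{1,r_\Phi^+,p_-(L)\},\,\cdot\,)$ (with the appropriate upper endpoints $1$, $\min\{p_+(L),q_+(L)\}$ in the first case and $p_+(L)$ in the second) are all nonempty under the stated hypotheses, and that the lower-type exponent $(r_\Phi^+/s)'/(q/s)'$ of $\Psi$ indeed exceeds $1$ (which amounts to $q/s>r_\Phi^+/s$, i.e.\ $q>r_\Phi^+$, already arranged). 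Once these inequalities are confirmed, everything reduces verbatim to the three cited Orlicz-space lemmas and to Theorems \ref{r} and \ref{riesz}, so I would state the two assertions, carry out the parameter selection, and cite; no genuinely new estimate is required.
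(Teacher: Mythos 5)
Your first assertion is exactly the paper's (omitted) argument: verify Assumptions \ref{vector} and \ref{vector2} for $X:=L^\Phi(\rn)$ with $\theta\in(n/(n+m),\min\{1,r_\Phi^-\})$, $s\in(\theta,\min\{1,r_\Phi^-\})$, and $q\in(\max\{1,r_\Phi^+,p_-(L)\},\min\{p_+(L),q_+(L)\})$ (nonempty precisely by the stated hypotheses and Proposition \ref{basic}), using \cite[Theorem 7.14(i)]{SHYY17}, the duality identity \eqref{sa}, \cite[Proposition 7.8]{SHYY17}, and \cite[Theorem 7.12]{SHYY17}, and then invoke Theorem \ref{r}; this is correct and is the same reduction the paper performs (``proof similar to that of Theorem \ref{os1}''). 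Note only that Theorem \ref{os2} claims just the conclusion of Theorem \ref{r}, so your second assertion (the converse via Theorem \ref{riesz}, which requires the additional $m$-$L^r(\rn)$-$L^2(\rn)$ off-diagonal hypothesis) is really the content of Theorem \ref{os3} and is superfluous here.
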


\begin{theorem}\label{os3}
Let $m\in\nn$, $L$ be a homogeneous divergence form $2m$-order elliptic operator  in
\eqref{high} satisfying Ellipticity Condition  \ref{ec}, and the family
$\{e^{-tL}\}_{t\in(0,\infty)}$ of operators satisfy the  $m-L^r(\rn)-L^2(\rn)$ off-diagonal estimate
for some $r\in(1,2].$ Assume that $\Phi$ is an Orlicz function with positive lower type $r_{\Phi}^-$ and
positive upper type $r_{\Phi}^+$. Assume further that $\frac{nr}{n+mr}<r_{\Phi}^-\leq r_{\Phi}^+<p_+(L).$
Then the conclusion of Theorem \ref{riesz} holds true with $H_{X,\,L}(\rn)$
replaced by $H_{\Phi,\,L}(\rn).$
\end{theorem}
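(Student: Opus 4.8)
The plan is to obtain Theorem \ref{os3} as a corollary of Theorem \ref{riesz}, exactly as Theorems \ref{os1} and \ref{os2} were obtained from Theorems \ref{th2} and \ref{r}. Thus it suffices to show that the Orlicz space $X:=L^\Phi(\rn)$ satisfies both Assumptions \ref{vector} and \ref{vector2} for some triple $(s,\theta,q)$ with $s\in(0,1]$, $\theta\in(\frac{nr}{n+mr},1]$, and $q\in[2,p_+(L))$; once this is done, Theorem \ref{riesz} immediately gives, for any $h\in H_{\Phi,\,L,\,\mathrm{Riesz}}(\rn)$, that $h\in H_{\Phi,\,L}(\rn)$ together with the desired quasi-norm bound.

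First I would choose the parameters. Since the heat semigroup $\{e^{-tL}\}_{t\in(0,\infty)}$ is bounded on $L^2(\rn)$, we have $2\in(p_-(L),p_+(L))$, so that $p_-(L)<2<p_+(L)$; combined with $r_{\Phi}^+<p_+(L)$, this lets me fix $q\in(\max\{2,\,r_{\Phi}^+,\,p_-(L)\},\,p_+(L))\subset[2,p_+(L))$. Using the hypothesis $\frac{nr}{n+mr}<r_{\Phi}^-$, I then fix $\theta\in(\frac{nr}{n+mr},\,\min\{1,\,r_{\Phi}^-\})$ and $s\in(\theta,\,\min\{1,\,r_{\Phi}^-\})$. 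For Assumption \ref{vector}, since $\theta<r_{\Phi}^-$ and $s<r_{\Phi}^-$, the Fefferman--Stein vector-valued maximal inequality on Orlicz spaces (\cite[Theorem 7.14(i)]{SHYY17}) yields precisely the required estimate. For Assumption \ref{vector2}, I would compute, via the duality of Orlicz spaces (\cite[Theorem 13]{RR02}), that $[(X^{1/s})']^{1/(q/s)'}=L^{\Psi}(\rn)$, where $\Psi(t):=\sup_{h\in(0,\infty)}[t^{1/(q/s)'}h-\Phi(h^{1/s})]$ for $t\in[0,\infty)$; by \cite[Proposition 7.8]{SHYY17} and the choice $q>r_{\Phi}^+$, the function $\Psi$ is an Orlicz function of positive lower type $(r_{\Phi}^+/s)'/(q/s)'\in(1,\infty)$, whence the Hardy--Littlewood maximal operator $\cm$ is bounded on $L^{\Psi}(\rn)$ by \cite[Theorem 7.12]{SHYY17}, which is Assumption \ref{vector2} for this $s$ and $q$. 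Applying Theorem \ref{riesz} then completes the proof.

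The only point requiring care — and the reason the hypotheses of Theorem \ref{os3} are stated as they are — is the joint compatibility of the parameter constraints: one simultaneously needs $\theta>\frac{nr}{n+mr}$ (imposed by Theorem \ref{riesz}, through the $m$-$L^r(\rn)$-$L^2(\rn)$ off-diagonal estimate), $\theta<r_{\Phi}^-$ (imposed by the Orlicz vector-valued maximal inequality), $q\ge 2$ (imposed by Theorem \ref{riesz}), and $r_{\Phi}^+<q<p_+(L)$ (imposed, respectively, by the lower type of $\Psi$ and by the $L^q(\rn)$-boundedness of the semigroup). These are consistent exactly because of the standing assumptions $\frac{nr}{n+mr}<r_{\Phi}^-$ and $r_{\Phi}^+<p_+(L)$ together with the general fact $p_+(L)>2$, so no ingredient is needed beyond those already used for Theorems \ref{os1} and \ref{os2}, and the remaining verifications are the routine ones sketched above.
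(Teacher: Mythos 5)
Your proposal is correct and follows essentially the same route as the paper: the paper omits the proof of Theorem \ref{os3} precisely because it is the argument of Theorem \ref{os1} with the parameter ranges adjusted to meet the hypotheses of Theorem \ref{riesz} (namely $\theta\in(\frac{nr}{n+mr},\min\{1,r_{\Phi}^-\})$, $s\in(\theta,\min\{1,r_{\Phi}^-\})$, and $q\in(\max\{2,r_{\Phi}^+,p_-(L)\},p_+(L))$), verifying Assumptions \ref{vector} and \ref{vector2} via the Orlicz vector-valued maximal inequality, Orlicz duality, and the boundedness of $\cm$ on $L^{\Psi}(\rn)$, exactly as you do. Your closing remarks on the compatibility of the constraints (using $p_+(L)>2$ and the standing type assumptions) match the paper's implicit reasoning, so no gap remains.
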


\begin{remark}
When $m:=1$ and $r_\Phi^+\in(0,1]$, Theorem \ref{os1} was obtained in \cite[Theorem 5.2]{JY10}.
Meanwhile, when $m:=1$ and $\frac{n}{n+1}<r_{\Phi}^-\leq r_{\Phi}^+\leq1$,
Theorem \ref{oss2} is just \cite[Theorem 7.4]{JY10}. Furthermore, to the best of our knowledge,
Theorem \ref{os3} is totally new even when $m:=1$.
\end{remark}

\subsection{Orlicz-slice Hardy Spaces}\label{oss}
In this subsection, we apply Theorems \ref{th2}, \ref{r}, and \ref{riesz}
to the Orlicz-slice Hardy space associated with $L$.
We first recall the definitions of Orlicz-slice spaces and then describe briefly
some related facts. Throughout this subsection, we \emph{always} assume that
$\Phi: [0,\infty)\to [0,\infty)$ is an Orlicz function with positive lower type $r_{\Phi}^-$
and positive upper type $r_{\Phi}^+$. For any given $t, r\in(0,\infty)$, the \emph{Orlicz-slice space}
$(E_\Phi^r)_t(\rn)$ is defined to be the set of all the measurable functions $f$ on $\rn$ with the finite
\emph{quasi-norm}
$$
\|f\|_{(E_\Phi^r)_t(\rn)} :=\lf\{\int_{\rn}\lf[\frac{\|f\mathbf{1}_{B(x,t)}\|_{L^\Phi(\rn)}}
{\|\mathbf{1}_{B(x,t)}\|_{L^\Phi(\rn)}}\r]^r\,dx\r\}^{\frac{1}{r}}.
$$

Orlicz-slice spaces were introduced in \cite{ZYYW} as a generalization of slice spaces
in \cite{AM2014,APA} and Wiener amalgam spaces in \cite{h75,knt,h19}.
Meanwhile, by \cite[Lemma 2.28]{ZYYW} and \cite[Remark 7.41(i)]{ZWYY20},
we find that the Orlicz-slice space $(E_\Phi^r)_t(\rn)$ is a ball Banach function space,
but in general is not a Banach function space.

In particular, when $X:=(E_\Phi^r)_t(\rn)$, the Hardy space $H_{X,\,L}(\rn)$ is just
the \emph{Orlicz-slice Hardy space associated with $L$}; in this case, we denote $H_{X,\,L}(\rn)$
simply by $(HE_{\Phi,\,L}^r)_t(\rn)$. Then, applying Theorem \ref{th2}
to the Orlicz-slice Hardy space $(HE_{\Phi,\,L}^r)_t(\rn)$, we have the following conclusion.

\begin{theorem}\label{oss1}
Let $m\in\nn$ and $L$ be a homogeneous divergence form $2m$-order elliptic operator
in \eqref{high} satisfying  Strong Ellipticity Condition \ref{sec}.
Assume that $t\in(0,\infty),\,r\in(0,p_+(L)),$ and $\Phi$ is an Orlicz function with
positive lower type $r_{\Phi}^-$ and positive upper type $r_{\Phi}^+\in(0,p_+(L))$.
Then the conclusion of Theorem \ref{th2} holds true with $H_{X,\,L}(\rn)$
replaced by $(HE_{\Phi,\,L}^r)_t(\rn)$.
\end{theorem}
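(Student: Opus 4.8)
The plan is to reduce Theorem \ref{oss1} to the abstract criterion of Theorem \ref{th2}, exactly as was done for the weighted, variable, and mixed-norm cases above. By Theorem \ref{th2}, it suffices to verify that the ball quasi-Banach function space $X:=(E_\Phi^r)_t(\rn)$ satisfies both Assumption \ref{vector} and Assumption \ref{vector2} for a suitable choice of parameters $\theta\in(0,1]$, $s\in(\theta,1]$, and $q\in(p_-(L),p_+(L))$. The crucial structural input is that $r_\Phi^+<p_+(L)$ and $r<p_+(L)$, which leaves room to pick $q$ strictly between $\max\{1,r,r_\Phi^+,p_-(L)\}$ and $p_+(L)$, and to pick $\theta$ small and $s\in(\theta,\min\{1,r,r_\Phi^-\})$.

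First I would fix $\theta\in(0,\min\{1,r,r_\Phi^-\})$, then $s\in(\theta,\min\{1,r,r_\Phi^-\})$, and finally $q\in(\max\{1,r,r_\Phi^+,p_-(L)\},p_+(L))$; the hypotheses $r<p_+(L)$ and $r_\Phi^+<p_+(L)$ together with $p_-(L)<p_+(L)$ guarantee this interval for $q$ is nonempty. For Assumption \ref{vector}, I would invoke the Fefferman--Stein vector-valued maximal inequality on Orlicz-slice spaces; this is available from \cite[Lemma 4.3]{ZYYW} (the analogue of \cite[Lemma 3.7]{HLYY} used in the mixed-norm proof), which gives the desired $\ell^s$-valued bound for $\cm^{(\theta)}$ once $\theta,s$ lie below the lower type and below $r$. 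For Assumption \ref{vector2}, I would first identify the iterated associate/convexification space: using the duality of Orlicz-slice spaces (see \cite[Lemma 2.28]{ZYYW} or \cite[Theorem 2.26]{ZYYW}) together with the duality of Orlicz spaces as in the proof of Theorem \ref{os1}, one computes that $[(X^{1/s})']^{1/(q/s)'}$ is again an Orlicz-slice space $(E_\Psi^{r_0})_t(\rn)$ with $r_0:=(r/s)'/(q/s)'\in(1,\infty)$ and $\Psi$ an Orlicz function whose lower type $r_\Psi^-:=(r_\Phi^+/s)'/(q/s)'$ is strictly larger than $1$ by the choice $q>r_\Phi^+$. Then the boundedness of $\cm$ on this space follows from the boundedness of $\cm$ on Orlicz-slice spaces with indices in the correct range (see \cite[Lemma 4.3]{ZYYW}), which verifies Assumption \ref{vector2}.

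With both assumptions in hand for the chosen $\theta,s,q$, Theorem \ref{th2} applies verbatim and yields that $H_{(E_\Phi^r)_t,\,L}(\rn)$, $H_{(E_\Phi^r)_t,\,S_h}(\rn)$, $H_{(E_\Phi^r)_t,\,\mathcal R_h}(\rn)$, and $H_{(E_\Phi^r)_t,\,\mathcal N_h}(\rn)$ coincide with equivalent quasi-norms, i.e.\ the conclusion of Theorem \ref{th2} holds with $H_{X,\,L}(\rn)$ replaced by $(HE^r_{\Phi,\,L})_t(\rn)$. The main obstacle I anticipate is the second bullet point: correctly tracking the $s$-convexification and the Köthe dual through the Orlicz-slice construction to see that the resulting space is still an Orlicz-slice space over a well-behaved Orlicz function $\Psi$ of lower type $>1$, so that the known $\cm$-boundedness on Orlicz-slice spaces can be cited; the type-computation for $\Psi$ (essentially \cite[Proposition 7.8]{SHYY17} combined with the slice structure) is the delicate routine step, while the vector-valued inequality and the final assembly are straightforward once the index bookkeeping is done. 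Since the argument is entirely parallel to the proofs of Theorems \ref{wls1}, \ref{vls1}, \ref{mls1}, and \ref{os1}, I would present it concisely and refer to those for the shared structure.
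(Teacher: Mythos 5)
Your proposal is correct and follows essentially the same route as the paper: choose $\theta\in(0,1)$, $s\in(\theta,\min\{1,r_\Phi^-,r\})$, $q\in(\max\{r,r_\Phi^+,p_-(L)\},p_+(L))$, verify Assumptions \ref{vector} and \ref{vector2} for $X:=(E^r_\Phi)_t(\rn)$, and apply Theorem \ref{th2}. The only difference is that the paper does not carry out your explicit identification of $[(X^{1/s})']^{1/(q/s)'}$ as an Orlicz-slice space: it simply cites \cite[Lemmas 4.3 and 4.4]{ZYYW}, which already contain both the vector-valued maximal inequality and the boundedness of $\cm$ needed for Assumption \ref{vector2}, so your ``delicate routine step'' can be replaced by that citation.
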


\begin{proof}
Let all the symbols be the same as in the present theorem. Assume further that $\theta\in(0,1),$
$s\in(\theta,\min\{1,\,r_\Phi^-,\,r\}),$ and $q\in(\max\{r,\,r_\Phi^+,\,p_{-}(L)\},\,p_+(L)).$
It was proved in \cite[Lemmas 4.3 and 4.4]{ZYYW} that $X:=(E_\Phi^r)_t(\rn)$ satisfies
both Assumptions \ref{vector} and \ref{vector2} for such a $\theta,$ an $s,$ and a $q.$ From this and Theorem
\ref{th2}, it follows that the conclusion of Theorem \ref{oss1} holds true.
\end{proof}

Moreover, by both Theorems \ref{r} and \ref{riesz}, we have the following results; since their  proofs are
similar to that of Theorem \ref{oss1}, we omit the details here.

\begin{theorem}\label{oss2}
Let $m\in\nn$ and $L$ be a homogeneous divergence form $2m$-order elliptic operator  in
\eqref{high} satisfying   Ellipticity Condition \ref{ec}.
Assume that $t\in(0,\infty)$,
$$r\in\lf(\frac{n}{n+m},\min\lf\{p_+(L),\,q_+(L)\r\}\r),$$
and $\Phi$ is an Orlicz function with positive lower type $r_{\Phi}^-\in(\frac{n}{n+m},p_+(L))$
and positive upper type $r_{\Phi}^+\in(0,\min\{p_+(L),\,q_+(L)\})$.
Then the conclusion of Theorem \ref{r} holds true with $H_{X,\,L}(\rn)$
replaced by $(HE_{\Phi,\,L}^r)_t(\rn)$.
\end{theorem}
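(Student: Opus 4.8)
The plan is to deduce Theorem \ref{oss2} directly from Theorem \ref{r} applied to the ball quasi-Banach function space $X:=(E_\Phi^r)_t(\rn)$. Since it is already recorded in the excerpt (via \cite[Lemma 2.28]{ZYYW} and \cite[Remark 7.41(i)]{ZWYY20}) that $(E_\Phi^r)_t(\rn)$ is a ball (indeed ball Banach) function space, the whole task reduces to producing an admissible triple of exponents $(\theta,s,q)$ with $\theta\in(n/(n+m),1]$, $s\in(\theta,1]$, and $q\in(p_-(L),\min\{p_+(L),q_+(L)\})$ for which this $X$ satisfies both Assumption \ref{vector} and Assumption \ref{vector2}.

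First I would fix the exponents. Using the hypotheses $r\in(n/(n+m),\infty)$, $r_{\Phi}^-\in(n/(n+m),\infty)$, together with $n/(n+m)<1$, the interval $(n/(n+m),\min\{1,r_{\Phi}^-,r\})$ is nonempty; I choose $\theta$ in it and then $s\in(\theta,\min\{1,r_{\Phi}^-,r\})$, which is again a nonempty interval. Next, since $r<\min\{p_+(L),q_+(L)\}$, $r_{\Phi}^+<\min\{p_+(L),q_+(L)\}$, and $p_-(L)=q_-(L)<2<q_+(L)$ together with $p_-(L)<p_+(L)$ give $p_-(L)<\min\{p_+(L),q_+(L)\}$, the interval $(\max\{r,r_{\Phi}^+,p_-(L)\},\min\{p_+(L),q_+(L)\})$ is nonempty; I choose $q$ in it. With these choices one has $\theta\in(n/(n+m),1]$, $s\in(\theta,1]$, and $p_-(L)<q<\min\{p_+(L),q_+(L)\}$, exactly the ranges required in Theorem \ref{r}.

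Second, I would verify the two Assumptions for this $(\theta,s,q)$. This is precisely the content of \cite[Lemmas 4.3 and 4.4]{ZYYW}: one lemma supplies the Fefferman--Stein type vector-valued inequality for $\cm^{(\theta)}$ on $(E_\Phi^r)_t(\rn)$ (Assumption \ref{vector}), valid because $\theta<\min\{1,r_{\Phi}^-,r\}$; the other identifies the iterated associate/convexification space $[((E_\Phi^r)_t(\rn))^{1/s})']^{1/(q/s)'}$ as an Orlicz-slice type space and yields the boundedness of $\cm$ on it (Assumption \ref{vector2}), valid because $q>\max\{r,r_{\Phi}^+\}$ places the relevant exponents in the range where the Hardy--Littlewood maximal operator is bounded on Orlicz-slice spaces. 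These are the same verifications used for Theorem \ref{oss1}; the only new point is that the admissible windows for $\theta$ and $q$ remain compatible with the sharper constraints $\theta>n/(n+m)$ and $q<q_+(L)$ imposed by Theorem \ref{r}.

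Finally, having checked Assumptions \ref{vector} and \ref{vector2}, I apply Theorem \ref{r} with $X=(E_\Phi^r)_t(\rn)$: since $(HE_{\Phi,\,L}^r)_t(\rn)=H_{X,\,L}(\rn)$ by definition, this gives that $\nabla^m L^{-1/2}$ maps $(HE_{\Phi,\,L}^r)_t(\rn)$ boundedly into $H_X(\rn)$ with $\|f\|_{H_{X,\,L,\,\mathrm{Riesz}}(\rn)}\le C\|f\|_{H_{X,\,L}(\rn)}$, which is the asserted conclusion. The only real obstacle is bookkeeping---making sure that none of the parameter intervals collapse---and that is exactly where the specific numerical hypotheses (notably $r_{\Phi}^->n/(n+m)$, $r>n/(n+m)$, and $r_{\Phi}^+,r<\min\{p_+(L),q_+(L)\}$) are used; no genuinely new analytic input is needed beyond Theorem \ref{r} and the results of \cite{ZYYW}.
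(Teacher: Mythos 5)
Your proposal is correct and follows essentially the same route as the paper: the paper omits the proof of this theorem, noting it is the same verification as for Theorem \ref{oss1}, namely checking Assumptions \ref{vector} and \ref{vector2} for $X:=(E_\Phi^r)_t(\rn)$ via \cite[Lemmas 4.3 and 4.4]{ZYYW} with exponents chosen exactly as you do ($\theta\in(n/(n+m),\min\{1,r_\Phi^-,r\})$, $s\in(\theta,\min\{1,r_\Phi^-,r\})$, $q\in(\max\{r,r_\Phi^+,p_-(L)\},\min\{p_+(L),q_+(L)\})$), and then invoking Theorem \ref{r}. Your bookkeeping that these intervals are nonempty under the stated hypotheses is the only point that needs checking, and it is done correctly.
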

\begin{theorem}\label{oss3}
Let $m\in\nn$, $L$ be a homogeneous divergence form $2m$-order elliptic operator  in
\eqref{high} satisfying   Ellipticity Condition \ref{ec}, and the family
$\{e^{-tL}\}_{t\in(0,\infty)}$ of operators satisfy the $m-L^r(\rn)-L^2(\rn)$ off-diagonal estimate
for some $r\in(1,2].$ Assume that $t\in(0,\infty),\,r\in(\frac{n}{n+m},p_+(L)),$ and
$\Phi$ is an Orlicz function with positive lower type $r_{\Phi}^-\in(\frac{nr}{n+mr},p_+(L))$
and positive upper type $r_{\Phi}^+\in(0,p_+(L))$.
Then the conclusion of Theorem \ref{riesz} holds true with $H_{X,\,L}(\rn)$
replaced by $(HE_{\Phi,\,L}^r)_t(\rn)$.
\end{theorem}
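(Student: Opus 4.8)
The plan is to reduce Theorem \ref{oss3} to Theorem \ref{riesz} exactly in the way that Theorems \ref{wsl3}, \ref{vsl3}, and \ref{mls3} were reduced to it, namely by verifying that the Orlicz-slice space $X:=(E_\Phi^r)_t(\rn)$ satisfies Assumptions \ref{vector} and \ref{vector2} with parameters $s,\theta,q$ chosen to meet the hypotheses of Theorem \ref{riesz}. Concretely, for the given $r\in(\frac{n}{n+m},p_+(L))$, the $m$-$L^r$-$L^2$ off-diagonal exponent $r\in(1,2]$, and the Orlicz function $\Phi$ with lower type $r_\Phi^-\in(\frac{nr}{n+mr},p_+(L))$ and upper type $r_\Phi^+\in(0,p_+(L))$, I would pick $\theta\in(\frac{nr}{n+mr},\min\{r,r_\Phi^-\})$ so that $\theta$ lies in the admissible range for Theorem \ref{riesz} (this uses $\frac{nr}{n+mr}<r$ and $\frac{nr}{n+mr}<r_\Phi^-$), then pick $s\in(\theta,\min\{1,r,r_\Phi^-\})$, and finally pick $q\in(\max\{r,r_\Phi^+,p_-(L)\},p_+(L))$, which is a nonempty interval because $r<p_+(L)$, $r_\Phi^+<p_+(L)$, and $p_-(L)<p_+(L)$. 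Note Theorem \ref{riesz} additionally requires $q\in[2,p_+(L))$ and $s\in(0,1]$; since $p_+(L)>2$ always (indeed $q_+(L)\in(2,\infty)$ and $p_+(L)\ge q_+(L)$ by Proposition \ref{basic}(ii)), one can enlarge the lower endpoint for $q$ to also exceed $2$, and $s\le 1$ holds by construction.

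The heart of the argument is then the citation of \cite[Lemmas 4.3 and 4.4]{ZYYW}, exactly as invoked in the proof of Theorem \ref{oss1}: those lemmas state that $(E_\Phi^r)_t(\rn)$ satisfies Assumption \ref{vector} for any $\theta\in(0,1)$ and $s\in(\theta,\min\{1,r_\Phi^-,r\})$, and satisfies Assumption \ref{vector2} for $s$ as above together with any $q\in(\max\{r,r_\Phi^+,p_-(L)\},p_+(L))$. One should double-check that the constraints imposed on $\theta$ in \cite[Lemma 4.3]{ZYYW} are compatible with the additional lower bound $\theta>\frac{nr}{n+mr}$ that Theorem \ref{riesz} demands — but since \cite[Lemma 4.3]{ZYYW} allows all $\theta\in(0,1)$ and $\frac{nr}{n+mr}<1$, this is automatic. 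Having established that $X=(E_\Phi^r)_t(\rn)$ meets all hypotheses of Theorem \ref{riesz} for this choice of $s,\theta,q$, the conclusion \eqref{shediao} for $H_{X,\,L}(\rn)=(HE^r_{\Phi,\,L})_t(\rn)$ follows verbatim.

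I do not expect a serious obstacle here; the proof is a bookkeeping exercise in parameter selection, entirely parallel to Theorems \ref{wsl3}, \ref{vsl3}, \ref{mls3}, and \ref{os3}. The one point requiring a moment's care is confirming that the interval for $\theta$ — which must simultaneously satisfy $\theta>\frac{nr}{n+mr}$, $\theta<r$, $\theta<r_\Phi^-$, and $\theta<1$ — is nonempty; this is where the hypotheses $r>\frac{nr}{n+mr}$ (equivalently $m>0$, always true) and $r_\Phi^->\frac{nr}{n+mr}$ (an explicit assumption of the theorem) are used, and also $\frac{nr}{n+mr}<1$ (equivalent to $mr>0$). Thus the proof reads:

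\begin{proof}
Let all the symbols be the same as in the present theorem. Since $r\in(\frac{n}{n+m},p_+(L))\subset(0,\infty)$ and, by Lemma \ref{2.26}, the family $\{tLe^{-tL}\}_{t\in(0,\infty)}$ inherits the $m-L^r(\rn)-L^2(\rn)$ off-diagonal estimate, we may apply Theorem \ref{riesz} once we exhibit admissible parameters. Observe that $\frac{nr}{n+mr}<\min\{r,\,r_\Phi^-,\,1\}$, because $\frac{nr}{n+mr}<r$ is equivalent to $mr>0$, the bound $\frac{nr}{n+mr}<r_\Phi^-$ is assumed, and $\frac{nr}{n+mr}<1$ is equivalent to $mr>0$. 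Hence we may choose
$$
\theta\in\lf(\frac{nr}{n+mr},\,\min\lf\{r,\,r_\Phi^-,\,1\r\}\r),\quad
s\in\lf(\theta,\,\min\lf\{1,\,r,\,r_\Phi^-\r\}\r),
$$
and, using $p_+(L)>\max\{2,\,r,\,r_\Phi^+,\,p_-(L)\}$ (note $p_+(L)\ge q_+(L)>2$ by Proposition \ref{basic}(ii)),
$$
q\in\lf(\max\lf\{2,\,r,\,r_\Phi^+,\,p_-(L)\r\},\,p_+(L)\r).
$$
By \cite[Lemmas 4.3 and 4.4]{ZYYW}, the Orlicz-slice space $X:=(E_\Phi^r)_t(\rn)$ satisfies both Assumptions \ref{vector} and \ref{vector2} for this $\theta,$ $s,$ and $q.$ Moreover $s\in(0,1]$, $\theta\in(\frac{nr}{n+mr},1]$, and $q\in[2,p_+(L))$, so all hypotheses of Theorem \ref{riesz} hold with this $X.$ Applying Theorem \ref{riesz} and recalling that $H_{X,\,L}(\rn)=(HE_{\Phi,\,L}^r)_t(\rn)$ when $X=(E_\Phi^r)_t(\rn)$, we obtain the desired conclusion. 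This finishes the proof of Theorem \ref{oss3}.
\end{proof}
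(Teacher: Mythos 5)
Your proposal follows exactly the route the paper intends (the paper omits the proof of Theorem \ref{oss3}, saying it is similar to that of Theorem \ref{oss1}): verify Assumptions \ref{vector} and \ref{vector2} for $X=(E_\Phi^r)_t(\rn)$ via \cite[Lemmas 4.3 and 4.4]{ZYYW} with $\theta\in(\frac{nr}{n+mr},\min\{1,r,r_\Phi^-\})$, $s\in(\theta,\min\{1,r,r_\Phi^-\})$, $q\in(\max\{2,r,r_\Phi^+,p_-(L)\},p_+(L))$, and then quote Theorem \ref{riesz}; this is the same bookkeeping used for Theorems \ref{wsl3}, \ref{vsl3}, \ref{mls3}, \ref{os3}, and \ref{hms3}.

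One arithmetic claim in your nonemptiness check is wrong and should be fixed: $\frac{nr}{n+mr}<1$ is \emph{not} equivalent to $mr>0$; it is equivalent to $r(n-m)<n$, which can fail for $r\in(1,2]$ when $n>2m$ (for instance $n=10$, $m=1$, $r=2$ gives $\frac{nr}{n+mr}=\frac{5}{3}>1$). Only the inequality $\frac{nr}{n+mr}<r$ reduces to $mr>0$. The consequence is limited: the condition $\frac{nr}{n+mr}<1$ is in any case forced by the hypothesis $\theta\in(\frac{nr}{n+mr},1]$ of Theorem \ref{riesz}, so outside this regime the reduction-to-\ref{riesz} argument is unavailable for anyone — this is an implicit restriction already present in the paper's own formulation of Theorem \ref{oss3} (and of Theorems \ref{os3}, \ref{vsl3}), not a gap you introduced. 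Within the regime where Theorem \ref{riesz} is non-vacuous, your choice of parameters is admissible and the proof is complete; also note that $p_+(L)>2$ follows directly from Proposition \ref{basic}(i) (the interval $(p_-(L),p_+(L))$ contains $2$), which is a cleaner justification than appealing to $p_+(L)\ge q_+(L)$, a fact not literally stated in Proposition \ref{basic}(ii).
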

\begin{remark}
To the best of our knowledge, Theorems \ref{oss1}, \ref{oss2}, and \ref{oss3} are
totally new even when $m:=1.$
\end{remark}
\subsection{Morrey--Hardy Spaces}\label{hms}
In this subsection, we apply Theorems \ref{th2}, \ref{r}, and \ref{riesz}
to the Morrey--Hardy space associated with $L$.
We begin with recalling the definition of the Morrey space.

Let $0<r\le p\le\fz$. Recall that the \emph{Morrey space} ${\mathcal M}^p_r(\rn)$ is defined
to be the set of all the $f\in L^r_{{\rm loc}}(\rn)$ such that
\begin{equation*}
\|f\|_{{\mathcal M}^p_r(\rn)}:=\sup_{B\subset\rn}
|B|^{\frac1p-\frac1r}\left[\int_{B}|f(y)|^r\,dy\right]^{\frac1r}<\fz,
\end{equation*}
where the supremum is taken over all balls $B\subset\rn$.

The space ${\mathcal M}^p_r(\rn)$ was introduced by Morrey \cite{M38}. It is known that
${\mathcal M}^p_r(\rn)$ with $1\le r < p <\fz$ is not a Banach function space,
but is a ball Banach function space (see, for instance, \cite[Section 7.4]{SHYY17}).
Moreover, by the definition of ${\mathcal M}^p_r(\rn)$, it is easy to find that
${\mathcal M}^p_r(\rn)$ with $0<r\le p\le\fz$ is a ball quasi-Banach space.

In particular, when $X:={\mathcal M}^p_r(\rn)$, the Hardy space $H_{X,\,L}(\rn)$ is just
the \emph{Morrey--Hardy space associated with $L$}; in this case, we denote $H_{X,\,L}(\rn)$
simply by $H{\mathcal M}^p_{r,\,L}(\rn)$. Then, applying Theorem \ref{th2}
to the Morrey--Hardy space $H{\mathcal M}^p_{r,\,L}(\rn)$, we have the following conclusion.

\begin{theorem}\label{hms1}
Let $m\in\nn$ and $L$ be a homogeneous divergence form $2m$-order elliptic operator
in \eqref{high} satisfying  Strong Ellipticity Condition \ref{sec}.
Assume that $p\in(0,p_+(L))$ and $r\in(0,p]$. Then the conclusion of Theorem \ref{th2} holds
true with $H_{X,\,L}(\rn)$ replaced by $H{\mathcal M}^p_{r,\,L}(\rn)$.
\end{theorem}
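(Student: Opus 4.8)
The plan is to follow exactly the pattern of the proofs of Theorems \ref{wls1}, \ref{mls1}, and \ref{os1}: by Theorem \ref{th2}, it suffices to show that $X:=\mathcal{M}^p_r(\rn)$ satisfies both Assumptions \ref{vector} and \ref{vector2} for some admissible parameters $\theta,s\in(0,1]$ with $\theta<s$ and some $q\in(p_-(L),p_+(L))$; once this is done, the conclusion of Theorem \ref{hms1} is immediate. Concretely, since $r\in(0,p]$, I would choose $\theta\in(0,\min\{1,r\})$ sufficiently small, then $s\in(\theta,\min\{1,r\})$, and finally $q$ in a suitable subinterval of $(p_-(L),p_+(L))$, in particular with $q>p$; this is possible because $p<p_+(L)$ and $p_-(L)<p_+(L)$. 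Note that, since $1<r/s\le p/s<\infty$, the $\frac1s$-convexification $X^{1/s}=\mathcal{M}^{p/s}_{r/s}(\rn)$ is a ball Banach function space, which is one of the hypotheses required in Assumption \ref{vector2}.

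To verify Assumption \ref{vector}, I would substitute $g_j:=|f_j|^\theta$, so that $\mathcal{M}^{(\theta)}(f_j)=[\mathcal{M}(g_j)]^{1/\theta}$, and then use the elementary identity $\|h^{1/\theta}\|_{\mathcal{M}^p_r(\rn)}=\|h\|_{\mathcal{M}^{p/\theta}_{r/\theta}(\rn)}^{1/\theta}$ to reduce the claimed inequality to the Fefferman--Stein vector-valued maximal inequality on the Morrey space $\mathcal{M}^{p/\theta}_{r/\theta}(\rn)$ with summation exponent $s/\theta>1$. Since $1<r/\theta\le p/\theta<\infty$, this vector-valued inequality on Morrey spaces is available (see, for instance, the vector-valued maximal estimates recorded in \cite{SHYY17} and the references therein), and hence Assumption \ref{vector} holds for the above $\theta$ and $s$.

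To verify Assumption \ref{vector2}, I would identify the space $[(X^{1/s})']^{1/(q/s)'}$. The associate space $(X^{1/s})'=(\mathcal{M}^{p/s}_{r/s}(\rn))'$ is the block space which is the K\"othe dual of $\mathcal{M}^{p/s}_{r/s}(\rn)$, and raising it to the power $1/(q/s)'\in(0,1)$ produces a convexification of this block space on which the Hardy--Littlewood maximal operator $\mathcal{M}$ is bounded; here the condition $q>p$ is exactly what forces the relevant convexification exponent into the admissible range for the boundedness of $\mathcal{M}$ on (convexifications of) block spaces. Combining this boundedness with the ball Banach function space property of $X^{1/s}$ gives Assumption \ref{vector2}, and then Theorem \ref{th2} finishes the proof.

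The hard part will be the third step. Unlike the weighted, variable, mixed-norm, and Orlicz settings treated above, the associate space of a Morrey space is a block space whose (quasi-)norm has no simple closed-form expression, so one must carefully pin down which convexification of the block space appears in $[(X^{1/s})']^{1/(q/s)'}$ and verify the boundedness of $\mathcal{M}$ there; this is precisely where the full strength of the hypotheses $r\le p$ and $p<p_+(L)$ enters. The first two steps, by contrast, are routine once the correct parameters $\theta$, $s$, and $q$ have been fixed as above.
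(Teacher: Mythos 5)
Your proposal is correct and follows essentially the same route as the paper: reduce via Theorem \ref{th2} to checking Assumptions \ref{vector} and \ref{vector2} for $X:=\mathcal{M}^p_r(\rn)$ with $\theta\in(0,1)$ small, $s\in(\theta,\min\{1,r\})$, and $q\in(\max\{p,\,p_-(L)\},p_+(L))$. The two verifications you outline (the Fefferman--Stein vector-valued maximal inequality on $\mathcal{M}^{p/\theta}_{r/\theta}(\rn)$, and the boundedness of $\mathcal{M}$ on $[(\mathcal{M}^{p/s}_{r/s}(\rn))']^{1/(q/s)'}$, which indeed requires $q>p$) are exactly what you flag as "the hard part," and the paper does not reprove them either; it simply cites \cite[Remarks 2.4(e) and 2.7(e)]{WYY20}, where these facts are recorded for precisely this parameter range.
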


\begin{proof}
Let all the symbols be the same as in the present theorem. Assume further that $\theta\in(0,1),$
$s\in(\theta,\min\{1,\,r\}),$ and $q\in(\max\{p,\,p_{-}(L)\},\,p_+(L)).$
Then it is known that $X:={\mathcal M}^p_r(\rn)$ satisfies both Assumptions \ref{vector} and
\ref{vector2} for such a $\theta,$ an $s,$ and a $q$ (see, for instance, \cite[Remarks 2.4(e) and 2.7(e)]{WYY20}).
By this and Theorem \ref{th2}, we find that the conclusion of Theorem \ref{hms1} holds true.
\end{proof}

Moreover, by both Theorems \ref{r} and \ref{riesz}, we have the following results; since their  proofs are
similar to that of Theorem \ref{hms1}, we omit the details here.

\begin{theorem}\label{hms2}
Let $m\in\nn$ and $L$ be a homogeneous divergence form $2m$-order elliptic operator  in
\eqref{high} satisfying  Ellipticity Condition \ref{ec}.
Assume that $p\in(\frac{n}{n+m},\min\{p_+(L),\,q_+(L)\})$ and $r\in(\frac{n}{n+m},p]$.
Then the conclusion of Theorem \ref{r} holds true with $H_{X,\,L}(\rn)$
replaced by $H{\mathcal M}^p_{r,\,L}(\rn)$.
\end{theorem}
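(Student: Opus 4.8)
The final statement to prove is Theorem \ref{hms2}, which asserts that the conclusion of Theorem \ref{r} (boundedness of $\nabla^mL^{-1/2}$ from $H_{X,\,L}(\rn)$ to $H_{X,\,L,\,\mathrm{Riesz}}(\rn)$) holds for $X:=\mathcal{M}^p_r(\rn)$ when $p\in(\frac{n}{n+m},\min\{p_+(L),\,q_+(L)\})$ and $r\in(\frac{n}{n+m},p]$. The plan is to verify that the Morrey space $X:=\mathcal{M}^p_r(\rn)$ satisfies the structural hypotheses required by Theorem \ref{r} and then invoke that theorem directly, exactly as was done in the proof of Theorem \ref{hms1} using Theorem \ref{th2}. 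Concretely, I must exhibit parameters $\theta\in(\frac{n}{n+m},1]$, $s\in(\theta,1]$, and $q\in(p_-(L),\min\{p_+(L),\,q_+(L)\})$ such that $\mathcal{M}^p_r(\rn)$ satisfies both Assumption \ref{vector} and Assumption \ref{vector2}.

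First I would choose the parameters: pick $\theta\in(\frac{n}{n+m},\min\{1,\,r\})$ (possible since $r>\frac{n}{n+m}$), then $s\in(\theta,\min\{1,\,r\})$, and finally $q\in(\max\{p,\,p_-(L)\},\,\min\{p_+(L),\,q_+(L)\})$. This last interval is nonempty precisely because the hypothesis $p<\min\{p_+(L),\,q_+(L)\}$ guarantees $\max\{p,\,p_-(L)\}<\min\{p_+(L),\,q_+(L)\}$ (using $p_-(L)<p_+(L)$ and $p_-(L)\le q_+(L)$, which follow from Proposition \ref{basic}). Note also that $\theta\in(\frac{n}{n+m},1]$ and $s\in(\theta,1]$ as required by Theorem \ref{r}, and $q\in(p_-(L),\min\{p_+(L),\,q_+(L)\})$. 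Second, I would cite the known facts — as recorded in the proof of Theorem \ref{hms1} via \cite[Remarks 2.4(e) and 2.7(e)]{WYY20} — that $\mathcal{M}^p_r(\rn)$ satisfies Assumption \ref{vector} for this $\theta$ and $s$ (this is a Fefferman--Stein-type vector-valued maximal inequality on Morrey spaces) and satisfies Assumption \ref{vector2} for this $s$ and $q$ (this amounts to the boundedness of $\cm$ on the appropriate convexification of the associate space of $\mathcal{M}^{p/s}_{r/s}(\rn)$, which holds when $q>p$). Third, having verified all hypotheses of Theorem \ref{r} with these parameters, the conclusion follows immediately: for any $f\in H_{X,\,L}(\rn)$ one has $f\in H_{X,\,L,\,\mathrm{Riesz}}(\rn)$ with $\|f\|_{H_{X,\,L,\,\mathrm{Riesz}}(\rn)}\le C\|f\|_{H_{X,\,L}(\rn)}$.

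The proof is therefore essentially a bookkeeping argument: the only genuine content is checking that the parameter ranges allowed by Theorem \ref{r} are compatible with the structural properties of Morrey spaces, and this compatibility is exactly engineered by the hypotheses $p\in(\frac{n}{n+m},\min\{p_+(L),\,q_+(L)\})$ and $r\in(\frac{n}{n+m},p]$. I do not anticipate any real obstacle; the one point requiring slight care is ensuring the simultaneous constraints $\theta>\frac{n}{n+m}$, $\theta<s<\min\{1,r\}$, and $q>p$ can all be met, which is why the lower bound $\frac{n}{n+m}$ appears on both $p$ and $r$ in the hypothesis. I would present this as a short proof of at most a paragraph, mirroring the proof of Theorem \ref{hms1}:

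\begin{proof}
Let all the symbols be the same as in the present theorem. Assume further that $\theta\in(\frac{n}{n+m},\min\{1,\,r\}),$ $s\in(\theta,\min\{1,\,r\}),$ and $q\in(\max\{p,\,p_-(L)\},\,\min\{p_+(L),\,q_+(L)\});$ such a $q$ exists since $p<\min\{p_+(L),\,q_+(L)\}$ and $p_-(L)<p_+(L)\le q_+(L)$ by Proposition \ref{basic}. Then $\theta\in(\frac{n}{n+m},1]$, $s\in(\theta,1]$, and $q\in(p_-(L),\min\{p_+(L),\,q_+(L)\}).$ Moreover, it is known that $X:=\mathcal{M}^p_r(\rn)$ satisfies both Assumptions \ref{vector} and \ref{vector2} for such a $\theta,$ an $s,$ and a $q$ (see, for instance, \cite[Remarks 2.4(e) and 2.7(e)]{WYY20}). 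By this and Theorem \ref{r}, we conclude that the conclusion of Theorem \ref{hms2} holds true.
\end{proof}
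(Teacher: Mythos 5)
Your proposal is correct and takes essentially the same route as the paper, which omits the proof of Theorem \ref{hms2} precisely because it is the same bookkeeping as for Theorem \ref{hms1}: choose $\theta,s,q$ in the indicated ranges, verify Assumptions \ref{vector} and \ref{vector2} for $\mathcal{M}^p_r(\rn)$ via \cite[Remarks 2.4(e) and 2.7(e)]{WYY20}, and apply Theorem \ref{r}. One small correction to your justification of the nonemptiness of the range for $q$: Proposition \ref{basic} does not give $p_+(L)\le q_+(L)$; what you actually need is $p_-(L)<\min\{p_+(L),q_+(L)\}$, which holds because $p_-(L)=q_-(L)<2<q_+(L)$ by Proposition \ref{basic}(ii) together with the $L^2$-boundedness of the relevant families.
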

\begin{theorem}\label{hms3}
Let $m\in\nn$, $L$ be a homogeneous divergence form $2m$-order elliptic operator  in
\eqref{high} satisfying   Ellipticity Condition \ref{ec}, and the family
$\{e^{-tL}\}_{t\in(0,\infty)}$ of operators satisfy the $m-L^s(\rn)-L^2(\rn)$ off-diagonal estimate
for some $s\in(1,2].$ Assume that $p\in(\frac{ns}{n+ms},p_+(L))$ and $r\in(\frac{ns}{n+ms},p]$.
Then the conclusion of Theorem \ref{riesz} holds true with $H_{X,\,L}(\rn)$
replaced by $H{\mathcal M}^p_{r,\,L}(\rn)$.
\end{theorem}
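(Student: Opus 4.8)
The plan is to obtain Theorem \ref{hms3} as a direct corollary of Theorem \ref{riesz}, applied with the ball quasi-Banach function space $X:=\mathcal{M}^p_r(\rn)$; this parallels the way Theorem \ref{hms1} was derived from Theorem \ref{th2}. Since $\mathcal{M}^p_r(\rn)$ with $0<r\le p\le\infty$ is already known to be a ball quasi-Banach function space, the whole task reduces to exhibiting exponents that simultaneously meet the structural requirements of Morrey spaces and the hypotheses of Theorem \ref{riesz}. Note that the off-diagonal exponent $s\in(1,2]$ of the present theorem takes the role of the exponent called ``$r$'' in Theorem \ref{riesz}, so that the constraint there on $\theta$ reads $\theta\in(\frac{ns}{n+ms},1]$.

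First I would fix the auxiliary parameters. Choose $\theta\in(\frac{ns}{n+ms},\min\{1,r\})$, a nonempty interval because $r>\frac{ns}{n+ms}$ by hypothesis; then choose $\sigma\in(\theta,\min\{1,r\})$ and $q\in(\max\{p,\,p_-(L)\},\,p_+(L))$ with moreover $q\ge 2$, which is possible since $p<p_+(L)$ by hypothesis, $p_-(L)<p_+(L)$, and $p_+(L)>2$ (indeed $p_+(L)=\infty$ when $n\le 2m$, while $p_+(L)\ge\frac{2n}{n-2m}>2$ when $n>2m$, by Proposition \ref{basic}(i)). With these choices one has $\theta\in(\frac{ns}{n+ms},1]$, $\sigma\in(0,1]$, and $q\in[2,p_+(L))$, exactly the ranges demanded by Theorem \ref{riesz}.

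Next I would check that $X:=\mathcal{M}^p_r(\rn)$ satisfies Assumption \ref{vector} with the pair $(\theta,\sigma)$ and Assumption \ref{vector2} with the pair $(\sigma,q)$. Assumption \ref{vector} follows from the Fefferman--Stein vector-valued maximal inequality on Morrey spaces, and Assumption \ref{vector2} follows after identifying $[(X^{1/\sigma})']^{1/(q/\sigma)'}$ with the associated block-type space and using the boundedness of the Hardy--Littlewood maximal operator on it; for the ranges of $\theta$, $\sigma$, and $q$ fixed above, both facts are recorded in \cite[Remarks 2.4(e) and 2.7(e)]{WYY20} (this is precisely the input already used in the proof of Theorem \ref{hms1}). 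Once this verification is in place, Theorem \ref{riesz} applies verbatim: for any $h\in H_{X,\,L,\,\mathrm{Riesz}}(\rn)$ one concludes $h\in H_{X,\,L}(\rn)$ together with $\|h\|_{H_{X,\,L}(\rn)}\le C\|h\|_{H_{X,\,L,\,\mathrm{Riesz}}(\rn)}$, which, since $X=\mathcal{M}^p_r(\rn)$ gives $H_{X,\,L}(\rn)=H\mathcal{M}^p_{r,\,L}(\rn)$, is exactly the assertion of Theorem \ref{hms3}.

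The only genuinely technical point is the parameter bookkeeping of the second step: one must verify that a single $\sigma$ can be placed above $\theta$ and below $\min\{1,r\}$, that $\theta$ can be placed above $\frac{ns}{n+ms}$ while remaining below $\min\{1,r\}$, and that $q$ can be placed above $\max\{p,p_-(L)\}$ and at least $2$ while staying below $p_+(L)$. The standing assumptions $p,r\in(\frac{ns}{n+ms},p_+(L))$ with $r\le p$ are precisely what keep all of these intervals nonempty, so no additional analytic work is needed beyond this chase and the cited structural properties of Morrey spaces; in particular, there is no obstacle of the kind encountered in the proofs of Theorems \ref{th2} and \ref{riesz} themselves.
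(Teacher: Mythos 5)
Your proposal is correct and follows essentially the same route as the paper: the paper omits the proof of this theorem, noting it is similar to that of Theorem \ref{hms1}, i.e., one checks that $X:=\mathcal{M}^p_r(\rn)$ satisfies Assumptions \ref{vector} and \ref{vector2} for suitable parameters (via \cite[Remarks 2.4(e) and 2.7(e)]{WYY20}) and then applies Theorem \ref{riesz}. Your parameter bookkeeping (taking $\theta\in(\tfrac{ns}{n+ms},\min\{1,r\})$, $\sigma\in(\theta,\min\{1,r\})$, and $q\in(\max\{p,p_-(L),2\},p_+(L))$, using $p_+(L)>2$ from Proposition \ref{basic}(i)) is exactly the adjustment the paper intends.
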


\begin{remark}
To the best of our knowledge, Theorems \ref{hms1}, \ref{hms2}, and \ref{hms3} are totally new
even when $m:=1$.
\end{remark}

\bigskip

\noindent Xiaosheng Lin, Dachun Yang (Corresponding author) and Wen Yuan

\medskip

\noindent Laboratory of Mathematics and Complex Systems (Ministry of Education of China),
School of Mathematical Sciences, Beijing Normal University, Beijing 100875,
The People's Republic of China

\smallskip

\smallskip

\noindent {\it E-mails}: \texttt{xiaoslin@mail.bnu.edu.cn} (X. Lin)

\noindent\phantom{{\it E-mails:} }\texttt{dcyang@bnu.edu.cn} (D. Yang)

\noindent\phantom{{\it E-mails:} }\texttt{wenyuan@bnu.edu.cn} (W. Yuan)

\bigskip

\noindent Sibei Yang

\medskip

\noindent School of Mathematics and Statistics, Gansu Key Laboratory of Applied Mathematics
and Complex Systems, Lanzhou University, Lanzhou 730000, The People's Republic of China

\smallskip

\noindent{\it E-mail:} \texttt{yangsb@lzu.edu.cn}

\end{document}